\documentclass[11pt,a4paper,openany]{book}
\usepackage[left=2.5cm,right=2.5cm,top=2.5cm,bottom=2.5cm]{geometry}
\usepackage[english]{babel}
\usepackage[T1]{fontenc}
\usepackage[utf8]{inputenc}
\usepackage{lmodern}
\usepackage{csquotes}
\usepackage{microtype}

\usepackage{amsmath,amssymb,amsfonts,amsthm,mathtools,esint}
\usepackage{mathrsfs}
\usepackage{dsfont}
\numberwithin{equation}{chapter}

\newcommand{\C}{\mathcal C}

\renewcommand{\H}{\mathcal H}

\newcommand{\T}{\mathbb T}

\newcommand{\CC}{\mathbb C}

\newcommand{\RR}{\mathbb R}
\newcommand{\TT}{\mathbb T}
\newcommand{\ZZ}{\mathbb Z}

\usepackage{xcolor}
\usepackage{graphicx}
\usepackage{subcaption}
\usepackage{float}
\usepackage{wrapfig}
\usepackage[rightcaption]{sidecap}
\usepackage[labelfont=bf]{caption}
\usepackage{tikz}
\usetikzlibrary{positioning,shapes.geometric,arrows,shadows,shadows.blur,shapes.misc}
\usepackage[shortlabels,inline]{enumitem}
\setlist[itemize]{itemsep=5pt}
\setlist[enumerate]{itemsep=5pt}
\graphicspath{{Images/}{../Images/}}
\setkeys{Gin}{width=0.85\textwidth}

\definecolor{linkscolor}{RGB}{4,36,58}
\usepackage{hyperref}
\hypersetup{colorlinks=true,linkcolor=linkscolor,citecolor=linkscolor,urlcolor=linkscolor,linktoc=all}
\usepackage[nameinlink]{cleveref}
\usepackage{fancyhdr}
\usepackage[titles]{tocloft}
\usepackage[explicit]{titlesec}
\titleformat{\chapter}[display]
{\centering\normalfont\Huge}
{\titlerule[4pt]\vspace{3pt}\titlerule[1.5pt]\vspace{3pt}\textbf{\MakeUppercase{\chaptername} \thechapter}}
{0pt}
{\titlerule[1.5pt]\vspace{1pc}\Huge\MakeUppercase{#1}}

\titleformat{name=\chapter,numberless}[display]
{\centering\normalfont\Huge}
{\titlerule[1.5pt]\vspace{-20pt}}
{0pt}
{\Huge\MakeUppercase{#1}}

\usepackage[most]{tcolorbox}
\tcbuselibrary{theorems,skins,breakable}

\definecolor{defscol}{HTML}{ecd8d7}
\definecolor{asumscol}{HTML}{ecd8d7}
\definecolor{rmkscol}{HTML}{313160}
\definecolor{exmscol}{HTML}{e04b52}
\definecolor{lemscol}{HTML}{2c3943}
\definecolor{thmscol}{HTML}{595765}
\definecolor{prpscol}{HTML}{9c98b1}
\definecolor{corscol}{HTML}{dfd9fd}
\definecolor{clmscol}{HTML}{165c58}
\definecolor{facscol}{HTML}{28a8a1}

\newtcbtheorem[number within=section]{mydefinition}{Definition}{enhanced,breakable,frame hidden,fonttitle=\bfseries\large,coltitle=black,colbacktitle=defscol!40!white,colback=defscol!20!white}{defn}
\newtcbtheorem[use counter from=mydefinition]{mytheorem}{Theorem}{enhanced,breakable,frame hidden,fonttitle=\bfseries\large,coltitle=black,colbacktitle=thmscol!40!white,colback=thmscol!20!white}{thm}
\newtcbtheorem[use counter from=mydefinition]{mylemma}{Lemma}{enhanced,breakable,frame hidden,fonttitle=\bfseries\large,coltitle=black,colbacktitle=lemscol!40!white,colback=lemscol!20!white}{lem}
\newtcbtheorem[use counter from=mydefinition]{myproposition}{Proposition}{enhanced,breakable,frame hidden,fonttitle=\bfseries\large,coltitle=black,colbacktitle=prpscol!30!white,colback=prpscol!20!white}{prop}
\newtcbtheorem[use counter from=mydefinition]{mycorollary}{Corollary}{enhanced,breakable,frame hidden,fonttitle=\bfseries\large,coltitle=black,colbacktitle=corscol!40!white,colback=corscol!20!white}{cor}
\newtcbtheorem[use counter from=mydefinition]{myassumption}{Assumptions}{enhanced,breakable,frame hidden,fonttitle=\bfseries\large,coltitle=black,colbacktitle=asumscol!40!white,colback=asumscol!20!white}{asum}
\newtcbtheorem[use counter from=mydefinition]{myclaim}{Claim}{enhanced,breakable,frame hidden,fonttitle=\bfseries\large,coltitle=black,colbacktitle=clmscol!40!white,colback=clmscol!20!white}{clm}
\newtcbtheorem[use counter from=mydefinition]{myfact}{Fact}{enhanced,breakable,frame hidden,fonttitle=\bfseries\large,coltitle=black,colbacktitle=facscol!40!white,colback=facscol!20!white}{fact}

\newenvironment{myexample}{\tcolorbox[blanker,breakable,left=5mm,parbox=false,before upper={\parindent15pt},after skip=10pt,borderline west={1mm}{0pt}{clmscol!40!white}]}{\textcolor{clmscol!40!white}{\hbox{}\nobreak\hfill$\blacksquare$}\endtcolorbox}

\usepackage{todonotes}

\usepackage{mdframed}

\newenvironment{chapterabstract}

\newcommand{\BigTitle}{{\bf Rotating Vortex Patches}}
\newcommand{\LittleTitle}{{\bf Rigidity, Bifurcation, and Unified Structures}}
\title{\Huge \BigTitle\\[0.4cm]\Large \LittleTitle}
\author{
\Large Taoufik Hmidi\\
\\
\Large New York University Abu Dhabi\\
\texttt{th2644@nyu.edu}
}
\date{\today}

\begin{document}

\frontmatter

\maketitle

\chapter*{Abstract}

\addcontentsline{toc}{chapter}{Abstract}

This monograph presents a systematic account of several classical and recent developments in the theory of rotating vortex patches, or V-states, for the two-dimensional Euler equations and related active scalar models. Its purpose is both expository and structural: we revisit some of the foundational results of the subject, provide detailed proofs and alternative formulations, and present more recent results within a common analytical framework.

We begin with the basic theory of two-dimensional Euler vortex patches and the contour dynamics formulation of rigidly rotating solutions. We then develop two complementary approaches to the V-state equation, based on Cauchy integrals and conformal mappings, and discuss their connections with potential theory and Faber polynomials. These tools are used to revisit classical examples, including Rankine vortices and Kirchhoff ellipses, as well as several rigidity and classification results for rotating patches. A substantial part of the monograph is devoted to Burbea's bifurcation theory, for which we give a detailed treatment in Hölder spaces, including the functional-analytic and spectral ingredients underlying the construction of noncircular V-states.

The final part develops a unified approach to rotating patches for a broad class of incompressible active scalar equations. The main point is that the bifurcation mechanism can be formulated in terms of structural properties of the interaction kernel, rather than through model-specific computations. In particular, complete monotonicity leads to a spectral factorization that isolates a universal component of the linearized problem and provides a common framework encompassing the Euler, generalized surface quasi-geostrophic, quasi-geostrophic shallow-water, and related models. The monograph concludes with a discussion of open problems and perspectives concerning rigidity, global bifurcation, singular geometries, and recurrent vortex dynamics.

\clearpage

\tableofcontents
\mainmatter
\chapter{General Introduction}
\section{Rigid vortex motion: global overview }

The mathematical theory of vortex motion lies at the crossroads of partial differential equations,
geometric analysis, complex analysis and fluid mechanics.
Since the pioneering works of Helmholtz and Kelvin in the nineteenth century,
vortices have been recognized as the fundamental building blocks of incompressible flows.
Their remarkable persistence, their ability to interact over long distances,
and the intricate patterns generated by their dynamics
have made them one of the central objects of mathematical fluid mechanics.

Among all coherent vortex structures, those undergoing a rigid-body motion occupy a distinguished position.
They provide exact solutions of highly nonlinear evolution equations,
yet exhibit an unexpectedly rich mathematical structure.
From a physical viewpoint,
they describe long-lived coherent structures observed in laboratory experiments,
geophysical flows and numerical simulations.
From a mathematical viewpoint,
they constitute nonlinear equilibria around which one can investigate
stability, bifurcation, spectral properties and long-time dynamics.

The history of rigid vortex motions dates back to the seminal work of
Kirchhoff \cite{Kirch}, who discovered in 1874 that an elliptical region of constant vorticity
rotates uniformly while preserving its shape.
This remarkable observation revealed for the first time
that the Euler equations admit nontrivial coherent structures beyond the obvious circular vortices introduced earlier by Rankine.
For almost one century,
Rankine vortices and Kirchhoff ellipses remained essentially the only explicit examples of rotating vortex patches.

A major turning point occurred in the late 1970s, when Deem and Zabusky \cite{DZ78} numerically discovered rotating vortex patches exhibiting polygonal symmetries. Their computations suggested that the Euler equations possess infinitely many families of noncircular rotating equilibria, revealing a much richer structure than previously anticipated. Shortly afterwards, Burbea \cite{Burbea82} established their existence rigorously by combining conformal mapping techniques with local bifurcation theory.

Burbea's pioneering work laid the foundations for the modern theory of
rotating vortex patches. Remarkably, however, his paper remained largely
unnoticed within the PDE community for several decades. Around 2012,
together with Joan Mateu and Joan Verdera, we rediscovered Burbea's work
and revisited his approach using modern PDE techniques. This contributed
to a renewed interest in the analytical theory of rotating vortex patches.

Since then, the subject has experienced remarkable progress, driven by increasingly refined analytical and numerical methods. What initially appeared as a local bifurcation problem near the Rankine vortex has developed into a rich research area at the crossroads of nonlinear partial differential equations, complex analysis, geometric analysis, and dynamical systems. Several directions have emerged, concerning the topology and regularity of the patches, the global structure of bifurcation branches, rigidity and symmetry phenomena, and extensions to other active scalar equations.

One important direction concerns the topology of the vorticity support.
Beyond the simply connected configurations arising from Burbea's theorem,
the existence theory has been extended to doubly and multiply connected
domains, revealing a considerably richer family of rotating coherent
structures.

A second fundamental direction concerns the regularity of the vortex boundary. Although local bifurcation arguments naturally produce solutions in spaces of limited regularity, subsequent analysis reveals a remarkable smoothing phenomenon. Starting from H\"older regularity, successive works established higher-order regularity, $C^\infty$ smoothness, and eventually, under suitable assumptions, real analyticity of the boundary. Besides their intrinsic interest, these results provide important analytical tools for the study of geometric properties, uniqueness, and the finer structure of bifurcation branches.

The global continuation of the local branches has also attracted considerable attention. While the Crandall--Rabinowitz theorem describes the solution set only in a neighborhood of the Rankine vortex, numerical computations and global bifurcation techniques reveal a much richer picture away from the bifurcation point. Branches may develop turning points, reconnect with other branches, form closed loops, or approach singular limiting configurations characterized by corners or self-contact. Understanding the global organization of these branches and their possible limiting geometries remains one of the challenging problems in the theory.

Closely related to these questions is the study of qualitative and geometric properties of rotating patches. Rigidity and symmetry results seek to determine when the geometry of a patch is forced by its angular velocity or by additional structural assumptions. Convexity, geometric inequalities, singular limiting profiles, stability, and long-time dynamics provide further directions in which the interaction between the geometry of the patch and the underlying fluid dynamics becomes particularly pronounced.

In parallel, the theory has been extended well beyond the two-dimensional Euler equations to a variety of active scalar models arising notably in geophysical fluid dynamics. Prominent examples include the Surface Quasi-Geostrophic equation (SQG), its generalized versions (gSQG), the quasi-geostrophic shallow-water equations (QGSW), and several regularized models; see, for instance, \cite{DHH16,DHHM,DHMV16,Garcia21,Garcia20,GS23,GHJ20,HMH23,Gom19,HMW20,HW22,HM16b,HM17,HMV15,HXX23,Rou23a}. Despite substantial differences in the singularity and decay of their interaction kernels, these equations exhibit remarkably similar families of rotating coherent structures. Their construction relies on closely related ingredients: contour dynamics, conformal parametrizations, spectral analysis, and bifurcation theory.

A central theme of this monograph is that the classical bifurcation mechanism discovered by Burbea for the Euler equation is not intrinsically tied to the logarithmic interaction kernel. At the linearized level, it is governed by a spectral structure that can be isolated through complete monotonicity of the radial kernel. This observation ultimately leads to a unified bifurcation theory encompassing Euler, gSQG, QGSW, and a broader class of active scalar equations. This perspective will be developed systematically in the final chapter.

More recently, the study of coherent vortex structures has moved beyond relative equilibria toward genuinely time-dependent dynamics. In particular, time quasi-periodic vortex patches have been constructed for the Euler equation and related active scalar models; see \cite{BHM23,HHM23,HR22,HHR23,HR21,Rou23b}. These solutions exhibit several interacting temporal frequencies and open a new direction in the study of recurrent vortex dynamics.

Our primary focus is on simply connected uniformly rotating vortex
patches, with particular emphasis on rigidity, local bifurcation from
the Rankine vortex, and the structural mechanisms underlying their
existence for different active scalar equations.

\section{Organization of the monograph}

The monograph is organized around a natural progression from the basic analytical framework of the two-dimensional Euler equations to rigidity, bifurcation, and ultimately a unified theory of rotating vortex patches for a broad class of active scalar equations. Each chapter develops a different facet of the theory, while introducing tools that will be used throughout the subsequent analysis.

{\bf Chapter~2} introduces the analytical and geometric foundations for the study of rigid vortex motions in the two-dimensional Euler equations. We first review the vorticity formulation and the classical global well-posedness theory, with particular emphasis on vortex patches and the persistence of their boundary regularity. We then introduce rotating vortices and discuss some of their elementary rigidity properties arising from the conservation laws of the Euler flow. The chapter concludes with the derivation of the contour dynamics formulation and the boundary equation characterizing rotating vortex patches. These results provide the basic framework for the rigidity and bifurcation theories developed later.

{\bf Chapters~3 and 4} explore two complementary reformulations of the rotating patch equation, based respectively on the Cauchy transform and conformal mapping techniques. The guiding principle is that the same geometric condition defining a $V$-state can be expressed in different analytical forms, each revealing a distinct structure of the problem.

In Chapter~3, the boundary equation is reformulated through the Cauchy transform of the patch. This converts the $V$-state problem into an inverse problem of potential-theoretic nature and provides an efficient framework for recovering the classical Rankine vortices and Kirchhoff ellipses. It also leads to an inverse characterization of ellipses, illustrating the rigidity encoded in the Cauchy transform of the domain.

Chapter~4 develops a complementary approach based on the exterior Riemann conformal mapping. The Cauchy transform is expressed in terms of the associated Faber polynomials, leading to an algebraic formulation of the $V$-state equation adapted to the conformal geometry of the domain. This representation provides a particularly transparent description of Kirchhoff ellipses and yields further rigidity results, including the characterization of rotating patches whose exterior conformal mapping has a finite Laurent expansion. Together, Chapters~3 and 4 highlight the interplay between vortex dynamics, complex analysis, potential theory, and geometric function theory.

{\bf Chapter~5} is devoted to global rigidity phenomena. Its central question
is whether the angular velocity can constrain, or even completely
determine, the geometry of a rotating vortex patch. We discuss the
rigidity theorem asserting that a simply connected rotating patch with
angular velocity
\(
\Omega\notin\left(0,\frac12\right)
\)
must be a Rankine vortex.
Particular attention is given to the critical value
\(\Omega=\frac12\), for which we present two proofs based respectively
on the maximum principle and on the conformal/Faber-polynomial
formulation. The chapter concludes with the general variational approach developed by  Gómez-Serrano, Park, Shi and Yao \cite{GPSY},
where Talenti's isoperimetric inequality plays a central role.

{\bf Chapter~6} is devoted to Burbea's celebrated construction of noncircular
rotating vortex patches bifurcating from the Rankine vortex. Burbea's
original argument introduced the conformal mapping formulation and
combined it with bifurcation tools to construct, for each
integer \(m\geq2\), a branch of \(m\)-fold symmetric \(V\)-states.
However, some functional-analytic aspects of the original proof,
formulated in Hardy spaces, require additional justification. We revisit
the argument in a Hölder framework and provide a complete proof of the
local bifurcation result.

The analysis is organized around the nonlinear \(V\)-state functional,
its regularity and linearization at the disk, and the spectral properties
of the resulting Fourier multiplier. The explicit Euler spectrum allows
one to identify the kernel and range at each bifurcation value and to
verify the transversality condition required by the
Crandall--Rabinowitz theorem. This explicit spectral structure also
provides the natural starting point for the unified theory developed in
Chapter~7.

The final chapter is devoted to a unified theory of rotating vortex patches for a broad class of incompressible active scalar equations. Following Burbea's pioneering work for the Euler equations, analogous bifurcation results have been established for several active scalar models arising in geophysical fluid dynamics, including the generalized surface quasi-geostrophic equations (gSQG) obtained in \cite{HH15,CCG16}, the quasi-geostrophic shallow-water equations (QGSW) in \cite{DHR19}, and various regularized SQG models. Although these equations are governed by interaction kernels with substantially different singularities, they all exhibit remarkably similar families of rotating coherent structures. Nevertheless, the existing proofs are highly model-dependent and often rely on delicate computations involving special functions, such as Gamma functions, Bessel functions, or hypergeometric functions, making it difficult to identify the common analytical mechanisms underlying these bifurcation phenomena.

The primary objective of this chapter is to show that these apparently different existence theories are in fact manifestations of a single abstract framework. We present the unified bifurcation theory recently developed by Hmidi, Xue and Xue in \cite{HXX26}, which applies to a broad class of incompressible active scalar equations. The starting point is the observation that the interaction kernels associated with these models all satisfy a common structural assumption, namely that the function $-K_0'$ is completely monotone. Combined with a mild integrability condition near the origin, this property is sufficient to recover the entire bifurcation theory in a unified manner, independently of the particular equation under consideration.

The central novelty of this theory is a new spectral approach to the
linearized operator around the Rankine vortex, based on the factorization formula for the spectrum
\[
\lambda_n
=
2\int_0^\infty
\phi_n(x)\,
\frac{d\mu(x)}{x},
\qquad
\phi_n(x)
=
\int_0^\pi
e^{-2x\sin\eta}\,
e^{2in\eta}\,d\eta.
\]
The functions $\phi_n$ are universal, depending only on the Fourier mode,
whereas the dependence on the active scalar model is entirely encoded in
the positive measure $\mu$ arising from the Bernstein representation of
the kernel. This factorization separates the universal spectral structure
from the model-dependent interaction kernel and reduces the analysis to
the study of the family $\{\phi_n\}_{n\geq1}$. Their positivity and
monotonicity, obtained through an ODE and comparison argument, yield the
spectral properties required by the Crandall--Rabinowitz theorem.

\section*{Acknowledgments}

The author would like to thank Haroune Houamed, Liutang Xue, and Zhilong Xue
for many valuable discussions and for the considerable time and effort they
devoted to carefully checking several parts of the manuscript.

\chapter{Two-dimensional Euler equations and rigid vortex motions}

\begin{chapterabstract}
{\it This chapter introduces the analytical and geometric foundations for the study of rigid vortex motions in the two-dimensional Euler equations. We first review the vorticity formulation and the classical global well-posedness theory, with particular emphasis on vortex patches and the persistence of their boundary regularity. We then introduce rotating vortices and discuss some of their rigidity properties arising from the conservation laws of the Euler flow. Finally, we derive the contour dynamics formulation and the boundary equation characterizing rotating vortex patches. These results provide the basic framework for the rigidity and bifurcation theories developed in the subsequent chapters.}
\end{chapterabstract}

\section{Planar two-dimensional Euler equations}

The motion of an ideal incompressible fluid in the plane is governed by the two-dimensional Euler equations. Written in terms of the velocity field $v=(v_1,v_2)$ and the pressure $p$, they take the form
\begin{equation}\label{Euler-velocity}
\left\{
\begin{aligned}
&\partial_t v+(v\cdot\nabla)v+\nabla p=0,\\
&\operatorname{div}v=0,\\
&v|_{t=0}=v_0,
\end{aligned}
\right.
\end{equation}
where the incompressibility condition expresses the conservation of volume along the flow. 

The Euler equations were introduced by Euler in the eighteenth century as the fundamental equations describing the motion of an ideal fluid. Nevertheless, the geometric structure of fluid motion became substantially clearer through the work of Helmholtz and Kelvin in the nineteenth century. Helmholtz's analysis of vortex motion revealed that the rotational component of the velocity, rather than the velocity itself, is the natural quantity governing many persistent structures observed in ideal fluids. Kelvin's circulation theorem further showed that the circulation of the velocity along a closed material curve is conserved by the evolution. These ideas laid the foundations of modern vortex dynamics.
\\
In the planar setting, the vorticity is the scalar function
\[
\omega(z,t)
=
\partial_1v_2(z,t)-\partial_2v_1(z,t),
\qquad
z\in\CC,\quad t\geq0.
\]
It measures the local rotational motion of the fluid. Taking the curl
of the velocity formulation \eqref{Euler-velocity} eliminates the
pressure and yields the vorticity equation
\begin{equation}\label{vorticity}
\begin{cases}
\partial_t\omega(z,t)+v(z,t)\cdot\nabla\omega(z,t)=0,
& z\in\CC,\quad t>0,\\
v(z,t)=\nabla^\perp\psi(z,t),
\qquad
\Delta\psi(z,t)=\omega(z,t),\\
\omega(z,0)=\omega_0(z),
\end{cases}
\end{equation}
where
\[
\nabla^\perp=(-\partial_2,\partial_1).
\]
Thus, the vorticity is transported by the incompressible velocity
field, while the velocity is recovered from the vorticity through the
stream function \(\psi\).
Since
\[
\Delta\left(\frac{1}{2\pi}\log|z|\right)=\delta_0
\]
in the sense of distributions, the stream function associated with a
sufficiently localized vorticity is given by
\[
\psi(z,t)
=
\frac{1}{2\pi}
\int_{\CC}\log|z-\zeta|\,\omega(\zeta,t)\,d\zeta.
\]
Identifying \(\RR^2\) with \(\CC\), we represent the velocity field by
the complex-valued function
\[
v=v_1+iv_2.
\]
Introducing the Wirtinger derivative
\[
\overline{\partial}
=
\tfrac12\left(\partial_{x_1}+i\partial_{x_2}\right),
\]
the relation \(v=\nabla^\perp\psi\) becomes
\[
v(z,t)=2i\,\overline{\partial}\psi(z,t).
\]
Moreover,
\[
2\overline{\partial}\log|z|
=
\frac{1}{\overline z},
\qquad z\neq0.
\]
Consequently, the velocity generated by the vorticity \(\omega(t)\) admits the
Biot--Savart representation
\begin{align}\label{Biot-Savart-real}
v(z,t)
\nonumber&=
\frac{1}{2\pi}
\int_{\RR^2}
\frac{(z-\zeta)^\perp}{|z-\zeta|^2}
\omega(\zeta,t)\,dA(\zeta)\\
&=
\frac{i}{2\pi}
\int_{\CC}
\frac{\omega(\zeta,t)}
{\overline z-\overline\zeta}\,
dA(\zeta).
\end{align}
Hence the Euler equations can be interpreted as an active scalar equation: the scalar vorticity is transported by a velocity field that is itself reconstructed nonlocally from the vorticity.
This formulation makes apparent the main analytical difficulty of the problem. Although the vorticity equation is formally a linear transport equation, the transporting velocity depends nonlinearly and nonlocally on the transported quantity. The singularity of the Biot--Savart kernel also implies that the regularity of the velocity depends delicately on the function space to which the vorticity belongs.

The absence of a vortex-stretching term is the fundamental feature distinguishing the two-dimensional Euler equations from their three-dimensional counterpart. Indeed, in three dimensions the vorticity vector satisfies
\[
\partial_t\omega+(v\cdot\nabla)\omega
=
(\omega\cdot\nabla)v,
\]
and the term on the right-hand side may amplify the vorticity. In two dimensions, this mechanism disappears and the vorticity is simply transported by the flow.
\\
More precisely, let \(X(t,x)\) denote the Lagrangian flow associated with \(v\), defined by
\[
\frac{d}{dt}X(t,x)=v\bigl(t,X(t,x)\bigr),
\qquad
X(0,x)=x.
\]
Then
\begin{equation}\label{vorticity-transport-flow}
\omega\bigl(t,X(t,x)\bigr)=\omega_0(x).
\end{equation}
Since \(\operatorname{div}v=0\), the flow map \(X(t,\cdot)\) preserves Lebesgue measure. Consequently, all distribution functions of the vorticity are conserved. In particular,
\begin{equation}\label{Lp-vorticity-conservation}
\|\omega(t)\|_{L^p}
=
\|\omega_0\|_{L^p},
\qquad
1\leqslant p\leqslant\infty,
\end{equation}
whenever these quantities are well defined. More generally, for every sufficiently regular function \(\beta:\RR\to\RR\),
\[
\int_{\CC}\beta\bigl(\omega(z,t)\bigr)\,dA(z)
=
\int_{\CC}\beta\bigl(\omega_0(z)\bigr)\,dA(z)\,,
\]
with $dA$ being two dimensional Lebesgue measure.
Thus the Euler dynamics preserves not only the total vorticity, but an infinite family of quantities usually referred to as Casimir invariants.

\section{Global well-posedness}
The global solvability of the two-dimensional Euler equations is one of the fundamental consequences of the transport structure described above. The first global existence result for sufficiently smooth planar flows goes back to Wolibner \cite{Wolibner1933}. Alternative proofs and substantial extensions were subsequently obtained by Kato \cite{Kato1967} and many other authors, leading to well-posedness results in various functional settings, including Sobolev, Besov, and Triebel--Lizorkin spaces. We refer to \cite{Chemin1998,MB02} and the references therein for further details.
The key mechanism behind global regularity in two dimensions is the conservation of the $L^\infty$ norm of the vorticity, which follows directly from \eqref{Lp-vorticity-conservation}.
Combined with logarithmic estimates controlling $\|\nabla v\|_{L^\infty}$ in terms of the vorticity, this prevents the growth of the relevant higher-order norms from producing a finite-time singularity and allows the local solution to be continued globally.
\\
A decisive step toward solutions with nonsmooth vorticity was made by Yudovich \cite{Yudovich1963}. In the whole-plane setting, his theorem may be stated as follows.

\begin{mytheorem}{}{thm:Yudovich}
Assume that
\[
\omega_0\in L^1(\RR^2)\cap L^\infty(\RR^2).
\]
Then the two-dimensional Euler equations admit a unique global weak solution such that
\[
\omega
\in
L^\infty\bigl(\RR_+;L^1(\RR^2)\cap L^\infty(\RR^2)\bigr).
\]
Moreover,
\[
\|\omega(t)\|_{L^p}
=
\|\omega_0\|_{L^p},
\qquad
1\leqslant p\leqslant\infty,
\]
and the vorticity is transported by a unique measure-preserving flow.
\end{mytheorem}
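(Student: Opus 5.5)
The proof splits into existence by compactness and uniqueness by an energy estimate. The tool behind both is the log-Lipschitz bound for the Biot--Savart law \eqref{Biot-Savart-real}: if $\omega\in L^1\cap L^\infty$, then $v$ is bounded,
\[
\|v\|_{L^\infty}\lesssim \|\omega\|_{L^1}^{1/2}\|\omega\|_{L^\infty}^{1/2},
\]
and
\[
|v(x)-v(y)|\leqslant C\|\omega\|_{L^1\cap L^\infty}\,|x-y|\bigl(1+\log^+(1/|x-y|)\bigr).
\]
Both follow by splitting the kernel $\frac{(z-\zeta)^\perp}{|z-\zeta|^2}$ into near and far regions at the scale $|x-y|$. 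We also use the companion Calder\'on--Zygmund estimate $\|\nabla v\|_{L^p}\leqslant Cp\|\omega\|_{L^p}$ for $2\leqslant p<\infty$, where the linear growth in $p$ of the Riesz transform norms is essential.

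For existence, regularize $\omega_0^\varepsilon=\rho_\varepsilon*\omega_0$ and take the global smooth solutions $\omega^\varepsilon$ given by the classical theory recalled above (Wolibner/Kato). By \eqref{Lp-vorticity-conservation}, $\|\omega^\varepsilon(t)\|_{L^1\cap L^\infty}\leqslant\|\omega_0\|_{L^1\cap L^\infty}$ uniformly in $\varepsilon$. The log-Lipschitz bound then gives uniform log-Lipschitz control on $v^\varepsilon$. Hence the flows $X^\varepsilon$ are uniformly H\"older in space, with exponent $e^{-Ct}$ via Osgood's lemma, and Lipschitz in time. Arzel\`a--Ascoli yields a locally uniform limit $X$, and weak-$*$ compactness yields $\omega^\varepsilon\rightharpoonup\omega$. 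The velocities $v^\varepsilon$ converge locally uniformly, since they are equicontinuous and the kernel is locally integrable. This lets us pass to the limit in the weak formulation, where the nonlinear term $v\omega$ is a product of a strongly and a weakly converging sequence. In the limit, $X$ is the flow of $v$, it is unique by Osgood's criterion because $r\log(1/r)$ is not integrable against $dr/\cdots$ near $0$, and it is measure preserving as a limit of measure-preserving maps. Then $\omega(t)=\omega_0\circ X(t)^{-1}$, which gives the conservation of all $L^p$ norms, and \eqref{vorticity-transport-flow} holds.

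Uniqueness is the main obstacle. Given two solutions, set $w=v_1-v_2$. Since $v\notin L^2$ in general when $\int\omega_0\neq0$, I would first note that the difference $w$ has zero total vorticity, so $w\in L^2$; alternatively one can localize, which is a technical point to check. The energy identity gives
\[
\frac{d}{dt}\|w\|_{L^2}^2\leqslant 2\int|\nabla v_2|\,|w|^2
\leqslant 2\|\nabla v_2\|_{L^p}\|w\|_{L^\infty}^{2/p}\|w\|_{L^2}^{2-2/p}
\leqslant Cp\,M\,\|w\|_{L^2}^{2-2/p},
\]
with $M$ depending on $\|\omega_0\|_{L^1\cap L^\infty}$. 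Writing $E=\|w\|_{L^2}^2$, this becomes $E'\leqslant CpM\,E^{1-1/p}$, so $E(t)\leqslant (CMt)^p$. For $t<1/(CM)$, letting $p\to\infty$ forces $E=0$, and iterating on successive time intervals gives global uniqueness. Uniqueness of the flow then follows from the Osgood argument already used.
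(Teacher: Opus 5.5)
The paper does not prove this theorem. It quotes Yudovich's result, refers to Chemin and Majda--Bertozzi, and records only the mechanism you rely on: the log-Lipschitz bound \eqref{log-Lipschitz} combined with Osgood's criterion. Your outline is the classical proof, and most of it is sound. The existence part works: mollified data, uniform $L^1\cap L^\infty$ bounds, uniformly H\"older flows, passage to the limit in $v\omega$ as strong times weak, measure preservation of the limit flow, and uniqueness of the flow since $\int_0\frac{dr}{r\log(1/r)}=\infty$. The $p$-dependent energy estimate $E'\leqslant CpME^{1-1/p}\Rightarrow E(t)\leqslant(CMt)^p$, with $p\to\infty$ on short intervals and then iteration, is exactly Yudovich's argument.

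The step that fails as written is the claim that $w=v_1-v_2\in L^2$ because $\omega_1(t)-\omega_2(t)$ has zero total vorticity. Zero mean does not put the Biot--Savart velocity in $L^2$. Take a radial $\omega\in L^1\cap L^\infty$ with $\int\omega=0$, equal to $|x|^{-2}(\log|x|)^{-3/2}$ for large $|x|$ and compensated near the origin. Then $m(r):=\int_{|y|>r}\omega\,dy=4\pi(\log r)^{-1/2}$, so $|v(x)|=|m(|x|)|/(2\pi|x|)$ and $\int_{|x|>R}|v|^2\,dx=8\pi\int_R^\infty\frac{dr}{r\log r}=\infty$.

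Without $w\in L^2$ and $\|w(t)\|_{L^2}\to0$ as $t\to0$, the energy method cannot start. Localizing is not a harmless substitute, because the cut-offs generate pressure and commutator terms that do not close.

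The correct justification is dynamical. Each solution satisfies $\partial_t\omega_i+\operatorname{div}(v_i\omega_i)=0$ with $v_i\omega_i\in L^\infty_tL^2$, since $v_i\in L^\infty$ and $\omega_i\in L^2$. Hence $\omega_1(t)-\omega_2(t)=-\operatorname{div}G(t)$ with $G(t)=\int_0^t(v_1\omega_1-v_2\omega_2)\,ds$. Therefore $w(t)=-\nabla^\perp\Delta^{-1}\operatorname{div}G(t)$, which is a matrix of products of Riesz transforms applied to $G(t)$. This gives $\|w(t)\|_{L^2}\leqslant\|G(t)\|_{L^2}\lesssim t\max_i\sup_s\|\omega_i(s)\|_{L^1\cap L^\infty}^2$. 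So $E$ is finite with $E(0)=0$, and your estimate then goes through unchanged. Your constant $M$ is uniform in time because $\omega_i\in L^\infty(\RR_+;L^1\cap L^\infty)$, which is what the iteration on successive intervals needs.
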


The significance of Yudovich's theorem is that bounded vorticity does not generally produce a Lipschitz velocity. Instead, the Biot--Savart law gives the logarithmic modulus of continuity
\begin{equation}\label{log-Lipschitz}
|v(x,t)-v(y,t)|
\leqslant
C|x-y|
\left(
1+\log\frac{1}{|x-y|}
\right),
\qquad
0<|x-y|\leqslant\tfrac12.
\end{equation}
This log-Lipschitz regularity is nevertheless sufficient to guarantee uniqueness of the Lagrangian flow through Osgood's criterion. Yudovich's argument therefore identifies a borderline regularity class in which the nonlinear transport equation remains globally well posed.

Beyond the Yudovich class, global weak solutions are known under substantially weaker
assumptions on the vorticity. For instance, if
\[
\omega_0\in L^p(\mathbb R^2),\qquad 1<p<\infty,
\]
one can construct global weak solutions with
\[
\omega\in L^\infty(\mathbb R_+;L^p(\mathbb R^2)),
\]
although uniqueness is generally open in this regime. At an even lower level of regularity,
Delort established global existence for certain vortex-sheet initial data, allowing the vorticity
to be a bounded Radon measure of distinguished sign, up to an \(H^{-1}\) contribution
\cite{Delort1991}. These results illustrate the broad existence theory available below the
Yudovich class, but they will not play a role in the vortex-patch analysis developed here.

\section{Vortex patches}

A particularly important class of Yudovich solutions is given by vortex patches. In this case, the initial vorticity has the form
\[
\omega_0=\gamma\mathbf 1_{D_0},
\]
where \(\gamma\in\RR^\star\) and \(D_0\subset\RR^2\) is a bounded domain. Since the vorticity is transported by the flow, the solution remains a characteristic function:
\[
\omega(t)=\gamma\mathbf 1_{D_t},
\qquad
D_t=X(t,D_0).
\]
Thus the infinite-dimensional Euler system reduces to the evolution of the boundary
\(\partial D_t\).
\\
Yudovich's theorem immediately guarantees the global existence and uniqueness of a weak solution for every bounded initial patch. It does not, however, imply that the boundary of the patch remains smooth. The global persistence of boundary regularity was established by Chemin \cite{Chemin1993,Chemin19930} and independently, through a different argument, by Bertozzi and Constantin \cite{BertozziConstantin1993}. Their results imply that if
\[
\partial D_0\in C^{1+\alpha},
\qquad
0<\alpha<1,
\]
then
\[
\partial D_t\in C^{1+\alpha}
\]
for every finite time \(t\geq0\). Higher regularity is propagated as well. 
\\

\section{Preliminaries on rotating vortices}

Rotating vortices constitute one of the most fundamental classes of coherent structures in two-dimensional ideal fluid dynamics. They
provide canonical examples of nontrivial solutions of the Euler
equations whose geometry is preserved by the flow, evolving only
through a rigid motion. Besides their intrinsic physical relevance,
such configurations play a central role in the mathematical analysis
of vortex dynamics, as they often arise as stationary or relative
equilibria of the vorticity equation and serve as the starting point
for bifurcation and stability theories.
\\
The purpose of this section is to recall the basic framework and
establish a few elementary properties of rigidly rotating vortices
that will be used throughout this monograph. In particular, we formulate
the notion of a rotating solution, identify the geometric constraints
imposed by the Euler dynamics, and derive several simple consequences
that will considerably simplify the subsequent analysis.
 Recall that the vorticity
\[
\omega=\partial_1v^2-\partial_2v^1
\]
satisfies the transport equation
\begin{equation}\label{vorts1}
\partial_t\omega+v\cdot\nabla\omega=0,
\qquad
v=\nabla^\perp\Delta^{-1}\omega.
\end{equation}
A planar rigid motion is, in general, a combination of a translation
and a rotation. Since our main interest lies in rotating coherent
structures, we restrict throughout this section to pure rotations
about a fixed center.
For \(\theta\in\RR\), let
\[
Q_\theta
=
\begin{pmatrix}
\cos\theta&-\sin\theta\\
\sin\theta&\cos\theta
\end{pmatrix}
\]
denote the counterclockwise rotation matrix of angle \(\theta\). The
rotation of center \(x_0\in\RR^2\) and angle \(\theta\) is then defined by
\[
\mathcal R_{x_0,\theta}(x)
=
x_0+Q_\theta(x-x_0).
\]

\begin{mydefinition}{}{defa1}
Let
\[
\omega_0\in L^1(\RR^2)\cap L^\infty(\RR^2),
\]
and let \(\omega\) be the unique global Yudovich solution of
\eqref{vorts1} with initial datum \(\omega_0\). We say that
\(\omega_0\) is a \emph{rotating vorticity} about \(x_0\) if there
exists a smooth, nonconstant function
\[
\theta:\RR_+\to\RR
\]
such that
\begin{equation}\label{rotating-vorticity}
\omega(t,x)
=
\omega_0\bigl(\mathcal R_{x_0,-\theta(t)}(x)\bigr),
\qquad
x\in\RR^2,\quad t\geq0.
\end{equation}
\end{mydefinition}
Thus, at every time, the vorticity profile is obtained from its
initial configuration by a rigid rotation around \(x_0\). The
assumption
\[
\omega_0\in L^1(\RR^2)\cap L^\infty(\RR^2)
\]
ensures, by Yudovich's theorem, that the corresponding Euler solution
is globally defined and unique.
In the vortex-patch setting, the preceding definition has a simple
geometric interpretation. Suppose that
\[
\omega_0=\chi_D,
\]
where \(D\subset\RR^2\) is a bounded domain. Then \(\omega_0\) is a
rotating vorticity if and only if
\[
\omega(t)=\chi_{D_t},
\qquad
D_t=\mathcal R_{x_0,\theta(t)}(D).
\]
Such a domain \(D\) is called a \emph{rotating vortex patch}, or a
\emph{V-state}.
\\
We next recall two elementary invariants of Euler
flows.
Let \(\omega\) be a compactly supported solution of \eqref{vorts1}.
Its total vorticity is defined by
\[
m(t)
=
\int_{\RR^2}\omega(t,x)\,dx.
\]
If \(m(t)\neq0\), its center of vorticity is defined by
\[
\overline{x}(t)
=
\frac{1}{m(t)}
\int_{\RR^2}x\,\omega(t,x)\,dx.
\]
The terminology ``center of vorticity'' is sometimes preferable to
``center of mass,'' since the vorticity need not be nonnegative.

\begin{myproposition}{}{cons1}
Let \(\omega_0\) be a smooth, compactly supported initial vorticity
with
\[
m(0)\neq0.
\]
Then, for every \(t\geqslant0\),
\[
m(t)=m(0)
\qquad\text{and}\qquad
\overline{x}(t)=\overline{x}(0).
\]
\end{myproposition}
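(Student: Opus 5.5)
The plan is to differentiate the two quantities in time and use the transport structure of the vorticity equation together with the antisymmetry of the Biot--Savart kernel. Since \(\omega_0\) is smooth and compactly supported, the solution is smooth, and \(\omega(t)\) stays compactly supported for all times: it is transported by the flow \(X(t,\cdot)\) via \eqref{vorticity-transport-flow}, and \(v\) is bounded, so the support moves at finite speed. This justifies differentiating under the integral sign and integrating by parts with no boundary terms.

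\textbf{Conservation of \(m(t)\).} Using \eqref{vorts1} and \(\operatorname{div}v=0\), we can write \(v\cdot\nabla\omega=\operatorname{div}(v\,\omega)\). Then
\[
m'(t)=-\int_{\RR^2}\operatorname{div}(v\omega)\,dx=0 .
\]
Alternatively, one can simply invoke measure preservation of \(X(t,\cdot)\) in the Casimir identity with \(\beta(s)=s\).

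\textbf{Conservation of \(\overline x(t)\).} Since \(m\) is constant and nonzero, it suffices to show that \(\int x\,\omega\,dx\) is constant. Integrating by parts,
\[
\frac{d}{dt}\int_{\RR^2}x\,\omega\,dx
=-\int_{\RR^2}x\,\operatorname{div}(v\omega)\,dx
=\int_{\RR^2}v(t,x)\,\omega(t,x)\,dx .
\]
Now insert the Biot--Savart representation \eqref{Biot-Savart-real}:
\[
\int_{\RR^2} v\,\omega\,dx=\frac1{2\pi}\iint \frac{(x-y)^\perp}{|x-y|^2}\,\omega(x)\,\omega(y)\,dy\,dx .
\]
The kernel is odd under the exchange \(x\leftrightarrow y\), while \(\omega(x)\omega(y)\) is symmetric. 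Swapping the variables via Fubini therefore shows that the integral equals its own negative, so it vanishes.

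\textbf{Main obstacle.} The only delicate point is the legitimacy of Fubini and of the symmetrization. The kernel \(|x-y|^{-1}\) is locally integrable in \(\RR^2\), and \(\omega\in L^1\cap L^\infty\) with compact support. Hence \(\iint |x-y|^{-1}|\omega(x)||\omega(y)|\,dx\,dy<\infty\), which follows from the bound \(\sup_x\int|x-y|^{-1}|\omega(y)|\,dy\le C(\|\omega\|_{L^1}+\|\omega\|_{L^\infty})\). With this absolute integrability in hand, the exchange of variables is justified and the conclusion \(\overline x(t)=\overline x(0)\) follows.
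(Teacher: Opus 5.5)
Your proposal is correct and follows essentially the same route as the paper. The conservative form $\partial_t\omega+\operatorname{div}(v\omega)=0$ gives $m'(t)=0$, and integrating by parts reduces the first moment to $\int v\,\omega\,dx$, which vanishes by the oddness of the Biot--Savart kernel. The extra justifications you supply are sound and are left implicit in the paper: finite-speed propagation of the support, and the bound $\iint |x-y|^{-1}|\omega(x)||\omega(y)|\,dx\,dy<\infty$ that legitimizes Fubini and the symmetrization.
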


\begin{proof}
Since \(\operatorname{div}v=0\), the vorticity equation may be written
in conservative form as
\[
\partial_t\omega+\operatorname{div}(v\omega)=0.
\]
Integrating over \(\RR^2\), and using the spatial decay of the
solution, yields
\[
\frac{d}{dt}\int_{\RR^2}\omega(t,x)\,dx=0.
\]
Hence
\[
m(t)=m(0).
\]
Equivalently, if \(X(t,\cdot)\) denotes the flow generated by \(v\),
then
\[
\omega(t,x)
=
\omega_0\bigl(X^{-1}(t,x)\bigr).
\]
Because the flow is measure-preserving, the total vorticity is
conserved.
It remains to prove the conservation of the first moment. For
\(j\in\{1,2\}\), define
\[
f_j(t)
=
\int_{\RR^2}x_j\omega(t,x)\,dx.
\]
Using the vorticity equation and integrating by parts, we find
\[
\begin{aligned}
f_j'(t)
&=
-\int_{\RR^2}x_jv\cdot\nabla\omega\,dx\\
&=
\int_{\RR^2}v^j\omega\,dx.
\end{aligned}
\]
We claim that
\[
\int_{\RR^2}v^j\omega\,dx=0,
\qquad j=1,2.
\]
 Indeed,
writing
\[
v(x)
=
\int_{\RR^2}K(x-y)\omega(y)\,dy,
\qquad
K(z)=\frac{1}{2\pi}\frac{z^\perp}{|z|^2},
\]
one has
\[
\begin{aligned}
\int_{\RR^2}v(x)\omega(x)\,dx
&=
\iint_{\RR^2\times\RR^2}
K(x-y)\omega(x)\omega(y)\,dx\,dy\\
&=0,
\end{aligned}
\]
because \(K(y-x)=-K(x-y)\).
Therefore
\[
f_j'(t)=0,
\qquad j=1,2,
\]
and hence
\[
\int_{\RR^2}x\,\omega(t,x)\,dx
=
\int_{\RR^2}x\,\omega_0(x)\,dx.
\]
Combining this identity with the conservation of \(m(t)\) gives
\[
\overline{x}(t)=\overline{x}(0).
\]
This ends the proof.
\end{proof}
The notion of a rotating vortex patch introduced above still allows, a priori,
for arbitrary centers of rotation and time-dependent angular velocities.
However, the Euler dynamics is much more rigid. The conservation laws of the
flow, together with the geometric structure of rigid rotations, impose strong
constraints on any rotating solution. In particular, the center of rotation
cannot be chosen freely: it is uniquely determined by the conserved first
moment of the vorticity. Moreover, the angular speed of rotation cannot vary
with time, except in the trivial case of the Rankine vortex. Thus every
nontrivial rotating vortex patch performs a \emph{uniform} rotation about its
center of mass. These observations considerably simplify the analysis of
rotating patches, reducing the problem to the determination of a profile
rotating with a constant angular velocity. This rigidity is summarized in the following proposition.

\begin{myproposition}{}{center-mass}
Let \(D\subset\RR^2\) be a bounded non radial simply connected domain with
\(C^1\) boundary, and let \(\omega_0=\chi_D\). Assume that the
corresponding vortex patch   undergoes a nontrivial rigid rotation about
a fixed point \(x_0\), namely
\[
D_t=\mathcal R_{x_0,\theta(t)}(D),
\qquad \theta\in C^1(\RR_+).
\]
 Then the following holds.
\begin{enumerate}
\item The point \(x_0\) coincides with the center of mass of \(D\):
\[
x_0=\frac{1}{|D|}\int_Dx\,dx.
\]
\item The angular velocity is constant. More precisely, there exist
constants \(\Omega,\theta_0\in\RR\) such that
\[
\theta(t)=\Omega t+\theta_0,
\qquad t\geqslant 0.
\]
\end{enumerate}
\end{myproposition}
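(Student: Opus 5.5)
For part (1) I will use the conservation of the center of vorticity in Proposition~\ref{prop:cons1}. That proposition is stated for smooth data, but the argument extends to patches. The flow is measure preserving, so $|D_t|=|D|$. The identity $f_j'=\int v^j\omega\,dx=0$ holds for Yudovich solutions as well, since $v$ is bounded and continuous and the kernel $K$ is antisymmetric. One can justify this either by mollifying $\omega_0$ and using stability of Yudovich solutions in $L^1$, or directly through the weak formulation tested against $x_j$ times a cutoff.

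Writing $\overline{x}_D=\frac1{|D|}\int_D x\,dx$, a change of variables in $D_t=\mathcal R_{x_0,\theta(t)}(D)$ gives
\[
\overline{x}_{D_t}=x_0+Q_{\theta(t)}\bigl(\overline{x}_D-x_0\bigr).
\]
Conservation says this equals $\overline{x}_D$ for all $t$, so $(Q_{\theta(t)}-I)(\overline{x}_D-x_0)=0$. The rotation is nontrivial, so $\theta$ is nonconstant, and by continuity there is some $t$ with $\theta(t)\notin 2\pi\ZZ$. For such $t$ the matrix $Q_{\theta(t)}-I$ is invertible, which forces $x_0=\overline{x}_D$.

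For part (2), translate so that $x_0=0$ and pass to the boundary. The boundary is transported by the flow, so the normal velocity of $\partial D_t$ equals $v(t)\cdot n$. The normal velocity of the rigidly rotating boundary is $\theta'(t)\,x^\perp\cdot n$. The velocity field rotates with the patch: by rotation covariance of Biot--Savart, $v(t,x)=Q_{\theta(t)}v_0(Q_{-\theta(t)}x)$, where $v_0$ is the velocity of $\chi_D$. Pulling back to $\partial D$ then gives, for every $t$ and every $x\in\partial D$,
\[
v_0(x)\cdot n(x)=\theta'(t)\,x^\perp\cdot n(x).
\]
The left side does not depend on $t$. If $x^\perp\cdot n$ is not identically zero on $\partial D$, evaluating at a point where it is nonzero shows that $\theta'(t)$ is a constant $\Omega$, and integrating gives $\theta(t)=\Omega t+\theta_0$.

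The main obstacle is to rule out $x^\perp\cdot n\equiv0$ on $\partial D$ and to make the boundary identity rigorous. If $x^\perp\cdot n\equiv0$, the tangent at each boundary point is radial-orthogonal, meaning $\partial D$ is tangent to the circles centered at $0$. Then $|x|^2$ is constant along the $C^1$ curve $\partial D$, so $\partial D$ is a circle about $0$. Since $D$ is simply connected, $D$ is a disk, which contradicts non-radiality.

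For the rigorous derivation I would parametrize $\partial D$ by $\gamma(s)$. The flow $X(t,\gamma(s))$ lies on $\partial D_t=Q_{\theta(t)}\partial D$, so it can be written $Q_{\theta(t)}\gamma(\sigma(t,s))$. Differentiating in $t$ is justified because $\partial D_t$ is $C^1$ and, by Chemin's result, the velocity is Lipschitz for $C^{1+\alpha}$ patches; alternatively one takes the normal component directly from the weak formulation. Differentiating yields $v\cdot n=\theta'\,(Q_\theta\gamma)^\perp\cdot n$, since the tangential term drops out.
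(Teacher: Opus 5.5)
Your proposal is correct and follows the paper's proof essentially step by step: for part (1), conservation of the center of vorticity combined with invertibility of $Q_{\theta(t)}-\mathrm{Id}$; for part (2), the rotated kinematic identity $v_0\cdot n=\theta'(t)\,x^\perp\cdot n$ on $\partial D$, where the degenerate case $\frac{d}{ds}|x|^2\equiv0$ forces a disk centered at $x_0$. The only deviations are minor: you rule out $\theta(t)\in2\pi\ZZ$ for all $t$ directly from continuity and nonconstancy of $\theta$, whereas the paper invokes rigidity of stationary patches, and you are more careful than the paper in noting that Proposition~\ref{prop:cons1} and the kinematic condition need justification for patches (for the latter, continuity of $v$ suffices, so the appeal to Chemin's $C^{1+\alpha}$ theory is unnecessary under the $C^1$ hypothesis).
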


\begin{proof}
Let
\[
X(t)=\frac{1}{|D_t|}\int_{D_t}x\,dx
\]
denote the center of mass of the patch.  By Proposition
\ref{prop:cons1}, we infer
\[
\overline{x}(t)=\overline{x}(0),
\qquad t\geqslant 0.
\]
 We may absorb the initial phase into the
parametrization and assume that \(\theta(0)=0\). Using
\[
D_t=\mathcal R_{x_0,\theta(t)}(D),\quad \hbox{with}\quad \theta(0)=0
\]
and performing a change of variables, we obtain
\[
\begin{aligned}
\overline{x}(t)
&=\frac{1}{|D|}\int_D
\bigl(x_0+Q_{\theta(t)}(x-x_0)\bigr)\,dx\\
&=x_0+Q_{\theta(t)}\bigl(\overline{x}(0)-x_0\bigr).
\end{aligned}
\]
Consequently,
\[
\bigl(Q_{\theta(t)}-\textnormal{Id}\bigr)\bigl(\overline{x}(0)-x_0\bigr)=0.
\]
We distinguish two cases. Suppose first that there exists $t>0$ such that $\theta(t)\notin 2\pi\ZZ$. For such a time, the matrix $Q_{\theta(t)}-\textnormal{Id}$ is invertible, and therefore
\[
\overline{x}(0)=x_0.
\]
On the other hand, if $\theta(t)\in 2\pi\ZZ$ for every $t\geqslant0$, then
\[
D_t=D,\qquad t\geqslant0,
\]
and hence the patch is stationary. By Fraenkel's rigidity result, see Theorem \ref{thm:Rigidity-Gom-Yap}, the domain $D$ must then be a disk, which is a contradiction.
This proves the first assertion.\\
We now prove that the angular velocity is constant. After translating
the coordinates, we may assume without loss of generality that
\(x_0=0, \theta(0)=0\). 
Let
\(
s\longmapsto\gamma_0(s)
\)
be a \(C^1\) parametrization of \(\partial D\). Then
\[
\gamma_t(s)=e^{i\theta(t)}\gamma_0(s)
\]
parametrizes \(\partial D_t\). The kinematic boundary condition states
that the difference between the velocity of the parametrization and
the fluid velocity is tangent to the boundary, see Section \ref{sec-kinematic}. In complex notation,
this reads
\[
\operatorname{Im}\left(
\bigl(\partial_t\gamma_t(s)-v(t,\gamma_t(s))\bigr)
\overline{\partial_s\gamma_t(s)}
\right)=0.
\]
Using the rotational covariance of the Biot--Savart law, we have
\[
v\bigl(t,e^{i\theta(t)}z\bigr)
=e^{i\theta(t)}v_0(z),
\]
where \(v_0\) is the velocity generated by \(\chi_D\). Moreover,
\[
\partial_t\gamma_t
=i\dot\theta(t)e^{i\theta(t)}\gamma_0,
\qquad
\partial_s\gamma_t
=e^{i\theta(t)}\gamma_0'.
\]
Substituting these identities into the boundary equation gives
\[
\dot\theta(t)
\operatorname{Re}\left(
\gamma_0(s)\overline{\gamma_0'(s)}
\right)
=
\operatorname{Im}\left(
v_0(\gamma_0(s))
\overline{\gamma_0'(s)}
\right).
\]
Equivalently,
\[
\tfrac{\dot\theta(t)}{2}
\tfrac{d}{ds}|\gamma_0(s)|^2
=
\operatorname{Im}\left(
v_0(\gamma_0(s))
\overline{\gamma_0'(s)}
\right).
\]
The right-hand side is independent of \(t\). If there exists
\(s_0\) such that
\[
\frac{d}{ds}|\gamma_0(s_0)|^2\neq0,
\]
then
\[
\dot\theta(t)
=
2\,
\frac{
\operatorname{Im}\left(
v_0(\gamma_0(s_0))
\overline{\gamma_0'(s_0)}
\right)}
{
\displaystyle\frac{d}{ds}|\gamma_0(s_0)|^2
},
\]
which is independent of \(t\). Thus \(\dot\theta(t)=\Omega\) for
some constant \(\Omega\), and hence
\[
\theta(t)=\Omega t+\theta_0.
\]
It remains to consider the case
\[
\tfrac{d}{ds}|\gamma_0(s)|^2=0
\qquad\text{for every }s.
\]
Then \(|\gamma_0(s)|\) is constant on \(\partial D\). Since \(D\) is
simply connected and its boundary is a Jordan curve, \(D\) must be a
disk centered at the origin. This is precisely the Rankine vortex,
which has been excluded by assumption. The proof is complete.
\end{proof}

\section{Contour dynamics equation and rotating patches}\label{sec-kinematic}

The evolution of a vortex patch may be described entirely through the
motion of its boundary. Indeed, since the vorticity is transported by
the Euler flow, the boundary of the patch is a material curve and
therefore moves with the fluid velocity. However, a parametrization of
the boundary is not uniquely determined: one may freely add a
tangential velocity without changing the underlying curve. Thus, the
geometrically relevant quantity is only the normal component of the
boundary velocity. The purpose of this section is to formulate this
kinematic condition in a parametrization-independent form and then
specialize it to rigidly rotating patches.

Let \(D_t\subset\RR^2\) be a simply connected vortex patch whose
boundary remains sufficiently smooth for all times under
consideration. Consider two proper parametrizations of \(\partial D_t\),
\[
\alpha\longmapsto z(\alpha,t),
\qquad
\beta\longmapsto\eta(\beta,t),
\]
where the parameter intervals are understood with their endpoints
identified. Assume that both parametrizations are continuously
differentiable with respect to the spatial parameter and time. Since
they describe the same curve, there exists a time-dependent change of
parameter \(\alpha=\alpha(\beta,t)\) such that
\[
\eta(\beta,t)=z(\alpha(\beta,t),t).
\]
Differentiating with respect to time gives
\[
\partial_t\eta(\beta,t)
=
\partial_\alpha z(\alpha,t)\,\partial_t\alpha(\beta,t)
+
\partial_tz(\alpha,t).
\]
The vector \(\partial_\alpha z(\alpha,t)\) is tangent to
\(\partial D_t\). Hence, taking the scalar product with the exterior
unit normal \(n\) at the common point
\[
z(\alpha,t)=\eta(\beta,t),
\]
we obtain
\begin{equation}\label{boundary}
\partial_t\eta(\beta,t)\cdot n
=
\partial_tz(\alpha,t)\cdot n.
\end{equation}
Therefore, the normal velocity of the boundary is independent of the
chosen parametrization. By contrast, the tangential component depends
on the parametrization and has no effect on the geometry of the
evolving patch.
\\
Since \(\partial D_t\) is transported by the Euler flow, its normal
velocity must coincide with the normal component of the fluid
velocity. Consequently,
\begin{equation}\label{vortex}
\partial_tz(\alpha,t)\cdot n
=
v(z(\alpha,t),t)\cdot n.
\end{equation}
This is the kinematic equation governing the motion of the patch
boundary. Equivalently,
\[
\bigl(\partial_tz(\alpha,t)-v(z(\alpha,t),t)\bigr)\cdot n=0,
\]
which expresses the fact that the difference between the velocity of
the chosen parametrization and the fluid velocity is tangent to the
boundary.
We next rewrite this condition in terms of the stream function. For a
vortex patch of unit intensity $\omega(t)= \chi_{D_t}$, and using \eqref{Biot-Savart-real} we get
\begin{equation}\label{stream}
\psi(z,t)
=
\frac{1}{2\pi}
\int_{D_t}\log|z-\zeta|\,d\zeta
=
\left(\frac{1}{2\pi}\log|\cdot|*\chi_{D_t}\right)(z).
\end{equation}
and 
\[
v(z,t)
=
\frac{i}{2\pi}
\int_{D_t}\frac{1}{\overline{z-\zeta}}\,d\zeta
=
\frac{i}{2\pi}
\left(\frac{1}{\overline z}*\chi_{D_t}\right)(z).
\]
Let \(\tau\) denote the positively oriented unit tangent vector to
\(\partial D_t\), and let \(n\) be the exterior unit normal. With the
convention above, one has
\[
n=-\tau^\perp.
\]
Therefore,
\[
v\cdot n
=
\nabla^\perp\psi\cdot n
=
-\nabla\psi\cdot\tau.
\]
If \(s\) denotes the arc-length parameter along \(\partial D_t\), then
\[
\nabla\psi\cdot\tau=\tfrac{d\psi}{ds},
\]
and hence
\[
v(z(\alpha,t),t)\cdot n
=
-\tfrac{d}{ds}\psi(z(\alpha,t),t).
\]
Substituting this identity into \eqref{vortex}, we obtain
\begin{equation}\label{vortex2}
\tfrac{d}{ds}\psi(z(\alpha,t),t)
=
-\partial_tz(\alpha,t)\cdot n.
\end{equation}
Equation \eqref{vortex2} is the contour-dynamics formulation that will
be used below. It involves only the normal component of the boundary
velocity and is therefore invariant under time-dependent
reparametrizations of the curve. In the case of a rigidly rotating
patch, the left-hand side can be compared with the normal velocity
generated by the rotation, leading to a stationary equation in the
co-rotating frame.

We now specialize the contour dynamics equation to the class of
rigidly rotating vortex patches, also known as \emph{V-states}. The
time dependence of such solutions is entirely described by a uniform
rotation of the domain, allowing the free-boundary evolution to be
reduced to a stationary equation on the initial boundary.\\
Assume that the vortex patch rotates with constant angular velocity
\(\Omega\) about its center of mass. By Proposition
\ref{prop:center-mass}, after a translation we may assume that the center
of mass coincides with the origin. The evolution of the patch is then
given by
\[
D_t=e^{i\Omega t}D,
\]
where \(D\subset\CC\) is a fixed simply connected domain. Let
\(\alpha\mapsto z(\alpha)\) be a proper \(C^1\)-parametrization of
\(\partial D\). Then
\[
z(\alpha,t)=e^{i\Omega t}z(\alpha)
\]
is a parametrization of \(\partial D_t\), and
\[
\partial_tz(\alpha,t)
=
i\Omega e^{i\Omega t}z(\alpha)
=
i\Omega z(\alpha,t).
\]
Substituting this identity into the kinematic equation
\eqref{vortex2} yields
\begin{equation}\label{rvortex}
\frac{d}{ds}\psi(z(\alpha,t),t)
=
-\partial_tz(\alpha,t)\cdot n
=
-i\Omega z(\alpha,t)\cdot n
=
\Omega z(\alpha,t)\cdot\tau.
\end{equation}

Since rotations preserve the Euclidean scalar product, we have
\[
z(\alpha,t)\cdot\tau(z(\alpha,t))
=
z(\alpha)\cdot\tau(z(\alpha)).
\]
Choosing the arc-length parametrization
\(\alpha=s\), we obtain
\[
\frac{d}{ds}|z(s)|^2
=
2\,z(s)\cdot\tau(s).
\]
Hence \eqref{rvortex} becomes
\begin{equation}\label{rvortex01}
\frac{d}{ds}\psi(z(s,t),t)
=
\frac{\Omega}{2}
\frac{d}{ds}|z(s)|^2.
\end{equation}
Next, we observe that the stream function is invariant in the
co-rotating frame. Indeed, using the representation
\eqref{stream} and performing the change of variables
\(y=e^{i\Omega t}\xi\), we obtain
\[
\begin{aligned}
\psi(z(s,t),t)
&=
\frac{1}{2\pi}
\int_{D_t}
\log|z(s,t)-y|\,dA(y)\\
&=
\frac{1}{2\pi}
\int_D
\log|z(s)-\xi|\,dA(\xi)
=
\psi(z(s)),
\end{aligned}
\]
where
\[
\psi(z)
=
\frac{1}{2\pi}
\int_D
\log|z-y|\,dA(y).
\]
Consequently,
\begin{equation}\label{rvortex02}
\tfrac{d}{ds}\psi(z(s))
=
\tfrac{\Omega}{2}
\tfrac{d}{ds}|z(s)|^2.
\end{equation}
Integrating along the boundary yields
\begin{equation}\label{rvortex3}
\psi(z)-\tfrac{\Omega}{2}|z|^2
=
\mu,
\qquad
\forall z\in\partial D,
\end{equation}
for some constant \(\mu\in\RR\).
Equation \eqref{rvortex3} is the classical boundary formulation of
the V-states equation: the effective potential
\(\psi-\frac{\Omega}{2}|z|^2\) is constant along the boundary of the
patch.
\\
Finally, recalling that
\[
v=\nabla^\perp\psi,
\]
equation \eqref{rvortex} is equivalently written as
\begin{equation}\label{master0}
\bigl(v(x)-\Omega x^\perp\bigr)\cdot n(x)=0,
\qquad
x\in\partial D,
\end{equation}
where \(n(x)\) denotes the outward unit normal vector to
\(\partial D\). Thus, in the rotating frame, the relative velocity
has no normal component on the boundary, which is precisely the
kinematic characterization of a rotating vortex patch.

\chapter{Rotating patches: Reformulation via Cauchy integrals}
{\it



In this  chapter we review the
Cauchy--Pompeiu formula and its relation to the Cauchy transform of a
domain. We then derive an  equivalent complex formulation of the
rotating-patch equation in terms of boundary Cauchy integrals.  Finally, we illustrate the strength of
this approach by recovering the classical Rankine vortices and
Kirchhoff ellipses through explicit computations of their Cauchy
transforms. We
next discuss the inverse problem associated with the Cauchy transform
and establish rigidity results for domains whose Cauchy transform has
a prescribed algebraic form.
}

\section{Cauchy--Pompeiu formula}
The velocity field generated by a vortex patch is naturally expressed
through the Cauchy transform of the patch. The purpose of this section
is to derive a boundary integral representation of this transform by
means of the Cauchy--Pompeiu formula. This reformulation allows the
rotating vortex equation to be written entirely in terms of boundary
quantities and provides the starting point for the contour dynamics
formulation.
\\
Let $D\subset\CC$ be a finitely connected domain whose boundary
$\Gamma=\partial D$ consists of finitely many positively oriented
$C^1$ Jordan curves. If $\varphi$ is of class
$C^1$ on an open neighborhood of $\overline D$, then the
Cauchy--Pompeiu formula reads
\begin{equation}\label{GC}
\varphi(z)\chi_D(z)
=
\frac{1}{2\pi i}
\int_{\Gamma}
\frac{\varphi(\xi)}{\xi-z}\,d\xi
-
\frac{1}{\pi}
\int_D
\frac{\partial\varphi}{\partial\overline{\xi}}(\xi)
\frac{1}{\xi-z}\,dA(\xi),
\qquad z\in\CC.
\end{equation}
When $z\in\partial D$, the boundary integral is understood as a Cauchy principal value.
We shall apply this identity to the function
$\varphi(z)=\overline z$, whose $\bar\partial$-derivative satisfies
\[
\partial_{\overline z}\overline z=1.
\]
Substituting this choice into \eqref{GC} yields
\begin{equation}\label{GC01}
\overline z\,\chi_D(z)
=
\frac{1}{2\pi i}
\int_{\Gamma}
\frac{\overline{\xi}}{\xi-z}\,d\xi
+
\mathcal C(\chi_D)(z),
\qquad z\in\CC,
\end{equation}
where
\begin{equation}\label{Cauchy}
\mathcal C(\chi_D)(z)
=
\frac{1}{\pi}
\int_D
\frac{1}{z-\xi}\,dA(\xi),
\qquad z\in\CC,
\end{equation}
denotes the Cauchy transform of the domain $D$.
We now use this identity to reformulate the rotating vortex equation.
Recall that the velocity field is related to the stream function by
\[
\tfrac{i}{2}\,\overline{v(z)}
=
\partial_z\psi(z)
=
\tfrac14\,\mathcal C(\chi_D)(z).
\]
Taking the interior boundary limit in \eqref{GC01}, we obtain
\begin{equation}\label{Cauchy-1}
\mathcal C(\chi_D)(z)
=
\fint_{\partial D}
\frac{\overline z-\overline\xi}{\xi-z}\,d\xi,
\qquad z\in\CC,
\end{equation}
where we use the notation
\[
\fint_{\partial D}
:=
\frac{1}{2\pi i}
\int_{\partial D}.
\]
Consequently,
\begin{equation}\label{BoundaryVelocity}
4\partial_z\psi(z)
=
\fint_{\partial D}
\frac{\overline z-\overline\xi}{\xi-z}\,d\xi,
\qquad z\in\CC.
\end{equation}
Substituting this representation into \eqref{master0}, we arrive at the
following complex formulation of the rotating vortex equation:
\begin{equation}\label{Master1-0}
F(\Omega,D)(z)
:=
\operatorname{Re}
\left\{
\bigl(\mathcal C(\chi_D)(z)-2\Omega\,\overline z\bigr)
\tau(z)
\right\}
=0,
\qquad z\in\partial D,
\end{equation}
where $\tau(z)$ denotes the positively oriented unit tangent vector to
$\partial D$ at the point $z$.

\section{ The Cauchy transform and the Plemelj formulas}

The Cauchy transform provides a convenient complex representation of
the velocity field associated with a vortex patch. The aim of this
section is to relate it to the boundary Cauchy integrals through the
Plemelj--Sokhotski formulas. Besides yielding a boundary formulation
of the rotating vortex equation, this representation is particularly
useful for explicit computations in several important examples.
Recall from \eqref{Cauchy} that the Cauchy transform of a bounded domain $D\subset\CC$ is
defined by
\[
\mathcal C(\chi_D)(z)
=
\frac1\pi
\int_D
\frac{1}{z-\xi}\,dA(\xi),
\qquad z\in\CC.
\]
It is well known that $\mathcal C(\chi_D)$ is continuous on
$\CC$, holomorphic on $\CC\setminus\overline D$, and
vanishes at infinity.
\\
Assume now that the boundary
$\Gamma=\partial D$ is of class $C^1$. We introduce the interior and
exterior Cauchy integrals
\[
\gamma^+(z)
=
\fint_\Gamma
\frac{\overline\xi}{\xi-z}\,d\xi,
\qquad z\in D,
\]
and
\[
\gamma^-(z)
=
\fint_\Gamma
\frac{\overline\xi}{\xi-z}\,d\xi,
\qquad
z\in\CC\setminus\overline D,
\]
where we recall the notation
\[
\fint_\Gamma
=
\frac{1}{2\pi i}
\int_\Gamma .
\]
Both functions $\gamma^\pm$ are holomorphic in their respective domains.
Moreover, they extend continuously up to the boundary. Indeed,
using the identity
\[
\frac{\overline\xi}{\xi-z}
=
\frac{\overline\xi-\overline z}{\xi-z}
+
\frac{\overline z}{\xi-z},
\]
together with the Cauchy integral formula, one obtains
\[
\gamma^\pm(z)
=
\fint_\Gamma
\frac{\overline\xi-\overline z}{\xi-z}\,d\xi
+
\overline z\,\chi_D(z),
\]
where the first integral defines a continuous function of $z$. For simplicity, we denote these
boundary traces by the same symbols $\gamma^\pm$.
\\
The Plemelj--Sokhotski formulas, see for instance\cite[p.~143]{V}, state that
\[
\gamma^+(z)
=
\operatorname{p.v.}
\fint_\Gamma
\frac{\overline\xi}{\xi-z}\,d\xi
+
\frac{\overline z}{2},
\qquad z\in\Gamma,
\]
and
\[
\gamma^-(z)
=
\operatorname{p.v.}
\fint_\Gamma
\frac{\overline\xi}{\xi-z}\,d\xi
-
\frac{\overline z}{2},
\qquad z\in\Gamma,
\]
where the integrals are understood in the Cauchy principal value
sense. Their difference yields the classical jump relation
\begin{equation}\label{plem}
\overline z
=
\gamma^+(z)-\gamma^-(z),
\qquad z\in\Gamma.
\end{equation}
The Cauchy transform can now be reconstructed from the boundary
Cauchy integrals. Indeed, by the  formula \eqref{GC01} we infer
\begin{equation}\label{Cauchydins}
\mathcal C(\chi_D)(z)
=
\overline z-\gamma^+(z),
\qquad
z\in\overline D,
\end{equation}
and
\begin{equation}\label{Cauchyfora}
\mathcal C(\chi_D)(z)
=
-\gamma^-(z),
\qquad
z\in\CC\setminus D.
\end{equation}
By the continuity of the Cauchy transform and the boundary traces of
$\gamma^\pm$, these identities remain valid on $\Gamma$.
Consequently, the boundary equation \eqref{Master1-0} can be written
solely in terms of the interior Cauchy integral
\begin{equation}\label{Master2-0}
F(\Omega,D)(z)
=
\operatorname{Re}
\Bigl\{
\bigl((1-2\Omega)\overline z-\gamma^+(z)\bigr)
\tau(z)
\Bigr\}
=
0,
\qquad
z\in\partial D,
\end{equation}
where $\tau(z)$ denotes the positively oriented unit tangent vector to
$\partial D$.

\section{Rankine vortices and Kirchhoff ellipses}
In this section, we compute explicitly the Cauchy transform
\eqref{Cauchy} for two fundamental examples of rotating vortex
patches: the Rankine vortex and the Kirchhoff ellipse. This enables us
to verify directly that both configurations satisfy the boundary
equation \eqref{Master2-0}, thereby recovering the classical result
that they evolve as uniformly rotating solutions of the Euler
equations.
\subsubsection{Rankine vortices}
We begin with the unit disk
\[
 D=\{z\in\CC:\ |z|<1\}.
\]
The Cauchy transform for an arbitrary disk follows immediately from
this case by translation and scaling.
 Since $\xi \overline{\xi} =1 $ on $\partial D=\mathbb{T},$ then
$$
\gamma^+(z)=\fint_{\mathbb{T}}\frac{1}{\xi(\xi-z)}d\xi=0, \quad  \,z\in
D\quad\hbox{and} \quad  \gamma^-(z)=-\frac1z, \quad  z\notin
\overline{D}.
$$
Therefore
\begin{equation*}
\mathcal{C}(\chi_D)(z)=
 \left\{
\begin{array}{ll}
\overline{z}, \quad z\in {D} \\
\frac1z, \quad z\notin {D}.
\end{array} \right.
\end{equation*}
 For a disk of center  $z_0$ and radius $r$  translating and
 dilating the previous result gives
     \begin{equation*}
\mathcal{C}(\chi_D)(z)=
 \left\{
\begin{array}{ll}
\overline{z}-\overline{z}_0, \quad  z\in D \\
\frac{r^2}{z-z_0}, \quad z \notin D.
\end{array} \right.
     \end{equation*}
Substituting this expression into the boundary equation
\eqref{Master1-0}, we find that
\[
F(\Omega,z)=0,
\qquad z\in\partial D,
\]
for every $\Omega\in\mathbb R$. Consequently, every disk is a rotating
vortex patch. Since a disk is radially symmetric, its shape is
preserved under rotations, and hence the angular velocity $\Omega$ is
arbitrary.

\subsubsection{Kirchhoff ellipses}
We now turn to the second classical family of rotating vortex patches,
namely the Kirchhoff ellipses. Our goal is to recover, through the
boundary formulation developed in the previous sections, Kirchhoff celebrated result asserting that ellipses constitute uniformly
rotating solutions of the Euler equations for a unique angular
velocity. This example further illustrates the effectiveness of the
Cauchy transform approach in deriving explicit rotating solutions.
\\
Let $D$ be the domain enclosed by a noncircular  ellipse
\[
\frac{x^2}{a^2}+\frac{y^2}{b^2}=1.
\]
Without loss of generality, we assume that
\[
a> b>0,
\]
and define
\[
c^2=a^2-b^2.
\]

\begin{mytheorem}{}{Kirchhoff}
The elliptical vortex patch $\chi_D$ rotates uniformly if and only if
\[
\Omega=\frac{ab}{(a+b)^2}\cdot
\]
\end{mytheorem}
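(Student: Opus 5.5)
We plan to compute the Cauchy transform of the ellipse explicitly through the interior Cauchy integral $\gamma^+$ and then substitute the result into the boundary equation \eqref{Master2-0}. The key observation is that on the ellipse $\Gamma=\partial D$, the conjugate $\overline z$ is an explicit algebraic function of $z$. Writing $z=a\cos\theta+ib\sin\theta$, one has $\overline z = \frac{a^2+b^2}{c^2}\,z-\frac{2ab}{c^2}\sqrt{z^2-c^2}$, where the branch of the square root is holomorphic in $\CC\setminus[-c,c]$ and behaves like $z$ at infinity. Equivalently, $\overline z=S(z)$ on $\Gamma$, with $S$ the Schwarz function. With this identity, the boundary integrals become contour integrals of holomorphic functions.

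The first step is to compute $\gamma^+$ and $\gamma^-$ from the jump relation \eqref{plem}. The function $z\mapsto \frac{a^2+b^2}{c^2}z$ is entire. The function $\sqrt{z^2-c^2}-z$ is holomorphic outside $\overline D$ (the focal segment lies inside $D$) and vanishes at infinity. Hence
\[
\gamma^+(z)=\frac{a^2+b^2}{c^2}z-\frac{2ab}{c^2}z=\frac{(a-b)^2}{c^2}\,z=\frac{a-b}{a+b}\,z,\qquad z\in \overline D,
\]
and
\[
\gamma^-(z)=-\frac{2ab}{c^2}\bigl(\sqrt{z^2-c^2}-z\bigr).
\]
To justify this, we check that these functions are holomorphic in the correct regions, that $\gamma^-$ vanishes at infinity, and that their difference on $\Gamma$ equals $\overline z$. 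Uniqueness of the Plemelj decomposition (Liouville's theorem) then identifies them. Using \eqref{Cauchydins}, we obtain $\mathcal C(\chi_D)(z)=\overline z-\frac{a-b}{a+b}z$ in $\overline D$.

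Next, we plug this into \eqref{Master2-0}. The condition becomes
\[
\operatorname{Re}\Bigl\{\bigl((1-2\Omega)\overline z-\tfrac{a-b}{a+b}z\bigr)\tau(z)\Bigr\}=0\quad\text{on }\Gamma.
\]
Parametrize by $\theta$, so that $\tau\propto z'(\theta)=-a\sin\theta+ib\cos\theta$. A direct computation gives
\[
\operatorname{Re}(\overline z z')=\tfrac12\frac{d}{d\theta}|z|^2=(b^2-a^2)\sin\theta\cos\theta,
\]
\[
\operatorname{Re}(z z')=-(a^2+b^2)\sin\theta\cos\theta.
\]
The equation therefore reduces to
\[
\sin\theta\cos\theta\,\Bigl[(1-2\Omega)(b^2-a^2)+\tfrac{a-b}{a+b}(a^2+b^2)\Bigr]=0
\]
for all $\theta$. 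Since $a\neq b$, the bracket must vanish. Dividing by $a^2-b^2$ gives
\[
2\Omega-1=-\frac{a^2+b^2}{(a+b)^2},
\]
that is,
\[
\Omega=\frac{ab}{(a+b)^2}.
\]
Both directions follow at once, because the factor $\sin\theta\cos\theta$ is not identically zero.

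The main obstacle is the first step: carefully establishing the Schwarz-function identity on $\Gamma$ and the branch choice for $\sqrt{z^2-c^2}$. We must confirm that it is single-valued and holomorphic off $[-c,c]\subset D$, with $\sqrt{z^2-c^2}=z+O(1/z)$ at infinity. We would verify this by using the Joukowski parametrization $z=\frac{a+b}{2}w+\frac{a-b}{2}w^{-1}$ with $|w|=1$. In this parametrization,
\[
\overline z=\frac{a+b}{2}w^{-1}+\frac{a-b}{2}w,
\]
and $\sqrt{z^2-c^2}=\frac{a+b}{2}w-\frac{a-b}{2}w^{-1}$. This also gives an alternative route: compute $\gamma^\pm$ directly by residues in the $w$-variable.
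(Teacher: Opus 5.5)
Your proposal is correct and follows essentially the same route as the paper. The paper also writes $\overline z=Qz+F(z)$ on the ellipse, with $Q=\frac{a-b}{a+b}$ and $F(z)=\frac{2ab}{z(1+\sqrt{1-c^2/z^2})}=\frac{2ab}{c^2}\bigl(z-\sqrt{z^2-c^2}\bigr)$, which is exactly your Schwarz-function splitting. It deduces $\gamma^+(z)=Qz$ in $D$ (by Cauchy's formula and a residue computation rather than by uniqueness of the jump decomposition, which is an immaterial difference) and substitutes into \eqref{Master2-0} to reach the same condition $(1-2\Omega)(b^2-a^2)+Q(a^2+b^2)=0$.

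One small slip: the jump relation $\overline z=\gamma^+-\gamma^-$ forces $\gamma^-(z)=+\frac{2ab}{c^2}\bigl(\sqrt{z^2-c^2}-z\bigr)=-F(z)$, the opposite sign to what you wrote. This is harmless, since only $\gamma^+$ enters the boundary equation.
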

\begin{proof}
The equation of the ellipse can be rewritten in complex coordinates
as
\[
\overline z=Qz+F(z),
\qquad z\in\partial D,
\]
where
\[
Q=\frac{a-b}{a+b},
\qquad
F(z)=\frac{2ab}{z\left(1+\sqrt{1-\frac{c^2}{z^2}}\right)}.
\]
Notice that $F$ is holomorphic in $\CC\setminus[-c,c]$ and satisfies
\[
F(z)=O(|z|^{-1}),
\qquad |z|\to\infty.
\]
We first compute the interior Cauchy integral. By the decomposition
above,
\[
\gamma^+(z)
=
Q\fint_{\partial D}\frac{\xi}{\xi-z}\,d\xi
+
\fint_{\partial D}\frac{F(\xi)}{\xi-z}\,d\xi.
\]
Since $z\in D$, Cauchy's integral formula gives
\[
\fint_{\partial D}\frac{\xi}{\xi-z}\,d\xi=z.
\]
Moreover, the function
\[
\xi\longmapsto\frac{F(\xi)}{\xi-z}
\]
is holomorphic in the exterior of $\overline D$ and behaves like
$O(|\xi|^{-2})$ at infinity. Hence its contour integral vanishes,
which yields
\[
\gamma^+(z)=Qz,
\qquad z\in D.
\]
For later use, observe that if the ellipse has center $z_0$ and its
major axis forms an angle $\theta$ with the horizontal axis, then
\[
\gamma^+(z)
=
e^{-2i\theta}Q(z-z_0)+\overline{z_0},
\qquad z\in D.
\]
Next we compute the exterior Cauchy integral. Since
\[
\fint_{\partial D}\frac{\xi}{\xi-z}\,d\xi=0,
\qquad z\in\CC\setminus\overline D,
\]
we obtain
\[
\gamma^-(z)
=
\fint_{\partial D}\frac{F(\xi)}{\xi-z}\,d\xi.
\]

We claim that
\begin{equation}\label{Resid-the}
\fint_{\partial D}\frac{F(\xi)}{\xi-z}\,d\xi
=
-F(z),
\qquad
z\in\CC\setminus\overline D.
\end{equation}
To prove this, fix $R>|z|$ and consider the domain
\[
A_R=B(0,R)\setminus\overline D.
\]
The function
\[
\xi\longmapsto\frac{F(\xi)}{\xi-z}
\]
is holomorphic in $A_R$ except for the simple pole at $\xi=z$.
Applying the residue theorem and using the orientation
\[
\partial A_R=\partial D_R-\partial D,
\]
we obtain
\[
\fint_{\partial D_R}
\frac{F(\xi)}{\xi-z}\,d\xi
-
\fint_{\partial D}
\frac{F(\xi)}{\xi-z}\,d\xi
=
F(z).
\]
Since
\[
\left|
\fint_{\partial D_R}
\frac{F(\xi)}{\xi-z}\,d\xi
\right|
\leqslant
\frac{R}{R-|z|}
\sup_{|\xi|=R}|F(\xi)|,
\]
and $F(\xi)=O(|\xi|^{-1})$, we infer
\[
\lim_{R\to\infty}
\fint_{\partial D_R}
\frac{F(\xi)}{\xi-z}\,d\xi
=
0,
\]
which proves \eqref{Resid-the}. Consequently,
\[
\gamma^+(z)=Qz,
\qquad
z\in D,
\]
and
\[
\gamma^-(z)=-F(z),
\qquad
z\in\CC\setminus\overline D.
\]
Applying the representation formulas
\eqref{Cauchydins}--\eqref{Cauchyfora}, we obtain
\[
\mathcal C(\chi_D)(z)
=
\begin{cases}
\overline z-Qz,
&
z\in D,\\[2mm]
\displaystyle
\frac{2ab}
{z\left(1+\sqrt{1-\frac{c^2}{z^2}}\right)},
&
z\in\CC\setminus D.
\end{cases}
\]
A non-unit tangent vector to the ellipse is given by
\[
\tau(z)
=
-2a^2y+i\,2b^2x
=
i\Bigl((a^2+b^2)z+(b^2-a^2)\overline z\Bigr).
\]
Substituting the expression of the Cauchy transform into
\eqref{Master2-0}, we obtain
\[
F(\Omega,z)
=
-\textnormal{Im}\!\left\{
\Bigl((1-2\Omega)\overline z-Qz\Bigr)
\Bigl((a^2+b^2)z+(b^2-a^2)\overline z\Bigr)
\right\}.
\]
Expanding the product gives
\[
\begin{aligned}
\Bigl((1-2\Omega)\overline z-Qz\Bigr)
\Bigl((a^2+b^2)z+(b^2-a^2)\overline z\Bigr)
={}&
\Bigl((1-2\Omega)(a^2+b^2)+Q(a^2-b^2)\Bigr)|z|^2\\
&+(1-2\Omega)(b^2-a^2)\overline z^{\,2}
-Q(a^2+b^2)z^2.
\end{aligned}
\]
Since the first term is real,
\[
F(\Omega,z)
=
\textnormal{Im}\!\left\{
\Bigl((1-2\Omega)(b^2-a^2)
+Q(a^2+b^2)\Bigr)z^2
\right\}.
\]
Hence
\[
F(\Omega,\cdot)\equiv0
\quad\Longleftrightarrow\quad
(1-2\Omega)(b^2-a^2)+Q(a^2+b^2)=0.
\]
Using the expression of $Q$, this identity is equivalent to
\[
\Omega=\frac{ab}{(a+b)^2},
\]
which completes the proof.
\end{proof}

\section{An inverse characterization of ellipses}

The explicit computations carried out for the Rankine vortex and the
Kirchhoff ellipse naturally lead to the following inverse problem:
can one recover the geometry of a domain from the knowledge of its
interior Cauchy integral? In general, this inverse problem is not
uniquely solvable. However, the affine form of the interior Cauchy
integral turns out to be remarkably rigid. The next result shows that
if the interior Cauchy integral is an affine function, then the
underlying domain must necessarily be an ellipse. This provides a
characterization of Kirchhoff's ellipses solely in terms of their
Cauchy transform. This characterization was  established in \cite{HMV15}.
\begin{mytheorem}{}{Charac-ellip}
Let $\Gamma$ be a Jordan curve of class $C^1$ enclosing a bounded
domain $D$. Assume that there exist $Q\in \RR$ and $z_0\in \CC$ such
that
$$
\gamma^+(z)= \frac{1}{2\pi
i}\int_{\Gamma}\frac{\overline{\xi}}{\xi-z}d\xi=Q(z-z_0)+\overline{z_0},\quad
 z\in D.
$$
Then the curve $\Gamma$ is an ellipse of center $z_0$ with semi-axes
$a$ and $b$ satisfying
 \begin{equation*}
Q= \frac{a-b}{a+b}\cdot
     \end{equation*}
     \label{propinv}
\end{mytheorem}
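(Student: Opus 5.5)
Translating by $z_0$ (the affine form is preserved, since the Cauchy integral of $\overline{z_0}$ over $\Gamma$ gives $\overline{z_0}$ inside $D$), we reduce to the case $z_0=0$. The hypothesis then reads $\gamma^+(z)=Qz$ on $D$. The plan is to extract from the jump relation \eqref{plem} a holomorphic function outside $\overline D$, and show that it forces $\Gamma$ to be an ellipse.

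First, the exterior function $\gamma^-$ is holomorphic in $\CC\setminus\overline D$, extends continuously to $\Gamma$, and behaves like $-\frac{|D|/\pi}{z}+O(|z|^{-2})$ at infinity. Indeed, expanding $\frac{1}{\xi-z}=-\sum_k \xi^k z^{-k-1}$ gives the leading coefficient $-\fint_\Gamma\overline\xi\,d\xi=-|D|/\pi$ by Green's formula. The boundary traces are continuous, so \eqref{plem} gives
\[
\overline z=Qz-\gamma^-(z),\qquad z\in\Gamma .
\]
Thus $\Gamma$ is a curve on which $\overline z$ coincides with the boundary values of $Qz+G(z)$, where $G:=-\gamma^-$ is holomorphic outside $D$ and vanishes at infinity. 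In other words, the Schwarz function of $\Gamma$ extends holomorphically to the whole exterior with a simple pole at infinity.

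Next, identify the curve. Note that $|Q|<1$ is forced: the area computed via Green's formula gives $|D|=\frac{1}{2i}\int_\Gamma\overline z\,dz$, and comparing with the structure should yield this. Alternatively, for $|Q|\ge1$ derive a contradiction using the argument below. For $|Q|<1$, choose $a>b>0$ with $\frac{a-b}{a+b}=|Q|$ and $ab=|D|/\pi$, and rotate so that $Q\ge0$. Let $E$ be the ellipse with semi-axes $a,b$; by the computation in Theorem~\ref{thm:Kirchhoff}, $\overline z=Qz+F_E(z)$ on $\partial E$, with $F_E$ holomorphic off $[-c,c]$ and $F_E(z)\sim ab/z$.

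The main obstacle is proving uniqueness, namely that a Jordan domain whose exterior Schwarz function is $Qz+O(1/z)$ must be this ellipse. I would use the exterior conformal map $\Phi:\{|w|>1\}\to\CC\setminus\overline D$, $\Phi(w)=\lambda w+\sum_{n\ge0}a_nw^{-n}$. On $|w|=1$ we have $\overline{\Phi(w)}=\overline\lambda/w+\sum\overline{a_n}w^n$, while $Q\Phi(w)+G(\Phi(w))$ is holomorphic in $|w|>1$ and equals $Q\lambda w+O(1)$ at infinity. By continuity of $\Phi$ up to the boundary (Carathéodory, $\Gamma$ Jordan), the function $h(w)=\overline{\Phi(1/\overline w)}$, which is holomorphic in $|w|<1$ with a simple pole at $0$, matches a function holomorphic in $|w|>1$ with linear growth across the circle. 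By Morera or Painlevé's theorem these glue to a meromorphic function on the Riemann sphere with at most simple poles at $0$ and $\infty$. Hence $\overline\lambda/w+\sum\overline{a_n}w^n=\overline\lambda/w+\overline{a_0}+\overline{a_1}w$. This forces $a_n=0$ for $n\ge2$, so $\Phi(w)=\lambda w+a_0+a_1/w$, which parametrizes an ellipse of center $a_0$. The centering normalization gives $a_0=0$. Comparing the coefficient of $w$ gives $\overline{a_1}=Q\lambda$, so the semi-axes are $|\lambda|\pm|a_1|$ and $Q=\frac{|a_1|}{|\lambda|}=\frac{a-b}{a+b}$, with the sign and rotation of $Q$ absorbed as in the remark after the Kirchhoff computation. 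The same gluing argument with $|Q|\ge1$ yields $\Phi=\lambda w+a_1/w$ with $|a_1|\ge|\lambda|$, which is not univalent on $|w|>1$, a contradiction.
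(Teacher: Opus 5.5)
Your proof is correct in its main line, but it takes a genuinely different route from the paper.

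\textbf{The paper's route.} The paper never leaves the physical plane. From the jump relation it forms the quadratic combination $g=(2AQ-B)(z-z_0)\gamma^- - A(\gamma^-)^2$ with $A=Q$, $B=1+Q^2$, chosen so that the $(z-z_0)^2$ terms cancel. Then $g$ is holomorphic off $\overline D$, real on $\Gamma$, and tends to $\frac{1-Q^2}{\pi}|D|$ at infinity. The maximum principle for $\operatorname{Im} g$ makes $g$ constant, so $\Gamma$ lies on the conic $(1-Q)^2X^2+(1+Q)^2Y^2=\frac{1-Q^2}{\pi}|D|$. This needs neither the Riemann map, nor Carath\'eodory, nor a removability theorem. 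The cases $|Q|\geqslant1$ are read off from the sign of the right-hand side. The squaring trick, however, is tailored to degree one.

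\textbf{Your route.} You read the hypothesis as saying that the Schwarz function of $\Gamma$ is $Qz+G(z)$ with $G$ holomorphic in the exterior. You pull back by $\Phi$, reflect via $\overline{\Phi(1/\overline w)}$, glue across $\TT$ (Painlev\'e), and use Liouville to conclude that $\Phi$ is a Laurent polynomial of degree one. This costs more machinery but is more conceptual and scales. If $\gamma^+$ were a polynomial of degree $N$, the same gluing would give $\Phi(w)=\lambda w+\sum_{n=0}^{N}a_nw^{-n}$. That links directly to the finite-Laurent setting of Burbea's rigidity theorem in Chapter 4.

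\textbf{Points to tighten.}

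First, $a_0=0$ does not come from a ``centering normalization''. It comes from matching constant terms of the glued Laurent polynomial: $\overline{a_0}$ from inside and $Qa_0$ from outside. The resulting relation $\overline{a_0}=Qa_0$ forces $a_0=0$ once $|Q|<1$.

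Second, your exclusion of $|Q|\geqslant1$ fails as stated at $|Q|=1$. There, $\lambda w+Q\overline\lambda/w$ is a rotated Joukowski map, which \emph{is} univalent on $|w|>1$. The actual contradiction is that it maps $\TT$ two-to-one onto a segment, so $\Gamma$ would not be a Jordan curve. A cleaner fix, which also replaces your vague area remark, is to match the $w^{-1}$ coefficients. Since $G(z)=\frac{|D|}{\pi z}+O(z^{-2})$, you get $\overline\lambda=Qa_1+\frac{|D|}{\pi\lambda}$. With $a_1=Q\overline\lambda$ this gives $|D|=\pi|\lambda|^2(1-Q^2)$, which rules out $|Q|\geqslant1$ at once.

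Third, no rotation is needed, and the paragraph with the model ellipse $E$ can be dropped. Writing $s=t+\arg\lambda$, you get $\Phi(e^{it})=|\lambda|\bigl((1+Q)\cos s+i(1-Q)\sin s\bigr)$. This is an axis-parallel ellipse with horizontal semi-axis $a=|\lambda|(1+Q)$ and vertical semi-axis $b=|\lambda|(1-Q)$. So $Q=\frac{a-b}{a+b}$ holds with its sign, exactly as in the paper's labeling.
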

A remarkable consequence of the proposition is that necessarily
\[
|Q|<1.
\]
Indeed, the limiting cases $Q=\pm1$ would force the ellipse to
degenerate into a line segment.

\begin{proof}
By the Plemelj jump formula \eqref{plem},
\[
\overline z
=
Q(z-z_0)+\overline{z_0}-\gamma^-(z),
\qquad z\in\Gamma,
\]
where $\gamma^-$ is holomorphic in
$\CC_\infty\setminus\overline D$ and satisfies
\[
\gamma^-(z)=O(z^{-1}),
\qquad z\to\infty.
\]
Squaring the preceding identity and taking the product with
$z-z_0$ give
\[
(\overline z-\overline{z_0})^2+(z-z_0)^2
=
(1+Q^2)(z-z_0)^2
-2Q(z-z_0)\gamma^-(z)
+\bigl(\gamma^-(z)\bigr)^2,
\]
and
\[
|z-z_0|^2
=
Q(z-z_0)^2
-(z-z_0)\gamma^-(z).
\]
We now form a linear combination of these two identities in order to
eliminate the quadratic term $(z-z_0)^2$. Let $A,B\in\mathbb R$. Then
\[
-A\Bigl((\overline z-\overline{z_0})^2+(z-z_0)^2\Bigr)
+B|z-z_0|^2
=
\bigl(BQ-A(1+Q^2)\bigr)(z-z_0)^2
+g(z),
\]
where
\[
g(z)
=
(2AQ-B)(z-z_0)\gamma^-(z)
-A\bigl(\gamma^-(z)\bigr)^2.
\]
Choosing
\[
A=Q,
\qquad
B=1+Q^2,
\]
cancels the quadratic term and yields
\[
-Q\Bigl((\overline z-\overline{z_0})^2+(z-z_0)^2\Bigr)
+(1+Q^2)|z-z_0|^2
=
g(z),
\qquad z\in\Gamma.
\]
The function $g$ is  holomorphic on $\CC\backslash
\overline{D}$ and has a limit at infinity given by
\begin{eqnarray*}
\lim_{z\to\infty}g(z)&=&(2AQ-B)\lim_{z\to \infty}\frac{1}{2 \pi i}\int_{\Gamma}\frac{\overline{\xi} z}{\xi-z}d\xi\\
&=& (1-Q^2) \frac{1}{2 \pi i}\int_{\Gamma}{\overline{\xi} }\,d\xi \\
&=& \frac{1-Q^2}{\pi} |D|,
\end{eqnarray*}
where we applied Green-Stokes in the last identity. Notice that $g$
has a continuous extension up to the boundary $\Gamma$ and takes
real values on this set. Then the imaginary part   of $g$ is a
harmonic function on the exterior domain $\CC\backslash
\overline{D}$, continuous up to the boundary and satisfying
$$
 \hbox{Im } g(z)=0, \quad z\in \Gamma \quad\hbox{and}\quad \lim_{z\to
\infty}\hbox{Im } g(z)=0 .
$$
By the maximum principle we conclude that $\hbox{Im } g$ is
identically zero on $\mathbb{C} \setminus \overline{D}$. Thus the
holomorphic function $g$ is real on $ \mathbb{C} \setminus
\overline{D}$ and consequently must be constant. This means that
$$
-Q\Big((\overline{z}-\overline{z_0})^2+(z-{z_0})^2\Big)+(1+Q^2)|z-z_0|^2=\frac{1-Q^2}{\pi} |D|,\quad
z\in \Gamma\,.
$$
 Set $X=\hbox{Re}(z-z_0)$ and $ Y=\hbox{Im
}(z-z_0)$ then
$$
(1-Q)^2 X^2+(1+Q)^2 Y^2=\frac{1-Q^2}{\pi} |D|, \quad \hbox{on}\quad \Gamma.
$$
If $|Q|>1$  the equation is impossible. When $Q=\pm1$, the equation degenerates into a line segment,
contradicting the assumption that $\Gamma$ is a Jordan curve
 enclosing a bounded domain.
If $|Q|<1$, this is precisely the equation of an ellipse centered at
 $z_0$. In particular, if $Q\geq0$ and $a\geqslant b$ denote the major and minor
semi-axes, then
\[
a=\frac{\sqrt C}{1-Q},
\qquad
b=\frac{\sqrt C}{1+Q},
\]
and therefore
\[
Q=\frac{a-b}{a+b}.
\]
  The proof of the desired
result is complete.
\end{proof}
    \chapter{Conformal mapping and Faber polynomial formulation of V-States}
{\it This chapter reformulates the V-states equation for simply connected vortex patches in terms of the exterior Riemann conformal mapping. Using Faber polynomials, we derive an explicit representation of the Cauchy transform and obtain a boundary formulation adapted to the conformal geometry of the domain. As applications, we recover Kirchhoff’s theorem for rotating ellipses and prove a rigidity result showing that ellipses are the only rotating vortex patches whose exterior conformal mapping has a finite Laurent expansion. This latter result is due to \mbox{Burbea $\cite{Burbea81}$.}}

\section{Riemann conformal mapping and Faber polynomials}
The main difficulty in the study of rotating vortex patches stems from the fact that the boundary equation is posed on the unknown interface $\partial D$. A standard way to overcome this difficulty is to transfer the problem onto the fixed unit circle by means of the Riemann conformal mapping associated with the domain. This approach, initiated by Burbea \cite{Burbea82}, has become a fundamental tool in the analysis of V-states. Moreover, the geometry of the domain is naturally encoded by the coefficients of the conformal mapping and by the associated Faber polynomials, which will play a central role throughout this chapter.
By the Riemann Mapping Theorem, there exists a biholomorphic mapping
\[
\Phi:\CC\setminus\overline{\mathbb D}\longrightarrow\CC\setminus\overline D,
\]
where
\[
\mathbb D=\{z\in\CC:\ |z|<1\}.
\]
Moreover, $\Phi$ admits the Laurent expansion
\[
\Phi(z)=a\left(z+\sum_{n\ge0}\frac{a_n}{z^n}\right),
\qquad |z|>1,
\]
for some nonzero complex number $a$. After a rotation of the independent variable, we may assume without loss of generality that $a>0$.
If $\partial D$ is a Jordan curve, Carath\'eodory's theorem asserts that $\Phi$ extends continuously to the unit circle $\TT=\{z\in\CC:\ |z|=1\}$. Furthermore, this extension is absolutely continuous, so that $\Phi'(w)$ exists for almost every $w\in\TT$ and belongs to $L^1(\TT)$. If, in addition, $\partial D$ is of class $C^{1+\alpha}$, with $\alpha\in(0,1)$, then the Kellogg--Warschawski theorem ensures that the boundary extension satisfies
\[
\Phi\in C^{1+\alpha}(\TT).
\]
The Faber polynomials $(F_n)_{n\ge0}$ associated with the domain $D$ are defined by
\[
F_n(z)=\textnormal{polynomial part of }[\Phi^{-1}(z)]^n,
\]
that is, if
\[
[\Phi^{-1}(z)]^n
=
F_n(z)
+
\sum_{k=1}^\infty\frac{a_{n,k}}{z^k},
\]
then $F_n$ is obtained by retaining only the nonnegative powers of $z$. They form a sequence of polynomials adapted to the geometry of the domain and satisfy a number of remarkable identities. One of the most important is the classical generating formula due to Faber,
\begin{equation}\label{Fab1}
\frac{\xi\Phi'(\xi)}{\Phi(\xi)-z}
=
\sum_{n\geqslant 0}F_n(z)\xi^{-n},
\qquad |\xi|>1,\quad z\in\overline D.
\end{equation}
Each polynomial $F_n$ has degree $n$, and the first two are given by
\begin{equation}\label{Tot1}
F_0(z)=1,
\qquad
F_1(z)=\frac{z}{a}-a_0.
\end{equation}
The Faber polynomials satisfy the recurrence relation, see for instance \cite{Curtis},
\[
F_{n+1}(z)
=
F_1(z)F_n(z)
-\frac1a\sum_{k=1}^{n-1}a_kF_{n-k}(z)
-(n+1)\frac{a_n}{a},
\qquad n\geqslant 1.
\]
They also admit the integral representation
\begin{equation}\label{TR1}
F_n(z)
=
\fint_{|\xi|=r}
\xi^n
\frac{\Phi'(\xi)}{\Phi(\xi)-z}\,d\xi,
\qquad r>1,\quad z\in\overline D,
\end{equation}
which will be repeatedly used in the sequel.

\section{Conformal mapping reformulation and link with Faber polynomials}
The purpose of this section is to reformulate the V-states equation in terms of the conformal mapping and the associated Faber polynomials. The key observation is that the Cauchy transform, which governs the boundary dynamics, can be expressed explicitly through the Faber expansion of the conformal mapping. This leads to a remarkable reformulation of the nonlinear free-boundary problem as a boundary identity involving a holomorphic function determined solely by the coefficients of the conformal mapping.
Assume that $\partial D$ is of class $C^1$, and let
\[
\Phi:\TT\longrightarrow\partial D
\]
be the boundary parametrization induced by the exterior conformal mapping. Performing the change of variables $z=\Phi(w)$ and $\xi=\Phi(\tau)$ in the boundary equation \eqref{Master1-0}, together with the representation \eqref{Cauchy-1} of the Cauchy transform, we obtain the equivalent formulation
\begin{align}\label{rotsq12}
\textnormal{Im}\{G(\Omega,\Phi(w))\}=0,
\qquad
w\in\TT,
\end{align}
where
\[
G(\Omega,\Phi(w))
=
\left(
2\Omega\,\overline{\Phi(w)}
+
\fint_{\TT}
\frac{\overline{\Phi(\xi)}-\overline{\Phi(w)}}
{\Phi(\xi)-\Phi(w)}
\Phi'(\xi)\,d\xi
\right)
w\Phi'(w).
\]
The next theorem establishes the fundamental link between the V-states equation and the Faber polynomials. It shows that the boundary condition can be integrated and rewritten in terms of a holomorphic function whose coefficients are precisely those of the Faber expansion associated with the conformal mapping.
\begin{mytheorem}{}{Faber-Vstate}
    Let $D$ be a simply connected bounded domain such that $\partial D$ is of class $C^{1+\alpha}, \alpha\in(0,1).$ Then ${\bf{1}}_D$ is rotating around the origin with the angular velocity $\Omega$ if and only if 
    $$
\tfrac{2\Omega-1}{2} |z|^2+\textnormal{Re } H(z)= \mu, \,\forall\,z\in \partial D
$$where 
\begin{equation*}
F(z):=a\sum_{n\geqslant0}\overline{a_n} F_n(z), \quad H(z):=\int_{z_0}^zF(\xi)d\xi
\end{equation*}
and $H$ is a holomorphic primitive of $F.$
\label{Theom-form-Fab}
\end{mytheorem}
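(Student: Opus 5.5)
Our plan is to compute the interior Cauchy integral $\gamma^+$ explicitly in terms of the Faber polynomials, and then integrate the boundary equation \eqref{Master2-0} along $\partial D$. The identity to establish is
\[
\gamma^+(z)=\overline{F(z)}\ \ \text{(suitably interpreted)},\qquad\text{more precisely}\qquad \gamma^+(z)=\fint_{\partial D}\frac{\overline\xi}{\xi-z}\,d\xi=\sum_{n\ge0}\overline{\,\cdot\,}\cdots ,
\]
and it is derived from the generating formula \eqref{Fab1}.

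\emph{Step 1: the interior Cauchy integral.} For $z\in D$ we change variables $\xi=\Phi(\tau)$, $\tau\in\TT$, and get
\[
\gamma^+(z)=\fint_{\TT}\overline{\Phi(\tau)}\,\frac{\Phi'(\tau)}{\Phi(\tau)-z}\,d\tau .
\]
On $\TT$ we have $\overline\tau=1/\tau$. Hence
\[
\overline{\Phi(\tau)}=a\Big(\tau^{-1}+\sum_{n\ge0}\overline{a_n}\,\tau^{n}\Big).
\]
We insert this into the integral and use the representation \eqref{TR1}, pushed to $r=1$ by the $C^{1+\alpha}$ regularity of $\Phi$ and dominated convergence. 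Each term $\tau^n$ then yields $F_n(z)$. The term $\tau^{-1}$ yields
\[
\fint_{\TT}\tau^{-1}\frac{\Phi'(\tau)}{\Phi(\tau)-z}\,d\tau ,
\]
which vanishes. Indeed, $\tau^{-1}\Phi'(\tau)/(\Phi(\tau)-z)$ is holomorphic in $|\tau|>1$ and is $O(\tau^{-2})$ at infinity. We conclude that
\[
\gamma^+(z)=a\sum_{n\ge0}\overline{a_n}F_n(z)=F(z),\qquad z\in D .
\]
This identity holds provided the series converges. It does for $z$ in compact subsets of $D$: from \eqref{TR1} with $r$ close to $1$ one gets $|F_n(z)|\lesssim r^{n}$ uniformly there, and the coefficients $a_n$ are summable enough since $\Phi\in C^{1+\alpha}(\TT)$.

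The main obstacle is the justification at this level. More carefully, the interchange of sum and integral is cleanest if we avoid term-by-term expansion. We instead write $\overline{\Phi(\tau)}-a\tau^{-1}=\overline{\Phi(\tau)}-a\overline\tau$ as the boundary value of $g(\tau)=a\sum\overline{a_n}\tau^n$, which is holomorphic in $\mathbb D$ and continuous up to $\TT$. We then define $F$ through the integral. The series identification is obtained on $|\tau|=r<1$, via \eqref{Fab1}, and follows by Abel summation.

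\emph{Step 2: boundary continuity and integration.} By the Plemelj discussion, $\gamma^+$ extends continuously to $\partial D$, so $F$ extends continuously to $\overline D$. Equation \eqref{Master2-0} then reads
\[
\operatorname{Re}\{((1-2\Omega)\overline z-F(z))\tau(z)\}=0,\qquad z\in\partial D .
\]
Let $z(s)$ be an arc-length parametrization, so that $\tau=z'(s)$. The identities
\[
\operatorname{Re}(\overline z\,z')=\tfrac12\tfrac{d}{ds}|z|^2,\qquad \operatorname{Re}(F(z)z')=\tfrac{d}{ds}\operatorname{Re}H(z(s))
\]
show that the equation is exactly
\[
\tfrac{d}{ds}\Big[\tfrac{1-2\Omega}{2}|z|^2-\operatorname{Re}H(z)\Big]=0 .
\]
The second identity requires $H$ to be $C^1$ up to the boundary. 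If $F$ is merely continuous, we apply it on the interior level curves $\Phi$-images of $|w|=r$ and let $r\to1$. Alternatively, we note that $\operatorname{Re}H(z)-\frac{1-2\Omega}{2}|z|^2$ is a difference of functions whose tangential derivative is continuous. Integrating over the closed curve gives the stated identity with the sign convention $\frac{2\Omega-1}{2}|z|^2+\operatorname{Re}H=\mu$.

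\emph{Step 3: the converse.} Conversely, differentiating the identity $\frac{2\Omega-1}{2}|z|^2+\operatorname{Re}H(z)=\mu$ along $\partial D$ recovers \eqref{Master2-0}. By the equivalence with \eqref{master0} established in Chapter 2, this shows that $\mathbf 1_D$ rotates with angular velocity $\Omega$.
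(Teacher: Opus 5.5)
Your proof is correct. Its overall architecture matches the paper's: identify the interior Cauchy integral with the Faber series, then integrate the boundary equation using a primitive $H$. The paper phrases the first step as $I(z)=\gamma^+(z)-\overline z=-\overline z+F(z)$.

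The two routes differ in how the identity $\gamma^+=F$ on $D$ is justified.

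\textbf{The paper's route.} It first treats analytic boundaries, pushing the contour \emph{outward} to $|\xi|=r>1$ via the holomorphic extension $\Phi^\ast$ of $\overline{\Phi}$ into an annulus. It then reaches $C^{1+\alpha}$ boundaries by truncating the Laurent series. That limit uses the Lesley--Vinge--Warschawski bound $\sup_{\overline D}|F_n|\lesssim\log(1+n)$, together with $|a_n|\lesssim(1+n)^{-1-\alpha}$.

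\textbf{Your route.} You work directly on $\TT$. For fixed $z\in D$, \eqref{TR1} holds with $r=1$, using Kellogg--Warschawski continuity of $\Phi'$ and $\operatorname{dist}(z,\partial D)>0$. The Fourier series of $\overline{\Phi}$ on $\TT$ is absolutely convergent. Moreover $|\Phi'(\tau)/(\Phi(\tau)-z)|\leq\|\Phi'\|_{L^\infty(\TT)}/\operatorname{dist}(z,\partial D)$ on $\TT$, so exchanging sum and integral is immediate.

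\textbf{Comparison.} Your argument is more elementary: it needs neither the approximation step nor Lesley's theorem. Continuity of $F$ up to $\partial D$ comes for free from $F=\gamma^+$ and the Plemelj discussion. What the paper's route gives in addition is uniform convergence of the series itself on $\overline D$. The paper reuses that fact in the first proof of the $\Omega=\frac12$ rigidity, although your continuity statement would suffice there too.

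\textbf{Side remarks that need correcting.}

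\emph{The convergence bound.} The bound $|F_n(z)|\lesssim r^n$ with $r>1$ close to $1$ does not give convergence. A $C^{1+\alpha}$ boundary only yields polynomial decay of $a_n$, so $\sum|a_n|r^n$ may diverge for every $r>1$. Use instead the $n$-independent bound at $r=1$ above. It gives absolute and locally uniform convergence in $D$, hence holomorphy of $F$.

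\emph{The Abel-summation alternative.} Working on $|\tau|=r<1$ via \eqref{Fab1} does not make sense, because $\Phi$ and \eqref{Fab1} live in $|\xi|>1$. It is also unnecessary, since there is no real obstacle at this step.

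\emph{The approximating curves in Step 2.} The images of $|w|=r$ under the exterior map $\Phi$ lie outside $\overline D$, where $F$ and $H$ are not available. The clean justification of $\frac{d}{ds}\operatorname{Re}H(z(s))=\operatorname{Re}\bigl(F(z(s))z'(s)\bigr)$ is Cauchy's theorem for $F$ continuous on $\overline D$ and holomorphic in $D$. It gives $H(z(s_2))-H(z(s_1))=\int F(\zeta)\,d\zeta$ along the boundary arc. The paper itself passes over this point.
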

\begin{proof}
We first establish the representation of the Cauchy integral in terms of the Faber polynomials. For $z\in D$, define
\[
I(z):=\fint_{\TT}\frac{\overline{\Phi(\xi)}-\overline z}{\Phi(\xi)-z}\,\Phi'(\xi)\,d\xi.
\]
On the unit circle, the Laurent expansion of $\Phi$ gives
\begin{equation}\label{overl}
\overline{\Phi(\xi)}
=
a\left(\xi^{-1}+\sum_{n\ge0}\overline{a_n}\xi^n\right),
\qquad \xi\in\TT.
\end{equation}
We first consider the case where $\partial D$ is analytic. Then the coefficients $(a_n)$ decay exponentially and the function
\[
\Phi^\ast(\xi):=
a\left(\xi^{-1}+\sum_{n\ge0}\overline{a_n}\xi^n\right)
\]
is holomorphic in an annulus $1<|\xi|<r_0$, for some $r_0>1$, satisfying
\[
\Phi^\ast(\xi)=\overline{\Phi(\xi)},
\qquad \xi\in\TT.
\]
Fix $1<r<r_0$. Since the integrand is holomorphic in the annulus bounded by $\TT$ and $r\TT$, Cauchy's theorem yields
\[
I(z)
=
\fint_{r\TT}
\frac{\Phi^\ast(\xi)-\overline z}{\Phi(\xi)-z}\,
\Phi'(\xi)\,d\xi.
\]
Using
\[
\fint_{r\TT}\frac{\Phi'(\xi)}{\Phi(\xi)-z}\,d\xi=1,
\]
we obtain
\[
I(z)
=
-\overline z
+
\fint_{r\TT}
\Phi^\ast(\xi)\frac{\Phi'(\xi)}{\Phi(\xi)-z}\,d\xi.
\]
Substituting the expansion of $\Phi^\ast$ gives
\[
\begin{aligned}
\fint_{r\TT}
\Phi^\ast(\xi)\frac{\Phi'(\xi)}{\Phi(\xi)-z}\,d\xi
={}&
a\fint_{r\TT}
\frac{\Phi'(\xi)}{\xi(\Phi(\xi)-z)}\,d\xi\\
&+
a\sum_{n\geqslant0}\overline{a_n}
\fint_{r\TT}
\xi^n\frac{\Phi'(\xi)}{\Phi(\xi)-z}\,d\xi.
\end{aligned}
\]
The first integral vanishes because its integrand is holomorphic in $|\xi|>1$ and behaves like $O(|\xi|^{-2})$ at infinity. By the integral representation \eqref{TR1} of the Faber polynomials,
\[
\fint_{r\TT}
\xi^n\frac{\Phi'(\xi)}{\Phi(\xi)-z}\,d\xi
=
F_n(z).
\]
Hence
\begin{equation}\label{T2}
I(z)
=
-\overline z
+
a\sum_{n\geqslant 0}\overline{a_n}F_n(z),
\qquad z\in D.
\end{equation}
We now extend this identity to the case where $\partial D$ is of class $C^{1+\alpha}$. The coefficients of the conformal map satisfy
\[
|a_n|
\le
\frac{C}{(1+n)^{1+\alpha}},
\]
while Theorem~2 of \cite{Lesley} asserts the existence of two constants $A$ and $B$  such that
\[
\sup_{z\in\overline D}|F_n(z)|
\le
A\log(1+n)+B.
\]
Therefore
\[
\sum_{n\ge0}|a_n|
\sup_{z\in\overline D}|F_n(z)|
<\infty,
\]
and the series defining $F$ converges absolutely and uniformly on $\overline D$.
To justify \eqref{T2}, consider the truncated conformal mappings
\[
\Phi_N(w)
=
a\left(
w+\sum_{n=0}^Na_nw^{-n}
\right).
\]
Since $\Phi\in C^{1+\alpha}(\TT)$, one has
\begin{equation}\label{T0}
\|\Phi_N-\Phi\|_{C^1(\TT)}
\longrightarrow0.
\end{equation}
For $N$ sufficiently large, $\Phi_N(\TT)$ bounds a simply connected domain $D_N$ containing every fixed compact subset of $D$. Define
\[
I_N(z)
:=
\fint_{\TT}
\frac{\overline{\Phi_N(\xi)}-\overline z}
{\Phi_N(\xi)-z}
\Phi_N'(\xi)\,d\xi.
\]
By dominated convergence,
\begin{equation}\label{TT1}
I_N(z)\longrightarrow I(z),
\qquad z\in D.
\end{equation}
Applying the analytic case to $\Phi_N$ gives
\begin{equation}\label{T3}
I_N(z)
=
-\overline z
+
a\sum_{n=0}^N
\overline{a_n}
F_{n,N}(z),
\end{equation}
where
\[
F_{n,N}(z)
=
\fint_{|\xi|=r}
\xi^n
\frac{\Phi_N'(\xi)}
{\Phi_N(\xi)-z}\,d\xi.
\]
For every fixed $n$,
\[
F_{n,N}(z)\longrightarrow F_n(z),
\qquad z\in D.
\]
Moreover,
\[
\sup_{z\in\overline{D_N}}
|F_{n,N}(z)|
\le
\widetilde A\log(1+n)+\widetilde B,
\]
with constants independent of $N$. Hence
\[
|a_n|
\sup_{z\in\overline{D_N}}
|F_{n,N}(z)|
\lesssim
(1+n)^{-1-\alpha}(1+\log(1+n)),
\]
and the right-hand side is summable. Dominated convergence therefore yields
\[
\lim_{N\to\infty}
\sum_{n=0}^N
\overline{a_n}
F_{n,N}(z)
=
\sum_{n\ge0}
\overline{a_n}
F_n(z).
\]
Passing to the limit in \eqref{T3} gives
\[
I(z)
=
-\overline z
+
a\sum_{n\ge0}
\overline{a_n}
F_n(z),
\qquad z\in D.
\]
Both sides extend continuously to $\overline D$, and thus
\[
I(z)
=
-\overline z
+
F(z),
\qquad z\in\overline D,
\]
where
\begin{equation}\label{master10}
F(z)
:=
a\sum_{n\ge0}
\overline{a_n}
F_n(z).
\end{equation}
Substituting this identity into \eqref{rotsq12}, we obtain
\[
\textnormal{Im}\left\{
\bigl((2\Omega-1)\overline{\Phi(w)}
+F(\Phi(w))\bigr)
w\Phi'(w)
\right\}
=
0.
\]
The estimates obtained above show that the series
defining $F$
converges uniformly on compact subsets of $D$. Since each $F_n$ is a polynomial, it follows that $F$ is holomorphic in $D$. As $D$ is simply connected, $F$ admits a holomorphic primitive. We therefore define
\begin{equation*}
H(z):=\int_{z_0}^{z}F(\xi)\,d\xi,
\end{equation*}
where the integral is independent of the chosen path joining $z_0$ to $z$. Consequently,
\[
H'(z)=F(z),\qquad z\in D.
\]
Writing $w=e^{it}$ and $z(t)=\Phi(e^{it})$, we have
\[
z'(t)=iw\Phi'(w).
\]
Therefore,
\[
\tfrac{d}{dt}
\left(
\tfrac{2\Omega-1}{2}
|\Phi(e^{it})|^2
+
\textnormal{Re} H(\Phi(e^{it}))
\right)
=
-
\textnormal{Im}\left\{
\bigl((2\Omega-1)\overline{\Phi(w)}
+F(\Phi(w))\bigr)
w\Phi'(w)
\right\}.
\]
Hence the boundary equation is equivalent to
\[
\tfrac{d}{dt}
\left(
\tfrac{2\Omega-1}{2}
|\Phi(e^{it})|^2
+
\textnormal{Re} H(\Phi(e^{it}))
\right)
=
0.
\]
Since $\partial D$ is connected, there exists a constant $\mu\in\mathbb R$ such that
\begin{align}\label{master11}
\tfrac{2\Omega-1}{2}
|\Phi(w)|^2
+
\textnormal{Re} H(\Phi(w))
=
\mu,
\qquad w\in\TT,
\end{align}
or equivalently,
\[
\tfrac{2\Omega-1}{2}|z|^2+\textnormal{Re} H(z)=\mu,
\qquad z\in\partial D.
\]
Conversely, differentiating this identity along the boundary immediately recovers \eqref{rotsq12}. This completes the proof.
\end{proof}

\section{Rigidity of ellipses}
The reformulation obtained in Theorem~\ref{Theom-form-Fab} provides a remarkably simple framework for studying rotating vortex patches. As a first application, we recover Kirchhoff's classical characterization of uniformly rotating ellipses by a short computation involving only the conformal mapping and the associated Faber polynomial. This illustrates the effectiveness of the new formulation and avoids the more involved computations based directly on the Cauchy transform.
\\
As a second application, we establish a rigidity result for conformal mappings with finite Laurent expansions. We prove that if the exterior conformal mapping of a rotating patch is a Laurent polynomial, then it necessarily has degree one. Consequently, the corresponding domain is an ellipse. This provides a complete characterization of Kirchhoff's ellipses within the class of rotating patches whose conformal mapping has finitely many non-zero coefficients. This latter result is obtained by Burbea \cite{Burbea81}.
\\
We summarize these results on the following statement.
\begin{mytheorem}{}{Kirchhoff}
The ellipse is rotating uniformly if and only if 
$$
\Omega=\frac{ab}{(a+b)^2}\cdot$$
Let ${\bf{1}}_D$ be a rotating patch of a simply connected bounded domain $D$ whose conformal mapping has a finite expansion, namely
$$
\Phi(w)=a\Big(w+\sum_{n=0}^N\frac{a_n}{w^n}\Big),\, w\in\TT$$
Then $D$ is an ellipse, in particular
$$
a_n=0,\forall n\in\{2,..,N\}.
$$
\end{mytheorem}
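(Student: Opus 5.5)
The plan is to use the Faber formulation of Theorem~\ref{thm:Faber-Vstate}: a domain rotates with angular velocity $\Omega$ if and only if
\[
\tfrac{2\Omega-1}{2}|\Phi(w)|^2+\textnormal{Re}\,H(\Phi(w))=\mu
\qquad\text{for all } w\in\TT.
\]
When $\Phi$ is a Laurent polynomial, both terms are trigonometric polynomials in $w$, using $\overline w=w^{-1}$ on $\TT$. The identity can then be tested Fourier mode by Fourier mode.

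\emph{Ellipse.} By translation invariance (Proposition~\ref{prop:center-mass}) we take the center at the origin, so $a_0=0$. Writing $\Phi(w)=\alpha(w+Q\overline w)$ with $\alpha>0$ and $Q=a_1\in[0,1)$, the semi-axes are $\alpha(1\pm Q)$. This gives $Q=(a-b)/(a+b)$, where here $a,b$ denote the semi-axes and not the conformal coefficient $\alpha$.

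From \eqref{Tot1} we have $F_1(z)=z/\alpha$, so $F(z)=\alpha Q F_1(z)=Qz$ and $H(z)=\tfrac Q2 z^2$. On $\TT$ one computes
\[
|\Phi|^2=\alpha^2\bigl(1+Q^2+2Q\,\textnormal{Re}\,w^2\bigr),
\qquad
\textnormal{Re}\,\Phi^2=\alpha^2\bigl(2Q+(1+Q^2)\textnormal{Re}\,w^2\bigr).
\]
The equation therefore reduces to requiring that the coefficient of $\textnormal{Re}\,w^2$ vanish, namely $Q\bigl((2\Omega-1)+\tfrac{1+Q^2}{2}\bigr)=0$. For $Q\neq0$ this is equivalent to $\Omega=\frac{1-Q^2}{4}=\frac{ab}{(a+b)^2}$, which proves the first assertion in both directions.

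\emph{Finite expansion.} Suppose $a_N\neq0$ with $N\ge2$; we aim for a contradiction, after which descending induction leaves only $a_0,a_1$, i.e. an ellipse. Each Faber polynomial $F_n$ has degree $n$ with leading coefficient $\alpha^{-n}$. Hence $F=\alpha\sum_{n\le N}\overline{a_n}F_n$ is a polynomial of degree $N$ with leading coefficient $\alpha^{1-N}\overline{a_N}$, and $H$ has degree $N+1$ with leading coefficient $c=\alpha^{1-N}\overline{a_N}/(N+1)\neq0$.

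Now track the extreme Fourier mode. The lowest power of $w$ in $\Phi(w)^k$ is $w^{-Nk}$, with coefficient $\alpha^k a_N^k$. So the only contribution to $w^{-N(N+1)}$ in $H(\Phi(w))$ is $c\,\alpha^{N+1}a_N^{N+1}w^{-N(N+1)}$. Taking the real part, $\textnormal{Re}\,H(\Phi)$ therefore contains the mode $w^{N(N+1)}$ with coefficient $\tfrac12\overline{c\,\alpha^{N+1}a_N^{N+1}}\neq0$; no other power $\Phi^k$ with $k\le N$ reaches this mode.

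On the other hand, $|\Phi|^2=\Phi\,\overline\Phi$ only involves modes $w^j$ with $|j|\le N+1$. For $N\ge2$ we have $N(N+1)>N+1$, so this mode cannot be cancelled, which contradicts the equation. Thus $a_N=0$, and we repeat the argument with $N-1$ until $N=1$.

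\emph{Main obstacle.} The delicate point is to check that nothing else contributes to the extreme mode. This requires the exact leading coefficient of $F_N$, which follows from $F_n$ being the polynomial part of $(\Phi^{-1})^n$ with $\Phi^{-1}(z)\sim z/\alpha$. It also requires the bound that $\Phi^k$ has no mode below $w^{-Nk}$. Both are routine, but the degree bookkeeping must be done carefully.
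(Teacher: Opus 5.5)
Your proposal is correct and follows the paper's proof essentially step for step. You use the Faber formulation with $F(z)=Qz$, $H(z)=\tfrac Q2 z^2$ and the vanishing of the $\textnormal{Re}\,w^2$ coefficient for the ellipse, and for rigidity you compare the extreme Fourier mode $w^{\pm N(N+1)}$ produced by the leading term of $H$ with the modes $|j|\le N+1$ of $|\Phi|^2$. Your explicit check that neither $|\Phi|^2$ nor the lower powers $\Phi^k$ with $k\le N$ reach that mode is a slightly cleaner justification of the paper's step asserting that the two orders must coincide.
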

\begin{proof}
We begin with the Kirchhoff ellipse. Assume first that $a>b>0$ and set
\[
A:=\frac{a+b}{2},
\qquad
Q:=\frac{a-b}{a+b}.
\]
The associated exterior conformal mapping is
\[
\Phi(w)=A\left(w+\frac{Q}{w}\right),
\qquad |w|\ge1.
\]
Hence
\[
a_0=0,\qquad
a_1=Q,\qquad
a_n=0,\quad n\ge2.
\]
Since $F_1(z)=z/A$, it follows from \eqref{master10} that
\[
F(z)=AQF_1(z)=Qz,
\qquad
H(z)=\frac{Q}{2}z^2.
\]
The boundary identity \eqref{master11},writes as
\[
\tfrac{2\Omega-1}{2}|\Phi(w)|^2+\textnormal{Re} H(\Phi(w))=\mu,
\qquad w\in\TT.
\]
Since $\overline w=w^{-1}$ on $\TT$,
\[
|\Phi(w)|^2
=
A^2|w+Q\overline w|^2
=
A^2\bigl(1+Q^2+2Q\textnormal{Re}(w^2)\bigr),
\]
and
\[
\textnormal{Re} H(\Phi(w))
=
\tfrac{QA^2}{2}
\textnormal{Re}\!\left((w+Q\overline w)^2\right)
=
\tfrac{QA^2}{2}
\Bigl((1+Q^2)\textnormal{Re}(w^2)+2Q\Bigr).
\]
Therefore
\[
\tfrac{2\Omega-1}{2}|\Phi(w)|^2+\textnormal{Re} H(\Phi(w))
=
\tfrac{QA^2}{2}
\Bigl(2(2\Omega-1)+1+Q^2\Bigr)\textnormal{Re}(w^2)
+\textnormal{constant}.
\]
Since $Q\neq0$, the left-hand side is constant if and only if
\[
2(2\Omega-1)+1+Q^2=0,
\]
that is,
\[
\Omega
=
\frac{1-Q^2}{4}
=
\frac{ab}{(a+b)^2}.
\]
This proves the first assertion.
We now turn to the rigidity statement. Assume that the conformal mapping has the finite Laurent expansion
\[
\Phi(w)
=
a\left(
w+\sum_{n=0}^{N}\frac{a_n}{w^n}
\right),
\qquad |w|\ge1,
\]
with $a_N\neq0$. We shall prove that necessarily $N=1$.
Since $F_n$ is a polynomial of degree $n$, it follows from \eqref{master10} that $F$ is a polynomial of degree $N$. Consequently, its primitive $H$ is a polynomial of degree $N+1$, which we write as
\[
H(z)
=
\sum_{k=0}^{N+1}c_kz^k,
\qquad
c_{N+1}\neq0.
\]
The boundary identity \eqref{master11} becomes
\[
\tfrac{2\Omega-1}{2}|\Phi(w)|^2+\textnormal{Re} H(\Phi(w))
=
\mu,
\qquad
\forall w\in\TT.
\]
Expanding the left-hand side into Fourier modes, we compare the highest powers of $\overline w$. Since
\[
\Phi(w)
=
a\left(
w+a_0+\sum_{n=1}^{N}a_n\overline w^{\,n}
\right),
\]
the quadratic term contributes the Fourier mode
\[
(2\Omega-1)a^2
\textnormal{Re}\!\left(
a_N\overline w^{\,N+1}
\right).
\]
On the other hand, the leading term of $H$ produces the Fourier mode
\[
a^{N+1}
\textnormal{Re}\!\left(
c_{N+1}a_N^{\,N+1}
\overline w^{\,N(N+1)}
\right),
\]
whose coefficient is nonzero because $c_{N+1}\neq0$ and $a_N\neq0$.
Since the whole expression is constant on $\TT$, the highest Fourier modes must cancel. Therefore their orders coincide, namely
\[
N(N+1)=N+1.
\]
As $N\ge1$, we conclude that
\[
N=1.
\]
Hence
\[
\Phi(w)
=
a\left(
w+a_0+\frac{a_1}{w}
\right),
\]
which is precisely the conformal mapping of an ellipse. Therefore
\[
a_n=0,
\qquad n\ge2,
\]
and $D$ is an ellipse. This completes the proof.
\end{proof}
\section{Algebraic \(V\)-states}\label{sec-algebraic-vstates}

The rigidity result of the previous section admits a natural
interpretation in terms of algebraic curves. Indeed, a finite Laurent
expansion of the exterior conformal mapping automatically produces an
algebraic boundary. This observation suggests a broader rigidity
question in which the finite-expansion assumption is replaced by the
weaker requirement that the boundary itself be algebraic.
We say that a planar curve \(\Gamma\subset\mathbb R^2\) is
\emph{algebraic} if there exists a nonzero polynomial
\[
P\in\mathbb R[X,Y]
\]
such that
\[
\Gamma\subset\bigl\{(x,y)\in\mathbb R^2:\ P(x,y)=0\bigr\}.
\]
Accordingly, a simply connected \(V\)-state \(D\) will be called an
\emph{algebraic \(V\)-state} if its boundary \(\partial D\) is
algebraic.
\\
The following observation shows that the finite Laurent setting
considered above belongs to this class.

\begin{myproposition}{}{prop-finite-Laurent-algebraic}
Let \(D\subset\mathbb C\) be a bounded simply connected domain whose
exterior conformal mapping has a finite Laurent expansion
\[
\Phi(w)
=
a\left(
w+a_0+\frac{a_1}{w}+\cdots+\frac{a_N}{w^N}
\right),
\qquad |w|>1,
\]
where \(a>0\). Then \(\partial D\) is contained in a real algebraic
curve.
\end{myproposition}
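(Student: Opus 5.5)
The plan is to use an elimination argument. Since the boundary is the image of the circle under the Laurent polynomial, I will eliminate the parameter \(w\) from the two equations \(z=\Phi(w)\) and \(\overline z=\overline{\Phi(w)}\). On \(\TT\) we have \(\overline w=w^{-1}\). So for \(w\in\TT\) and \(z=\Phi(w)\in\partial D\), the pair \((z,\overline z)\) satisfies
\[
z=a\Bigl(w+a_0+\sum_{n=1}^N a_nw^{-n}\Bigr),\qquad
\overline z=a\Bigl(w^{-1}+\overline{a_0}+\sum_{n=1}^N\overline{a_n}\,w^{n}\Bigr).
\]
Multiplying through by \(w^N\) turns both into polynomial equations in \(w\) with coefficients that are polynomials in \(z\) and in \(\overline z\), respectively:
\[
P_1(w;z):=w^N z-a\bigl(w^{N+1}+a_0w^N+\cdots+a_N\bigr)=0,
\qquad
P_2(w;\overline z):=w\,\overline z-a\bigl(1+\overline{a_0}w+\cdots+\overline{a_N}w^{N+1}\bigr)=0.
\]

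Next I will treat \(z\) and \(\zeta:=\overline z\) as independent complex variables. I take the resultant \(R(z,\zeta):=\operatorname{Res}_w(P_1,P_2)\), which is a polynomial in \((z,\zeta)\) with complex coefficients. For every boundary point the two polynomials share the root \(w\), so \(R(z,\overline z)=0\) on \(\partial D\). Writing \(z=x+iy\) and \(\overline z=x-iy\), the function \(R(x+iy,x-iy)=:P_{\CC}(x,y)\) is a polynomial in \(\CC[x,y]\) vanishing on \(\partial D\). To get a real polynomial, I will take \(P:=(\operatorname{Re}P_{\CC})^2+(\operatorname{Im}P_{\CC})^2=P_{\CC}\,\overline{P_{\CC}}\), where the conjugation acts on coefficients. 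This lies in \(\RR[x,y]\) and vanishes on \(\partial D\).

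The main obstacle is showing that \(R\), and hence \(P\), is not the zero polynomial; otherwise the conclusion would be vacuous. I will get this from degree and leading-term considerations. \(P_1\) has leading coefficient \(-a\neq0\) in \(w\), degree \(N+1\), and it is monic up to a constant. Therefore \(R=(-a)^{\deg P_2}\prod_{P_1(w_j)=0}P_2(w_j;\zeta)\), where the \(w_j=w_j(z)\) are the roots of \(P_1\). For large \(|z|\), one root satisfies \(w\sim z/a\), while the others tend to \(0\), behaving like \(w^N\sim a a_N/z\) (using \(a_N\ne0\) after discarding trailing zero coefficients). I then choose a specific point: take \(\zeta=0\) and \(|z|\) large. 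At the large root, \(P_2(w;0)\approx -a\overline{a_N}w^{N+1}\neq0\) when \(a_N\neq0\). At the small roots, \(P_2(w;0)\approx -a\neq0\). So \(R(z,0)\neq0\) for \(|z|\) large, hence \(R\not\equiv0\). The case \(N=0\) (a disk) is handled separately, directly via \(|z-aa_0|^2=a^2\). Finally, since \(R\not\equiv0\) as a polynomial in \((z,\zeta)\) and \((z,\zeta)\mapsto(x,y)\) is an invertible linear change of variables, \(P_{\CC}\not\equiv0\). Then \(P=|P_{\CC}|^2\) on \(\RR^2\) is nonzero too, because a nonzero complex polynomial cannot vanish on all of \(\RR^2\).
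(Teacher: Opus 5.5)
Your proof is correct and follows essentially the paper's argument. Like the paper, you eliminate $w$ from $z=\Phi(w)$ and $\overline z=\overline{\Phi(w)}$ (using $\overline w=w^{-1}$ on the circle) by a resultant in $w$, and then pass to a nonzero real polynomial vanishing on $\partial D$. The differences are only in two sub-steps, and both of your variants are valid:
\begin{itemize}
\item To check $R\not\equiv0$, the paper notes that the coefficient of $\zeta^{N+1}$ is a nonzero multiple of $\prod_j w_j$ (nonzero because $aa_N\neq0$), whereas you specialize to $\zeta=0$ and large $|z|$.
\item To get a real polynomial, the paper takes a nonvanishing real or imaginary part, whereas you use $P_{\mathbb C}\overline{P_{\mathbb C}}$.
\end{itemize}
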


\begin{proof}
The case \(N=0\) is immediate, since \(\partial D\) is then a circle.
We therefore assume, after reducing \(N\) if necessary, that
\(N\geq1\) and \(a_N\neq0\).
Let
\[
z=\Phi(w),\qquad |w|=1.
\]
Multiplying this identity by \(w^N\), we obtain
\[
A(w,z)=0,
\]
where
\[
A(w,z)
=
aw^{N+1}
+(aa_0-z)w^N
+aa_1w^{N-1}
+\cdots
+aa_N.
\]
Since \(|w|=1\), we have \(\overline w=w^{-1}\). Taking the complex
conjugate of \(z=\Phi(w)\), we find
\[
\overline z
=
a\left(
\frac1w+\overline{a_0}
+\overline{a_1}w+\cdots+\overline{a_N}w^N
\right).
\]
After multiplication by \(w\), this becomes
\[
B(w,\overline z)=0,
\]
where
\[
B(w,\zeta)
=
a\overline{a_N}w^{N+1}
+\cdots
+a\overline{a_1}w^2
+(a\overline{a_0}-\zeta)w+a.
\]
We now regard \(z\) and \(\zeta\) as independent complex variables and
eliminate \(w\). Let
\[
R(z,\zeta)
=
\mathop{\rm Res}\nolimits_w
\bigl(A(w,z),B(w,\zeta)\bigr)
\]
denote the resultant of the two polynomials with respect to \(w\).
Since the coefficients of \(A\) and \(B\) depend polynomially on
\(z\) and \(\zeta\), respectively, we have
\[
R\in\mathbb C[z,\zeta].
\]
Moreover, \(R\) is not identically zero. Indeed, if
\(w_1=w_1(z),\ldots,w_{N+1}=w_{N+1}(z)\) are the roots of \(A(\cdot,z)\), counted with
multiplicity, then the product formula for the resultant gives, up to
a nonzero multiplicative constant,
\[
R(z,\zeta)
=
\prod_{j=1}^{N+1}B(w_j,\zeta).
\]
Since the constant term of \(A\) is \(aa_N\neq0\), none of the roots
\(w_j\) vanishes. Each factor \(B(w_j,\zeta)\) is affine in \(\zeta\)
with coefficient \(-w_j\). Hence the coefficient of
\(\zeta^{N+1}\) in \(R\) is nonzero, and therefore
\[
R\not\equiv0.
\]
Now let \(z\in\partial D\). There exists \(w\in\mathbb T\) such that
\(z=\Phi(w)\). Consequently,
\[
A(w,z)=0,
\qquad
B(w,\overline z)=0.
\]
Thus the two polynomials $A(\cdot,z)$ and $B(\cdot,\overline{z})$ have a common root $w$, and the defining property
of the resultant yields
\[
R(z,\overline z)=0,
\qquad z\in\partial D.
\]
Thus
\[
R(x+iy,x-iy)=0,
\qquad (x,y)\in\partial D.
\]As the polynomial \((x,y)\mapsto R(x+iy,x-iy)\)  is not identically zero, then taking a nonzero real or imaginary
part therefore produces a non zero real polynomial $P$
such that
\[
P(x,y)=0,
\qquad \forall (x,y)\in\partial D.
\]
Hence \(\partial D\) is contained in a real algebraic curve. This ends the proof.
\end{proof}
We have therefore established the implication
\[
{
\text{finite Laurent exterior conformal mapping}
\quad\Longrightarrow\quad
\text{algebraic boundary}.
}
\]
The converse does not hold in general: an algebraic Jordan curve need
not have an exterior conformal mapping with only finitely many nonzero
Laurent coefficients. Thus the class of algebraic boundaries is
considerably broader than the finite Laurent class.
\\
Combining the preceding proposition with Burbea's rigidity result, we
know that every \(V\)-state whose exterior conformal mapping has a
finite Laurent expansion is necessarily a Kirchhoff ellipse. This
naturally suggests the following conjecture.

\medskip

\noindent
\textbf{Conjecture (rigidity of algebraic \(V\)-states).}
\emph{Let \(D\subset\mathbb C\) be a bounded simply connected
\(V\)-state for the two-dimensional Euler equations, with sufficiently
smooth boundary. If \(\partial D\) is algebraic, then \(D\) is an
ellipse.}

\medskip

\chapter{Some rigidity aspects of rotating patches}
{\it
This chapter is devoted to rigidity phenomena for uniformly rotating vortex patches and, in particular, to understanding how the angular velocity constrains the geometry of the patch. We first study the distinguished endpoint $\Omega=\frac12$, following the approach of Hmidi~$\cite{Hmidi15}.$ We present two complementary proofs, based respectively on the stream function and on the conformal mapping formulation developed in the previous chapter. Both approaches show that the only admissible simply connected patch is the disk.
We then consider the general rigidity problem for
\(
\Omega\notin\left(0,\frac12\right).
\)
Following the variational approach of G\'omez-Serrano, Park, Shi and Yao~$\cite{GPSY}$, we combine a variational identity with sharp extremal properties of the second moment and the torsional rigidity, the latter relying on Talenti's isoperimetric inequality. This yields the rigidity theorem asserting that every simply connected rotating vortex patch with angular velocity outside $\left(0,\frac12\right)$ is necessarily a Rankine vortex.
}

\section{The endpoint $\Omega=\frac12$}

The endpoint $\Omega=\frac12$ plays a distinguished role in the theory of rotating vortex patches. At this critical value, the boundary equation simplifies considerably and becomes sufficiently rigid to determine the geometry of the patch completely. The following theorem, due to Hmidi~\cite{Hmidi15}, shows that the only rotating patch with angular velocity $\Omega=\frac12$ is the Rankine vortex.

We present two different proofs. The first is new and exploits the conformal mapping formulation developed in the previous chapter, leading to a short argument based on Faber polynomials. The second follows the original approach of \cite{Hmidi15}, based on the stream function and the Cauchy transform.

\begin{mytheorem}{}{}
Let $D$ be a simply connected bounded domain with boundary of class $C^{1+\alpha}$, $\alpha\in(0,1)$, such that ${\bf1}_D$ rotates around the origin with angular velocity
\[
\Omega=\frac12.
\]
Then $D$ is necessarily a disk.
\end{mytheorem}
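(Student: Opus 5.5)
The plan is to use the conformal-mapping formulation of Theorem~\ref{Theom-form-Fab}. At $\Omega=\frac12$ the quadratic term $\tfrac{2\Omega-1}{2}|z|^2$ disappears, and the boundary identity reduces to
\[
\textnormal{Re}\, H(z)=\mu,\qquad z\in\partial D.
\]
Here $H$ is a holomorphic primitive of $F=a\sum_{n\geq0}\overline{a_n}F_n$ on $D$.

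The first step is regularity. By the proof of Theorem~\ref{Theom-form-Fab}, the series defining $F$ converges uniformly on $\overline D$, because $|a_n|\lesssim (1+n)^{-1-\alpha}$ and the Faber polynomials grow only logarithmically. Hence $F$ is continuous on $\overline D$ and holomorphic in $D$. Since $\partial D$ is rectifiable, $H$ is then continuous up to the boundary.

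The second step is the maximum principle. The function $\textnormal{Re}\,H$ is harmonic in $D$, continuous on $\overline D$, and equal to $\mu$ on $\partial D$, so $\textnormal{Re}\,H\equiv\mu$ in $D$. It follows that $H$ is constant and $F\equiv0$ on $D$.

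The third step identifies $F$ through the Cauchy integral. From the proof of Theorem~\ref{Theom-form-Fab} we have $I(z)=-\overline z+F(z)$, and comparing with \eqref{Cauchy-1} gives $F(z)=\overline z-\mathcal C(\chi_D)(z)=\gamma^+(z)$ by \eqref{Cauchydins}. Thus $\gamma^+\equiv0$ in $D$. This is exactly the hypothesis of Theorem~\ref{thm:Charac-ellip} with $Q=0$ and $z_0=0$. That theorem then says $\Gamma$ is an ellipse centred at $0$ with $\frac{a-b}{a+b}=0$, so $a=b$ and $D$ is a disk.

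A route avoiding Theorem~\ref{thm:Charac-ellip} is also available. The relation $F\equiv0$ means
\[
\fint_{\TT}\overline{\Phi(\xi)}\,\frac{\Phi'(\xi)}{\Phi(\xi)-z}\,d\xi=0
\]
for every $z\in D$. Expanding in Faber polynomials, whose linear independence follows from their degrees, gives the chain $\overline{a_n}=0$ for all $n\ge0$, hence $\Phi(w)=aw$.

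The main obstacle is the rigorous passage from the pointwise boundary identity to the interior statement. One has to check that $H$ extends continuously to $\partial D$ with $\textnormal{Re}\,H=\mu$ there, and not merely almost everywhere. For this I would rely on the uniform convergence on $\overline D$ established before. If that is insufficient, I would work directly with $\gamma^+$: by \eqref{Master2-0} at $\Omega=\frac12$ we get $\textnormal{Re}\{\gamma^+(z)\tau(z)\}=0$ on $\partial D$. Since $\gamma^+$ is continuous up to the boundary, this means $\textnormal{Im}\,(\text{primitive of }\gamma^+)$ is constant along $\partial D$, and the maximum principle applies as before.
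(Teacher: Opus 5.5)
Your proof is correct, but its main line combines the paper's two proofs. Up to $F\equiv0$ you follow the first (Faber) proof, and your alternative route is exactly the paper's finish via the triangular structure of the $F_n$.

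Your main finish is different. From $I=-\mathcal C(\chi_D)$, \eqref{T2} and \eqref{Cauchydins} you identify $F=\gamma^+$ in $D$, so that $\mathcal C(\chi_D)(z)=\overline z$ on $\overline D$. This is precisely the intermediate identity that the paper's second proof obtains by applying the maximum principle to $\psi-\frac14|z|^2$. You then invoke Theorem~\ref{thm:Charac-ellip} with $Q=0$. In that case its proof reduces to the paper's last step: $z\,\mathcal C(\chi_D)(z)=-z\gamma^-(z)$ is holomorphic off $\overline D$, real on $\partial D$ and at infinity, hence constant.

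This route buys you two things. It makes explicit the bridge between the Faber and Cauchy formulations. It also sidesteps the uniqueness of Faber expansions that the ``recursive'' conclusion tacitly uses. For an infinite series every $F_n$ contributes to each low-order Taylor coefficient, so independence by degrees alone does not suffice. This applies to your alternative route as well. It can be repaired by reading off coefficients through $F_k(\Phi(w))=w^k+O(w^{-1})$ on $|w|=1$.

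The price is that Theorem~\ref{Theom-form-Fab}, with the logarithmic Faber bounds of \cite{Lesley}, is much heavier than needed to reach $\gamma^+\equiv0$. Your fallback via \eqref{Master2-0} gets there directly. Since $\partial_z\bigl(\psi-\frac14|z|^2\bigr)=-\frac14\gamma^+$ in $D$, that fallback is just the paper's second proof in different notation.

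There is one slip in that fallback. If $G'=\gamma^+$, then $\frac{d}{ds}G(z(s))=\gamma^+\tau$, so $\textnormal{Re}\{\gamma^+\tau\}=0$ makes $\textnormal{Re}\,G$, not $\textnormal{Im}\,G$, constant on $\partial D$. The maximum-principle conclusion is unaffected.
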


\begin{proof}
We give two different proofs.

\medskip
\noindent
\textbf{First proof: conformal mapping and Faber polynomials.}
\\
Applying Theorem~\ref{Theom-form-Fab}, we obtain
\[
\textnormal{Re} H(z)=C_0,
\qquad z\in\partial D,
\]
where
\[
F(z)=a\sum_{n\geqslant0}\overline{a_n}F_n(z),
\qquad
H'(z)=F(z).
\]
We have seen in the proof of Theorem~\ref{thm:Faber-Vstate} that as $\partial D$ is of class $C^{1+\alpha}$, the series defining $F$ converges uniformly on $\overline D$. Hence $F$ is holomorphic in $D$ and extends continuously to $\overline D$. Consequently, $H$ is holomorphic in $D$ and continuous on $\overline D$.
Therefore, the  function $\textnormal{Re} H$ is harmonic in $D$ and constant on $\partial D$. By the maximum principle,
\[
\textnormal{Re} H\equiv C_0
\qquad\hbox{in }D.
\]
The Cauchy--Riemann equations then imply that $H$ is constant throughout $D$. Therefore
\[
F(z)=H'(z)\equiv0.
\]
Using the triangular structure of the Faber polynomials,
\[
F_n(z)=a^{-n}z^n+\text{lower-order terms},
\]
we conclude recursively that
\[
a_n=0,
\qquad n\geqslant0.
\]
Hence
\[
\Phi(z)=az,
\]
which is the conformal mapping of a disk.

\medskip
\noindent
\textbf{Second proof: stream function.}
\\
According to the stream function formulation \eqref{rvortex3},
\[
\Phi_0(z):=\psi(z)-\frac14|z|^2=\mu,
\qquad z\in\partial D,
\]
for some constant $\mu$.
Since
\[
\Delta\psi={\bf1}_D,
\]
we obtain
\[
\Delta\Phi_0
=
{\bf1}_D-1.
\]
Hence $\Phi_0$ is harmonic inside $D$ and satisfies
\[
\begin{cases}
\Delta\Phi_0=0,&z\in D,\\
\Phi_0=\mu,&z\in\partial D.
\end{cases}
\]
The maximum principle therefore yields
\[
\Phi_0\equiv\mu
\qquad\hbox{in }D.
\]
Equivalently,
\[
\psi(z)=\mu+\frac14|z|^2,
\qquad z\in D.
\]
Differentiating gives
\[
\mathcal C(\chi_D)(z)
=
4\partial_z\psi(z)
=
\overline z,
\qquad z\in D.
\]
Since both sides are continuous up to $\partial D$,
\[
\mathcal C(\chi_D)(z)=\overline z,
\qquad z\in\overline D.
\]
Now define
\[
G(z):=z\,\mathcal C(\chi_D)(z),
\qquad
z\in\CC\setminus\overline D.
\]
The function $G$ is holomorphic in $\CC\setminus\overline D$, continuous up to the boundary, and satisfies
\[
G(z)=|z|^2,
\qquad z\in\partial D.
\]
Moreover,
\[
\lim_{z\to\infty}G(z)=\frac{|D|}{\pi}\in\mathbb R.
\]
Let
\[
u(z):=\textnormal{Im} G(z).
\]
Then $u$ is harmonic in $\CC\setminus\overline D$, vanishes on
$\partial D$, and tends to zero at infinity. By the maximum principle,
\[
u\equiv0
\qquad\hbox{in }\CC\setminus\overline D.
\]
Thus the holomorphic function $G$ takes only real values and hence it  must be constant. Therefore,
\[
|z|^2=\mathrm{const},
\qquad z\in\partial D,
\]
which shows that $\partial D$ is a circle. This completes the proof.
\end{proof}

\section{Talenti's isoperimetric inequality}

Talenti's isoperimetric principle is one of the fundamental results in the theory of elliptic partial differential equations. It compares the solutions of elliptic boundary value problems on an arbitrary domain with those on a disk of the same measure through Schwarz symmetrization. As a consequence, it yields sharp geometric estimates for various quantities associated with elliptic equations, such as the torsional rigidity, and identifies the disk as the unique extremal domain in many optimization problems.

For our purposes, we only need the particular case concerning the torsion function. Before stating Talenti's theorem, we establish an elementary geometric inequality showing that the disk uniquely minimizes the second moment among all planar domains of prescribed area.
\begin{myproposition}{}{Talenti-1}
Let $f:[0,\infty)\to[0,\infty)$ be a continuous and strictly increasing function. Let $D\subset\mathbb R^2$ be a bounded measurable set, and let $B$ be the disk centered at the origin such that
\[
|D|=|B|.
\]
Then
\[
\int_D f(|x|)\,dx\geqslant \int_B f(|x|)\,dx.
\]
Moreover, equality holds if and only if
\[
|D\Delta B|=0,
\]
where
\[
D\Delta B=(D\setminus B)\cup(B\setminus D)
\]
denotes the symmetric difference.
\end{myproposition}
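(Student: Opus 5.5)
The plan is a bathtub (rearrangement) argument comparing $D$ and $B$ through their symmetric difference. Let $R$ be the radius of $B$, so that $\pi R^2=|D|$. Since $|D|=|B|<\infty$, subtracting the common part $D\cap B$ gives
\[
|D\setminus B|=|B\setminus D|.
\]
We then write
\[
\int_D f(|x|)\,dx-\int_B f(|x|)\,dx
=
\int_{D\setminus B} f(|x|)\,dx-\int_{B\setminus D} f(|x|)\,dx .
\]
The two integrals on the right are finite: $D$ is bounded and $f$ is continuous, hence bounded on bounded sets.

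Next, we compare the two pieces pointwise with the threshold value $f(R)$. On $D\setminus B$ we have $|x|\geqslant R$, so monotonicity gives $f(|x|)\geqslant f(R)$. On $B\setminus D$ we have $|x|<R$, so $f(|x|)\leqslant f(R)$. Hence
\[
\int_{D\setminus B}f(|x|)\,dx-\int_{B\setminus D}f(|x|)\,dx
\geqslant f(R)\bigl(|D\setminus B|-|B\setminus D|\bigr)=0,
\]
which is the inequality. Nothing about $f\geqslant0$ is needed here, only monotonicity and integrability.

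The equality case is the only delicate point, and it is where strict monotonicity enters. Suppose equality holds. Then both of the following nonnegative integrals must vanish:
\[
\int_{D\setminus B}\bigl(f(|x|)-f(R)\bigr)\,dx,
\qquad
\int_{B\setminus D}\bigl(f(R)-f(|x|)\bigr)\,dx .
\]
Strict monotonicity makes each integrand strictly positive, except on the circle $\{|x|=R\}$, which has Lebesgue measure zero. So the integrands are positive almost everywhere on their domains of integration, and a vanishing integral of an a.e. positive function forces the domain to be null. Therefore $|D\setminus B|=|B\setminus D|=0$, that is, $|D\Delta B|=0$.

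The converse is immediate. If $|D\Delta B|=0$, then $\chi_D=\chi_B$ almost everywhere, and the two integrals coincide.
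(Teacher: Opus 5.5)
Your proof is correct and follows the paper's argument. Both split the difference over $D\setminus B$ and $B\setminus D$, use $|D\setminus B|=|B\setminus D|$, and compare with the threshold value $f(R)$; strict monotonicity, together with the null circle $\{|x|=R\}$, then forces the symmetric difference to be null in the equality case. The only cosmetic difference is that you make both pieces vanish at once, whereas the paper shows $|D\setminus B|=0$ and then deduces $|B\setminus D|=0$ from the equal-measure identity.
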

\begin{proof}
Let $r_B$ denote the radius of the disk $B$. Since $|D|=|B|$, we have
\[
|D\setminus B|=|B\setminus D|.
\]
Moreover,
\[
|x|\leqslant r_B,\qquad x\in B\setminus D,
\]
and therefore, by the monotonicity of $f$,
\[
f(|x|)\leqslant f(r_B),\qquad x\in B\setminus D.
\]
Hence
\begin{align*}
\int_D f(|x|)\,dx-\int_B f(|x|)\,dx
&=\int_{D\setminus B}f(|x|)\,dx
-\int_{B\setminus D}f(|x|)\,dx\\
&\geqslant
\int_{D\setminus B}f(|x|)\,dx
-f(r_B)|B\setminus D|\\
&=
\int_{D\setminus B}\bigl(f(|x|)-f(r_B)\bigr)\,dx\\
&=
\int_{D\setminus\overline B}\bigl(f(|x|)-f(r_B)\bigr)\,dx\\
&\geqslant0.
\end{align*}
This proves the desired inequality.
\\
Assume now that equality holds. Then
\[
\int_{D\setminus\overline B}
\bigl(f(|x|)-f(r_B)\bigr)\,dx=0.
\]
Since $f$ is strictly increasing,
\[
f(|x|)-f(r_B)>0,
\qquad
x\in D\setminus\overline B.
\]
Hence the preceding integral can vanish only if
\[
|D\setminus B|=0.
\]
Using again the identity
\[
|D\setminus B|=|B\setminus D|,
\]
we also obtain
\[
|B\setminus D|=0.
\]
Therefore
\[
|D\Delta B|=0,
\]
which completes the proof.
\end{proof}
Let $D$ be a bounded open domain and  $p$ be the solution to the elliptic problem
\begin{equation}
\begin{cases}
\Delta p(x) = -2 & \text{in } D, \\
p(x) = 0 & \text{on } \partial D.
\end{cases}
\label{2.5}
\end{equation}
The next ingredient is a sharp isoperimetric estimate due to Talenti \cite{Talenti}. It provides a remarkable link between the solution $p$ of \eqref{2.5} and the geometry of the underlying domain. More precisely, the integral of $p$ over $D$ is maximized, among domains of prescribed area, by the disk. The rigidity of the equality case will be particularly important in our analysis: attaining the optimal bound forces the domain to be circular. Thus, this estimate will allow us to convert an analytic identity arising from the rotating patch equation into a geometric rigidity statement as we shall see in the following section.
\begin{mytheorem}{Talenti}{}
    Let $\alpha\in(0,1)$ and $D \subset \mathbb{R}^2$ be a bounded open domain   with $C^{1+\alpha}$ boundary, and let $p$ be defined as in \eqref{2.5}. 
Then we have
\[
\int_D p(x)\,dx \leqslant \frac{1}{4\pi} |D|^2,
\]
with equality if and only if $D$ is an open disk.
\end{mytheorem}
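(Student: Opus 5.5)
The plan is to use Schwarz symmetrization together with the coarea formula and the isoperimetric inequality. This is Talenti's classical argument specialized to the torsion problem. First, by the maximum principle, $p>0$ in $D$ and $p\in C^{1}(\overline D)\cap C^\infty(D)$, since $\partial D\in C^{1+\alpha}$. For $t\ge0$ set
\[
\mu(t)=|\{x\in D:\ p(x)>t\}|,
\]
which is nonincreasing, with $\mu(0)=|D|$. Because $p$ is real analytic in $D$ ($\Delta p$ is constant), its critical set has measure zero. Hence $\mu$ is absolutely continuous, the level sets $\{p=t\}$ are for a.e.\ $t$ smooth curves, and the coarea formula gives
\[
-\mu'(t)=\int_{\{p=t\}}\frac{d\sigma}{|\nabla p|}.
\]

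The key step is a differential inequality for $\mu$. Integrating $-\Delta p=2$ over $\{p>t\}$ and applying the divergence theorem yields
\[
\int_{\{p=t\}}|\nabla p|\,d\sigma=2\mu(t).
\]
Cauchy--Schwarz on the level curve then gives
\[
\mathcal H^1(\{p=t\})^2\le\Big(\int_{\{p=t\}}|\nabla p|\Big)\Big(\int_{\{p=t\}}\tfrac1{|\nabla p|}\Big)=2\mu(t)\,(-\mu'(t)).
\]
The isoperimetric inequality gives $\mathcal H^1(\{p=t\})^2\ge 4\pi\mu(t)$. Combining the two,
\[
-\mu'(t)\ge 2\pi \quad\text{for a.e. } t .
\]
Integrating from $0$ gives $\mu(t)\le |D|-2\pi t$, that is, $\mu(t)\le(|D|-2\pi t)_+$. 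By the layer-cake formula,
\[
\int_D p\,dx=\int_0^\infty\mu(t)\,dt\le\int_0^{|D|/2\pi}(|D|-2\pi t)\,dt=\frac{|D|^2}{4\pi}.
\]
This is exactly the value attained for the disk, where $p=R^2/2-|x|^2/2$.

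For the equality case, equality forces $\mu(t)=|D|-2\pi t$ on $[0,|D|/2\pi]$. This requires $-\mu'=2\pi$ a.e., and hence equality in the isoperimetric inequality for a.e.\ $t$. So a.e.\ superlevel set $\{p>t\}$ is a disk up to null sets; since $p$ is continuous and these sets are open, they are genuine disks. Letting $t\downarrow0$, $D=\bigcup_{t>0}\{p>t\}$ is an increasing union of disks with area $|D|$. It is therefore a disk up to a null set, and since it is an open domain with $C^{1+\alpha}$ boundary, it is an open disk. The converse is the explicit computation for the disk.

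The main obstacle is the regularity justification: absolute continuity of $\mu$, validity of the coarea and divergence identities on a.e.\ level set, and the use of the isoperimetric inequality for possibly nonsmooth level sets. For the latter I would use perimeter in the De Giorgi sense. I would handle the rest through the analyticity of $p$, which makes critical values countable and critical points isolated in the interior. By Sard-type arguments, a.e.\ $t$ is then a regular value, and the steps above hold for those $t$. The equality step also needs the rigidity of the isoperimetric inequality for finite-perimeter sets, which I would take as known.
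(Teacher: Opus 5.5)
The paper gives no proof of this statement: it quotes it from \cite{Talenti} and uses it only through its equality case in the rigidity theorem for $\Omega\notin(0,\frac12)$. Your proposal fills that gap with the classical Talenti argument specialized to the torsion problem, and it is correct.

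What you actually prove is stronger than the stated integral bound. The inequality $\mu(t)\le(|D|-2\pi t)_+=|\{p_B>t\}|$, where $p_B(x)=\frac12(R^2-|x|^2)$ is the torsion function of the disk $B$ with $|B|=|D|$, is exactly Talenti's pointwise comparison $p^\star\le p_B$ between the Schwarz rearrangement of $p$ and $p_B$. The integral inequality is just its layer-cake consequence.

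A few points to tidy up:
\begin{itemize}
\item The claim that critical points of $p$ are isolated in the interior is false in general. For an annulus, $p(r)=-\frac{r^2}{2}+a\log r+b$ has $\nabla p=0$ on the whole circle $r=\sqrt a$. You do not need this claim. Sard's theorem for $p\in C^\infty(D)$ already shows that a.e. $t>0$ is a regular value. For such $t$, the set $\{p>t\}$ is compactly contained in $D$ with smooth boundary $\{p=t\}$. The divergence theorem, Cauchy--Schwarz, and the classical smooth isoperimetric inequality with its equality case then apply directly, so no De Giorgi perimeter is needed.
\item The bound $-\mu'(t)\ge2\pi$ only follows where you can divide by $\mu(t)>0$, that is, for $t<\max p$. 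For $t\ge\max p$ one has $\mu(t)=0$, and $\mu(t)\le(|D|-2\pi t)_+$ is trivial.
\item In the equality case you can skip the ``up to a null set'' step. Choose good values $t_k\downarrow0$. Then $D=\bigcup_k\{p>t_k\}$ exactly, since $p>0$ in $D$. This is an increasing union of nested open disks $B(c_k,r_k)$. Nesting gives $|c_{k+1}-c_k|\le r_{k+1}-r_k$, so the centers converge, and the union is itself an open disk.
\end{itemize}
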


\section{Rigidity in the general case $\Omega\notin(0,\frac12)$}

We now turn to the general rigidity theorem, originally established by G\'omez-Serrano,  Park,  Shi and Yao~\cite{GPSY}. In contrast to the critical case $\Omega=\frac12$, where the rigidity follows from elementary harmonic arguments, the proof for $\Omega\notin(0,\frac12)$ relies on a subtle variational approach. The main ingredients are the isoperimetric inequalities established in the previous section together with a suitable integral identity satisfied by every rotating patch. Their combination shows that the only simply connected rotating patch with angular velocity $\Omega\notin(0,\frac12)$ is the Rankine vortex. More precisely, we intend to prove the following result.

\begin{mytheorem}{}{Rigidity-Gom-Yap}
    Let $D$ be a simply connected bounded domain with $C^{1+\alpha}$ boundary, where $\alpha\in(0,1)$. 
If $D$ is a rotating patch solution with angular velocity $\Omega$, where 
$\Omega\notin (0,\frac12)$, then $D$ must be a disk.
\end{mytheorem}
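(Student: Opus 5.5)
The plan is to derive one integral identity satisfied by every rotating patch and then compare its two sides using the extremal properties of the disk: Proposition~\ref{prop:Talenti-1} with $f(r)=r^2$, and Talenti's theorem. Take the rotation center at the origin and set
\[
f(x):=\psi(x)-\tfrac{\Omega}{2}|x|^2-\mu .
\]
By \eqref{rvortex3}, $f=0$ on $\partial D$. Since $\Delta\psi=\mathbf 1_D$, we have $\Delta f=1-2\Omega$ in $D$. Let $p$ be the torsion function from \eqref{2.5}.

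\textbf{Step 1 (Green's identity with $p$).} Both $f$ and $p$ vanish on $\partial D$, so Green's second identity gives $\int_D f\Delta p=\int_D p\Delta f$, that is,
\[
-2\int_D f\,dx=(1-2\Omega)\int_D p\,dx .
\]
Regularity is sufficient for this: $\psi\in C^{1}(\overline D)$ (its gradient is the log-Lipschitz velocity, in fact $C^{\alpha}$ up to the boundary for a $C^{1+\alpha}$ patch), and $p\in C^{1+\alpha}(\overline D)$ by Schauder theory.

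\textbf{Step 2 (Pohozaev-type identity with the field $x$).} Integrating by parts with $f|_{\partial D}=0$ gives
\[
\int_D x\cdot\nabla f\,dx=-2\int_D f\,dx .
\]
We then compute the left side directly. The quadratic term gives $\int_D x\cdot\nabla(\tfrac{\Omega}{2}|x|^2)\,dx=\Omega\int_D|x|^2\,dx$. For the stream term, write
\[
\int_D x\cdot\nabla\psi\,dx=\frac{1}{2\pi}\iint_{D\times D}\frac{x\cdot(x-y)}{|x-y|^2}\,dx\,dy .
\]
Symmetrizing in $(x,y)$ replaces $x\cdot(x-y)$ by $\tfrac12|x-y|^2$, so this equals $\tfrac{|D|^2}{4\pi}$. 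The double integral is absolutely convergent, so the symmetrization is legitimate. Combining with Step~1 yields the key identity
\[
(1-2\Omega)\int_D p\,dx=\frac{|D|^2}{4\pi}-\Omega\int_D|x|^2\,dx .
\]

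\textbf{Step 3 (comparison).} For the disk $B$ with $|B|=|D|$ we have $\int_B|x|^2\,dx=\tfrac{|D|^2}{2\pi}$. Proposition~\ref{prop:Talenti-1} gives $\int_D|x|^2\,dx\ge \tfrac{|D|^2}{2\pi}$, and Talenti gives $\int_D p\,dx\le\tfrac{|D|^2}{4\pi}$.
\begin{itemize}
\item If $\Omega\le0$: then $1-2\Omega>0$, so the left side is at most $(1-2\Omega)\tfrac{|D|^2}{4\pi}$, while the right side is at least $\tfrac{|D|^2}{4\pi}-\Omega\tfrac{|D|^2}{2\pi}=(1-2\Omega)\tfrac{|D|^2}{4\pi}$. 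Equality is therefore forced in Talenti's inequality, and $D$ is a disk.
\item If $\Omega\ge\tfrac12$: the case $\Omega=\tfrac12$ is the previous theorem. For $\Omega>\tfrac12$ we have $1-2\Omega<0$ and $\Omega>0$, so both inequalities reverse. The left side is at least $(1-2\Omega)\tfrac{|D|^2}{4\pi}$ and the right side is at most that same value. Equality is then forced in Proposition~\ref{prop:Talenti-1}, so $|D\Delta B|=0$. Since $D$ is open with Jordan boundary, $D=B$.
\end{itemize}

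The main obstacle is Step~2. One must get the exact constant $\tfrac{|D|^2}{4\pi}$ from the symmetrization of the Biot--Savart term, and justify the integration by parts. The latter requires the boundary integral $\int_{\partial D}f\,x\cdot n$ to vanish, which uses only $f\in C^1(\overline D)$ together with $f=0$ on $\partial D$. The constants must match those of the disk exactly; otherwise the comparison in Step~3 yields no rigidity.
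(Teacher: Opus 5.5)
Your proof is correct and follows the paper's argument. The paper packages your Steps 1 and 2 into a single flux identity $\int_D v\cdot\nabla\Phi_\Omega\,dx=0$ for the divergence-free field $v=-\nabla\bigl(\tfrac{|x|^2}{2}+p\bigr)$, then uses the same symmetrization giving $\tfrac{|D|^2}{4\pi}$ and the same comparison with the second moment and Talenti's inequality. Your equality-forcing conclusion (Talenti's equality case for $\Omega\le 0$, the second-moment equality case for $\Omega>\tfrac12$) is a slightly cleaner finish than the paper's contradiction argument, and it handles $\Omega=0$, where off-center disks are also solutions, without any extra care.
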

\begin{proof}
    Define
    $$
    \Phi_\Omega(x):=\psi(x)-\tfrac12\Omega |x|^2\quad\hbox{with}\quad \psi(x)=\tfrac{1}{2\pi}\int_{D}\log(|x-y|)\, dy$$
    The domain is rotating with angular velocity $\Omega$ is determined by the constraint
    $$\Phi_\Omega(x)=\mu,\forall x\in\partial D,$$
    with $\mu$ being a real constant. 
    Let $v$ be a free divergence vector field such that $v\in C^1(D)\cap C(\overline{D}).$ We consider the flux
    $$
    \mathcal{I}:=-\int_{D}v(x)\cdot\nabla\Phi_\Omega(x)\, dx.$$
    We will use two different ways to compute $\mathcal{I}$, and show that if $D$ is not a disk, then the two approaches lead to a contradiction for $\Omega \leqslant 0$ or $\Omega \geqslant \tfrac{1}{2}$.
\\
On the one hand, since $\Phi_\Omega$ is constant on $\partial D$ , the divergence theorem yields 
\begin{equation}\label{2.3}
\mathcal{I} 
= -\mu \int_{\partial D} v \cdot n \, d\sigma + \int_D (\nabla \cdot v)\, \Phi_\Omega \, dx
= -\mu \int_D \nabla \cdot v \, dx + \int_D (\nabla \cdot v)\, \Phi_\Omega \, dx 
= 0.
\end{equation}
On the other hand, we fix $v$ as follows,
\[
v(x) := -\nabla \varphi(x) \quad \text{in } D,
\]
where
\[
\varphi(x) := \tfrac{|x|^2}{2} + p(x) \quad \text{in } D.
\]
with $p$ being the solution to the Poisson equation \eqref{2.5}.\\
\\
In the proof, we show that with this choice of $v$, the quantity $I$ can be computed in a second way, yielding 
$\mathcal{I} > 0$ for $\Omega \leqslant 0$ and $\mathcal{I} < 0$ for $\Omega \geqslant \tfrac{1}{2}$. In both cases, this contradicts the 
identity $\mathcal{I} = 0$ obtained in \eqref{2.3}.
\\
On the other hand, we compute $\mathcal{I}$ as follows:
\begin{equation}
\mathcal{I} 
= - \int_D v \cdot \nabla \Phi_\Omega \, dx
= \int_D x \cdot \nabla \Phi_\Omega \, dx 
+ \int_D \nabla p \cdot \nabla \Phi_\Omega \, dx
=: \mathcal{I} _1 + \mathcal{I} _2.
\label{2.6}
\end{equation}
For $\mathcal{I} _1$, we have
\begin{align}
\mathcal{I} _1 
\nonumber&= \frac{1}{2\pi}\int_D x \cdot \nabla \big( \mathbf{1}_D * \log|\cdot| \big) \, dx 
- \Omega\int_D x \cdot  x \, dx \\
\nonumber&= \frac{1}{2\pi} \int_D \int_D x \cdot \frac{x - y}{|x - y|^2} \, dy\, dx 
- \Omega \int_D |x|^2 \, dx \\
\nonumber&= \frac{1}{4\pi} \int_D \int_D \frac{x \cdot (x - y) + y \cdot (y - x)}{|x - y|^2} \, dy\, dx 
- \Omega \int_D |x|^2 \, dx \\
&= \frac{1}{4\pi} |D|^2 - \Omega \int_D |x|^2 \, dx\label{2.7}.
\end{align}
To compute $\mathcal{I}_2$, we use the divergence theorem (together with the fact that $p = 0$ on $\partial D$) to obtain
\begin{equation}\label{2.8}
\mathcal{I}_2 
= - \int_D p \, \Delta \Phi_\Omega \, dx 
= (2\Omega - 1)\int_D p \, dx.
\end{equation}
Plugging \eqref{2.7} and \eqref{2.8} into \eqref{2.6}, we obtain
\begin{align}\label{Ineq-09}
\mathcal{I} 
= \frac{1}{4\pi} |D|^2 
- \Omega \int_D |x|^2 \, dx 
+ (2\Omega - 1)\int_D p \, dx.
\end{align}
Towards a contradiction, assume that $D \neq B$. Among all sets with the same area as $D$, the disk $B$ is uniquely characterized as the minimizer of the second moment. To see this, denote by $r_B$ the radius of $B$. Then,
\begin{align*}
\int_D |x|^2 \, dx - \int_B |x|^2 \, dx
&= \int_{D \setminus B} |x|^2 \, dx - \int_{B \setminus D} |x|^2 \, dx \\
&\geqslant \int_{D \setminus B} |x|^2 \, dx - \int_{B \setminus D} r_B^2 \, dx \\
&\geqslant \int_{D \setminus B} r_B^2 \, dx - \int_{B \setminus D} r_B^2 \, dx \\
&= 0,
\end{align*}
where the last equality follows from $|D \setminus B| = |B \setminus D|$, a consequence of $|D| = |B|$. Moreover, the inequality is strict whenever $D \neq B$. 
Thus, if $D \neq B$, we obtain
\[
\int_D |x|^2 \, dx > \int_B |x|^2 \, dx = \frac{1}{2\pi} |D|^2,
\]
where the last identity follows from a direct computation.\\
Plugging this into \eqref{Ineq-09} yields the following inequality for $\Omega \in \left[\tfrac{1}{2},\infty\right)$:
\[
\mathcal{I} 
\leqslant \frac{1}{4\pi}|D|^2 
- \Omega \frac{1}{2\pi}|D|^2 
+ (2\Omega - 1)\int_D p \, dx
= (1 - 2\Omega)\left( \frac{1}{4\pi}|D|^2 - \int_D p \, dx \right).
\]

On the other hand, for $\Omega \in (-\infty,0]$, we have
\[
\mathcal{I}
\geqslant \frac{1}{4\pi}|D|^2 
- \Omega \frac{1}{2\pi}|D|^2 
+ (2\Omega - 1)\int_D p \, dx
= (1 - 2\Omega)\left( \frac{1}{4\pi}|D|^2 - \int_D p \, dx \right).
\]
In both cases, we obtain a contradiction with $\mathcal{I} = 0$, and the proof is therefore complete.
\end{proof}
\section{Conclusion}

The rigidity results developed in this chapter reveal two complementary
mechanisms leading to radial symmetry. The first is analytic and exploits
the special structure of the $V$-state equation at distinguished values
of the angular velocity. At $\Omega=\frac12$, rigidity follows either
from the maximum principle and harmonicity of the stream function or
from the conformal formulation and the algebraic structure of Faber
polynomials.

The second mechanism is variational and relies on global geometric
quantities rather than on an explicit analysis of the boundary equation.
The general rigidity theorem combines identities satisfied by rotating
patches with sharp geometric inequalities, notably Talenti's
rearrangement inequality, whose equality case ultimately forces the
domain to be a disk.

These two approaches illustrate a useful dichotomy: rigidity may arise
either from the analytic structure of the governing equation or from
extremal properties of geometric functionals. Despite their different
nature, both mechanisms single out the Rankine vortex as the unique
admissible configuration in the corresponding rigidity regimes.

\chapter{Burbea's result revisited}
{\it
This chapter is devoted to a complete and rigorous proof of Burbea's celebrated theorem $\cite{Burbea82},$ which establishes the existence of nontrivial rotating vortex patches  with prescribed $m$-fold symmetry bifurcating from the Rankine vortex. Burbea's pioneering work laid the foundations of the modern theory of rotating vortex patches by introducing the conformal mapping formulation and the bifurcation approach. However, several delicate analytical issues remained unresolved, largely because the functional framework based on Hardy spaces was not well suited to the nonlinear problem. The primary objective of this chapter is to revisit this fundamental result in the natural setting of Hölder spaces and to present a complete, self-contained proof based on the Crandall--Rabinowitz bifurcation theorem, following the approach developed in $\cite{HMV13}.$
}
\section{General overview}

The purpose of this chapter is to present a complete and rigorous proof of Burbea's celebrated existence theorem for rotating vortex patches, or \(V\)-states, of the two-dimensional Euler equations. As discussed in the previous chapters, a vortex patch is a weak solution whose vorticity is the characteristic function of a bounded domain,
\(
\omega=\chi_D.
\)
The study of rotating vortex patches is motivated by the search for coherent structures whose geometry is preserved by the Euler flow up to a rigid rotation. The first explicit examples of such structures, discussed in detail in the previous chapters, are the Rankine vortices and the Kirchhoff ellipses. Among these, the unit disk occupies a distinguished position. It gives rise to the classical Rankine vortex and, as we shall see, provides the natural trivial configuration from which non elliptic rotating patches bifurcate. In this sense, the disk serves as the reference state for the local bifurcation theory developed in this chapter.

The first compelling evidence for the existence of rotating solutions beyond 
the classical family of Kirchhoff ellipses came from the numerical experiments 
of Deem and Zabusky \cite{DZ78}, which revealed remarkable families of rotating 
patches exhibiting prescribed \(m\)-fold symmetry. Motivated by these 
observations, Burbea \cite{Burbea82} established the first rigorous existence 
result by combining a conformal mapping formulation of the free-boundary 
problem with local bifurcation techniques inspired by the work of Keller and 
Langford \cite{Keller}.

Burbea's pioneering work introduced the fundamental ideas that underpin the 
modern theory of \(V\)-states. However, several delicate analytical aspects of 
the argument were left incomplete. In particular, the Hardy-space framework 
adopted there is not sufficiently well adapted to the regularity properties of 
the nonlinear boundary operator, and some of the key steps required for a 
complete application of the bifurcation theorem were only sketched.

The primary objective of this chapter is therefore to revisit Burbea's theorem in the natural setting of Hölder spaces and present a complete, self-contained proof based on the Crandall--Rabinowitz bifurcation theorem \cite{C-R71}, following the approach developed in \cite{HMV13}.

The starting point of Burbea's approach is the contour dynamics
equation derived in the previous chapter. We have seen in
\eqref{rotsq12} that a simply connected vortex patch \(\omega=\chi_D\) rotates around
the origin with angular velocity \(\Omega\) if and only if
\begin{align}\label{vort-patch-rot}
F(\Omega,f)=0\,,
\end{align}
where 
\begin{align}\label{rotsq14}
F(\Omega,f)(w)
:=
\operatorname{Im}
\Bigg[
\Bigg(
2\Omega\,\overline{\Phi(w)}
+
\fint_{\mathbb T}
\frac{\overline{\Phi(\xi)}-\overline{\Phi(w)}}
{\Phi(\xi)-\Phi(w)}
\,\Phi'(\xi)\,d\xi
\Bigg)
w\Phi'(w)
\Bigg],\quad\forall w\in\mathbb{T},
\end{align}
with
\[
\Phi:\mathbb C\setminus\overline{\mathbb D}
\longrightarrow
\mathbb C\setminus\overline D
\]
be the conformal mapping normalized at infinity by
\[
\Phi(z)=z+\sum_{n=0}^{\infty}a_nz^{-n}:=z+f(z),
\qquad |z|>1.
\]
Assuming that the boundary \(\partial D\) is rectifiable,
Carathéodory's theorem guarantees that \(\Phi\) extends continuously
and injectively up to the boundary. Consequently, the boundary of the
patch can be parametrized by
\begin{align}\label{conf-bound}
\Phi(w)
=
w+\sum_{n=0}^{\infty}a_n\overline w^{\,n}
=
w+f(w),
\qquad w\in\mathbb T.
\end{align}
Therefore, the unknown domain is encoded through the perturbation
\(f\). When
\[
\|f\|_{\mathrm{Lip}}<1,
\]
the mapping \(\Phi\) remains conformal and defines a Jordan domain.
Throughout this chapter we shall restrict ourselves to perturbations
satisfying this condition.
\\

Our objective is to solve \eqref{vort-patch-rot} in a neighborhood of the unit
disk, namely, for small perturbations \(f\) of the trivial solution \(f=0\).
The guiding principle is to regard the disk as a simple reference configuration
from which more intricate rotating patches may emerge through bifurcation.
To implement this strategy, we formulate the problem in terms of a nonlinear
operator \(F\) acting between suitable Banach spaces and apply the
Crandall--Rabinowitz bifurcation theorem. The analysis proceeds in several
steps. We first establish the required regularity properties of \(F\). We then
compute its linearization at the disk and analyze the corresponding spectral
structure. Finally, we verify the kernel, range, and transversality conditions
of the Crandall--Rabinowitz theorem. This will yield Burbea's existence result
for nontrivial rotating vortex patches, which is presented in
Section~\ref{sec-Burbea}.

\section{Basics on bifurcation theory and the Crandall--Rabinowitz theorem}

We briefly recall the local bifurcation principle that will be used in
the construction of Burbea's \(V\)-states. Consider a nonlinear equation
\[
F(\Omega,f)=0,
\qquad
F:\mathbb R\times X\to Y,
\]
between two Banach spaces, and suppose that
\[
F(\Omega,0)=0, \,\forall \Omega\in \mathbb{R}\,.
\]
 Thus,
\((\Omega,0)\) forms a branch of trivial solutions. In our setting,
\(f=0\) corresponds to the unit disk and \(\Omega\) is the angular
velocity.
The starting point of the bifurcation analysis is the linearized
operator
\[
L_\Omega:=\partial_fF(\Omega,0).
\]
As long as \(L_\Omega\) is invertible, the Implicit Function Theorem
precludes the existence of nontrivial solutions sufficiently close to
the trivial branch. Therefore, bifurcation can only occur at values of
\(\Omega\) for which \(L_\Omega\) loses invertibility.
The Crandall--Rabinowitz theorem provides the precise mechanism for
turning this linear degeneracy into a nonlinear branch of solutions.
Roughly speaking, if the linearized operator is Fredholm of index zero
with a one-dimensional kernel and if the degeneracy is crossed
transversally as the parameter varies, then a unique local curve of
nontrivial solutions bifurcates from the trivial branch. These are
exactly the properties that we shall verify for the rotating vortex
patch equation. In particular, each relevant Fourier mode will give
rise to a branch of \(m\)-fold symmetric \(V\)-states.

For completeness, we recall the version of the
Crandall--Rabinowitz theorem that will be used below
\cite{C-R71}. For a linear operator \(L\), we denote by \(N(L)\)
and \(R(L)\) its kernel and range, respectively.

\begin{mytheorem}{}{Crandall-Rab}
Let $X$, $Y$ be two Banach spaces, let $V$ be a neighborhood of $0$ in
$X$, and let
\[
F:\mathbb{R}\times V\to Y
\]
satisfy the following assumptions:

\begin{enumerate}
\item[(a)] $F(\Omega,0)=0$ for every $\Omega\in\mathbb{R}$.

\item[(b)] The partial derivatives
\[
\partial_\Omega F,\qquad \partial_xF,\qquad \partial_\Omega\partial_xF
\]
exist and are continuous.

\item[(c)] The spaces
\[
N(\partial_xF(0,0))
\qquad\text{and}\qquad
Y/R(\partial_xF(0,0))
\]
are one-dimensional.

\item[(d)] If
\[
N(\partial_xF(0,0))
=
\operatorname{span}\{x_0\},
\]
then
\[
\partial_\Omega\partial_xF(0,0)x_0
\notin
R(\partial_xF(0,0)).
\]
\end{enumerate}

Let $Z$ be any complement of $N(\partial_xF(0,0))$ in $X$. Then there exist a
neighborhood $U$ of $(0,0)$ in $\mathbb R\times X$, a positive number
$a$, and continuous functions
\[
\varphi:(-a,a)\to\mathbb R,
\qquad
\psi:(-a,a)\to Z,
\]
such that
\[
\varphi(0)=0,
\qquad
\psi(0)=0,
\]
and
\[
F^{-1}(0)\cap U
=
\Big\{
(\varphi(s),
s x_0+s\psi(s))
:\ |s|< a
\Big\}
\cup
\Big\{
(\Omega,0):(\Omega,0)\in U
\Big\}.
\]
Thus, a unique curve of nontrivial solutions bifurcates from
the trivial branch at $(0,0)$. 
\end{mytheorem}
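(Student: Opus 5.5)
The plan is the classical Lyapunov--Schmidt reduction combined with the Implicit Function Theorem; this is the route of \cite{C-R71}. After shifting the parameter we work at $(0,0)$. Set $L:=\partial_xF(0,0)$, $N(L)=\operatorname{span}\{x_0\}$, and fix the complement $Z$ with $X=\operatorname{span}\{x_0\}\oplus Z$. By (c), $R(L)$ has codimension one. It is closed because it has finite codimension and is the range of a bounded operator; this is the standard argument via $X/N(L)\oplus\CC^1\to Y$. So we may choose $y_0\notin R(L)$ with $Y=R(L)\oplus\operatorname{span}\{y_0\}$, and denote by $P:Y\to R(L)$ the associated continuous projection.

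We look for nontrivial solutions of the form $x=s(x_0+z)$ with $s\neq0$ and $z\in Z$. Since $F(\Omega,0)=0$, we can write
\[
F(\Omega,sx_0+sz)=s\int_0^1\partial_xF\bigl(\Omega,ts(x_0+z)\bigr)(x_0+z)\,dt.
\]
This leads us to introduce
\[
G(s,\Omega,z):=\int_0^1\partial_xF\bigl(\Omega,ts(x_0+z)\bigr)(x_0+z)\,dt,
\]
defined for $s$ small, including $s=0$. For $s\neq0$, the zeros of $F$ of the above form are exactly the zeros of $G$. By hypothesis (b), $G$ is continuous on a neighborhood of $(0,0,0)$ in $\RR\times\RR\times Z$. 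Moreover, $G$ is continuously differentiable in the pair $(\Omega,z)$; the $\Omega$-derivative uses the continuity of $\partial_\Omega\partial_xF$, and the $z$-derivative is handled through the difference quotient, using the continuity of $\partial_xF$.

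At the base point we have $G(0,0,0)=Lx_0=0$. The partial derivative of $G$ with respect to $(\Omega,z)$ at $(0,0,0)$ is the map
\[
(\omega,\zeta)\mapsto \omega\,\partial_\Omega\partial_xF(0,0)x_0+L\zeta,\qquad \RR\times Z\to Y.
\]
We check that this map is an isomorphism. $L$ restricted to $Z$ is injective, and its range is $R(L)$, so $L|_Z$ is a bijection onto $R(L)$. By (d), the vector $\partial_\Omega\partial_xF(0,0)x_0$ lies outside $R(L)$, so together with $R(L)$ it spans $Y$. Hence the map is a bounded bijection, and the open mapping theorem makes its inverse bounded.

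The Implicit Function Theorem, in its version that requires only continuity in the parameter $s$ and $C^1$ regularity in the unknowns, then gives continuous functions $\varphi(s)$ and $\psi(s)$, with $\varphi(0)=0$ and $\psi(0)=0$, that describe all zeros of $G$ near the origin. This is the step I expect to be the main obstacle. The difficulty is that $G$ is only continuous in $s$, so I would prove this version of the IFT directly by a contraction argument, with the contraction constant uniform in $s$.

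It remains to show that the two families exhaust $F^{-1}(0)$ near $(0,0)$. Take a zero $(\Omega,x)$ close to $(0,0)$ and decompose $x=sx_0+z'$. If $z'=0$ and $s=0$, the zero lies on the trivial branch. Otherwise, project the equation onto $R(L)$ with $P$. Applying $P$ to $F(\Omega,x)=0$ and using that $L|_Z$ is invertible onto $R(L)$ yields $\|z'\|\le o(1)(|s|+\|z'\|)$, hence $\|z'\|=o(|s|)$. In particular $s\neq0$, and $z:=z'/s$ is small. Then $(s,\Omega,z)$ is a zero of $G$ near the origin, and by uniqueness in the IFT it coincides with the branch: $\Omega=\varphi(s)$ and $z=\psi(s)$. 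This gives exactly the description of $F^{-1}(0)\cap U$ claimed in the statement.
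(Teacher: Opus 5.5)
The paper does not prove this theorem; it only quotes it from \cite{C-R71}. Your Lyapunov--Schmidt reduction via $G(s,\Omega,z)=s^{-1}F(\Omega,s(x_0+z))$, extended by $\partial_xF(\Omega,0)(x_0+z)$ at $s=0$, followed by the implicit function theorem with mere continuity in $s$, is precisely the original Crandall--Rabinowitz argument, and it is correct. The points to make explicit are the following.
\begin{enumerate}
\item $Z$ must be a closed complement, so that $\mathbb{R}\times Z$ is a Banach space and the open mapping theorem applies. Also, the scalar field in your closed-range argument is $\mathbb{R}$, not $\mathbb{C}$.
\item For every $s$, including $s=0$, one has $\partial_zG(s,\Omega,z)=\partial_xF\bigl(\Omega,s(x_0+z)\bigr)|_Z$. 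This must be computed from $s^{-1}F$, not by differentiating your integral formula, which would require $\partial_x^2F$.
\item Take $U=\{(\Omega,sx_0+z'):\ |s|<a,\ |\Omega|<\delta,\ \|z'\|<\epsilon\}$, choosing $a$ after $\delta,\epsilon$ so that the curve stays inside $U$. This makes the claimed set identity for $F^{-1}(0)\cap U$ exact.
\end{enumerate}
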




\section{Functional tools}

In this section, we introduce the functional framework that will be used
throughout the proof of Burbea's theorem. We begin by recalling some basic
facts about Fourier series and H\"older spaces on the unit circle.
We identify the one-dimensional torus with the unit circle in the complex
plane,
\[
\mathbb T:=\bigl\{w\in\mathbb C:\ |w|=1\bigr\}.
\]
Every function \(f\in L^2(\mathbb T)\) admits the Fourier expansion
\[
f(w)=\sum_{n\in\mathbb Z}a_n w^n,
\qquad \text{for a.e. } w\in\mathbb T,
\]
where \((a_n)_{n\in\mathbb Z}\) denotes the sequence of Fourier
coefficients of \(f\) given by
\[
a_n
=
\fint_{\mathbb T}
f(w)\overline{w}^{\,n+1}\,dw,
\qquad n\in\mathbb Z.
\]
Let $\alpha\in(0,1)$ and $f:\mathbb T\to\mathbb C$. We define the Hölder seminorm by
\[
[f]_{C^\alpha(\mathbb T)}
=
\sup_{w_1\neq w_2}
\frac{|f(w_1)-f(w_2)|}
{|w_1-w_2|^\alpha},
\]
The H\"older space \(C^\alpha(\mathbb T)\) consists of all continuous
functions \(f:\mathbb T\to\mathbb C\) for which this seminorm is finite,
and is endowed with the norm
\[
\|f\|_{C^\alpha(\mathbb T)}
=
\|f\|_{L^\infty(\mathbb T)}
+
[f]_{C^\alpha(\mathbb T)}.
\]
It is known that for $C^\alpha(\mathbb T)$ is an algebra with the estimate 
\begin{align}\label{algebra-cal}
\|fg\|_{C^\alpha(\mathbb T)}\lesssim \|f\|_{C^\alpha(\mathbb T)}\|g\|_{L^\infty(\mathbb T)}+\|f\|_{L^\infty(\mathbb T)}\|g\|_{C^\alpha(\mathbb T)}
\end{align}
It is often convenient to identify $f$ with the associated $2\pi$-periodic function
\[
g(\theta)=f(e^{i\theta}),\qquad \theta\in\mathbb R,
\]
and to use the equivalent seminorm
\[
[g]_{C^\alpha_{\rm per}}
=
\sup_{\theta\neq\eta}
\frac{|g(\theta)-g(\eta)|}
{\left|\sin\!\left(\frac{\theta-\eta}{2}\right)\right|^\alpha}.
\]
Indeed, since
\[
|e^{i\theta}-e^{i\eta}|
=
2\left|\sin\!\left(\frac{\theta-\eta}{2}\right)\right|,
\]
we have
\[
[g]_{C^\alpha_{\rm per}}
=
2^\alpha [f]_{C^\alpha(\mathbb T)},
\]
and therefore the two seminorms, as well as the corresponding norms, are equivalent. \\
For a continuously differentiable function on $\mathbb T$, we define the tangential derivative by
\[
\partial_\tau f(w):=
\left.\frac{d}{dt}f(we^{it})\right|_{t=0},
\qquad w\in\mathbb T.
\]
It is immediate that 
$$
\partial_\tau f(w)=g^\prime(\theta), w=e^{ i\theta}$$
If $f$ is the boundary trace of a holomorphic function in $\mathbb D$, then
\[
\partial_\tau f(w)=iwf'(w),
\]
where $f'$ denotes the complex derivative.
The space $C^{1+\alpha}(\mathbb T)$ consists of all functions $f\in C^1(\mathbb T)$ such that $\partial_\tau f\in C^\alpha(\mathbb T)$, equipped with the norm
\[
\|f\|_{C^{1+\alpha}(\mathbb T)}
:=
\|f\|_{L^\infty(\mathbb T)}
+\|\partial_\tau f\|_{L^\infty(\mathbb T)}
+[\partial_\tau f]_{C^\alpha(\mathbb T)}.
\]
This norm is equivalent to
$$
\|g\|_{L^\infty(\mathbb{R})}
+\|g^\prime\|_{L^\infty(\mathbb R)}
+[g^\prime]_{{C^\alpha_{\rm per}}}
$$
Throughout the paper, we shall freely use either characterization according to convenience.
\\
We now introduce the Banach space that will be used to parametrize the
boundaries of the vortex patches. Define
\begin{align}\label{space-X}
X:=\Bigl\{
f\in C^{1+\alpha}(\mathbb T):
f(w)=\sum_{n\geqslant0}a_n  w^{-\,n},
\quad a_n\in\mathbb R
\Bigr\}.
\end{align}
Note that  the Fourier coefficients of functions in \(X\) are  vanishing for 
non-positive frequencies. This choice is naturally
motivated by the structure of the perturbation term appearing in the
conformal parametrization \eqref{conf-bound}. Moreover, functions in
\(X\) satisfy the symmetry relation
\[
f(\overline w)=\overline{f(w)},
\qquad w\in\mathbb T,
\]
which is equivalent to the fact that all their Fourier coefficients
are real. Consequently, the corresponding vortex patches are
symmetric with respect to the real axis. As we shall discuss later, the range of the nonlinear functional $F$ will be embedded in  the space
\begin{align}\label{space-Y}
Y
=
\Bigl\{
g\in C^\alpha(\mathbb T):
g(w)
=
\sum_{n\geqslant 1}g_n\,\operatorname{Im}(w^n),
\quad g_n\in\mathbb R
\Bigr\}.
\end{align}
Observe that every element of \(Y\) is a real-valued odd function on
the unit circle. This space naturally arises from the imaginary part
appearing in the rotating patch equation \eqref{rotsq14}.
\\
The spaces \(X\) and \(Y\) will serve as the domain and codomain of
the nonlinear functional \(F\) associated with the rotating vortex
patch equation. They are specifically designed so that the symmetry of
the problem is preserved and the hypotheses of the
Crandall--Rabinowitz theorem can be verified.

\section{Burbea's result}\label{sec-Burbea}

The numerical experiments of Deem and Zabusky \cite{DZ78}
revealed families of noncircular rotating vortex patches exhibiting
\(m\)-fold symmetry, suggesting that the classical Rankine and Kirchhoff
solutions belong to a much richer class of coherent structures.

\begin{figure}[H]
\centering
\includegraphics[width=3cm]{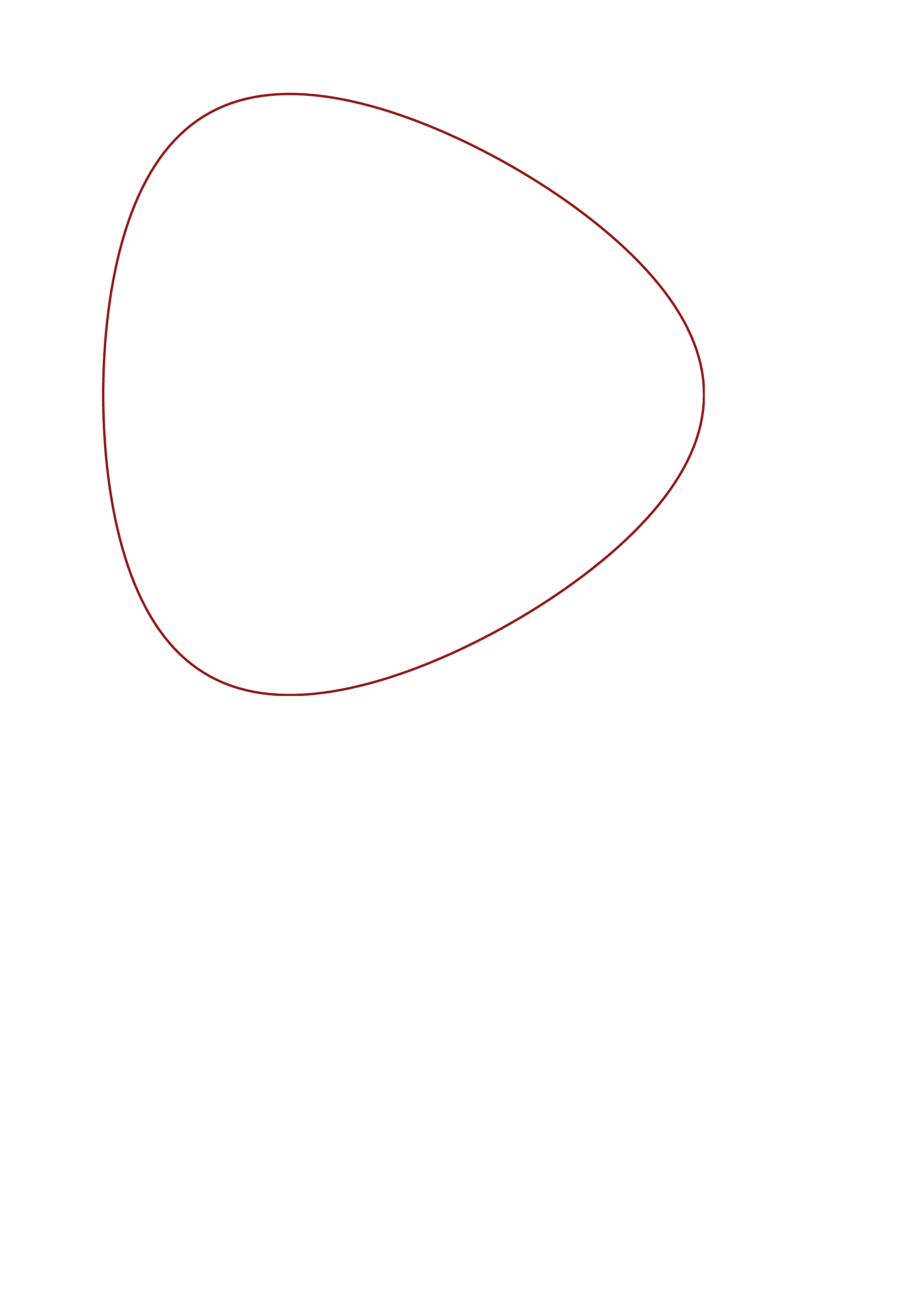}\hspace{0.2cm}
\includegraphics[width=3cm]{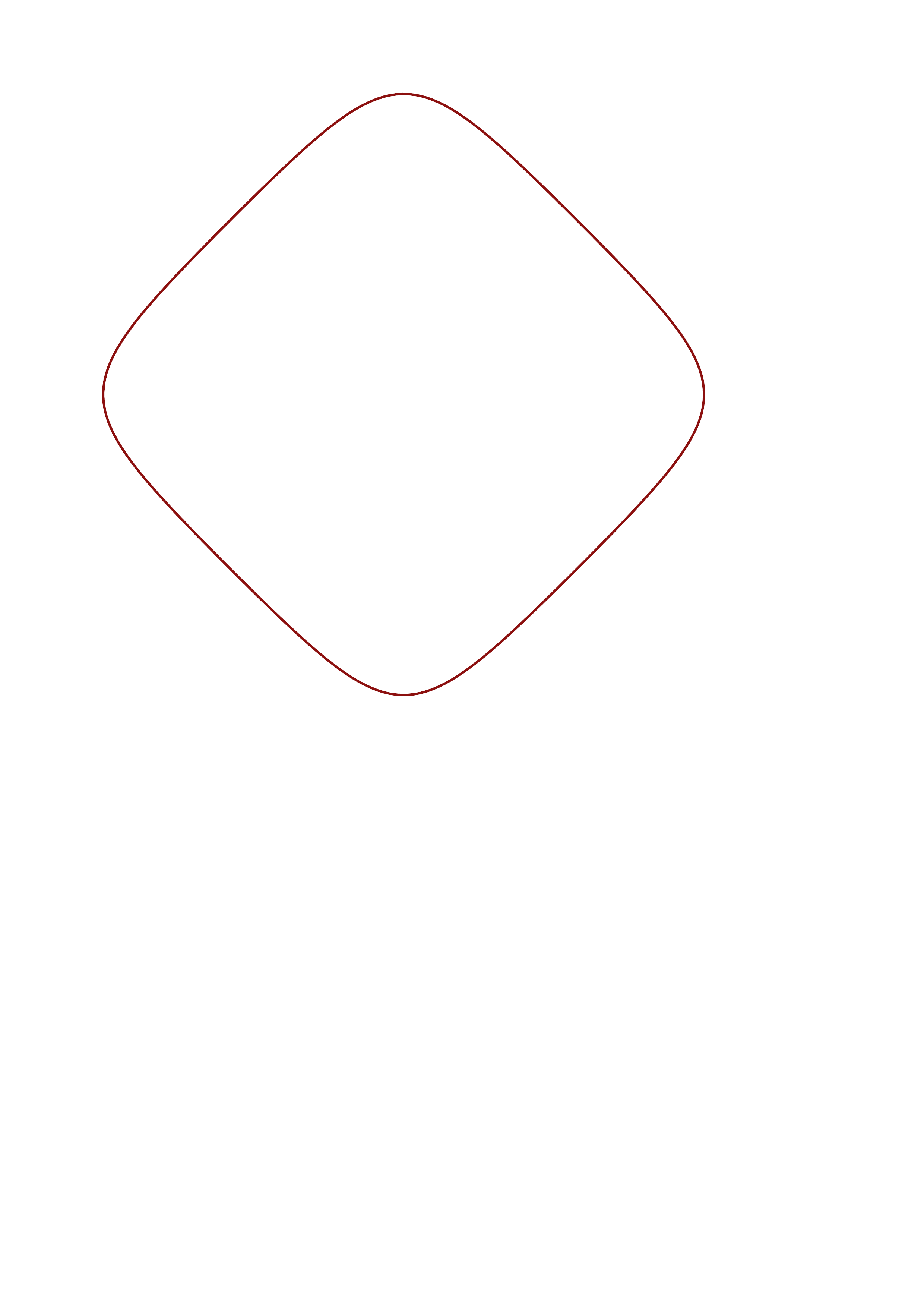}\hspace{0.2cm}
\includegraphics[width=3cm]{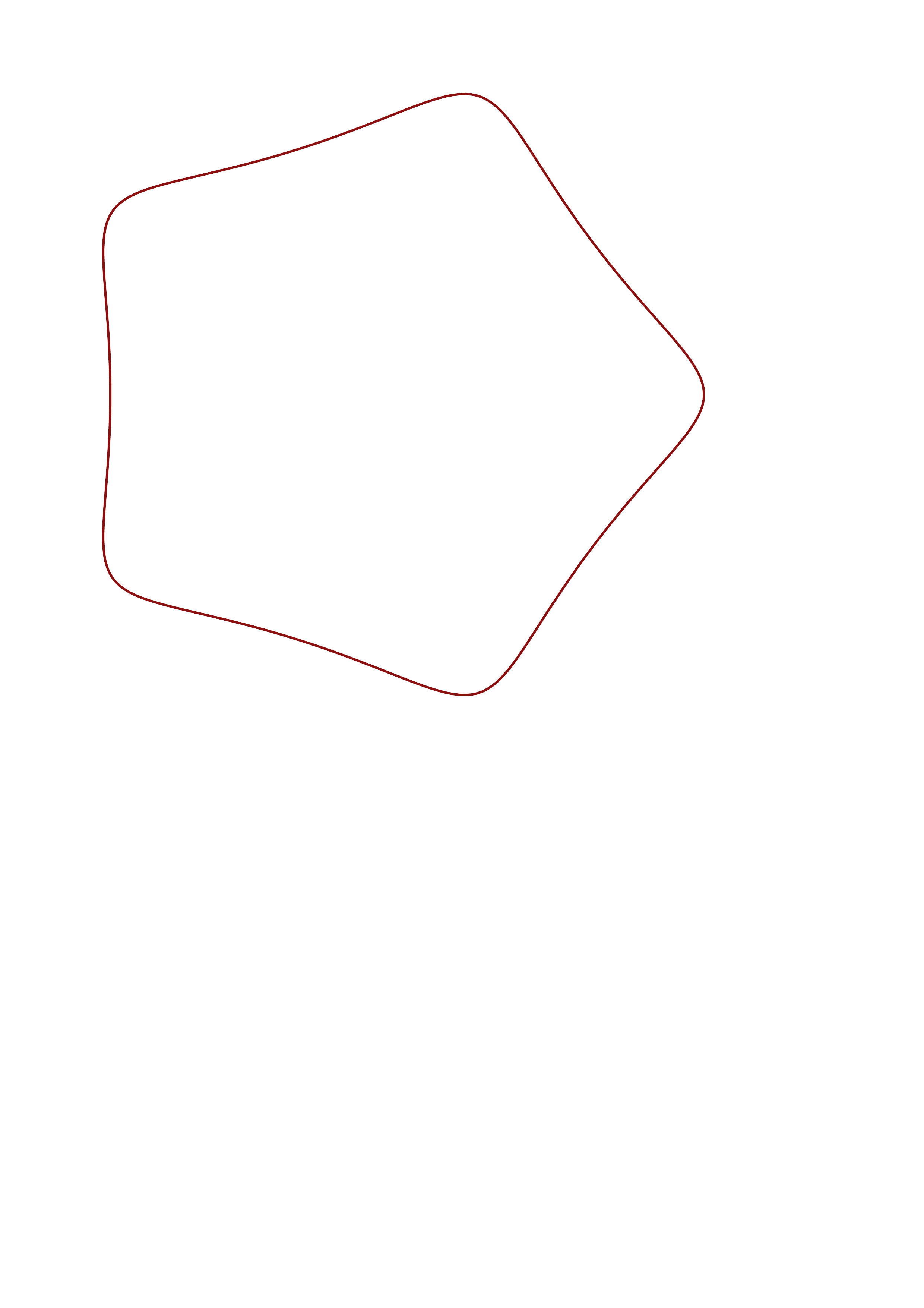}
\caption{Numerical \(V\)-states with \(m=3,4,5\)  observed
by Deem and Zabusky \cite{DZ78}.}
\label{fig:Deem-Zabusky}
\end{figure}
Burbea's theorem proved in \cite{Burbea82}  can be stated as follows.
\begin{mytheorem}{}{Burbea}
Let $\alpha\in(0,1)$ and let $m\ge 2$ be an integer. Then there exists
a family of $m$-fold symmetric rotating vortex patches
$
(V_m)_{m\ge2}
$
for the equation \eqref{vort-patch-rot}-\eqref{rotsq14}. Moreover, for each $m$ the branch
\(V_m\) bifurcates from the trivial Rankine vortex
at the angular velocity
\[
\Omega_m=\frac{m-1}{2m}.
\]

In addition, the boundary of the corresponding vortex patch belongs to
the H\"older class
\[
C^{1+\alpha}(\mathbb T).
\]
In fact, the boundary is analytic.
\end{mytheorem}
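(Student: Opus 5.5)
The plan is to apply the Crandall--Rabinowitz theorem (Theorem~\ref{thm:Crandall-Rab}) to $F(\Omega,f)$ from \eqref{rotsq14}. We take $X$ from \eqref{space-X} and $Y$ from \eqref{space-Y}, both restricted to $m$-fold symmetric functions. Concretely, $X_m$ is the subspace of $f\in X$ with $f(w)=\sum_{n\ge1}a_{nm-1}\overline w^{\,nm-1}$, and $Y_m$ is the subspace of $Y$ spanned by $\operatorname{Im}(w^{nm})$. We work in a small ball $\|f\|_{C^{1+\alpha}}<\varepsilon$, so that $\Phi=\mathrm{Id}+f$ is a conformal parametrization of a Jordan curve.

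\textbf{Step 1: well-definedness and regularity.} We check that $F$ maps $\mathbb R\times B_\varepsilon\subset\mathbb R\times X_m$ into $Y_m$. Since all Fourier coefficients of $f$ are real, $F(\Omega,f)(\overline w)=-F(\Omega,f)(w)$, so $F$ is real and odd. The $m$-fold symmetry follows from the rotational covariance $\Phi(e^{2i\pi/m}w)=e^{2i\pi/m}\Phi(w)$. The Hölder regularity $C^\alpha$ of the Cauchy-type integral
\[
\fint_{\mathbb T}\frac{\overline{\Phi(\xi)}-\overline{\Phi(w)}}{\Phi(\xi)-\Phi(w)}\,\Phi'(\xi)\,d\xi
\]
comes from writing it as a singular integral whose kernel $K(w,\xi)$ obeys the standard size and smoothness estimates $|K|\lesssim1$ and $|\partial_w K|\lesssim|w-\xi|^{-1}$. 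These hold because of the bi-Lipschitz bound $|\Phi(\xi)-\Phi(w)|\ge(1-\|f'\|_\infty)|\xi-w|$. A classical $T(1)$-type lemma for such kernels on $\mathbb T$ then gives $C^\alpha$ bounds, and together with the algebra property \eqref{algebra-cal} this yields $F(\Omega,f)\in C^\alpha$. For the $C^1$ dependence in $f$ (and the smoothness of $\partial_\Omega\partial_fF$, which is trivially linear in $\Omega$), we differentiate the kernel in the Gâteaux sense and estimate the differences with the same kernel lemma, which upgrades Gâteaux differentiability to Fréchet $C^1$. We expect this step to be the main technical obstacle. The difficulty is obtaining uniform Hölder estimates for integral operators whose kernels depend nonlinearly on $f$, including the continuity of $f\mapsto\partial_fF(\Omega,f)$ in operator norm.

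\textbf{Step 2: linearization and spectrum.} $F(\Omega,0)=0$ because the disk rotates for every $\Omega$. For $h=\sum a_n\overline w^{\,n}$, we compute $\partial_fF(\Omega,0)h$ by expanding to first order, using residue computations on $\mathbb T$ such as $\fint\frac{\overline\xi^{\,n}-\overline w^{\,n}}{\xi-w}d\xi$. We expect the Fourier multiplier
\[
\partial_fF(\Omega,0)h(w)=\sum_{n\ge1}n\,a_{n-1}\Bigl(\tfrac{n-1}{n}-2\Omega\Bigr)\operatorname{Im}(w^{n})
\]
up to normalization. For $\Omega=\Omega_m=\frac{m-1}{2m}$, the kernel in $X_m$ is spanned by $v_m=\overline w^{\,m-1}$. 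No other mode $n=km$ with $k\ge2$ vanishes, since $\frac{n-1}{n}$ is strictly increasing in $n$. The range consists of those $g\in Y_m$ whose coefficient of $\operatorname{Im}(w^m)$ vanishes. Proving this requires showing that the inverse multiplier, of order $-1$ (with symbol behaving like $1/n$), maps $C^\alpha$ back into $C^{1+\alpha}$. We do this by writing $1/(n(\frac{n-1}{n}-2\Omega_m))=\frac1{n(1-2\Omega_m)}+O(n^{-2})$, combining the boundedness of $\partial_\tau^{-1}$ composed with the Szegő projection on $C^\alpha$ with a smoothing remainder. Thus $Y_m/R$ is one-dimensional.

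\textbf{Step 3: transversality and conclusion.} We have $\partial_\Omega\partial_fF(\Omega_m,0)v_m=-2m\operatorname{Im}(w^m)$, which does not belong to the range. Crandall--Rabinowitz then yields the branch $V_m$ of $m$-fold symmetric solutions with $C^{1+\alpha}$ boundaries. We will only remark on analyticity, referring to later regularity results, since it is not obtained by this argument.
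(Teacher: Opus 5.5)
Your proposal is correct and follows the paper's proof essentially step for step. It uses the same spaces $X_m$, $Y_m$, the same bounded-kernel Hölder lemma (with $|K|\lesssim 1$, $|\partial_wK|\lesssim|w-\xi|^{-1}$) for the regularity of $F$, and the same diagonal multiplier at the disk. It also uses the same splitting of the inverse symbol into an $m/n$ part (a primitive of the Szegő projection) and an $O(n^{-2})$ part (convolution with an $L^2$ kernel) to identify the range, followed by the same transversality check.

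The only discrepancies are minor. Computing directly gives the multiplier $n\bigl(2\Omega-\frac{n-1}{n}\bigr)$, so your sign, and hence the $-2m$ in your transversality vector, is flipped; this is harmless. As in the paper, the argument yields $C^{1+\alpha}$ boundaries, while analyticity is only asserted and not proved.
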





The bifurcation diagram associated with the theorem is represented in
Figure \ref{fig:bifurcation}. Each branch corresponds to a family of
rotating patches sharing the same symmetry group. Near the
bifurcation point, the solutions are small perturbations of the disk
and are essentially governed by the Fourier mode
\[
\cos(m\theta).
\]
As one moves away from the bifurcation point along the global branch,
the geometry of the patches becomes increasingly nonlinear, gradually
developing the characteristic shapes observed numerically by Deem and
Zabusky. At the limiting configuration, the boundary loses smoothness
and develops corners.

\begin{figure}[H]
\centering
\includegraphics[width=6cm]{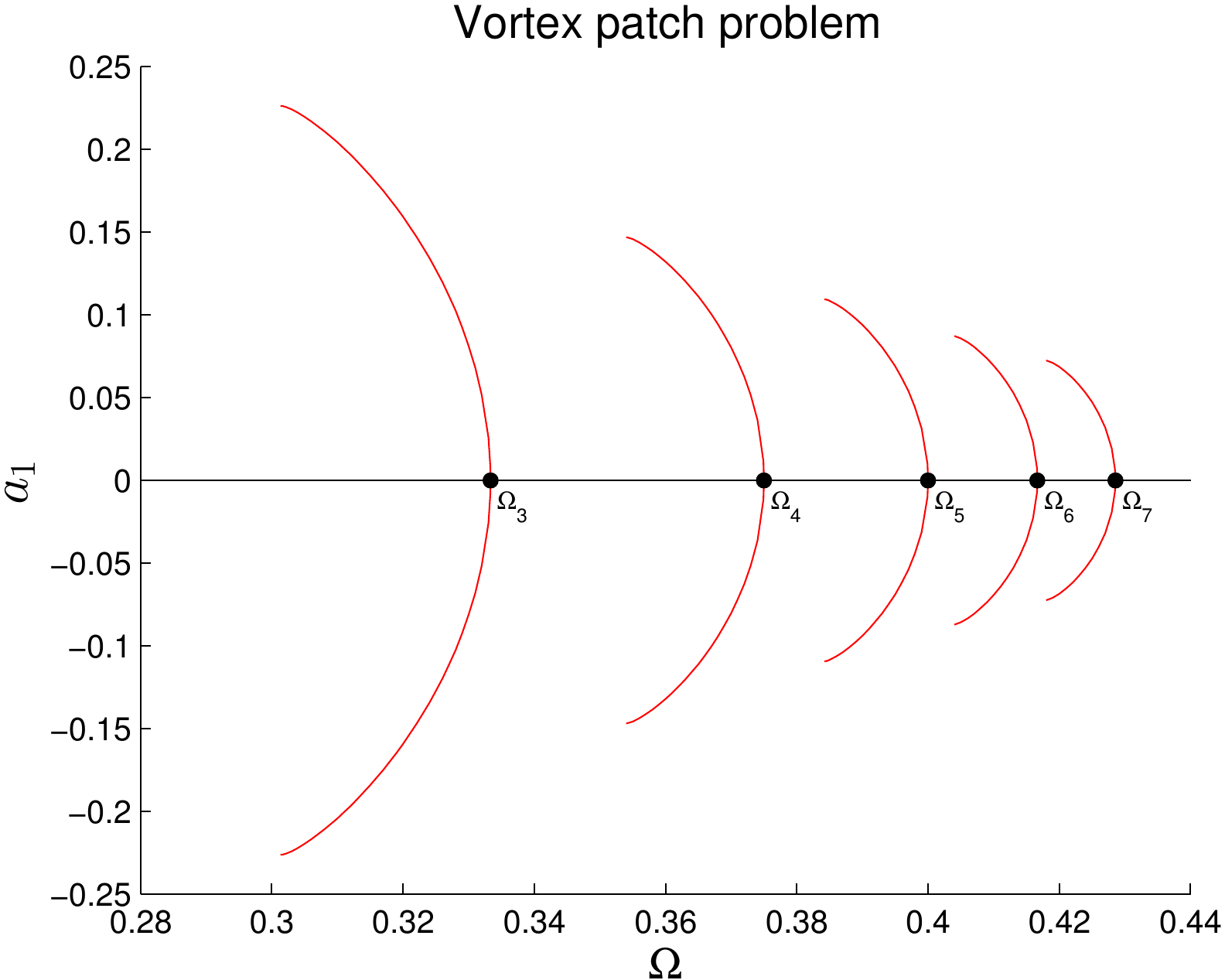}
\caption{Bifurcation curves associated with rotating vortex patches.
The numerical simulations are taken from \cite{DHMV16}.}
\label{fig:bifurcation}
\end{figure}

The remainder of this section is devoted to a detailed proof of
Burbea's theorem. We shall verify
the assumptions of the Crandall--Rabinowitz theorem. This requires the
use of appropriate Banach spaces, the study of the
regularity of the nonlinear functional \(F\), the computation of the
linearized operator around the disk, the characterization of its
kernel and range, and finally the verification of the transversality
condition.

\section{Technical lemmas}

In this section we collect several auxiliary results that will be repeatedly used throughout the analysis. The first lemma provides a simple characterization of functions whose Fourier coefficients are real. This property is closely related to the symmetry of the conformal parametrization and plays an important role in proving that the nonlinear functional $F$ preserves the symmetry of the space $X$. In particular, it allows us to identify the imaginary part of such functions with elements of the target space $Y$.

\begin{mylemma}{}{lem-stabili-realF}
Let
\[
f(w)=\sum_{n\in\mathbb Z} a_n w^n,
\qquad w\in\mathbb T,
\]
be the Fourier expansion of a function
$f\in L^2(\mathbb T)$. Then the following assertions are equivalent:

\begin{enumerate}

\item[(i)] All the Fourier coefficients $a_n$ are real.

\item[(ii)] The function $f$ satisfies
\[
f(\overline w)=\overline{f(w)},
\qquad \text{for a.e. } w\in\mathbb T.
\]
\end{enumerate}
Moreover, if $f\in C^\alpha(\mathbb T)$ satisfies \emph{(ii)}, then
\[
\hbox{Im}(f)\in Y.
\]
\end{mylemma}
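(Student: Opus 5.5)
The plan is to compare Fourier coefficients of $f(\overline w)$ and $\overline{f(w)}$. First I would compute both expansions. Since $\overline w=w^{-1}$ on $\mathbb T$,
\[
f(\overline w)=\sum_{n\in\mathbb Z}a_n w^{-n}=\sum_{n\in\mathbb Z}a_{-n}w^{n},
\qquad
\overline{f(w)}=\sum_{n\in\mathbb Z}\overline{a_n}\,w^{-n}=\sum_{n\in\mathbb Z}\overline{a_{-n}}\,w^{n}.
\]
Both series converge in $L^2(\mathbb T)$, because $w\mapsto\overline w$ is a measure-preserving involution and complex conjugation is an isometry. By uniqueness of Fourier coefficients in $L^2$, (ii) holds a.e. if and only if $a_{-n}=\overline{a_{-n}}$ for every $n\in\mathbb Z$. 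This is exactly (i), so both implications come at once.

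For the last assertion, I would take $f\in C^\alpha(\mathbb T)$ satisfying (ii). By continuity, the identity $f(\overline w)=\overline{f(w)}$ then holds everywhere. Set $g=\operatorname{Im} f$. It is real-valued, lies in $C^\alpha(\mathbb T)$ since $|\operatorname{Im} a-\operatorname{Im} b|\le|a-b|$, and satisfies $g(\overline w)=-g(w)$. To identify its expansion, I would use the real coefficients $a_n$ and write $\operatorname{Im}(a_nw^n)=a_n\operatorname{Im}(w^n)$. Then
\[
g(w)=\sum_{n\in\mathbb Z}a_n\operatorname{Im}(w^n)=\sum_{n\geqslant1}(a_n-a_{-n})\operatorname{Im}(w^n),
\]
using $\operatorname{Im}(w^{-n})=-\operatorname{Im}(w^n)$ and $\operatorname{Im}(w^0)=0$. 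This equality holds in $L^2$, hence a.e., and thus everywhere by continuity, understood as the Fourier series of the continuous function $g$. So $g$ has the form in \eqref{space-Y} with real $g_n=a_n-a_{-n}$, and $g\in Y$.

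The only delicate point is the sense in which the series defining elements of $Y$ converges. A general $C^\alpha$ function has an absolutely convergent Fourier series only when $\alpha>1/2$. I would therefore read membership in $Y$ as ``$g\in C^\alpha$, and its Fourier expansion, in $L^2$ sense, has the stated form.'' Equivalently, $g$ lies in $C^\alpha$, is real-valued, and is odd, $g(\overline w)=-g(w)$. This is consistent with how $Y$ is described after \eqref{space-Y}, and the argument above establishes precisely these properties.
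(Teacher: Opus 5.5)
Your proof is correct and follows the paper's argument: compare the Fourier expansions of $f(\overline w)$ and $\overline{f(w)}$, invoke uniqueness of Fourier coefficients, and then read off $\operatorname{Im} f=\sum_{n\geqslant1}(a_n-a_{-n})\operatorname{Im}(w^n)$ from the reality of the $a_n$ (you also identify the coefficients explicitly, which the paper leaves implicit). The convergence issue you raise is harmless, and your step ``in $L^2$, hence a.e., hence everywhere'' can be tightened: by the Dini--Lipschitz test the symmetric partial sums of a $C^\alpha$ function converge uniformly for every $\alpha\in(0,1)$, so the expansion defining $Y$ holds pointwise, not merely in $L^2$.
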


\begin{proof}
Assume first that $a_n\in\mathbb R$ for every $n\in\mathbb Z$. Then
\[
f(\overline w)
=
\sum_{n\in\mathbb Z} a_n \overline w^n.
\]
On the other hand,
\[
\overline{f(w)}
=
\sum_{n\in\mathbb Z}\overline{a_n}\,\overline w^n
=
\sum_{n\in\mathbb Z}a_n\overline w^n,
\]
which immediately yields
\[
f(\overline w)=\overline{f(w)}.
\]
Conversely, assume that
\[
f(\overline w)=\overline{f(w)}
\]
for almost every $w\in\mathbb T$. Using the Fourier expansions,
\[
f(\overline w)
=
\sum_{n\in\mathbb Z}a_n\overline w^n,
\qquad
\overline{f(w)}
=
\sum_{n\in\mathbb Z}\overline{a_n}\,\overline w^n,
\]
and invoking the uniqueness of Fourier coefficients, we obtain
\[
a_n=\overline{a_n},
\qquad \forall n\in\mathbb Z.
\]
Hence all Fourier coefficients are real.
\\
Finally, if $f\in C^\alpha(\mathbb T)$ satisfies
\[
f(\overline w)=\overline{f(w)},
\]
then its Fourier coefficients are real. Consequently,
\[
\hbox{Im} f(w)
=
\sum_{n\ge 1} b_n \hbox{Im}(w^n),
\]
for some $ b_n\in\mathbb R,$
which shows that $\hbox{Im}(f)\in Y$.
\end{proof}

The next result is a classical regularity estimate for singular integral operators on the unit circle. It will be repeatedly used to establish the Hölder continuity of various integral expressions arising in the contour dynamics formulation. The assumptions imposed on the kernel are sufficiently weak to cover all operators appearing in the linear and nonlinear analysis.

\begin{mylemma}{}{SI}
Let $K(w,\xi)$ be a measurable function on
\[
\mathbb T\times\mathbb T
\setminus
\{(w,\xi)\in\mathbb T\times\mathbb T:\; w=\xi\},
\]
satisfying, for some constant $C_0>0$,
\[
|K(w,\xi)|\leqslant C_0,
\qquad w\neq\xi,
\]
and assume that, for each fixed $\xi\in\mathbb T$, the map
$w\mapsto K(w,\xi)$ is differentiable away from $\xi$ and satisfies
\[
\left|
{\partial_w}K(w,\xi)
\right|
\leqslant
\frac{C_0}{|w-\xi|}\cdot
\]
Then, for every $\alpha\in(0,1)$, the integral operator
\begin{equation*}
\mathcal L(f)(w)
=
\int_{\mathbb T}K(w,\xi)f(\xi)\,d\xi
\end{equation*}
maps $L^\infty(\mathbb T)$ continuously into $C^\alpha(\mathbb T)$ and satisfies
\[
\|\mathcal L(f)\|_{C^\alpha}
\leqslant
C_\alpha C_0
\|f\|_{L^\infty},
\]
where $C_\alpha$ depends only on $\alpha$.
\end{mylemma}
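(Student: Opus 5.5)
The plan is to bound the $L^\infty$ norm directly and then estimate Hölder differences by splitting the circle into a region near the two points and a region far from them. The sup bound is immediate: $|\mathcal L(f)(w)|\leqslant C_0\|f\|_{L^\infty}\int_{\mathbb T}|d\xi|=2\pi C_0\|f\|_{L^\infty}$. For the seminorm, fix $w_1\neq w_2$ in $\mathbb T$ and set $d:=|w_1-w_2|$. If $d$ is bounded below (say $d\geqslant 1$), the sup bound already gives $|\mathcal L f(w_1)-\mathcal L f(w_2)|\leqslant 4\pi C_0\|f\|_{L^\infty}\leqslant 4\pi C_0\|f\|_{L^\infty} d^\alpha$. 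So assume $d<1$.

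Split $\mathbb T=A\cup A^c$ with $A:=\{\xi\in\mathbb T:\ |\xi-w_1|<2d\}$. On $A$, use only the boundedness of $K$:
\[
\Big|\int_A\bigl(K(w_1,\xi)-K(w_2,\xi)\bigr)f(\xi)\,d\xi\Big|\leqslant 2C_0\|f\|_{L^\infty}|A|\lesssim C_0\|f\|_{L^\infty}\,d\leqslant C_0\|f\|_{L^\infty} d^\alpha,
\]
since the arc $A$ has length $O(d)$ and $d<1$.

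On $A^c$, apply the mean value inequality along the shorter arc $\gamma$ joining $w_1$ to $w_2$; its length is at most $\frac{\pi}{2}d$. For $\zeta\in\gamma$ and $\xi\in A^c$ we have $|\zeta-\xi|\geqslant|\xi-w_1|-|\zeta-w_1|\geqslant |\xi-w_1|-\tfrac{\pi}{2}\cdot\tfrac{d}{2}$. A cruder but valid bound comes from $|\zeta-w_1|\leqslant \frac{\pi}{2}d$, but then $A$ should be enlarged to radius $4d$ so that $|\zeta-\xi|\geqslant\frac12|\xi-w_1|$ holds; that is the choice to fix. Then $\xi\mapsto K(\cdot,\xi)$ is differentiable along $\gamma$ (which avoids $\xi$), and
\[
|K(w_1,\xi)-K(w_2,\xi)|\leqslant \frac{\pi}{2}d\sup_{\zeta\in\gamma}|\partial_wK(\zeta,\xi)|\leqslant \frac{\pi C_0\, d}{|\xi-w_1|}.
\]
Integrating over $A^c$ gives a bound $\lesssim C_0\|f\|_{L^\infty}\,d\int_{4d\leqslant|\xi-w_1|}\frac{|d\xi|}{|\xi-w_1|}\lesssim C_0\|f\|_{L^\infty}\,d\bigl(1+\log\tfrac1d\bigr)$. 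Since $d\log(1/d)\leqslant C_\alpha d^\alpha$ for $d<1$ and $\alpha<1$, this region is also controlled by $C_\alpha C_0\|f\|_{L^\infty}d^\alpha$.

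Continuity of $\mathcal L(f)$ follows from the same difference estimate, and linearity gives continuity of the map $L^\infty\to C^\alpha$. The main delicacy is the far-region geometry: choosing the radius of $A$ so that the whole arc $\gamma$ stays at distance comparable to $|\xi-w_1|$ from $\xi$, so the derivative bound can be applied uniformly. The logarithmic loss is exactly why the conclusion holds only for $\alpha<1$.
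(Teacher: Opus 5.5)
Your proposal is correct and follows essentially the same route as the paper: the sup bound $2\pi C_0\|f\|_{L^\infty}$, then a split of $\mathbb T$ into an arc of radius comparable to $d=|w_1-w_2|$ around $w_1$ (controlled by $|K|\leqslant C_0$) and its complement (controlled by the mean value inequality and $\int_{|\xi-w_1|\gtrsim d}|\xi-w_1|^{-1}|d\xi|\lesssim 1+\log\frac1d$), closed by $d(1+\log\frac1d)\lesssim_\alpha d^\alpha$. Your extra care (integrating $\partial_wK$ along the shorter arc, treating $d\geqslant 1$ separately, and choosing the near radius so that $|\zeta-\xi|\geqslant\frac12|\xi-w_1|$ on that arc) only makes explicit what the paper leaves implicit. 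Discard your first tentative bound $|\zeta-w_1|\leqslant\frac{\pi}{4}d$, which is false (take $\zeta=w_2$), and keep the corrected radius $4d$; alternatively, the bound $|\zeta-w_1|\leqslant d$ on the shorter arc already works with radius $2d$.
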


\begin{proof}
We first establish the boundedness of $\mathcal L(f)$. By the uniform bound on the kernel,
\[
|\mathcal L(f)(w)|
\leqslant
\|f\|_{L^\infty}
\int_{\mathbb T}|K(w,\xi)|\,|d\xi|
\leqslant
2\pi C_0\|f\|_{L^\infty}.
\]
Let now $w_1\neq w_2\in\mathbb T$ and set
\[
r=|w_1-w_2|>0.
\]
We decompose
\[
|\mathcal L(f)(w_1)-\mathcal L(f)(w_2)|
\leqslant
\mathcal I_1+\mathcal I_2+\mathcal I_3,
\]
where
\begin{align*}
\mathcal I_1
&=
\int_{B_{2r}(w_1)}
|f(\xi)|\,|K(w_1,\xi)|\,|d\xi|,
\\
\mathcal I_2
&=
\int_{B_{2r}(w_1)}
|f(\xi)|\,|K(w_2,\xi)|\,|d\xi|,
\\
\mathcal I_3
&=
\int_{B^c_{2r}(w_1)}
|f(\xi)|\,|K(w_1,\xi)-K(w_2,\xi)|\,|d\xi|.
\end{align*}
Using the boundedness of the kernel,
\[
\mathcal I_1+\mathcal I_2
\lesssim
\|f\|_{L^\infty}
|w_1-w_2|.
\]
To estimate $\mathcal I_3$, we invoke the mean value theorem together with the derivative estimate:
\[
|K(w_1,\xi)-K(w_2,\xi)|
\lesssim
\frac{|w_1-w_2|}{|w_1-\xi|},
\qquad
\xi\in B_{2r}^c(w_1).
\]
Therefore,
\[
\mathcal I_3
\lesssim
\|f\|_{L^\infty}
|w_1-w_2|
\int_{B_{2r}^c(w_1)}
\frac{|d\xi|}{|w_1-\xi|}\cdot
\]
Since
\[
\int_{B_{2r}^c(w_1)}
\frac{|d\xi|}{|w_1-\xi|}
\lesssim
1+|\log r|,
\]
we obtain
\[
\mathcal I_3
\lesssim
\|f\|_{L^\infty}
\,r(1+|\log r|).
\]
Finally, for every $\alpha\in(0,1)$,
\[
r(1+|\log r|)
\lesssim r^\alpha,
\]
which yields
\[
|\mathcal L(f)(w_1)-\mathcal L(f)(w_2)|
\lesssim
\|f\|_{L^\infty}
|w_1-w_2|^\alpha.
\]
The proof is complete.
\end{proof}

\section{Regularity of the map $F$}

In this section we investigate the regularity properties of the nonlinear functional $F$ introduced in \eqref{rotsq14}. Our main objective is to establish that $F$ is well defined on a suitable neighborhood of the trivial configuration and possesses enough smoothness to allow the application of bifurcation techniques.
Throughout this section, we fix parameters
\[
r,\alpha\in(0,1),
\]
and consider the open ball
\begin{align}\label{ball-alpha}
B_r^\alpha
=
\Big\{
f\in X:\;
\|f\|_{C^{1+\alpha}}< r
\Big\}.
\end{align}
The smallness assumption on $r$ guarantees that the conformal perturbations under consideration remain close to the identity map. In particular, this prevents the appearance of singularities in the integral kernels defining $F$ and ensures that all geometric quantities involved are well defined.
\\
The main result of this section is the following.

\begin{mytheorem}{}{Theorem-smooth}
Let $\alpha,r\in(0,1)$. The following assertions hold.

\begin{enumerate}

\item  The map
\[
F:\mathbb{R}\times B_r^\alpha\longrightarrow Y
\]
is well defined.

\item The map $F$ is of class $C^1$.

\item The mixed derivative
\[
\partial_\Omega\partial_fF
\]
exists and is continuous.
\end{enumerate}
\end{mytheorem}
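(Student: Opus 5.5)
The plan is to split $F$ into a local part and a nonlocal part and to treat each with Lemma~\ref{lem:SI}. Write $\Phi=\mathrm{Id}+f$ with $f\in B_r^\alpha$. Then
\[
F(\Omega,f)(w)=\operatorname{Im}\Big[2\Omega\,\overline{\Phi(w)}\,w\Phi'(w)\Big]+\operatorname{Im}\Big[I(f)(w)\,w\Phi'(w)\Big],
\qquad
I(f)(w)=\fint_{\mathbb T}\frac{\overline{\Phi(\xi)}-\overline{\Phi(w)}}{\Phi(\xi)-\Phi(w)}\,\Phi'(\xi)\,d\xi .
\]
The first term is a polynomial expression in $f$, $\overline f$ and $f'$. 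Since $C^\alpha(\mathbb T)$ is an algebra by \eqref{algebra-cal}, this term is smooth, and affine in $\Omega$, as a map $\mathbb R\times X\to C^\alpha$. Consequently $\partial_\Omega F(\Omega,f)=2\operatorname{Im}[\overline{\Phi}\,w\Phi']$ and $\partial_\Omega\partial_fF(\Omega,f)h=2\operatorname{Im}[\overline h\,w\Phi'+\overline\Phi\,w h']$. Both are clearly continuous in $(\Omega,f)$, and this settles assertion (3) once (1) and (2) are in place.

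\textbf{Well-posedness and symmetry.} Since $\|f'\|_{L^\infty}<r<1$, the map $\Phi$ is bi-Lipschitz on $\mathbb T$. Indeed,
\[
|\Phi(\xi)-\Phi(w)|\geqslant (1-r)|\xi-w|,
\]
so the kernel
\[
K_f(w,\xi)=\frac{\overline{\Phi(\xi)}-\overline{\Phi(w)}}{\Phi(\xi)-\Phi(w)}
\]
is bounded by $\frac{1+r}{1-r}$. Differentiating in $w$ gives terms of the form $\Phi'(w)\times(\text{bounded})/(\Phi(\xi)-\Phi(w))$, which are $O(|w-\xi|^{-1})$. Lemma~\ref{lem:SI} applied with the $L^\infty$ function $\Phi'(\xi)$ then gives $I(f)\in C^\alpha$, hence $F(\Omega,f)\in C^\alpha$ by the algebra property.

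It remains to show that $F(\Omega,f)\in Y$. Real Fourier coefficients of $f$ give $\Phi(\overline w)=\overline{\Phi(w)}$. The change of variables $\xi\mapsto\overline\xi$ in $I$ (note $d\overline\xi=-\overline{\xi}^{2}d\xi$ and the $\fint$ normalization) shows $I(f)(\overline w)=\overline{I(f)(w)}$. The same identity holds for $w\Phi'(w)$. Lemma~\ref{lem:stabili-realF} then puts the imaginary part in $Y$, since the zero mode of an odd real function vanishes.

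\textbf{$C^1$ regularity.} I will compute the Gâteaux derivative of $I$ in the direction $h\in X$:
\[
\partial_fI(f)h(w)=\fint_{\mathbb T}\Big[\frac{\overline{h(\xi)}-\overline{h(w)}}{\Phi(\xi)-\Phi(w)}\Phi'(\xi)-\frac{(\overline{\Phi(\xi)}-\overline{\Phi(w)})(h(\xi)-h(w))}{(\Phi(\xi)-\Phi(w))^2}\Phi'(\xi)+K_f(w,\xi)h'(\xi)\Big]d\xi .
\]
Each piece has a kernel of the form
\[
\frac{\Delta_1\,\Delta_2}{\Delta_3\,\Delta_4},
\qquad \Delta_j=g_j(\xi)-g_j(w),\quad g_j\in\{\Phi,\overline\Phi,h,\overline h\},
\]
multiplied by an $L^\infty$ density ($\Phi'$ or $h'$). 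These kernels are bounded by $C\|h\|_{C^1}$, and their $w$-derivatives are bounded by $C\|h\|_{C^{1}}/|w-\xi|$. Lemma~\ref{lem:SI} therefore gives
\[
\|\partial_f I(f)h\|_{C^\alpha}\lesssim\|h\|_{C^{1+\alpha}},
\]
and in fact only the $C^1$ norm of $h$ enters.

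Continuity of $f\mapsto\partial_fI(f)$ in operator norm comes from the same method applied to differences. Writing $\partial_fI(f_1)h-\partial_fI(f_2)h$ as a sum of terms each containing one factor of the form $\Delta(f_1-f_2)$ or $(f_1-f_2)'$ produces kernels still satisfying the hypotheses of Lemma~\ref{lem:SI}, with constant $C\|f_1-f_2\|_{C^1}\|h\|_{C^1}$. The remainder $I(f+h)-I(f)-\partial_fI(f)h$ is handled in the same way, using a second-order Taylor expansion of the rational kernel and obtaining the bound $O(\|h\|_{C^1}^2)$. This shows that the Gâteaux derivative is a Fréchet derivative. Composing with the smooth algebra operations and $\operatorname{Im}$ yields assertion (2).

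\textbf{Main obstacle.} I expect the main obstacle to be verifying the derivative bound $|\partial_wK|\lesssim|w-\xi|^{-1}$ for the kernels that contain $h'$ and $\Phi'$. Differentiating $\Delta_j$ in $w$ produces $g_j'(w)$, which is only $C^\alpha$. This is harmless because only boundedness of $g_j'$ is needed, but some care is required so that no derivative lands on the density $h'(\xi)$. Since the differentiation is in $w$ and not in $\xi$, this holds, so I expect the argument to close using only $C^1$ control of the kernels.
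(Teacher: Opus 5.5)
Your proposal is correct and follows essentially the same route as the paper. Both arguments split off the local term $\overline{\Phi}\,w\Phi'$, apply Lemma~\ref{lem:SI} to the kernel $K(w,\xi)$ and to the kernels of the Gateaux derivative (bounded by $C\|h\|_{\mathrm{Lip}}$, with $w$-derivative of order $|w-\xi|^{-1}$), obtain the symmetry through the change of variables $\xi\mapsto\overline\xi$ together with Lemma~\ref{lem:lem-stabili-realF}, and read off $\partial_\Omega\partial_fF$ explicitly. The only difference is that you sketch the continuity of $f\mapsto\partial_fF(\Omega,f)$ and the Gateaux-to-Fr\'echet step via difference kernels carrying a factor $f_1-f_2$, which the paper only defers to \cite{HH15,HMV13}.
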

\begin{proof}
{\bf{1.}}\quad We decompose $F$ from \eqref{rotsq14} into two parts
$$
F(\Omega,f)=2\Omega \hbox{Im}(F_1(f))+\hbox{Im}(F_2(f)),\quad\hbox{with} \quad F_1(f)(w):=  \overline{\Phi(w}) w\Phi^\prime(w)
$$
and 
\begin{align*}
F_2(f)(w)&=w\,\Phi^\prime(w)\fint_{\mathbb{T}}\tfrac{\overline{\Phi(\xi)}-\overline{\Phi(w)}}{\Phi(\xi)-\Phi(w)}\,\Phi^\prime(\xi)d\xi\\
&:=w\,\Phi^\prime(w)\mathcal{I}(\Phi)(w)
\end{align*}
where
$$
\mathcal{I}(\Phi)(w):=\mathcal{I}_\Phi(\Phi)(w),\quad \mathcal{I}_\Phi(h)(w)=\fint_{\TT}K(w,\xi) h^\prime(\xi)d\xi, \quad K(w,\xi)=\tfrac{\overline{\Phi(\xi)}-\overline{\Phi(w)}}{\Phi(\xi)-\Phi(w)}
$$
As $\Phi\in C^{1+\alpha}(\mathbb T)$, we have 
$\Phi'\in C^\alpha(\mathbb T)$. Hence, by the algebra property 
\eqref{algebra-cal}, we obtain that $F_1\in C^\alpha(\mathbb T)$. 
Moreover, Lemma \ref{lem:lem-stabili-realF} ensures that 
$F_1\in Y$ whenever $f\in X$.
Similarly, in order to prove that $F_2\in Y$, it suffices to establish 
that $\mathcal I(\Phi)\in Y$. We begin by proving the Hölder regularity
\[
\mathcal I(\Phi)\in C^\alpha(\mathbb T).
\]
It is straightforward that
$$
|K(w,\xi)|= 1
$$
Differentiating  the kernel $K$ with respect to \(w\), we get
\[
\partial_w K(w,\xi)
=
\frac{
-\partial_w\overline{\Phi(w)}
\big(\Phi(\xi)-\Phi(w)\big)
+
\big(\overline{\Phi(\xi)}-\overline{\Phi(w)}\big)
\Phi'(w)
}
{\big(\Phi(\xi)-\Phi(w)\big)^2}.
\]
Using the identity
\begin{equation*}\label{diff1}\forall w\in\mathbb{T},\quad{\partial_w \overline{\Phi(w)}} =
-\frac{1}{{w}^2} \overline{\Phi^\prime(w)}.
\end{equation*}
then we get
$$
\forall w\in\mathbb{T},\quad \left|{\partial_w \overline{\Phi(w)}}\right|\leqslant
|\Phi^\prime(w)|.
$$
Therefore,
\begin{align*}
|\partial_w K(w,\xi)|
&\leqslant
\frac{
|\partial_w\overline{\Phi(w)}|
\,|\Phi(\xi)-\Phi(w)|
+
|\overline{\Phi(\xi)}-\overline{\Phi(w)}|
\,|\Phi'(w)|
}
{|\Phi(\xi)-\Phi(w)|^2}\\
&\leqslant \frac{2|\Phi^\prime(w)|}{|\Phi(\xi)-\Phi(w)|}
\end{align*}
Since
\[
\Phi(w)=w+f(w),
\qquad \|f\|_{\mathrm{Lip}}\leqslant r<1,
\]
we have
\[
|\Phi(\xi)-\Phi(w)|
\geqslant |\xi-w|-|f(\xi)-f(w)|
\geqslant (1-r)|\xi-w|.
\]
Similarly,
\[
|\Phi(\xi)-\Phi(w)|
\leqslant (1+r)|\xi-w|,
\]
which implies that 
\[
|\Phi'(w)|\leqslant  1+r.
\]
Hence
\begin{align}\label{est-kern}
\forall w\neq\xi,\quad |\partial_w K(w,\xi)|
\leqslant  2\frac{1+r}{1-r}
\frac{1}{|\xi-w|}\cdot
\end{align}
Now, we can apply Lemma \ref{lem:SI} allowing to get
\begin{align}\label{singular-1}
\|\mathcal{I}_\Phi(h)\|_\alpha\leqslant C\|h^\prime\|_{L^\infty}
\end{align}
which shows in particular  that $\mathcal{I}(\Phi)\in C^\alpha(\TT)$. Next, we shall prove that the Fourier coefficients of $\mathcal{I}(\Phi)$ are real. For this aim, we shall use Lemma \ref{lem:lem-stabili-realF} and show that
\begin{align}\label{Sym-id}
\overline{\mathcal{I}(\Phi)(w)}=\mathcal{I}(\Phi)(\overline{w})
\end{align}
Since $\Phi$ has real Fourier coefficients, we have
\[
\Phi(\overline w)=\overline{\Phi(w)},
\qquad
\Phi'(\overline w)=\overline{\Phi'(w)}.
\]
Therefore
\[
K(\overline w,\overline \xi)
=
\frac{\overline{\Phi(\overline \xi)}-\overline{\Phi(\overline w)}}
{\Phi(\overline \xi)-\Phi(\overline w)}
=
\frac{\Phi(\xi)-\Phi(w)}
{\overline{\Phi(\xi)}-\overline{\Phi(w)}}
=
\overline{K(w,\xi)}.
\]
We make the change of variable
$
\eta=\overline{\xi},\,\xi\in\mathbb T
,$ which reverses the orientation of the unit circle, we get
\begin{align*}
\mathcal I(\Phi)(\overline w)
&=
\fint_{\mathbb T}K(\overline w,\eta)\Phi'(\eta)\,d\eta\\
&=-\fint_{\mathbb T}K(\overline w,\overline{\xi})\Phi'(\overline{\xi})\,d\overline{\xi}\\
&=\overline{
\fint_{\mathbb T}K(w,\xi)\Phi'(\xi)\,d\xi
}
\end{align*}
Thus
\[
\mathcal I(\Phi)(\overline w)
=
\overline{
\mathcal I(\Phi)(w)
}.
\]
This ends the proof of \eqref{Sym-id}.
\\
We now compute the differential of $F$ with respect to $f$, while keeping
$\Omega$ fixed.We shall first compute Gateaux derivatives and one can easily show that it will coincide with  Fréchet derivative. Let $h\in X$
and define
\[
\Phi_t(w)=w+f(w)+t h(w)=\Phi(w)+t h(w).
\]
Then
\[
\partial_fF(\Omega,f)[h]
=
\left.\frac{d}{dt}F(\Omega,f+th)\right|_{t=0}.
\]
Since
\[
F(\Omega,f)(w)
=
\operatorname{Im}
\Bigg[
\Big(
2\Omega\overline{\Phi(w)}
+
\mathcal I(\Phi)(w)
\Big)
w\Phi'(w)
\Bigg],
\]
we get
\begin{align}\label{Linear-op}
\nonumber \partial_fF(\Omega,f)[h](w)
=&
\operatorname{Im}
\Big[
\Big(
2\Omega\overline{h(w)}
+
\partial_f\mathcal I(\Phi)[h](w)
\Big)w\Phi'(w)
\Big]\\
&
+\operatorname{Im}\Big[\Big(
2\Omega\overline{\Phi(w)}+
\mathcal I(\Phi)(w)
\Big)w h'(w)
\Big].
\end{align}
From direct computations, we obtain
\[
\partial_f\mathcal I(\Phi)[h](w)
=
\fint_{\mathbb T}
\partial_fK(\Phi)[h](w,\xi)\Phi'(\xi)\,d\xi
+
\mathcal I_\Phi(h)(w)
\]
where
\[
K_1(w,\xi):=\partial_fK(\Phi)[h](w,\xi)
=
\frac{
\big(\overline{h(\xi)}-\overline{h(w)}\big)
\big(\Phi(\xi)-\Phi(w)\big)
-
\big(\overline{\Phi(\xi)}-\overline{\Phi(w)}\big)
\big(h(\xi)-h(w)\big)
}
{\big(\Phi(\xi)-\Phi(w)\big)^2}.
\]
It follows that 
\[
\begin{aligned}\label{eq-lineariz}
\nonumber \partial_fF(\Omega,f)[h](w)
=
\operatorname{Im}
\Bigg[
&
\Bigg(
2\Omega\overline{h(w)}
+
\fint_{\mathbb T}
K_1(w,\xi)\Phi'(\xi)\,d\xi
+
\mathcal{I}_\Phi(h)(w)
\Bigg)
w\Phi'(w)
\\
&
+
\Bigg(
2\Omega\overline{\Phi(w)}
+\mathcal{I}(\Phi)(w)
\Bigg)
w h'(w)
\Bigg].
\end{aligned}
\]
Applying \eqref{singular-1}, we get  for $f\in B_r^\alpha$, using Sobolev embeddings
\begin{align}\label{singular-2}
\|\mathcal{I}(\Phi)\|_\alpha\leqslant C,\quad\hbox{and}\quad\|\mathcal{I}_\Phi(h)\|_\alpha\leqslant C\|h\|_{1+\alpha}
\end{align}
On the other hand, and similarly to \eqref{est-kern} we get 
\begin{align*}
\forall \xi\neq w,\quad \Big|K_1(w,\xi)\Big|&\leqslant 2
\frac{
\big|{h(\xi)}-{h(w)}\big|
}
{\big|\Phi(\xi)-\Phi(w)\big|}\\
&\leqslant \frac{2}{1-r}\|h\|_{\textnormal{Lip}}\\
&\leqslant \frac{2}{1-r}\|h\|_{1+\alpha}.
\end{align*}
and
\begin{align*}
\forall \xi\neq w,\quad \Big|\partial_wK_1(w,\xi)\Big|&\leqslant C\|h\|_{\textnormal{Lip}}\frac{1}{|w-\xi|}\\
&\leqslant C\|h\|_{1+\alpha}\frac{1}{|w-\xi|}.
\end{align*}
Applying, once again Lemma \ref{lem:SI} we get
\begin{align}\label{singular-3}
\nonumber\left\|\fint_{\mathbb T}
K_1(\cdot,\xi)\Phi'(\xi)\,d\xi\right\|_\alpha\leqslant C\|\Phi^\prime\|_{L^\infty}\|h\|_{\textnormal{Lip}}\\
\leqslant C\|h\|_{1+\alpha}
\end{align}
Putting together \eqref{singular-2} and \eqref{singular-3} and using the fact that $X$ is an algebra and $f\in B_r^\alpha$, we find
\begin{align*}
\nonumber\|\partial_fF(\Omega,f)[h]\|_\alpha
\leqslant  C\|h\|_{1+\alpha}
\end{align*}
which shows that the linear operator $\partial_fF(\Omega,f)$ is a continuous bounded operator from $X$ \mbox{to $Y$, that is, $\partial_fF(\Omega,f)\in \mathcal{L}(X,Y)$ .} The continuity of the functional 
$$f\in B_r^\alpha\to \partial_fF(\Omega,f)\in  \mathcal{L}(X,Y)
$$ can be done in a similar way. For more details we refer to the papers \cite{HH15,HMV13}. This allows to show that the functional $f\in B_r^\alpha\mapsto F(\Omega,f)\in Y$ is of class $C^1.$ The last point is to differentiate in $\Omega$ the operator \eqref{Linear-op}, leading to 
\begin{align}\label{Linear-op}
\nonumber \partial_\Omega\partial_fF(\Omega,f)[h](w)
=&
\operatorname{Im}
\Big[
2\overline{h(w)}w\Phi'(w)+2\overline{\Phi(w)}
w h'(w)
\Big].
\end{align}
We can easily see  that it is continuous on $B_r^\alpha.$ This ends the proof of the desired results.
\end{proof}
\section{Spectral study of the linearized operator}

The purpose of this section is to analyze the linearized operator
associated with the nonlinear functional \(F\) around the trivial
solution corresponding to the Rankine vortex.  This spectral analysis
constitutes the core of Burbea's bifurcation argument and reveals the
critical angular velocities at which nontrivial rotating vortex
patches emerge.
\\
Recall that the trivial branch of solutions is given by
\[
(\Omega,0),
\qquad
\Omega\in\mathbb R,
\]
and satisfies
\[
F(\Omega,0)=0.
\]
We define the linearized operator
\[
L_\Omega:=\partial_fF(\Omega,0).
\]
As we shall see below, a remarkable feature of the problem is that the Fourier basis
diagonalizes the linearized operator. Consequently, each Fourier mode
evolves independently and the spectral study reduces to an explicit
computation of the corresponding eigenvalues.

\subsection{Linearization at the disk}

We now turn to the linearization of the rotating patch equation around the
unit disk, corresponding to the trivial configuration $f=0$. This computation
is the key spectral step in the bifurcation analysis. A remarkable feature
of the disk is that the linearized operator is diagonal in the Fourier basis,
so that its spectrum can be computed explicitly. In particular, the loss of
invertibility occurs at a  discrete sequence of angular velocities, which will
provide precisely the bifurcation values used later in the
Crandall--Rabinowitz theorem.
The main result of this section is the following.
\begin{myproposition}{}{Spectral-prop}
Let
\[
h(w)=\sum_{n\geqslant0}a_n\overline w^{\,n}\in X.
\]
Then
\[
L_\Omega h(w)
=
2\sum_{n\geqslant1}
n\left(\Omega-\Omega_n\right)
a_{n-1}\operatorname{Im}(w^n),
\]
where
\[
\Omega_n=\tfrac{n-1}{2n},
\qquad n\geqslant1.
\]
\end{myproposition}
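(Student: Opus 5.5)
The plan is to specialize the general formula \eqref{Linear-op} for $\partial_fF(\Omega,f)[h]$ obtained in the proof of Theorem~\ref{thm:Theorem-smooth} to $f=0$, that is, to $\Phi(w)=w$. Each term will then be evaluated mode by mode using residue calculus on $\mathbb T$, and the result reindexed. No analytic difficulty should arise, because all the operators involved are already known to be bounded from $X$ to $Y$. The work is bookkeeping of Fourier modes and signs.

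\textbf{Step 1 (quantities at the disk).} Take $\Phi=\mathrm{Id}$, so that $\Phi'\equiv1$. On $\mathbb T$ we have $\overline\xi-\overline w=(w-\xi)/(\xi w)$, hence
\[
K(w,\xi)=-\overline\xi\,\overline w .
\]
It follows that $\mathcal I(\mathrm{Id})(w)=-\overline w\fint_{\mathbb T}\overline\xi\,d\xi=-\overline w$. Consequently
\[
\bigl(2\Omega\overline{\Phi(w)}+\mathcal I(\Phi)(w)\bigr)w\,h'(w)=(2\Omega-1)h'(w).
\]
Similarly,
\[
\mathcal I_{\mathrm{Id}}(h)(w)=-\overline w\fint_{\mathbb T}\frac{h'(\xi)}{\xi}\,d\xi .
\]
With $h'(\xi)=-\sum_n n a_n\xi^{-n-1}$, the integrand $h'(\xi)/\xi$ contains only the powers $\xi^{-n-2}$. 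None of these is $\xi^{-1}$, so this term vanishes.

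\textbf{Step 2 (the kernel $K_1$).} At $\Phi=\mathrm{Id}$ the kernel splits as
\[
K_1(w,\xi)=\frac{\overline{h(\xi)}-\overline{h(w)}}{\xi-w}+\frac{h(\xi)-h(w)}{\xi w(\xi-w)} .
\]
Here we use again $\overline\xi-\overline w=-(\xi-w)/(\xi w)$.

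For the first piece, $\overline{h(\xi)}=\sum a_n\xi^n$. Each quotient $(\xi^n-w^n)/(\xi-w)$ is a polynomial in $\xi$, so its integral over $\mathbb T$ vanishes.

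For the second piece, expand
\[
\frac{\xi^{-n}-w^{-n}}{\xi-w}=-\sum_{j=0}^{n-1}\xi^{-j-1}w^{-(n-j)} .
\]
After division by $\xi w$, only the powers $\xi^{-j-2}$ with $j\ge0$ remain. Again there is no $\xi^{-1}$ term, so this integral vanishes too. Hence the whole $K_1$ contribution is zero.

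\textbf{Step 3 (assembling the modes).} What remains is
\[
L_\Omega h(w)=\operatorname{Im}\bigl[2\Omega\,\overline{h(w)}\,w+(2\Omega-1)h'(w)\bigr].
\]
The two terms give
\[
\operatorname{Im}\bigl(\overline{h(w)}\,w\bigr)=\sum_{n\ge0}a_n\operatorname{Im}(w^{n+1}),
\qquad
\operatorname{Im}\bigl(h'(w)\bigr)=\operatorname{Im}\Bigl(-\sum_{n\ge0}na_n\overline w^{\,n+1}\Bigr)=\sum_{n\ge0} n a_n\operatorname{Im}(w^{n+1}).
\]
Both expansions use that the $a_n$ are real. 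The coefficient of $\operatorname{Im}(w^{n+1})$ is therefore $a_n\bigl(2\Omega+(2\Omega-1)n\bigr)$.

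Reindexing with $m=n+1$, this coefficient becomes
\[
a_{m-1}\bigl(2\Omega m-(m-1)\bigr)=2m\,(\Omega-\Omega_m)\,a_{m-1},
\qquad \Omega_m=\tfrac{m-1}{2m},
\]
which is the claimed formula.

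\textbf{Main obstacle.} The only delicate point is Step~2. One has to track carefully the sign conventions, the orientation, and the identity $\overline\xi=1/\xi$. The aim is to confirm that the $K_1$ term and the $\mathcal I_{\mathrm{Id}}(h)$ term really produce no $\xi^{-1}$ residue, since a slip there would shift the eigenvalues. To justify exchanging sums and integrals, I would first carry out the computation for trigonometric polynomials $h$. I would then extend it to all of $X$ by density and the continuity of $L_\Omega\colon X\to Y$ established in Theorem~\ref{thm:Theorem-smooth}.
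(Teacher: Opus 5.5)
Your proposal is correct and follows the paper's proof. Both specialize the linearization at $\Phi=\mathrm{Id}$, show that the $K_1$ and $\mathcal I_{\mathrm{Id}}(h)$ contributions vanish, and read off the multiplier from $\operatorname{Im}\bigl[2\Omega w\overline{h(w)}+(2\Omega-1)h'(w)\bigr]$. The only difference is that you kill those integrals by term-by-term Laurent expansion, whereas the paper applies Cauchy's theorem and the residue theorem to the whole integrands and so never exchanges a sum with an integral.

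One small correction to your justification: trigonometric polynomials are not dense in $C^{1+\alpha}(\mathbb T)$. Instead, pass to the limit along the Fejér means of $h$, which stay in $X$ and converge to $h$ in $C^1$; this works because at the disk every term of $L_\Omega h$ is bounded pointwise by $C(\Omega)\|h\|_{C^1}$. Alternatively, simply adopt the paper's holomorphy argument.
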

\begin{proof}
We start from the general expression for the differential of \(F\) with
respect to \(f\in B_r^\alpha\). Let
\[
\Phi(w)=w+f(w)
\]
and let \(h\in X\). It follows from \eqref{Linear-op} that
\begin{align}\label{structur1}
\partial_fF(\Omega,f)h(w)
=
\operatorname{Im}
\Big[
\big(
2\Omega\overline{h(w)}
+
\partial_f\mathcal I(\Phi)h(w)
\big)w\Phi'(w)
+
\big(
2\Omega\overline{\Phi(w)}
+
\mathcal I(\Phi)(w)
\big)w h'(w)
\Big],
\end{align}
where
\[
\mathcal I(\Phi)(w)
=
\fint_{\mathbb T}
K(w,\xi)\Phi'(\xi)\,d\xi,
\qquad
K(w,\xi)
=
\frac{\overline{\Phi(\xi)}-\overline{\Phi(w)}}
{\Phi(\xi)-\Phi(w)}.
\]
Moreover,
\[
\partial_f\mathcal I(\Phi)h(w)
=
\fint_{\mathbb T}
\partial_fK(\Phi)h(w,\xi)\Phi'(\xi)\,d\xi
+
\fint_{\mathbb T}
K(w,\xi)h'(\xi)\,d\xi,
\]
with
\[
\partial_fK(\Phi)h(w,\xi)
=
\frac{
\big(\overline{h(\xi)}-\overline{h(w)}\big)
\big(\Phi(\xi)-\Phi(w)\big)
-
\big(\overline{\Phi(\xi)}-\overline{\Phi(w)}\big)
\big(h(\xi)-h(w)\big)
}
{\big(\Phi(\xi)-\Phi(w)\big)^2}.
\]
We now specialize this formula to the trivial solution \(f=0\). In
that case,
\[
\Phi(w)=w,\qquad \Phi'(w)=1,
\]
and
\[
\forall w,\xi\in\mathbb T,\quad K(w,\xi)
=
\frac{\overline{\xi}-\overline w}{\xi-w}=-\overline{w}\,\overline{\xi}\,.
\]
Consequently,
\begin{align}\label{Id-1}
\mathcal I(\hbox{Id})(w)
=
\fint_{\mathbb T}
K(w,\xi)\,d\xi
=
-\frac1w
\fint_{\mathbb T}
\frac{d\xi}{\xi}
=
-\frac1w.
\end{align}
Similarly,
\begin{align*}
\partial_fK(\hbox{Id})h(w,\xi)
&=
\frac{
\big(\overline{h(\xi)}-\overline{h(w)}\big)(\xi-w)
-
(\overline{\xi}-\overline w)(h(\xi)-h(w))
}
{(\xi-w)^2}\\
&=\frac{\overline{h(\xi)}-\overline{h(w)}}{\xi-w}
+
\frac{h(\xi)-h(w)}{w\xi(\xi-w)}.
\end{align*}
Therefore,
\[
\partial_f\mathcal I(\hbox{Id}))[h](w)
=
\fint_{\mathbb T}
\left[
\frac{\overline{h(\xi)}-\overline{h(w)}}{\xi-w}
+
\frac{h(\xi)-h(w)}{w\xi(\xi-w)}
\right]d\xi
-\overline{w}
\fint_{\mathbb T}
\overline{\xi}\,h'(\xi)\,d\xi.
\]
As $h\in X$, then the map 
\[\xi\in\mathbb{D}\mapsto 
\frac{\overline{h(\xi)}-\overline{h(w)}}{\xi-w}\]
is holomorphic and has continuous extension up to the  boundary, and we get
$$
\fint_{\mathbb T}
\frac{\overline{h(\xi)}-\overline{h(w)}}{\xi-w}
d\xi
=0.
$$
On the other hand, the map
\[
\xi\in\mathbb{C}\backslash\overline{\mathbb{D}}\mapsto \frac{h(\xi)-h(w)}{w\xi(\xi-w)}
\]
is holomorphic and decays  like $\xi^{-2}$ at $\infty$, then by residue theorem, we infer
\[
\fint_{\mathbb T}
\frac{h(\xi)-h(w)}{w\xi(\xi-w)}
d\xi=0.
\]
As before, using residue theorem yields
\[
\fint_{\mathbb T}
\overline{\xi}{h'(\xi)}\,d\xi=\fint_{\mathbb T}
\frac{h'(\xi)}{\xi}\,d\xi=0.
\]
Putting together the preceding identities give
$$
\partial_f\mathcal I(\hbox{Id})[h]=0.
$$
Plugging this identity together with \eqref{Id-1}  into \eqref{structur1} allows to get
\begin{align}\label{structur2}
L_\Omega h(w)
=
\operatorname{Im}
\Big[
2\Omega\, w\,\overline{h(w)}
+
\big(
2\Omega-1\big) h'(w)
\Big],
\end{align}
For
\[
h(w)=\sum_{n\geqslant0}h_n\overline{w}^{n}\in X,
\]
we find
\[
\begin{aligned}
L_\Omega h(w)
&=
\operatorname{Im}
\Big\{
(2\Omega-1)h'(w)+2\Omega w\overline{h(w)}
\Big\}
\\
&=
\sum_{n\geqslant1}
n\left(2\Omega-\frac{n-1}{n}\right)
h_{n-1}\operatorname{Im}(w^n).
\end{aligned}
\]
Thus, the linearized operator at the disk is diagonal in the Fourier basis,
with the corresponding Fourier multipliers given explicitly above.
This completes the proof of the proposition.
\end{proof}

\subsection{Spectral structure}
We now exploit the Fourier representation of the linearized operator
obtained in Proposition \ref{prop:Spectral-prop} to identify its kernel
and range at the critical values $\Omega_m$.
The following proposition summarizes the spectral properties required for the bifurcation analysis.
\begin{myproposition}{}{Spectral structure-proppp}
For each $m\geqslant1,$
\[
N(L_{\Omega_m})
=
\operatorname{span}
\{\overline w^{\,m-1}\},
\]
and
\[
R(L_{\Omega_m})
=
\Bigl\{
h\in Y:
h_m=0
\Bigr\},
\]
In particular,
\[
\dim N(L_{\Omega_m})=1,
\]
and
\[
\dim\bigl(Y/R(L_{\Omega_m})\bigr)=1.
\]
\end{myproposition}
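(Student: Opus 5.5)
The plan is to read everything off the diagonal representation of Proposition~\ref{prop:Spectral-prop}. For $h=\sum_{n\geqslant0}a_n\overline w^{\,n}\in X$,
\[
L_{\Omega_m}h=2\sum_{n\geqslant1}n(\Omega_m-\Omega_n)a_{n-1}\operatorname{Im}(w^n).
\]
The first step is to observe that $n\mapsto\Omega_n=\frac{n-1}{2n}=\frac12-\frac1{2n}$ is strictly increasing. Hence $\Omega_m-\Omega_n=0$ exactly when $n=m$.

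\textbf{Kernel.} Since the functions $\operatorname{Im}(w^n)=\sin(n\theta)$ are linearly independent, $L_{\Omega_m}h=0$ forces $a_{n-1}=0$ for all $n\neq m$. The coefficient $a_{m-1}$ remains free, so the kernel is $\operatorname{span}\{\overline w^{\,m-1}\}$. This function belongs to $X$, being smooth with a real coefficient.

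\textbf{Range, inclusion.} By the formula above, every element of $R(L_{\Omega_m})$ has vanishing $m$-th coefficient. Combined with Theorem~\ref{thm:Theorem-smooth}, this shows that $R(L_{\Omega_m})$ is contained in $\{g\in Y: g_m=0\}$.

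\textbf{Range, reverse inclusion (main obstacle).} Take $g=\sum_{n\geqslant1}g_n\operatorname{Im}(w^n)\in C^\alpha$ with $g_m=0$. The natural preimage is
\[
h=\sum_{n\neq m}\frac{g_n}{2n(\Omega_m-\Omega_n)}\,\overline w^{\,n-1},
\qquad
2n(\Omega_m-\Omega_n)=\frac{n}{m}-1 .
\]
The difficulty is regularity: we must show $h\in C^{1+\alpha}$, not merely that the series converges. Since the multiplier is $\frac{n-m}{m}$, $h$ is, up to finitely many modes, $m$ times an antiderivative of the conjugate-type series associated with $g$.

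To make this rigorous, I would write $g=\operatorname{Im}G$ with $G=\sum_{n\geqslant1}g_nw^n$. Then $G$ is recovered from $g$ through the Szegő/Hilbert projection, which is bounded on $C^\alpha(\mathbb T)$ by Privalov's theorem. We may assume this, or prove it via Lemma~\ref{lem:SI}-type arguments.

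Next, I would decompose
\[
\frac{m}{n-m}=\frac{m}{n}+\frac{m^2}{n(n-m)} .
\]
The first piece corresponds to dividing by $n$, which amounts to taking a primitive: integrating $G(w)/w$ along the circle gains one derivative, from $C^\alpha$ to $C^{1+\alpha}$. For the second piece, the multiplier $\frac{m^2}{n(n-m)}$ is $O(n^{-2})$ and is the composition of two such smoothing operators plus a finite-rank correction. Its symbol is a smoothing multiplier mapping $C^\alpha$ into $C^{1+\alpha}$; one way to see this is to write it as $\frac1n$ composed with the bounded operator of multiplier $\frac{m^2}{n-m}$, which is bounded on $C^\alpha$ because it is itself a primitive of something of order $-1$, after discarding the finitely many modes $n\leqslant m$.

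Finally, the shift from $w^n$ to $\overline w^{\,n-1}$ is multiplication by $\overline w^{\,n}\cdot$conjugation. Both operations preserve $C^{1+\alpha}$ and real coefficients, so $h\in X$.

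\textbf{Codimension.} It follows immediately that $\dim N(L_{\Omega_m})=1$. For the quotient, note that $Y=R(L_{\Omega_m})\oplus\operatorname{span}\{\operatorname{Im}(w^m)\}$ via $g\mapsto g-g_m\operatorname{Im}(w^m)$, so $\dim\bigl(Y/R(L_{\Omega_m})\bigr)=1$.
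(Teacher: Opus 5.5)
Your proposal is correct and follows the paper's proof. The kernel comes from the diagonal form and the strict monotonicity of $\Omega_n$. For the range, you use the same splitting of $\frac{m}{m-n}$ into $-\frac{m}{n}$ and $\frac{m^2}{n(m-n)}$, with the Cauchy projection $\Pi_+$ bounded on $C^\alpha$, a primitive for the first piece, and boundedness of the multiplier $\frac{1}{n-m}$ on $C^\alpha$ for the second. The paper justifies that last step more cleanly than your ``primitive of something of order $-1$'': it writes the multiplier as convolution with $\sum_{n\neq m}\frac{w^n}{n-m}\in L^2(\mathbb T)\subset L^1(\mathbb T)$.

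Two small slips, neither affecting the argument: $2n(\Omega_m-\Omega_n)=1-\frac{n}{m}$, not $\frac{n}{m}-1$. And the passage from $w^n$ to $\overline w^{\,n-1}$ is conjugation followed by multiplication by $w$, i.e.\ $h=w\overline{H}$, not multiplication by $\overline w^{\,n}$.
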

\begin{proof}
According to Proposition \ref{prop:Spectral-prop}, we get for any 
\[
h(w)=\sum_{n\geqslant0}h_n\overline{w}^{n}\in X,
\]
we find
\[
\begin{aligned}
L_\Omega h(w)
&=
\operatorname{Im}
\Big\{
(2\Omega-1)h'(w)+2\Omega w\overline{h(w)}
\Big\}
\\
&=
\sum_{n\geqslant1}
n\left(2\Omega-\tfrac{n-1}{n}\right)
h_{n-1}\operatorname{Im}(w^n).
\end{aligned}
\]
This shows that the linearized operator at the equilibrium is a Fourier multiplier and it is easy to see that its kernel is nontrivial if and only if 
$$
\Omega=\Omega_m
$$
for some $m\geqslant1.$ Moreover, it is plain that the sequence $n\geqslant1\mapsto \Omega_n$ is strictly increasing, implying that the kernel is one dimensional when $\Omega=\Omega_m$ and
the kernel of the linearized operator is
\[
N(\partial_fF(\Omega_m,0))
=
\operatorname{span}
\{\overline w^{\,m-1}\}.
\]
 Now, we shall fix  $\Omega=\Omega_m$ for some $m\geqslant1$ and explore the range of $L_{\Omega_m}$. One can first see the embedding
$$
R(L_{\Omega_m})
\subset
\Bigl\{
h\in Y:
h_m=0
\Bigr\}:=\mathcal{Y}.
$$
It remains to show the converse. Take an element $g\in \mathcal{Y}$ and let's solve the equation
$$
L_{\Omega_m}h=g.
$$
Remind from \eqref{space-Y} that
$$
w\in\T\mapsto g(w)
=
\sum_{n\geqslant 1}g_n\,\operatorname{Im}(w^n)\in C^\alpha(\mathbb T),
\quad g_n\in\mathbb R\,.
$$
Using Fourier expansion, we consider $h\in X$, defined in \eqref{space-X}, we get
$$
2n\left(\Omega_m-\Omega_n\right)
h_{n-1}=g_n, \forall n\geqslant1, n\neq m
$$
that is
$$
\tfrac{m-n}{m}
h_{n-1}=g_n, \forall n\geqslant1, n\neq m
$$
which implies that
\begin{eqnarray*}
 \nonumber h_{n-1} & =& \,\tfrac{m}{m-n}\,g_{n}\\
&=&-\tfrac{m}{n}\,g_{n}+\tfrac{m^2}{(m-n)n}\,g_{n}.
\end{eqnarray*}
Consequently
$$
h(w)=-m\sum_{n\geqslant1\atop n\neq m}\tfrac{g_{n}}{n}\overline{w}^{n-1}+m^2\sum_{n\geqslant1\atop n\neq m}\tfrac{g_{n}}{n(m-n)}\overline{w}^{n-1}.
$$
Taking the complex conjugate
\begin{align*}
H(w):=w\overline{h(w)}&=-m\sum_{n\geqslant1\atop n\neq m}\tfrac{g_{n}}{n}{w}^{n}+m^2\sum_{n\geqslant1\atop n\neq m}\tfrac{g_{n}}{n(m-n)}{w}^{n}\\
&:=-m G_1(w)+m^2 G_2(w).
\end{align*}
The next goal is to check that $h\in C^{1+\alpha}(\mathbb{T}).$ This is equivalent to show that $H \in C^{1+\alpha}(\mathbb{T})$ as $h(w)=w\overline{H}$ and one can use the law products in $ C^{1+\alpha}(\mathbb{T})$ which is an algebra. We first show that $G_1 \in C^{1+\alpha}(\mathbb{T})$. Notice that $G_1\in L^\infty(\T)$ since by Cauchy-Schwarz inequality
\begin{align*}
|G_1(w)|&\leqslant \sum_{n\geqslant1\atop n\neq m}\tfrac{|g_{n}|}{n}\\
 &\lesssim \Big(\sum_{n\geqslant1\atop n\neq m}{|g_{n}|^2}\Big)^{\frac{1}{2}}\\
 &\lesssim \|g\|_{L^2(\T)}.
\end{align*} 
Therefore, 
\begin{align*}
\|G_1\|_{L^\infty(\T)}&\lesssim  \|g\|_{L^\infty(\T)}\\
&\lesssim \|g\|_{C^\alpha}.
\end{align*}
 Moreover, by differentiation we get
$$
G_1^\prime(w)=\overline{w}\sum_{n\geqslant1\atop n\neq m}{g_{n}}{w}^{n}.
$$
Recall that the Cauchy projection (or Szeg\H{o} projection)
\begin{equation*}\label{Cauchyproj}
\Pi_+:\sum_{n\in\mathbb Z}c_n w^n
\longmapsto
\sum_{n\geqslant 0}c_n w^n
\end{equation*}
is bounded on $C^\alpha(\mathbb T)$, whereas it is not bounded on
$L^\infty(\mathbb T)$. Therefore, we may write
$$
G_1^\prime(w)=\overline{w}\Pi_+(g)(w)
$$
allowing to deduce that $G_1^\prime\in C^{\alpha}(\T)$. It follows that $G_1\in C^{1+\alpha}(\T)$.   \\
Let's now show that $G_2\in C^{1+\alpha}(\T)$. As before, we may easily show that $G_2\in L^\infty(\T).$ By differentiation
\begin{align*}
wG_2^\prime(w)&=\sum_{n\geqslant1\atop n\neq m}\tfrac{g_{n}}{m-n}{w}^{n}\\
&=(K*\Pi_+(g))(w),\quad w\in\T
\end{align*}
where
$$K(w)= \sum_{ n=1\atop  n\neq m}^\infty \frac{{w}^n}{n-m}, \quad w\in\T.$$
Thus  $K \in L^2(\T) \subset L^1(\T)$ and by the convolution laws, as $\Pi_+(g) \in
C^{\alpha}(\T),$ we deduce that 
$$
wG_2^\prime\in C^{\alpha}(\T).
$$
Hence $G_2^\prime\in C^{\alpha}(\T).$ This achieves that $H\in C^{1+\alpha}(\T).$ We finally get
$$
R(L_{\Omega_m})
=
\Bigl\{
h\in Y:
h_m=0
\Bigr\}.
$$
This achieves the proof of the desired results.

\end{proof}
\subsection{Transversality condition}\label{Section-Transv}

We have seen in Proposition \ref{prop:Spectral structure-proppp} that for $\Omega=\Omega_m$ the kernel of $L_\Omega$ is one dimensional and generated by  the vector
\[
v_m(w)=\overline w^{\,m-1}.
\]
Differentiating the expression of the linearized operator with respect
to \(\Omega\), described by Proposition \ref{prop:Spectral-prop},  we obtain for every 
$
h(w)=\sum_{n\ge0}h_n\overline{w}^{n}\in X,
$,
\[
\partial_\Omega\partial_fF(\Omega,0)h
=
\sum_{n\ge1}
2n
h_{n-1}\operatorname{Im}(w^n).
\]
Evaluating this identity at
$
\Omega=\Omega_m$ 
and applying it to \(v_m\), we get
\[
\partial_\Omega\partial_fF(\Omega_m,0)
(v_m)
=
2m\,\operatorname{Im}(w^m).
\]
On the other hand, by the characterization of the range from Proposition \ref{prop:Spectral structure-proppp} we easily get that 
\[
\operatorname{Im}(w^n)\notin
R(\partial_fF(\Omega_m,0)).
\]
Therefore
\[
\partial_\Omega\partial_fF(\Omega_m,0)
(v_m)
\notin
R(\partial_fF(\Omega_m,0)).
\]
This is precisely the transversality condition required by the
Crandall--Rabinowitz theorem.
%

\section{Proof of Theorem \ref{thm:Burbea}}\label{sec:Burea-q}

We are now in a position to prove Burbea's result, in the formulation
revisited in \cite{HMV13} and stated in Theorem \ref{thm:Burbea}.
To this end, it remains to verify the hypotheses of the
Crandall--Rabinowitz theorem. Recall that
\[
F:\mathbb R\times B_r^\alpha\longrightarrow Y
\]
is the nonlinear functional defined by \eqref{rotsq14} and a rotating patch  solution to Euler equations can be viewed as a zero to this functional $F$, that is
$$
F(\Omega,f)=0
$$
From Theorem \ref{thm:Theorem-smooth}, the map
is of class \(C^1\) and the partial derivative $\partial_\Omega\partial_fF
$
exists and is continuous. Therefore assumption (b) of Theorem \ref{thm:Crandall-Rab}  is satisfied.
In addition, the Rankine vortex generates a trivial branch of solutions which is equivalent to 
\[
F(\Omega,0)=0,
\qquad
\forall\,\Omega\in\mathbb R.
\]
This verifies assumption (a).
Moreover, 
fix an integer \(m\geqslant2\) and set
\[
\Omega=\Omega_m.
\]
Assumption $(c)$ of the Crandall--Rabinowitz theorem follows directly
from Proposition~\ref{prop:Spectral structure-proppp}, while the
transversality condition $(d)$ was established in
Section~\ref{Section-Transv}. Hence, all the hypotheses of the
Crandall--Rabinowitz theorem are satisfied. Consequently, for each
$m\geqslant 2$, there exists a local curve of nontrivial solutions
bifurcating from the trivial branch at $\Omega=\Omega_m,$ that is
\[
(\Omega_s,f_s), \forall s\in(-\delta,\delta)
\]
with $\delta>0$
and
\[
\Omega_0=\Omega_m,\quad f_0=0.
\]
The formal mode \(m=1\) corresponds to translations of the disk and is
excluded by our normalization, while the \(m=2\) branch recovers the
classical Kirchhoff ellipses.
\\Finally 
\[\Phi_s(z)=z+f_s(z), \; |z|\geqslant 1,
\] is a
conformal mapping of $\C_\infty \setminus {\overline{\mathbb{D}}}$
into some domain $U_s$ and $D_s = \C_\infty
\setminus{\overline{U_s}}$ is a simply connected vortex patch
which rotates with angular velocity $\Omega_s.$ We know that $D_s$ is a domain with boundary
of class $C^{1+\alpha}$, but nothing else can be said about its
symmetry properties without further arguments. 
The $m$-fold symmetry of the bifurcating solutions is obtained by restricting the functional setting to suitable invariant subspaces. Let $m\geqslant2$ be fixed and define $X_m$ as the closed subspace of $X$ consisting of functions whose Fourier expansion contains only the frequencies compatible with an $m$-fold symmetry, namely
\begin{equation}\label{expansionm}
f(w)=\sum_{n=1}^{\infty}a_{nm-1}\,\overline{w}^{\,nm-1},
\qquad w\in\mathbb T.
\end{equation}
If $f$ belongs to the open unit ball of $X_m$, then the associated conformal mapping
\[
\Phi(z)=z+f(z),\qquad |z|\geqslant1,
\]
admits the expansion
\begin{equation}\label{expansionmfi}
\Phi(z)
=
z\left(1+\sum_{n=1}^{\infty}\frac{a_{nm-1}}{z^{nm}}\right),
\qquad |z|\geqslant1.
\end{equation}
From this representation, one immediately obtains
\[
\Phi\big(e^{2\pi i/m}z\big)
=
e^{2\pi i/m}\Phi(z),
\qquad |z|\geqslant1,
\]
which is precisely the $m$-fold rotational symmetry of the corresponding vortex patch.
Next, let $Y_m$ be the closed subspace of $Y$ consisting of functions whose Fourier coefficients vanish at all frequencies that are not nonzero multiples of $m$.
A direct computation shows that the nonlinear functional $F$, defined by \eqref{rotsq14}, is equivariant under the action of the cyclic group generated by the rotation
\[
z\longmapsto e^{2\pi i/m}z.
\]
Consequently,
\[
F:\mathbb{R}\times B_{r,m}^{\alpha}\longrightarrow Y_m,
\]
where
\[
B_{r,m}^{\alpha}=B_r^{\alpha}\cap X_m
\]
denotes the restriction of the ball defined in \eqref{ball-alpha} to the subspace \(X_m\).
Therefore, the Crandall--Rabinowitz theorem can be applied directly to the restricted mapping.
This concludes the proof of the Burbea's result.\\
The preceding analysis relies crucially on the explicit spectral structure of the linearized Euler operator. In particular, the bifurcation values can be computed explicitly, and their simplicity and nondegeneracy follow directly from the formula
\(
\Omega_n=\frac{n-1}{2n}.
\)
This explicit computation is a special feature of the logarithmic Euler kernel and is no longer available for general active scalar equations. The purpose of Chapter~7 is to replace this explicit Euler computation by an abstract mechanism deriving the required spectral properties directly from the interaction kernel.

\chapter{A unified theory of $V$-States for active scalar equations}
{\it 
This chapter develops a unified bifurcation theory for rotating vortex
patches in a broad class of active scalar equations. The approach
identifies structural assumptions on the interaction kernel that ensure
the existence of nontrivial $m$-fold symmetric $V$-states bifurcating
from the disk. The main ingredient is a general spectral analysis of the
linearized operator, based on complete monotonicity properties of the
kernel, which yields the monotonicity and nondegeneracy of the relevant
eigenvalues. The framework encompasses the Euler, generalized SQG and
quasi-geostrophic shallow-water equations, among other active scalar
models. The presentation is based on $\cite{HXX26},$ but is developed here
within the  conformal parametrization framework, rather than the polar
coordinate formulation used there.
}
\section{General motivation}

The preceding chapters culminated in Burbea's bifurcation construction
for the two-dimensional Euler equations. We now examine how this
mechanism extends to other active scalar models.\\
A first major extension is provided by the generalized surface
quasi-geostrophic (gSQG) equations, which interpolate between the Euler
and surface quasi-geostrophic equations through a fractional power of
the Laplacian. As the singularity of the interaction kernel increases,
the contour dynamics equation becomes more singular, and the classical
Euler estimates are no longer sufficient. Hmidi and Hassainia
\cite{HH15} extended the bifurcation approach to the gSQG family by
developing functional settings adapted to the singularity of the kernel.
The corresponding problem in the unit disk was subsequently investigated
in \cite{HXX23}. In the more singular, supercritical regime, the
bifurcation analysis was carried out by Castro, C\'ordoba and
G\'omez-Serrano \cite{CCG16}.

A different difficulty arises for the quasi-geostrophic shallow-water
(QGSW) equation, whose interaction kernel is expressed in terms of
modified Bessel functions. Unlike the Euler and gSQG kernels, it is not
homogeneous, due to the presence of a finite Rossby deformation length,
and arguments based on scaling are therefore no longer available.
Dritschel, Hmidi and Renault \cite{DHR19} established
the existence of rotating patches in this setting through a contour
dynamics formulation and estimates specifically adapted to the Bessel
kernel.

These developments naturally raise the question of whether the
bifurcation of $V$-states is tied to the particular form of each kernel,
or is instead governed by more fundamental structural properties. The
latter perspective was developed by Hmidi, L. Xue and Z. Xue
\cite{HXX26}, who identified general conditions on the interaction
kernel under which the bifurcation theory can be carried out
simultaneously for a large class of active scalar equations.

The common structure is particularly transparent at the level of the
linearization around the disk. After reformulating the rotating patch
problem as a nonlinear equation on the unit circle, the disk gives rise
to a trivial branch of solutions, and the bifurcation analysis is
reduced to understanding the spectrum of the corresponding linearized
operator. The crucial ingredient is a common representation of its
eigenvalues, from which the monotonicity and nondegeneracy properties
required by the Crandall--Rabinowitz theorem can be derived.

A central role is played by the complete monotonicity of a suitable
radial profile associated with the interaction kernel. Combined with
appropriate assumptions controlling its behavior near the origin, this
property provides a robust spectral mechanism encompassing the Euler,
gSQG and QGSW equations, as well as a much broader class of active scalar
models. 

\section{A general class of active scalar equations}
We consider the class of active scalar equations
\begin{equation}\label{AS}
\left\{
\begin{aligned}
&\partial_t\omega+v\cdot\nabla\omega=0,\\
v&=\nabla^\perp\psi,\\
\psi(x)&=\int_{\mathbb R^2}K(x-y)\omega(y)\,dy,
\end{aligned}
\right.
\end{equation}
where
\(
\nabla^\perp=(\partial_2,-\partial_1).
\) and 
 the interaction kernel 
\[
K:\mathbb R^2\setminus\{0\}\longrightarrow\mathbb R
\]
is assumed to be radial,
\begin{equation}\label{kernel-general}
K(x)=K_0(|x|),
\qquad x\in\mathbb R^2,
\end{equation}
so that \eqref{AS} is invariant under translations and rotations. Since
$v=\nabla^\perp\psi$ is divergence free, \eqref{AS} defines an
incompressible transport dynamics.
We emphasize that our convention for $\nabla^\perp$ is the opposite of
the one adopted in the preceding chapters. This choice is made for
convenience and will simplify the kernel representation used throughout
this chapter.
\\
The scalar $\omega$ may represent different physical quantities
depending on the model under consideration. For instance, it is the
vorticity for the Euler equations, the temperature for the
surface quasi-geostrophic equation, and the potential vorticity for
the shallow-water quasi-geostrophic model.
\\
The terminology \emph{active scalar equation} emphasizes that the
transported scalar determines, through the constitutive law defining
the stream function, the velocity field responsible for its own
transport. Consequently, all the qualitative properties of the
evolution are encoded in the interaction kernel $K$.
Next, we shall give some physical examples.

\medskip

\noindent
\textbf{(i) The two-dimensional Euler equations.}
The two-dimensional incompressible Euler equations, discussed extensively
in the preceding chapters, provide the classical example of an active
scalar equation. In vorticity form, they read

\[
\partial_t\omega+v\cdot\nabla\omega=0,
\qquad
v=\nabla^\perp(-\Delta)^{-1}\omega.
\]
Since the fundamental solution of $\textcolor{red}{-}\Delta$ in $\mathbb{R}^2$ is
\[
K(x)=-\frac{1}{2\pi}\log|x|,
\]
the Euler equation fits the general framework with the logarithmic
interaction kernel.

\medskip

\noindent
\textbf{(ii) The generalized surface quasi-geostrophic equations.}

The generalized surface quasi-geostrophic (gSQG) equations form a
one-parameter family of active scalar equations connecting the
two-dimensional Euler equation to the surface quasi-geostrophic (SQG)
equation. They are given by
\[
\partial_t\omega+v\cdot\nabla\omega=0,
\qquad
v=\nabla^\perp(-\Delta)^{-1+\frac{\alpha}{2}}\omega,
\qquad 0\leqslant\alpha\leqslant1.
\]
For $0<\alpha<1$, the corresponding interaction kernel is
\[
K_\alpha(x)=C_\alpha |x|^{-\alpha},
\]
where
\[
C_\alpha
=
\frac{\Gamma(\alpha/2)}
{2^{2-\alpha}\pi
\Gamma\!\left(\frac{2-\alpha}{2}\right)}.
\]
With this normalization, the operator
$(-\Delta)^{-1+\frac{\alpha}{2}}$ has Fourier symbol
$|\xi|^{-2+\alpha}$.

The parameter $\alpha$ controls the singularity of the interaction.
The endpoint $\alpha=1$ corresponds to the SQG equation, while the
Euler equation is recovered in the limit $\alpha\to0^+$, up to an
irrelevant additive constant in the potential kernel.

\medskip

\noindent
\textbf{(iii) The quasi-geostrophic shallow-water equations.}

The quasi-geostrophic shallow-water (QGSW) equation provides a
nonhomogeneous counterpart of the Euler  model. The stream
function is related to the active scalar through the Helmholtz equation
\[
(-\Delta+\varepsilon^2)\psi=\omega,
\qquad
\varepsilon=\tfrac{1}{L_R},
\]
where $L_R$ denotes the Rossby deformation radius. The corresponding
interaction kernel is
\[
K_\varepsilon(x)
=
\frac{1}{2\pi}\mathbf{K}_0(\varepsilon|x|),
\]
where $\mathbf{K}_0$ is the modified Bessel function of the second kind of order
zero.
The Bessel function $\mathbf{K}_0$ admits the integral representation
\[
\mathbf{K}_0(r)
=
\int_0^\infty e^{-r\cosh t}\,dt,
\qquad r>0,
\]
and has the asymptotic behavior
\[
\mathbf{K}_0(r)
=
-\log r+O(1),
\qquad r\to0^+,
\]
whereas
\[
\mathbf{K}_0(r)
\sim
\sqrt{\frac{\pi}{2r}}\,e^{-r},
\qquad r\to\infty.
\]
Near the origin, the kernel therefore behaves like the Euler kernel,
whereas at large distances it decays exponentially. Physically, this
exponential decay reflects the finite Rossby deformation radius, which
limits the range of the interaction between fluid particles. The QGSW
model is widely used in physical oceanography and atmospheric
dynamics to describe mesoscale coherent structures such as oceanic
eddies and atmospheric vortices.
\\
Unlike the Euler and generalized SQG kernels, the QGSW kernel is no
longer homogeneous. The loss of scaling invariance constitutes one of
the principal analytical difficulties in the study of rotating vortex
patches for this model.\\  For
convenience, the three principal examples discussed above are summarized
in the following table.
\begin{table}[H]
\centering
\begin{tabular}{|c|c|c|}
\hline
Model & Stream function & Kernel \\ \hline
Euler &
$(-\Delta)\psi=\omega$ &
$-\dfrac1{2\pi}\log|x|$ \\[2ex]
\hline
gSQG &
$(-\Delta)^{1-\frac{\alpha}{2}}\psi=\omega$ &
$C_\alpha|x|^{-\alpha}$ \\[2ex]
\hline
QGSW &
$(-\Delta+\varepsilon^2)\psi=\omega$ &
$\dfrac1{2\pi}\mathbf{K}_0(\varepsilon|x|)$ \\[2ex]
\hline
\end{tabular}
\end{table}

\section{Patch solutions and rotating patches}

Among the weak solutions of \eqref{AS}, a distinguished class is formed by
\emph{patch solutions}, for which the active scalar is constant on a
moving domain:
\[
\omega(t,x)=\chi_{D_t}(x).
\]
Here $D_t\subset\mathbb R^2$ is a bounded domain whose boundary is
transported by the velocity field. Formally, the transport structure preserves the patch form, while
incompressibility preserves the area of $D_t$. The essential dynamics
are therefore encoded by the evolution of the interface
$\partial D_t$, reducing the original transport equation to a
nonlocal free-boundary problem.\\
The analytical nature of the contour dynamics problem depends strongly
on the singularity of the interaction kernel. For the Euler equation,
the logarithmic kernel leads to the classical vortex-patch theory, in
which the regularity of smooth boundaries is propagated globally in
time. For the gSQG family, the contour equation becomes progressively
more singular as the parameter increases, and its local well-posedness
requires function spaces adapted to the strength of this singularity;
see, for instance, \cite{Gan08}. We refer the reader to the extensive
literature on contour dynamics and patch regularity for further results
and developments.

Within this broad class of patch solutions, we focus on domains undergoing
a rigid rotation about the origin,
\[
D_t=e^{i\Omega t}D_0,
\]
for some angular velocity $\Omega\in\mathbb R$. These are precisely the
$V$-states considered throughout this monograph. Their interest lies in
the fact that the time-dependent free-boundary dynamics reduce to a
stationary nonlinear problem for the shape of a single domain.
\section{Assumptions on the interaction kernel and examples}
We now specify the assumptions on the radial interaction kernel
\(K_0:(0,\infty)\longrightarrow\mathbb R\) used in \eqref{kernel-general}. We assume that it
is of class $C^\infty$ and satisfies the following two assumptions.

\begin{myassumption}{}{A1}
\begin{enumerate}
    \item Complete monotonicity:
The function \(-K_0'\) is a nontrivial completely monotone function,
that is,
\[
(-1)^n\frac{d^n}{dt^n}\big(-K_0'(t)\big)\geqslant0,
\qquad
\forall t>0,\quad \forall  n\in\mathbb{N}.
\]
Equivalently, by Bernstein's theorem, there exists a non-negative
Radon measure \(\mu\) on \([0,\infty)\) such that
\begin{equation}\label{Bernstein}
-K_0'(t)
=
\int_0^\infty
e^{-tx}\,d\mu(x),
\qquad t>0.
\end{equation}
\item Integrability near the origin: There exists \(\alpha\in(0,1)\) such that
\begin{equation}\label{kernel-integrability}
\int_0^{1}
|K_0(t)|\,
t^{-\,\alpha}
\,dt
<
\infty.
\end{equation}
\end{enumerate}
\end{myassumption}

The complete monotonicity assumption is the cornerstone of the unified
approach developed in this chapter. It allows us to represent the
interaction kernel as a superposition of exponentially decaying kernels,
thereby reducing many analytical questions to elementary estimates for
the exponential kernel. Moreover, it encompasses all the principal
examples discussed above, including the Euler, 
generalized surface quasi-geostrophic equation and the quasi-geostrophic
shallow-water models.

We now verify that the interaction kernels associated with all the models
discussed above satisfy these fundamental assumptions.
\subsubsection{The two-dimensional Euler equation}

For the two-dimensional incompressible Euler equation in the whole plane, we have
\[
K_0(t)=-\frac{1}{2\pi}\log t,
\qquad t>0.
\]
Although \(K_0\) does not have a fixed sign, its derivative satisfies
\[
-K_0'(t)
=
\frac{1}{2\pi}\frac1t.
\]
Using the elementary representation
\[
\frac1t
=
\int_0^\infty e^{-tx}\,dx,
\]
we get
\[
-K_0'(t)
=
\frac{1}{2\pi}
\int_0^\infty e^{-tx}\,dx
=
\int_0^\infty e^{-tx}\,d\mu(x),
\]
with
\[
d\mu(x)=\frac{1}{2\pi}\,dx.
\]
Therefore \(-K_0'\) is completely monotone. Moreover, \(K_0\) satisfies
the integrability condition \eqref{kernel-integrability} for every
\(
\alpha\in(0,1).
\)

\subsubsection{The generalized SQG equation}

The generalized surface quasi-geostrophic equation in the plane is
given by
\[
\psi=(-\Delta)^{-1+\frac{\beta}{2}}\omega,
\qquad
0<\beta<1.
\]
The associated kernel is
\[
K(\mathbf x)
=
K_0(|\mathbf x|)
=
c_\beta|\mathbf x|^{-\beta},
\]
where
\[
c_\beta
=
\frac{\Gamma\!\left(\frac{\beta}{2}\right)}
{\pi\,2^{2-\beta}\Gamma\!\left(1-\frac{\beta}{2}\right)}.
\]
Thus
\[
K_0(t)=c_\beta t^{-\beta}.
\]
Then
\[
-K_0'(t)
=
\beta c_\beta t^{-\beta-1}.
\]
Using
\[
\int_0^\infty e^{-tx}x^\beta\,dx
=
\Gamma(\beta+1)t^{-\beta-1},
\]
we obtain
\[
-K_0'(t)
=
\frac{\beta c_\beta}{\Gamma(\beta+1)}
\int_0^\infty e^{-tx}x^\beta\,dx.
\]
Since \(\Gamma(\beta+1)=\beta\Gamma(\beta)\), this becomes
\[
-K_0'(t)
=
\frac{c_\beta}{\Gamma(\beta)}
\int_0^\infty e^{-tx}x^\beta\,dx
=
\int_0^\infty e^{-tx}\,d\mu(x),
\]
where
\[
d\mu(x)
=
\frac{c_\beta}{\Gamma(\beta)}x^\beta\,dx.
\]
Hence \(-K_0'\) is completely monotone. Moreover, the integrability
condition \eqref{kernel-integrability} holds for every
$\alpha\in(0,1-\beta)$. Consequently, all the required assumptions are
satisfied.

\subsubsection{The quasi-geostrophic shallow-water equation}

The quasi-geostrophic shallow-water equation in the whole plane is
defined by
\[
\psi=(-\Delta+\varepsilon^2)^{-1}\omega,
\qquad
\varepsilon>0.
\]
Here \(\varepsilon\) is the inverse Rossby deformation radius. The
corresponding Green kernel is
\[
K(\mathbf x)
=
K_0(|\mathbf x|)
=
\frac{1}{2\pi}
\mathbf K_0(\varepsilon|\mathbf x|),
\]
where \(\mathbf K_0\) denotes the modified Bessel function of the
second kind of order zero.
Using the integral representation of \(\mathbf K_0\),
\[
\mathbf K_0(r)
=
\int_1^\infty
\frac{e^{-rx}}{\sqrt{x^2-1}}\,dx,
\]
we obtain
\[
K_0(t)
=
\frac{1}{2\pi}
\mathbf K_0(\varepsilon t)
=
\frac{1}{2\pi}
\int_1^\infty
\frac{e^{-\varepsilon tx}}{\sqrt{x^2-1}}\,dx.
\]
Therefore
\[
-K_0'(t)
=
\frac{1}{2\pi}
\int_1^\infty
\frac{\varepsilon x\,e^{-\varepsilon tx}}
{\sqrt{x^2-1}}\,dx.
\]
Making the change of variables \(s=\varepsilon x\), we find
\[
-K_0'(t)
=
\frac{1}{2\pi}
\int_\varepsilon^\infty
e^{-ts}
\frac{s}{\sqrt{s^2-\varepsilon^2}}\,ds.
\]
Hence
\[
-K_0'(t)
=
\int_0^\infty e^{-tx}\,d\mu(x),
\]
with
\[
d\mu(x)
=
\frac{1}{2\pi}
\frac{x}{\sqrt{x^2-\varepsilon^2}}
\mathbf 1_{\{x>\varepsilon\}}\,dx.
\]
Thus \(-K_0'\) is completely monotone and the integrability condition is satisfied for any $\alpha\in(0,1)$ 

\section{$V$-states equation via the stream function}

We now derive the equation governing rotating patches directly in terms
of the stream function. Although the dynamics may equivalently be
formulated through the velocity field, the stream-function approach is
particularly well suited to the present unified setting. Indeed, rigid
rotation can be characterized by the constancy of an effective stream
function along the boundary, thereby reducing the $V$-state problem to a
scalar free-boundary equation. Moreover, this formulation involves the
interaction kernel itself, rather than its derivative, which is
advantageous when dealing simultaneously with kernels of different
degrees of singularity.
\\
Suppose now that the patch rotates uniformly with angular velocity
$\Omega$ about the origin,
\[
D_t=e^{i\Omega t}D.
\]
In a rotating frame, the effective stream function is
\[
\Psi_D(z)+\tfrac{\Omega}{2}|z|^2,\quad \Psi_D(z)=\int_{D}K_0(|z-y|)\,dA(y).
\]
The boundary of a rotating patch is therefore characterized by
\begin{align}\label{stream-general}
\Psi_D(z)+\tfrac{\Omega}{2}|z|^2=\mu,
\qquad \forall z\in \partial D,
\end{align}
for some constant \(\mu\in\mathbb R\). Notice that the sign of $\Omega$ is opposite to that in
\eqref{rvortex3}. This difference stems from the convention adopted here
for $\nabla^\perp$, which results in a stream function with the opposite
sign to that used in the preceding chapters. The main result of this chapter is the following.
\begin{mytheorem}{}{Hmidi-Xue}
Let $\alpha\in(0,1)$ and let $m\geqslant 2$ be an integer. Assume that the kernel  $K_0$ satisfies the assumptions \eqref{Bernstein} and \eqref{kernel-integrability}. Then there exists
a family of $m$-fold symmetric rotating vortex patches
$
(V_m)_{m\ge2}
$
for the equation \eqref{stream-general}. Moreover, for each $m$ the branch
\(V_m\) bifurcates from the trivial Rankine vortex
at some  angular velocity
\(
\Omega_m.
\)
In addition, the boundary of the corresponding vortex patch belongs to
the H\"older class
\(
C^{1+\alpha}(\mathbb T).
\)
\end{mytheorem}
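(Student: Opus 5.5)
We follow the scheme of Chapter~6 and apply the Crandall--Rabinowitz theorem (Theorem~\ref{thm:Crandall-Rab}). The boundary is parametrized by the exterior conformal map $\Phi(w)=w+f(w)$, with $f$ in the ball $B^\alpha_{r,m}=B_r^\alpha\cap X_m$. We differentiate \eqref{stream-general} along $\partial D$, exactly as was done to pass from \eqref{rvortex3} to \eqref{master0}. This gives the functional
\[
G(\Omega,f)(w)=\operatorname{Im}\Big[\Big(\Omega\,\overline{\Phi(w)}+\overline{\nabla\Psi_D(\Phi(w))}\Big)\,w\Phi'(w)\Big]\cdot(\pm1),
\]
up to conventions. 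Using Green's formula, $\Psi_D$ is written as a boundary integral. With $\mathcal K$ a primitive satisfying $\partial_{\bar\zeta}\mathcal K(|z-\zeta|)=K_0(|z-\zeta|)$, or more simply through the derivative of $\Psi_D$,
\[
\partial_{\bar z}\Psi_D(z)=-\tfrac{1}{2i}\int_{\partial D}K_0(|z-\xi|)\,d\xi .
\]
Hence $G(\Omega,f)(w)=\operatorname{Im}\big[\big(\Omega\overline{\Phi(w)}+c\int_{\TT}K_0(|\Phi(w)-\Phi(\xi)|)\overline{\Phi'(\xi)}\,d\overline\xi\big)w\Phi'(w)\big]$. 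This form involves only $K_0$ itself, which is why the integrability assumption \eqref{kernel-integrability} enters.

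The first step is regularity. We must show that $G:\RR\times B^\alpha_{r,m}\to Y_m$ is well defined and $C^1$, with $\partial_\Omega\partial_fG$ continuous, mimicking Theorem~\ref{thm:Theorem-smooth}. We handle the kernel through the Bernstein representation \eqref{Bernstein}. Integrating in $t$, $K_0(t)-K_0(1)=\int_0^\infty x^{-1}(e^{-tx}-e^{-x})\,d\mu(x)$, so the kernel is a positive superposition of exponentials. Hölder estimates in the style of Lemma~\ref{lem:SI} are then proved for kernels $K_0(|\Phi(w)-\Phi(\xi)|)$ and their $w$-derivatives, which are bounded by $|K_0'|(|w-\xi|)$. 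The condition $\int_0^1|K_0|t^{-\alpha}dt<\infty$, combined with monotonicity of $-K_0'$, supplies the $C^\alpha$ control. Symmetry (real coefficients, Lemma~\ref{lem:lem-stabili-realF}) and $m$-fold equivariance follow exactly as in Chapter~6.

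The second step is spectral. At $f=0$ the functional $G$ vanishes for every $\Omega$ by radial symmetry. Linearizing, we expect a Fourier multiplier
\[
L_\Omega h=\sum_{n\ge1}n\,(\Omega-\Omega_n)\,h_{n-1}\operatorname{Im}(w^n)\quad(\text{up to normalization}),\qquad \Omega_n=\lambda_1-\lambda_n ,
\]
with $\lambda_n=\int_{\TT}K_0(|1-\xi|)\,\xi^{n}\cdots$. Inserting Bernstein and writing $|1-e^{i\theta}|=2\sin(\theta/2)$, the substitution $\eta=\theta/2$ yields the factorization announced in the introduction:
\[
\lambda_n=2\int_0^\infty\phi_n(x)\frac{d\mu(x)}{x},\qquad \phi_n(x)=\int_0^\pi e^{-2x\sin\eta}e^{2in\eta}\,d\eta .
\]
The kernel and range conditions and transversality then reduce to showing that $n\mapsto\Omega_n$ is strictly increasing, i.e.\ $\lambda_n$ strictly decreasing. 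This holds provided $\phi_n(x)>\phi_{n+1}(x)$ for all $x>0$. Given strict monotonicity, the kernel at $\Omega_m$ is spanned by $\overline w^{m-1}$ and the range is $\{g\in Y_m: g_m=0\}$. The range is proved as in Proposition~\ref{prop:Spectral structure-proppp}: since $\Omega_n\to\Omega_\infty$ with $n(\Omega_m-\Omega_n)\sim n$ asymptotically, the inverse is the Euler inverse plus a compact or smoothing correction. Transversality is immediate because $\partial_\Omega L_\Omega v_m=m\operatorname{Im}(w^m)$.

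The main obstacle is the strict monotonicity $\phi_n>\phi_{n+1}$ on $(0,\infty)$ together with positivity. Integration by parts shows that $\phi_n$ is real, $\phi_n(0)=0$ for $n\ge1$, and $\phi_n(x)\sim c/(n^2)$-type behaviour. The first approach is to derive a second-order ODE in $x$ satisfied by $\phi_n$, which is a Struve/Bessel-type equation obtained by differentiating under the integral and integrating by parts in $\eta$. Then $\delta_n=\phi_n-\phi_{n+1}$ satisfies an inhomogeneous ODE with positive forcing, and a comparison or maximum-principle argument gives $\delta_n>0$ on $(0,\infty)$, using the boundary data at $0$ and the decay at $\infty$. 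If this fails, the fallback is a direct computation: express $\phi_n$ as $\int_0^\pi(1-e^{-2x\sin\eta})\cos(2n\eta)\,d\eta$ with a sign change, then write $\cos 2n\eta-\cos2(n+1)\eta=2\sin\eta\,\sin((2n+1)\eta)$ and exploit concavity of $1-e^{-2x\sin\eta}$ in $\eta$. Once this is settled, Theorem~\ref{thm:Crandall-Rab} applies on $X_m\to Y_m$ for each $m\ge2$ and yields the branches.
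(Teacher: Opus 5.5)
Your argument is correct, and its spectral core is the paper's: the Bernstein factorization $\lambda_n=2\int_0^\infty\phi_n(x)\,d\mu(x)/x$, the Anger--Weber type equation $\phi_n''+x^{-1}\phi_n'-4(1+n^2/x^2)\phi_n=-4/x$, and the comparison principle applied first to $\phi_n$ and then to $\phi_n-\phi_{n+1}$. In the second step the forcing is $-4(2n+1)x^{-2}\phi_{n+1}$, so it has a sign only once positivity of $\phi_{n+1}$ is known.

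Where you differ is the functional setting.

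The paper keeps the undifferentiated equation $\Psi_\Phi+\frac{\Omega}{2}|\Phi|^2=\mu$ and uses the interior conformal map. It removes $\mu$ by subtracting the mean and works from $C^{1+\alpha}$ to $C^{1+\alpha}$. Hadamard's formula then gives the linearization at the disk as $((\Omega-\lambda_1)\mathrm{Id}+\mathcal K)\mathcal A$, where $\mathcal A h=\operatorname{Re}(\overline w h)$ is an isomorphism with no loss of derivatives. The Fredholm property is therefore immediate. The price is proving $\Psi_\Phi\in C^{1+\alpha}$, which needs the radial average $G_0$, the cancellation $\operatorname{Re}((\Phi(\xi)-\Phi(w))\overline{\xi\Phi'(\xi)})=O(|w-\xi|^{1+\alpha})$, and bounds up to $G_0''$.

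You stay in the Chapter~6 framework: exterior map and a velocity-level functional from $C^{1+\alpha}$ to $C^\alpha$. This buys you several things. The constant disappears by differentiation. Symmetry, $m$-fold equivariance and transversality transfer verbatim. Well-definedness of $G$ needs only $\int_0^1 t^{-\alpha}|K_0|\,dt$ and $\int_0^1 t^{1-\alpha}|K_0'|\,dt$, and the second follows from the first by complete monotonicity through integration by parts.

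The cleanest way to justify your multiplier is to note that $G$ is, up to sign, the tangential derivative of the stream-level functional. Hence $\partial_fG(\Omega,0)=\pm\partial_\theta\,\partial_\Phi F_0(\Omega,\mathrm{Id})=\pm((\Omega-\lambda_1)\mathrm{Id}+\mathcal K)\,\partial_\theta\mathcal A$. Your ``Euler inverse plus compact correction'' is then exactly the paper's Fredholm argument, with three ingredients:
\begin{itemize}
\item $\partial_\theta\mathcal A$ is inverted from $C^\alpha$ to $C^{1+\alpha}$ via the boundedness of $\Pi_+$, as in the Euler range computation;
\item $\mathcal K$ is compact on $C^\alpha$ by $L^1$-approximation of $\kappa$;
\item invertibility of the principal part requires $\Omega_m\neq\lambda_1$, i.e.\ $\lambda_m\neq0$, which follows from strict decrease of $\lambda_n$ together with $\lambda_n\to0$.
\end{itemize}

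Drop your fallback for the monotonicity. Two integrations by parts and concavity of $g(\eta)=1-e^{-2x\sin\eta}$ do give $4n^2\phi_n(x)=\int_0^\pi(-g''(\eta))(1-\cos 2n\eta)\,d\eta>0$, hence positivity. However, the weights $(1-\cos2n\eta)/n^2$ are not pointwise ordered in $n$, so concavity alone does not yield $\phi_n>\phi_{n+1}$. The ODE comparison is what does.
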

Let
\[
\Phi:\mathbb{D}\longrightarrow   D
\]
be the interior conformal mapping normalized by
\[
\Phi(w)=w+f(w),
\qquad |w|<1.
\]
The boundary $\partial D$ is parametrized by
\[
z=\Phi(w),
\qquad w\in\mathbb T.
\]
Thus the boundary equation becomes
\begin{align}\label{eq-ro56}
F_0(\Omega,\Phi)(w):=\Psi_\Phi(w)+\tfrac{\Omega}{2}|\Phi(w)|^2=\mu,
\qquad w\in\mathbb T,
\end{align}
where
\[
\Psi_\Phi(w)
:=
\int_{D_\Phi}
K_0(|\Phi(w)-y|)\,dA(y).
\]
and $D_\Phi:=D=\Phi(\mathbb{D}).$ In addition, the constant $\mu$ is   given by the average
$$
\mu=\frac{1}{2\pi}\int_{\mathbb{T}}F_0(\Omega,\Phi)(e^{i \theta})d\theta:=\langle F_0(\Omega,\Phi)\rangle\,.
$$
Hence the rotating patch problem is therefore reduced to
\begin{align}\label{eq-ro66}
F(\Omega,\Phi)(w)
:=
F_0(\Omega,\Phi)(w)-\langle F_0(\Omega,\Phi)\rangle=0,
\qquad w\in\mathbb T.
\end{align}

\subsection*{The trivial branch.}

The unit disk corresponds to the conformal mapping
\[
\Phi_0(w)=w.
\]
In this case,
\[
\Psi_{\Phi_0}(w)
=
\int_{\mathbb D}
K_0(|w-y|)\,dA(y),
\qquad w\in\mathbb T.
\]
By radial symmetry, \(\Psi_{\Phi_0}(w)\) is constant on
\(\mathbb T\). Moreover,
\[
|\Phi_0(w)|^2=|w|^2=1.
\]
Therefore, we infer from \eqref{eq-ro66}

\[
F(\Omega,\Phi_0)=0,
\qquad \forall \Omega\in\mathbb R.
\]
Thus the unit disk generates the trivial branch of rotating solutions.

\section{Boundary formula for the stream function}
The $V$-state equation obtained above involves the stream function
$\Psi_\Phi$ evaluated on the boundary of the patch. In its original
form, however, $\Psi_\Phi$ is given by an integral over the moving domain
$D_\Phi$, which is not well suited to the functional-analytic framework
required for bifurcation. It is therefore convenient to rewrite this
quantity entirely in terms of the boundary parametrization.

The radial structure of the interaction kernel allows us to convert the
area integral into a boundary integral by means of the divergence
theorem. The resulting formula has two important advantages: the
dependence on the domain is encoded solely through the conformal map
$\Phi$, and all integrations are performed over the fixed reference
circle $\mathbb T$. This representation will be the starting point for
the regularity and linearization analysis of the $V$-state equation.\\
To state the formula, we introduce a radial primitive of the interaction
kernel,
\[
G_0(r):=\int_0^1 sK_0(rs)\,ds,
\qquad r>0.
\]
\begin{myproposition}{}{prop-Bound-int}
Let $\Psi_\Phi$ be defined as in \eqref{stream-func1}. Then
\[
\Psi_\Phi(w)
=
\int_{\mathbb T}
G_0\bigl(|\Phi(w)-\Phi(\xi)|\bigr)
\textnormal{Re}\left(
\bigl(\Phi(\xi)-\Phi(w)\bigr)
\overline{\xi\Phi'(\xi)}
\right)
\,|d\xi|.
\]
\end{myproposition}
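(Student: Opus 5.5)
The plan is to convert the area integral defining $\Psi_\Phi$ into a boundary integral using the divergence theorem applied to a suitable radial vector field. Fix $w\in\mathbb T$ and set $z=\Phi(w)\in\partial D$.

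\textbf{Step 1: the vector field.} Introduce the field
\[
V(y):=G_0(|y-z|)\,(y-z),\qquad y\neq z,
\]
with $G_0(r)=\int_0^1 sK_0(rs)\,ds$. After the substitution $\rho=rs$ one has $r^2G_0(r)=\int_0^r\rho K_0(\rho)\,d\rho$. Writing $V(y)=\rho^{-1}\bigl(\rho^2G_0(\rho)\bigr)\,e_\rho$ with $\rho=|y-z|$, the radial divergence formula gives, for $y\neq z$,
\[
\operatorname{div}V(y)=\frac1\rho\frac{d}{d\rho}\bigl(\rho^2G_0(\rho)\bigr)=K_0(|y-z|).
\]
The integrability assumption \eqref{kernel-integrability} ensures that $\int_0^r\rho|K_0(\rho)|\,d\rho<\infty$. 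Hence $G_0$ is well defined, and $|V(y)|\le |y-z|^{-1}\int_0^{|y-z|}\rho|K_0|\,d\rho\to0$ as $y\to z$. It also gives $K_0(|\cdot-z|)\in L^1_{\rm loc}$.

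\textbf{Step 2: excise the singular point.} Apply the divergence theorem on $D\setminus \overline{B(z,\varepsilon)}$. The boundary of this region consists of $\partial D$ minus a small arc, together with the arc $\partial B(z,\varepsilon)\cap D$. The flux through the small circular arc is bounded by $\pi\varepsilon\,\sup_{|y-z|=\varepsilon}|V|=o(\varepsilon)$, so it vanishes as $\varepsilon\to0$. The volume integrals converge to $\int_D K_0(|z-y|)\,dA(y)$ by dominated convergence, since $K_0(|\cdot-z|)$ is integrable. This yields
\[
\Psi_\Phi(w)=\int_{\partial D}G_0(|y-z|)\,(y-z)\cdot n(y)\,d\sigma(y).
\]
This step is the only delicate one. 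The point $z$ lies on the boundary and $K_0$ may be singular at $0$, as in the Euler logarithm or gSQG. Controlling $V$ near $z$ through $\rho^2G_0(\rho)=o(\rho)$ resolves it. The integrand on $\partial D$ itself is bounded near $z$, because $G_0(\rho)\rho\to0$.

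\textbf{Step 3: pull back to $\mathbb T$.} Parametrize $\partial D$ by $y=\Phi(\xi)$, $\xi=e^{i\theta}$, which is positively oriented. The tangent is $i\xi\Phi'(\xi)\,d\theta$, so the outward normal times arc length equals $-i\cdot i\xi\Phi'(\xi)\,d\theta=\xi\Phi'(\xi)\,d\theta$. In complex notation $a\cdot b=\operatorname{Re}(a\bar b)$, hence
\[
(y-z)\cdot n\,d\sigma=\operatorname{Re}\bigl((\Phi(\xi)-\Phi(w))\overline{\xi\Phi'(\xi)}\bigr)\,d\theta,
\]
and $d\theta=|d\xi|$. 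This gives exactly the stated formula. If needed, the sign convention for $n$ can be checked on the disk, $\Phi=\mathrm{Id}$, where $\operatorname{Re}((\xi-w)\bar\xi)=1-\operatorname{Re}(w\bar\xi)\ge0$, as it should be for an outward flux.
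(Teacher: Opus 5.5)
Your proposal is correct and follows essentially the same route as the paper: the same radial field $G_0(|y-z|)(y-z)$ with divergence $K_0(|y-z|)$, excision of a small disk around the boundary point with the arc flux controlled by $\int_0^\varepsilon s|K_0(s)|\,ds\to0$, and the same pullback $n\,d\sigma=\xi\Phi'(\xi)\,d\theta$. Your extra justifications, namely dominated convergence for the area integrals and boundedness of the boundary integrand near $z$, fill in details the paper leaves implicit.
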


\begin{proof}
Fix \(w\in\mathbb T\) and set
\[
x:=\Phi(w).
\]
For \(y\neq x\), define the vector field
\[
F_x(y):=G_0(|y-x|)(y-x).
\]
Writing \(r=|y-x|\), we compute
\[
\operatorname{div}_y F_x(y)
=
2G_0(r)+rG_0'(r).
\]
Since, by change of variables, 
\[
r^2G_0(r)=\int_0^r sK_0(s)ds=: H_0(r),
\]
and differentiation leads to
\[
2rG_0(r)+r^2G_0'(r)=H_0'(r)=rK_0(r).
\]
Consequently,
\[
2G_0(r)+rG_0'(r)=K_0(r).
\]
Thus,
\[
\operatorname{div}_yF_x(y)=K_0(|y-x|).
\]
Because \(x\in\partial D_\Phi\), we first remove a small neighborhood of
the singular point. For \(\varepsilon>0\), set
\[
D_{\Phi,\varepsilon}
:=
D_\Phi\setminus \overline{B(x,\varepsilon)}.
\]
Applying the divergence theorem on \(D_{\Phi,\varepsilon}\), we obtain
\[
\int_{D_{\Phi,\varepsilon}}
K_0(|y-x|)\,dA(y)
=
\int_{\partial D_\Phi\setminus B(x,\varepsilon)}
F_x(y)\cdot n(y)\,d\sigma(y)
+
\int_{\partial B(x,\varepsilon)\cap D_\Phi}
F_x(y)\cdot n_\varepsilon(y)\,d\sigma(y),
\]
where \(n\) denotes the outward unit normal to \(D_\Phi\), while
\(n_\varepsilon\) denotes the outward unit normal to
\(D_{\Phi,\varepsilon}\) along the circular part of the boundary.
On \(\partial B(x,\varepsilon)\), we have
\[
|F_x(y)|
=
\frac{|H_0(\varepsilon)|}{\varepsilon},
\]
and hence
\[
\left|
\int_{\partial B(x,\varepsilon)\cap D_\Phi}
F_x(y)\cdot n_\varepsilon(y)\,d\sigma(y)
\right|
\leqslant
C|H_0(\varepsilon)|.
\]
Since
\[
|H_0(\varepsilon)|
\leqslant
\int_0^\varepsilon s|K_0(s)|\,ds
\longrightarrow0
\qquad\text{as }\varepsilon\to0,
\]
the circular boundary contribution vanishes. Letting
\(\varepsilon\to0\), we obtain
\[
\Psi_\Phi(w)
=
\int_{\partial D_\Phi}
G_0(|y-x|)(y-x)\cdot n(y)\,d\sigma(y).
\]
We now parametrize the boundary by
\[
y=\Phi(\xi),
\qquad
\xi=e^{i\theta}\in\mathbb T.
\]
Since the parametrization is counterclockwise,
\[
\frac{d}{d\theta}\Phi(e^{i\theta})
=
i\xi\Phi'(\xi),
\]
and the outward normal measure satisfies
\[
n(\Phi(\xi))\,d\sigma
=
-i\frac{d}{d\theta}\Phi(e^{i\theta})\,d\theta
=
\xi\Phi'(\xi)\,d\theta.
\]
Identifying the Euclidean scalar product in \(\mathbb C\) with
\[
a\cdot b=\textnormal{Re}(a\overline b),
\]
we deduce
\[
(y-x)\cdot n(y)\,d\sigma(y)
=
\textnormal{Re}\left(
\bigl(\Phi(\xi)-\Phi(w)\bigr)
\overline{\xi\Phi'(\xi)}
\right)\,|d\xi|.
\]
Substituting this identity into the boundary integral gives
\[
\Psi_\Phi(w)
=
\int_{\mathbb T}
G_0\bigl(|\Phi(w)-\Phi(\xi)|\bigr)
\textnormal{Re}\left(
\bigl(\Phi(\xi)-\Phi(w)\bigr)
\overline{\xi\Phi'(\xi)}
\right)
\,|d\xi|.
\]

\end{proof}
\section{Linearization of the nonlinear functional}

In order to investigate the local bifurcation of rotating patches from the
trivial branch, we study the linearization of the nonlinear functional
defined in \eqref{eq-ro66}. As in the classical Crandall--Rabinowitz
framework, the linearized operator will provide the spectral information
required to identify the bifurcation values of the angular velocity.
\\
Recall that the nonlinear functional is defined by
\[
F(\Omega,\Phi)
=
F_0(\Omega,\Phi)
-
\langle F_0(\Omega,\Phi)\rangle,
\]
where
\[
F_0(\Omega,\Phi)(w)
=
\Psi_\Phi(w)
+
\frac{\Omega}{2}|\Phi(w)|^2,
\qquad
w\in\mathbb T,
\]
and
\begin{align}\label{stream-func1}
\Psi_\Phi(w)
=
\int_{D_\Phi}
K_0(|\Phi(w)-y|)\,dA(y).
\end{align}
The subtraction of the average simply removes the arbitrary additive
constant appearing in the integrated formulation of the boundary
equation. Since the averaging operator is linear and continuous, the
Fréchet derivative of \(F\) is readily expressed in terms of that of
\(F_0\) as
\[
\partial_\Phi F(\Omega,\Phi)h
=
\partial_\Phi F_0(\Omega,\Phi)h
-
\Big\langle
\partial_\Phi F_0(\Omega,\Phi)h
\Big\rangle.
\]
Consequently, it suffices to compute the derivative of \(F_0\).

The functional \(F_0\) consists of two contributions. The first one,
\(\Psi_\Phi\), represents the stream function evaluated on the boundary
of the patch, while the second one corresponds to the centrifugal
potential in the rotating frame. Their linearizations are of a different
nature.

The quadratic term admits the straightforward expansion
\[
\partial_\varepsilon
\left(
\tfrac{\Omega}{2}
|\Phi+\varepsilon h|^2
\right)_{\varepsilon=0}
=
\Omega\,
\operatorname{Re}
\big(
h\,\overline{\Phi}
\big).
\]
The linearization of the stream function is considerably more delicate,
since the dependence on the conformal mapping is twofold. On the one
hand, the observation point \(\Phi(w)\) moves under the perturbation,
leading to the variation of the kernel with respect to its first
argument. On the other hand, the integration domain \(D_\Phi\) itself is
deformed, giving rise to a genuine shape derivative of the volume
integral.

To derive a concise expression for the Fréchet derivative, we shall
combine the interior conformal parametrization of the patch with
Hadamard's variational formula. This approach leads to a remarkably
compact representation of the linearized operator, separating
geometrically the contribution of the moving observation point from that
of the domain deformation.


\subsection{Linearization at a general  state}
We first recall the version of Hadamard's formula that will be used
below.
\begin{mytheorem}{Hadamard's Variational Formula}{Hadamard}
Let \(D\subset\mathbb R^2\) be a bounded domain with \(C^1\) boundary and
let \(v:\mathbb R^2\to\mathbb R^2\) be a smooth vector field.
For sufficiently small \(\varepsilon\), define the perturbed domain

\[
D_\varepsilon
=
\{x+\varepsilon v(x):x\in D\}.
\]
Assume that
\[
J(\varepsilon)
=
\int_{D_\varepsilon}
f(\varepsilon,x)\,dx,
\]
where \(f\) is continuously differentiable with respect to
\(\varepsilon\).
Then, we have 
\[
{
J^\prime(0)
=
\int_D
\partial_\varepsilon f(0,x)\,dx
+
\int_{\partial D}
f(0,x)
v_n(x)
\,d\sigma(x),
}
\]
where
\(
v_n=v\cdot n
\)
is the normal component of the deformation field.
\end{mytheorem}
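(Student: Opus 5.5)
The plan is to reduce to a fixed domain by a change of variables, differentiate under the integral sign, and turn the resulting divergence term into a boundary flux. For $\varepsilon$ small, the map $T_\varepsilon(x)=x+\varepsilon v(x)$ is a $C^1$ diffeomorphism of a neighborhood of $\overline D$ onto its image, since $DT_\varepsilon=\mathrm{Id}+\varepsilon Dv$ is a small perturbation of the identity. Changing variables $x=T_\varepsilon(y)$ gives
\[
J(\varepsilon)=\int_D f\bigl(\varepsilon,T_\varepsilon(y)\bigr)\,\det DT_\varepsilon(y)\,dy .
\]
The point of this step is that the integration domain no longer depends on $\varepsilon$.

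Next I will differentiate at $\varepsilon=0$ under the integral sign, which is justified by the continuity of $\partial_\varepsilon f$ and of $\nabla_x f$ on compact sets together with dominated convergence. Three elementary identities are needed:
\[
\tfrac{d}{d\varepsilon}\det(\mathrm{Id}+\varepsilon Dv)\big|_{\varepsilon=0}=\operatorname{div}v,
\qquad
\tfrac{d}{d\varepsilon}f(\varepsilon,y+\varepsilon v(y))\big|_{\varepsilon=0}=\partial_\varepsilon f(0,y)+\nabla_x f(0,y)\cdot v(y),
\]
and $\det DT_0=1$. These yield
\[
J'(0)=\int_D\partial_\varepsilon f(0,y)\,dy+\int_D\bigl(\nabla f(0,\cdot)\cdot v+f(0,\cdot)\operatorname{div}v\bigr)\,dy .
\]
The second integrand is exactly $\operatorname{div}\bigl(f(0,\cdot)v\bigr)$. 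Applying the divergence theorem on the $C^1$ domain $D$ turns it into $\int_{\partial D}f(0,x)\,v\cdot n\,d\sigma$, which is the claimed formula.

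The main obstacle is regularity. The argument above implicitly requires $f(0,\cdot)$ to be $C^1$ in $x$ up to $\overline D$, whereas the statement only assumes $C^1$ dependence on $\varepsilon$. I will therefore read the hypothesis as joint $C^1$ regularity of $f$ in $(\varepsilon,x)$ near $\{0\}\times\overline D$, which covers every use in the paper once singular kernels are handled separately. If only continuity in $x$ is available, I would instead mollify $f$ in $x$, apply the smooth formula to $f_\delta$, and pass to the limit $\delta\to0$ uniformly in $\varepsilon$. The boundary term behaves well under this limit because it depends only on the trace of $f(0,\cdot)$, which converges uniformly. The bulk term is the delicate one: to pass to the limit I need $J_\delta'\to J'$ locally uniformly in $\varepsilon$, so that the limit of the derivatives is the derivative of the limit.
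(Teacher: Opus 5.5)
Your proposal is correct and is essentially the paper's proof: pull back to the fixed domain $D$ via $T_\varepsilon(y)=y+\varepsilon v(y)$, expand $\det DT_\varepsilon=1+\varepsilon\operatorname{div}v+o(\varepsilon)$, differentiate under the integral sign, recognize $\nabla f\cdot v+f\operatorname{div}v=\operatorname{div}(fv)$, and apply the divergence theorem. Your regularity caveat is well taken, since the paper's argument also tacitly uses $\nabla_x f(0,\cdot)$ on $\overline D$, i.e.\ joint $C^1$ regularity of $f$; the mollification fallback is not needed, because one can instead split off $\int_{D_\varepsilon}\bigl(f(\varepsilon,\cdot)-f(0,\cdot)\bigr)\,dx$ and treat $\int_{D_\varepsilon}f(0,\cdot)\,dx-\int_D f(0,\cdot)\,dx$ directly, which requires only continuity of $f(0,\cdot)$ in $x$.
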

\begin{proof}
Using the change of variables
\[
x=T_\varepsilon(y)
=
y+\varepsilon v(y),
\]
we obtain
\[
J(\varepsilon)
=
\int_D
f(\varepsilon,T_\varepsilon(y))
\det DT_\varepsilon(y)
\,dy.
\]
Since
\[
DT_\varepsilon
=
\textnormal{Id}+\varepsilon Dv,
\]
we have
\[
\det DT_\varepsilon
=
1+\varepsilon\operatorname{div}v
+
o(\varepsilon).
\]
Therefore
\[
\begin{aligned}
J'(0)
=
\int_D
\partial_\varepsilon f(0,y)
\,dy
+
\int_D
\nabla f(0,y)\cdot v(y)
\,dy
+
\int_D
f(0,y)\operatorname{div}v(y)
\,dy.
\end{aligned}
\]
The last two terms combine into
\[
\operatorname{div}(fv)
=
\nabla f\cdot v
+
f\operatorname{div}v.
\]
Hence
\[
J'(0)
=
\int_D
\partial_\varepsilon f dy
+
\int_D
\operatorname{div}(fv) dy.
\]
Finally, the divergence theorem yields
\[
\int_D
\operatorname{div}(fv)
=
\int_{\partial D}
fv_n\,d\sigma.
\]
This proves the result.
\end{proof}
The following result provides an explicit expression for the Fréchet
derivative of the nonlinear functional $F_0$ defined by \eqref{eq-ro56}. More precisely, we have
the following.
\begin{myproposition}{}{Lin-Gen}
Let \(\Phi\) be a sufficiently smooth conformal mapping and let
\(h\) be an admissible perturbation. Then the Gateaux derivative of
\(F_0\) at \(\Phi\) is given by
\[
\begin{aligned}
\partial_\Phi F_0(\Omega,\Phi)[h](w)
={}&
\int_{\mathbb T}
K_0\!\left(|\Phi(w)-\Phi(\xi)|\right)
\operatorname{Re}
\left(
\big[h(\xi)-h(w)\big]\overline{\xi\Phi'(\xi)}
\right)
\, |d\xi|\\
&+\Omega\operatorname{Re}
\Big(
\overline{\Phi(w)}
\,h(w)
\Big),
\end{aligned}
\]
and
\[
\partial_\Phi F(\Omega,\Phi)[h]
=
\partial_\Phi F_0(\Omega,\Phi)[h]
-
\Big\langle
\partial_\Phi F_0(\Omega,\Phi)[h]
\Big\rangle.
\]
\end{myproposition}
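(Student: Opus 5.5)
The plan is to differentiate $F_0(\Omega,\Phi+\varepsilon h)$ at $\varepsilon=0$ term by term. The centrifugal part is immediate: $\partial_\varepsilon\frac{\Omega}{2}|\Phi+\varepsilon h|^2=\Omega\operatorname{Re}(\overline{\Phi}h)$. The statement for $F$ then follows from the linearity and continuity of the averaging map $g\mapsto\langle g\rangle$. The real work is therefore the derivative of
\[
\Psi_{\Phi+\varepsilon h}(w)=\int_{D_\varepsilon}K_0\bigl(|\Phi(w)+\varepsilon h(w)-y|\bigr)\,dA(y),\qquad D_\varepsilon=(\Phi+\varepsilon h)(\mathbb D).
\]

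I will apply Hadamard's formula (Theorem \ref{thm:Hadamard}). I write $D_\varepsilon=\{x+\varepsilon v(x):x\in D_\Phi\}$ with the deformation field $v=h\circ\Phi^{-1}$, extended smoothly. Its normal component on $\partial D_\Phi$ is read off from the conformal parametrization, exactly as in the proof of Proposition \ref{prop:prop-Bound-int}:
\[
v_n\,d\sigma=\operatorname{Re}\bigl(h(\xi)\overline{\xi\Phi'(\xi)}\bigr)|d\xi|.
\]
The integrand is $f(\varepsilon,y)=K_0(|\Phi(w)+\varepsilon h(w)-y|)$. Its $\varepsilon$-derivative is $-\nabla_y K_0(|x-y|)\cdot h(w)$ with $x=\Phi(w)$, i.e. a constant vector $h(w)$ paired against a $y$-gradient. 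The divergence theorem then turns the interior term into
\[
-\int_{\partial D_\Phi}K_0(|x-y|)\,h(w)\cdot n(y)\,d\sigma(y)=-\int_{\mathbb T}K_0(|\Phi(w)-\Phi(\xi)|)\operatorname{Re}\bigl(h(w)\overline{\xi\Phi'(\xi)}\bigr)|d\xi|.
\]
The boundary term of Hadamard's formula gives the same expression with $h(\xi)$ in place of $h(w)$ and with a plus sign. Adding the two produces exactly the claimed kernel
\[
\operatorname{Re}\bigl([h(\xi)-h(w)]\overline{\xi\Phi'(\xi)}\bigr).
\]

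The main obstacle is that $x=\Phi(w)$ lies on $\partial D_\Phi$, where $K_0$ may be singular: $\log$ for Euler, $|\cdot|^{-\beta}$ for gSQG. So neither Hadamard's formula nor the divergence theorem can be applied naively. I would handle this as in Proposition \ref{prop:prop-Bound-int}. I excise $B(x,\rho)$, carry out both computations on $D_\Phi\setminus\overline{B(x,\rho)}$, and show that the circular contributions are bounded by $C\rho\sup_{s\le\rho}|K_0(s)|$ or by $\int_0^\rho s|K_0(s)|\,ds$. These tend to zero, using \eqref{kernel-integrability} together with the fact that $K_0$ is monotone, so that $t|K_0(t)|\to0$.

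A cleaner alternative avoids differentiating under a singular integral. I would start from the boundary formula of Proposition \ref{prop:prop-Bound-int}, which is a $\mathbb T$-integral with the weakly singular kernel $G_0(|\cdot|)\cdot|\Phi(\xi)-\Phi(w)|$, differentiate it directly in $\varepsilon$, and use the identity $2G_0+rG_0'=K_0$ together with an integration by parts in $\xi$ to recombine the terms into the stated form. Justifying the interchange of derivative and integral then only requires the domination $|K_0(|\Phi(w)-\Phi(\xi)|)|\,|h(\xi)-h(w)|\lesssim |K_0(c|w-\xi|)|\,|w-\xi|$, which is integrable. I would present the Hadamard computation as the main argument and use this estimate for the rigorous justification.
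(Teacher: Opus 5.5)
Your proposal is correct and is essentially the paper's proof. The paper also applies Hadamard's formula with boundary velocity $h(\xi)$, rewrites the moving-observation-point term $h(w)\cdot\int_{D_\Phi}\nabla_xK_0(|x-y|)\,dA(y)$ as $-\int_{\partial D_\Phi}K_0(|x-y|)\,h(w)\cdot n\,d\sigma$ using $\nabla_x=-\nabla_y$ and the divergence theorem, and combines the two boundary integrals into the kernel $\operatorname{Re}\bigl([h(\xi)-h(w)]\overline{\xi\Phi'(\xi)}\bigr)$.

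The paper ignores the singularity at $x=\Phi(w)\in\partial D_\Phi$, so your justification is an addition, with two small fixes. First, the circular term is bounded by $C\rho\,|K_0(\rho)|$, not by $C\rho\sup_{s\le\rho}|K_0(s)|$, which is infinite for singular kernels. Second, in your alternative route the domination needed for the interchange is of $r|G_0(r)|+r^2|G_0'(r)|$, not of the final $K_0$ integrand; after that, the integration by parts in $\xi$ does recombine everything via $2G_0+rG_0'=K_0$.
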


\begin{proof}
Recall that 
\[
F_0(\Omega,\Phi)(w)
=
\Psi_\Phi(w)+\tfrac{\Omega}{2}|\Phi(w)|^2,
\qquad w\in\mathbb{T},
\]
where
\[
\Psi_\Phi(w)
=
\int_{D_\Phi}
K_0(|\Phi(w)-y|)\,dA(y).
\]
Here \(D_\Phi=\Phi(\mathbb D)\), and \(\Phi:\mathbb D\to D_\Phi\) is the
interior conformal mapping.
Let
\[
\Phi_\varepsilon=\Phi+\varepsilon h.
\]
Then
\[
\partial_\Phi F_0(\Omega,\Phi)[h]=\partial_\varepsilon F_0(\Omega,\Phi+\varepsilon h)_{|\varepsilon=0}
=
\partial_\varepsilon{\Psi_{\Phi+\varepsilon h}}_{|\varepsilon=0}
+
\Omega\operatorname{Re}\big(h\overline{\Phi}\big)
.
\]
Thus it remains to compute \(\partial_\Phi\Psi_\Phi h\).
Fix \(w\in\mathbb T\) and set
\[
x_\varepsilon:=\Phi_\varepsilon(w),
\qquad
x:=\Phi(w).
\]
The deformation of the domain \(D_\Phi\) induced by
\(\Phi_\varepsilon=\Phi+\varepsilon h\) has boundary velocity
\[
V_h(\Phi(\xi))=h(\xi).
\]
By the Hadamard transport formula,
\[
\begin{aligned}
\partial_\Phi\Psi_\Phi[h](w)
={}&
h(w)\cdot
\int_{D_\Phi}
\nabla_xK_0(|x-y|)\,dA(y)
\\
&+
\int_{\partial D_\Phi}
K_0(|x-y|)
V_h(y)\cdot n_\Phi(y)\,d\sigma(y).
\end{aligned}
\]
Since the kernel depends only on \(x-y\),
\[
\nabla_xK_0(|x-y|)
=
-\nabla_yK_0(|x-y|).
\]
Consequently, applying the divergence theorem in the \(y\)-variable,
\[
\int_{D_\Phi}
\nabla_xK_0(|x-y|)\,dA(y)
=
-\int_{\partial D_\Phi}
K_0(|x-y|)n_\Phi(y)\,d\sigma(y).
\]
It follows that
\[
\partial_\Phi\Psi_\Phi[h](w)
=
\int_{\partial D_\Phi}
K_0(|x-y|)
\bigl(V_h(y)-h(w)\bigr)\cdot n_\Phi(y)\,d\sigma(y).
\]
We now parametrize the boundary counterclockwise by
\[
y=\Phi(\xi),
\qquad
\xi=e^{i\theta}\in\mathbb T.
\]
The outward normal measure is
\[
n_\Phi(\Phi(\xi))\,d\sigma
=
\xi\Phi'(\xi)\,|d\xi|.
\]
Therefore, 
we obtain
\[
\begin{aligned}
\bigl(V_h(\Phi(\xi))-h(w)\bigr)
\cdot n_\Phi(\Phi(\xi))\,d\sigma
&=
\textnormal{Re}\left(
\bigl(h(\xi)-h(w)\bigr)
\overline{\xi\Phi'(\xi)}
\right)|d\xi|.
\end{aligned}
\]
Substitution into the preceding boundary formula yields
\begin{align}\label{psi-Id}
\partial_\Phi\Psi_\Phi[h](w)
=
\int_{\mathbb T}
K_0\bigl(|\Phi(w)-\Phi(\xi)|\bigr)
\textnormal{Re}\left(
\bigl(h(\xi)-h(w)\bigr)
\overline{\xi\Phi'(\xi)}
\right)
\,|d\xi|,
\end{align}
which completes the proof.
\end{proof}

\subsection{Linearization at the trivial equilibrium}

We now specialize the general linearization formula to the trivial
equilibrium corresponding to the unit disk. At this state, the rotational
invariance of the kernel implies that the linearized operator reduces to
a convolution operator on the unit circle. Our result reads as follows.

\begin{mycorollary}{}{cor-linearization-id}
Define 
\[ 
\lambda_1
=
\int_{0}^{2\pi}
K_0\!\left(
2\left|\sin\tfrac{\theta}{2}\right|
\right)
\cos\theta\,d\theta.
\]
Then,
\[
\begin{aligned}
\partial_\Phi F_0(\Omega,\mathrm{Id})[h](w)
=&
(\Omega-\lambda_1)\operatorname{Re}\!\big(\overline w h(w)\big)
+
\int_{\mathbb T}
K_0(|w-\xi|)
\operatorname{Re}\!\big(\overline\xi\,h(\xi)\big)
\, |d\xi|\,,
\end{aligned}
\]
and
\begin{align*}
\partial_\Phi F(\Omega,\mathrm{Id})[h]
&=
\partial_\Phi F_0(\Omega,\mathrm{Id})[h]
-
\Big\langle \partial_\Phi F_0(\Omega,\mathrm{Id})[h]\Big\rangle.
\end{align*}
\end{mycorollary}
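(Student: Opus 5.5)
We specialize Proposition~\ref{prop:Lin-Gen} to $\Phi=\mathrm{Id}$, where $\Phi'(\xi)=1$. This gives
\[
\partial_\Phi F_0(\Omega,\mathrm{Id})[h](w)=\int_{\mathbb T}K_0(|w-\xi|)\operatorname{Re}\big((h(\xi)-h(w))\overline\xi\big)\,|d\xi|+\Omega\operatorname{Re}(\overline w h(w)).
\]
The term involving $h(\xi)$ is already the convolution term $\int_{\mathbb T}K_0(|w-\xi|)\operatorname{Re}(\overline\xi h(\xi))\,|d\xi|$ in the statement. It therefore remains to show that
\[
\int_{\mathbb T}K_0(|w-\xi|)\operatorname{Re}\big(h(w)\overline\xi\big)\,|d\xi|=\lambda_1\operatorname{Re}(\overline w h(w)).
\]

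To do this, fix $w\in\mathbb T$ and set $\xi=we^{i\theta}$, so that $|d\xi|=d\theta$ and $|w-\xi|=|1-e^{i\theta}|=2|\sin\frac\theta2|$. Then $h(w)\overline\xi=\overline w h(w)e^{-i\theta}$, and by linearity of the real part the integral equals
\[
\operatorname{Re}\Big(\overline w h(w)\int_0^{2\pi}K_0\big(2|\sin\tfrac\theta2|\big)e^{-i\theta}\,d\theta\Big).
\]
The weight $K_0(2|\sin\frac\theta2|)$ is invariant under $\theta\mapsto 2\pi-\theta$, while $\sin\theta$ changes sign, so the sine part vanishes. The inner integral is therefore the real number $\lambda_1$, and the expression reduces to $\lambda_1\operatorname{Re}(\overline w h(w))$. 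The formula for $\partial_\Phi F$ then follows immediately, since the averaging operator is linear.

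The only delicate point is making sense of these integrals. Near $\theta=0$ the kernel behaves like $K_0(|\theta|)$, and assumption~\eqref{kernel-integrability} ensures $K_0\in L^1(0,1)$, since $t^{-\alpha}\geqslant 1$ there. Hence $\lambda_1$ is finite, and splitting the integrand $K_0(|w-\xi|)(h(\xi)-h(w))\overline\xi$ into two absolutely convergent pieces is legitimate for $h$ bounded. I expect this justification to be the main (if mild) obstacle, together with checking the Hadamard step in Proposition~\ref{prop:Lin-Gen} for a kernel that is singular at the boundary point $x=\Phi(w)$. The latter can be taken as given from Proposition~\ref{prop:Lin-Gen}.
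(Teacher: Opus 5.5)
Your proposal is correct and follows essentially the same route as the paper. Both specialize Proposition~\ref{prop:Lin-Gen} at $\Phi=\mathrm{Id}$, split off the $h(w)$ term, and rotate $\xi=w\zeta$, so that this term's coefficient becomes $\overline w\int_0^{2\pi}K_0(2|\sin\frac{\theta}{2}|)e^{-i\theta}\,d\theta$. The evenness of $\theta\mapsto K_0(2|\sin\frac{\theta}{2}|)$ then identifies this coefficient as the real number $\lambda_1$. Your observation that \eqref{kernel-integrability} forces $K_0\in L^1(0,1)$, which justifies splitting the integral, is a useful detail the paper leaves implicit.
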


\begin{proof}
For the identity map \(\Phi(w)=w\), we have in view of \eqref{psi-Id}
\begin{align*}
\partial_\Phi\Psi_{\textnormal{Id}}[h](w)
&=
\int_{\mathbb T}
K_0\bigl(|w-\xi|\bigr)
\textnormal{Re}\left(
\bigl(h(\xi)-h(w)\bigr)
\overline{\xi}
\right)
\,|d\xi|\\
&=-\textnormal{Re} \left( h(w)\int_{\mathbb T}
K_0\bigl(|w-\xi|\bigr)
\overline{\xi}
\,|d\xi|\right)+\int_{\mathbb T}
K_0\bigl(|w-\xi|\bigr)
\textnormal{Re}\left(
h(\xi)
\overline{\xi}
\right)
\,|d\xi|,
\end{align*}
As \(w\in\mathbb T\), then making a change of variables gives
$$\int_{\mathbb T}
K_0\bigl(|w-\xi|\bigr)
\overline{\xi}
\,|d\xi|=\overline{w} \int_{\mathbb T}
K_0\bigl(|1-\zeta|\bigr)
\overline{\zeta}
\,|d\zeta|$$
Writing \(\zeta=e^{i\theta}\), \(0\le\theta\le2\pi\), gives
\[
d\zeta
=
ie^{i\theta}\,d\theta,
\]
so that
\[
\lambda_1=\int_{\mathbb T}
K_0\bigl(|1-\zeta|\bigr)
\overline{\zeta}
\,|d\zeta|
=
\int_{0}^{2\pi}
K_0\!\left(
2\left|\sin\tfrac{\theta}{2}\right|
\right)
e^{-i\theta}\,d\theta.
\]
Since
\(\theta\mapsto 
K_0\!\left(
2\left|\sin\frac{\theta}{2}\right|
\right)
\)
is an even function, the sine contribution vanishes, and we conclude that
\[
\lambda_1=
\int_{0}^{2\pi}
K_0\!\left(
2\left|\sin\tfrac{\theta}{2}\right|
\right)
\cos\theta\,d\theta.
\]
Substituting into the general formula of
Proposition~\ref{prop:Lin-Gen}, we get
\[
\begin{aligned}
\partial_\Phi F_0(\Omega,\mathrm{Id})[h](w)
={}&
\operatorname{Re}
\Big(
(\Omega-\lambda_1)\overline w\, h(w)
\Big)
+
\int_{\mathbb T}
K_0(|w-\xi|)
\operatorname{Re}
\left(
h(\xi)\overline{\xi}
\right)
\,|d\xi|.
\end{aligned}
\]
Hence
\[
\begin{aligned}
\partial_\Phi F_0(\Omega,\mathrm{Id})[h](w)
={}&
(\Omega\textcolor{red}{-\lambda_1}) 
\operatorname{Re}\!\big(\overline w h(w)\big)
+
\int_{\mathbb T}
K_0(|w-\xi|)
\operatorname{Re}\!\big(h(\xi)\overline\xi\big)
\,|d\xi|.
\end{aligned}
\]
Finally, since
\[
F=F_0-\langle F_0\rangle,
\]
and the averaging operator is linear, we obtain
\[
\partial_\Phi F(\Omega,\mathrm{Id})[h]
=
\partial_\Phi F_0(\Omega,\mathrm{Id})[h]
-
\Big\langle \partial_\Phi F_0(\Omega,\mathrm{Id})[h]\Big\rangle.
\]
This completes the proof.
\end{proof}

\section{Functional tools and regularity properties}

The bifurcation analysis requires a functional framework that is compatible with the conformal parametrization of the boundary and provides sufficient regularity to control the nonlinear integral operators. Since the admissible perturbations arise from conformal deformations of the unit disk, they are naturally described as traces on $\mathbb T$ of functions holomorphic in the unit disk. We therefore work in Hölder spaces endowed with the appropriate Fourier structure.
Let $0<\alpha<1$. We define the spaces
\begin{align}\label{X-alpha}
X_\alpha
:=
\left\{
h\in C^{1+\alpha}(\mathbb T)
:
h(w)=\sum_{n\geqslant 2}h_n w^n,\quad h_n\in\mathbb R
\right\}
\end{align}
and
\begin{align}\label{Y-alpha}
\nonumber Y_{\alpha,0}
&:=
\Big\{
g\in C^{1+\alpha}(\mathbb T)
:
g(w)=\sum_{n\geqslant 0}g_n\,\operatorname{Re}(w^n),
\, g_n\in\mathbb R
\Big\},\\
\, Y_{\alpha}&=\big\{h\in Y_{\alpha,0},\, g_0=0\big\}.
\end{align}
For $\varepsilon>0$, and $f\in X_\alpha$ we define the open ball
$$
B_\varepsilon^\alpha(f):=\{h\in X_\alpha,\, \|h-f\|_{C^{1+\alpha}(\mathbb{T})} {<} \varepsilon\}, 
$$
and 
$$
B_\varepsilon^\alpha(\mathrm{Id}):=\mathrm{Id}+B_\varepsilon^\alpha(0), 
$$
while $\mathrm{Id}\not\in X_{\alpha}$. 
We identify each $f\in X_\alpha$ with its holomorphic extension
to $\mathbb D$. Choose $\varepsilon>0$ so small that
\[
 q_\varepsilon
 :=
 \sup_{f\in B_\varepsilon^\alpha(0)}
 \|f'\|_{L^\infty(\overline{\mathbb D})}
 <1.
\]
With the standard $C^{1+\alpha}$ norm, one has
$q_\varepsilon\leqslant\varepsilon$, so that it is enough to assume
$0<\varepsilon<1$.
\\
Let $\Phi=\mathrm{Id}+f\in
B_\varepsilon^\alpha(\mathrm{Id})$. Then, for every
$z,\zeta\in\overline{\mathbb D}$,
\[
 (1-q_\varepsilon)|z-\zeta|
 \leqslant |\Phi(z)-\Phi(\zeta)|
 \leqslant (1+q_\varepsilon)|z-\zeta|.
\]
Indeed,
\[
 f(z)-f(\zeta)
 =
 (z-\zeta)\int_0^1
 f'\bigl(\zeta+t(z-\zeta)\bigr)\,dt,
\]
and therefore
\[
 |f(z)-f(\zeta)|
 \leqslant q_\varepsilon |z-\zeta|.
\]
Consequently, $\Phi$ is injective on
$\overline{\mathbb D}$, hence univalent in $\mathbb D$,
and $\Phi(\mathbb T)$ is a $C^{1+\alpha}$ Jordan curve.
In particular, $\Phi$ is uniformly bi-Lipschitz on
$\overline{\mathbb D}$.
\\
Let \(\gamma\in[0,1)\), and let \(K_0:(0,\infty)\to\mathbb R\) be locally integrable and satisfy
\[
\int_0^1 |K_0(r)|\,r^{-\gamma}\,dr<\infty.
\]
Define the radial averaging of $K_0$
\[
G_0(r)
:=
\int_0^1 sK_0(rs)\,ds
=
\frac{1}{r^2}\int_0^r tK_0(t)\,dt,
\qquad 0<r\leqslant 1.
\]
Then \(G_0\) is locally absolutely continuous on \((0,1]\), and
\[
2G_0(r)+rG_0'(r)=K_0(r)
\]
for almost every \(r\in(0,1)\).
\\
The regularity analysis of the nonlinear functional requires quantitative control of the derivatives of the interaction kernel near the origin. Under the complete monotonicity assumption on $-K_0'$, the weighted integrability of $K_0$ automatically propagates to all its higher derivatives. Moreover, this structure is preserved by the radial averaging defining $G_0$. The following lemma collects these properties and provides the estimates that will be repeatedly used in the sequel.
\begin{mylemma}{}{k_0to G_0}
Let {$0\leqslant \gamma<1$},  $L>0,$ and  $K_0\in C^\infty((0,\infty))$ such that $-K_0'$ is completely monotone. 
Then, the following holds true.
\begin{enumerate}
    \item For every integer $k\geqslant1$,

\[
\int_0^L
|K_0^{(k)}(r)|\,r^{k-\gamma}\,dr
\leqslant 
\frac{\Gamma(k+1-\gamma)}
{\Gamma(1-\gamma)}
\int_0^L
\big(K_0(r)-K_0(L)\big)\,r^{-\gamma}\,dr.
\]
    \item The function $-G_0^\prime$ is also completely monotone and for every integer $k\geqslant 1$,
\[
\int_0^L
|G_0^{(k)}(r)|\,r^{k-\gamma}\,dr
\leqslant
\frac{\Gamma(k+1-\gamma)}
{(1+\gamma)\Gamma(1-\gamma)}
\int_0^L
\big(K_0(r)-K_0(L)\big)\,r^{-\gamma}\,dr,
\]
and
$$
\int_0^L
|G_0(r)|\,r^{-\gamma}\,dr\leqslant 
\frac{1}{1+\gamma}
\int_0^L|K_0(t)|t^{-\gamma}\,dt.$$
\item Let  $C\geqslant c>0$ and define for every integer $k\geqslant0$, 
\[
\mathcal M_k(r)
:=
\sup_{\rho\in[cr,Cr]}
|G_0^{(k)}(\rho)|.
\]
Then
\[
\mathcal M_0(r)
\le
2|G_0(cr)|+|G_0(Cr)|,
\]
and, for every integer $k\geqslant 1$,
\[
\mathcal M_k(r)
=
|G_0^{(k)}(cr)|.
\]
\end{enumerate}
\end{mylemma}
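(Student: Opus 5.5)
The plan is to reduce everything to the Bernstein representation \eqref{Bernstein}. Writing $-K_0'(r)=\int_0^\infty e^{-rx}\,d\mu(x)$ and differentiating under the integral gives, for $k\geqslant1$,
\[
K_0^{(k)}(r)=(-1)^k\int_0^\infty x^{k-1}e^{-rx}\,d\mu(x),
\qquad\text{so}\qquad
|K_0^{(k)}(r)|=\int_0^\infty x^{k-1}e^{-rx}\,d\mu(x).
\]
Since $K_0$ is decreasing, $K_0(r)-K_0(L)=\int_r^L(-K_0'(s))\,ds\geqslant0$. Fubini then gives
\[
\int_0^L\big(K_0(r)-K_0(L)\big)r^{-\gamma}\,dr=\frac{1}{1-\gamma}\int_0^L|K_0'(s)|\,s^{1-\gamma}\,ds .
\]
Since $\Gamma(2-\gamma)=(1-\gamma)\Gamma(1-\gamma)$, part 1 is therefore equivalent to
\[
\int_0^L|K_0^{(k)}(r)|\,r^{k-\gamma}\,dr\leqslant\frac{\Gamma(k+1-\gamma)}{\Gamma(2-\gamma)}\int_0^L|K_0'(s)|\,s^{1-\gamma}\,ds .
\]
For $k=1$ this is an equality. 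For $k\geqslant2$ I insert the Bernstein formula on both sides and apply Tonelli. Any atom of $\mu$ at $x=0$ contributes nothing to the left-hand side, because of the factor $x^{k-1}$. It therefore suffices to prove the inequality pointwise in $x>0$. After the substitution $u=rx$, both sides carry the same factor $x^{\gamma-2}$, and the claim becomes
\[
\frac{\gamma(k+1-\gamma,T)}{\Gamma(k+1-\gamma)}\leqslant\frac{\gamma(2-\gamma,T)}{\Gamma(2-\gamma)},\qquad T=Lx,
\]
where $\gamma(a,T)=\int_0^Te^{-u}u^{a-1}\,du$ is the lower incomplete gamma function.

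This monotonicity in $a$ of the regularized incomplete gamma function $P(a,T)$ is the one genuinely non-routine step, and I expect it to be the main obstacle. I would prove it by showing that $T\mapsto\gamma(a,T)/\gamma(b,T)$ is nondecreasing for $a>b$ and then letting $T\to\infty$. The monotonicity of this ratio is an elementary Chebyshev/monotone-ratio argument, since $u^{a-b}$ is increasing. Equivalently, I could use that the Gamma distributions are stochastically ordered in their shape parameter.

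For part 2, I differentiate $G_0(r)=\int_0^1sK_0(rs)\,ds$ under the integral to get $G_0^{(k)}(r)=\int_0^1s^{k+1}K_0^{(k)}(rs)\,ds$. In particular,
\[
-G_0'(r)=\int_0^1s^2\int_0^\infty e^{-rsx}\,d\mu(x)\,ds,
\]
which is a positive superposition of exponentials and hence completely monotone. For the estimate I bring absolute values inside, apply Fubini, and substitute $t=rs$:
\[
\int_0^L|G_0^{(k)}(r)|\,r^{k-\gamma}\,dr\leqslant\int_0^1s^{\gamma}\int_0^{Ls}|K_0^{(k)}(t)|\,t^{k-\gamma}\,dt\,ds\leqslant\frac{1}{1+\gamma}\int_0^L|K_0^{(k)}(t)|\,t^{k-\gamma}\,dt ,
\]
and then conclude with part 1. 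The same substitution with $k=0$ gives the bound on $\int_0^L|G_0|\,r^{-\gamma}\,dr$.

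Part 3 follows from the sign structure. Because $-G_0'$ is completely monotone, $(-1)^kG_0^{(k)}\geqslant0$ and $|G_0^{(k)}|=(-1)^kG_0^{(k)}$ is nonincreasing for every $k\geqslant1$, so its supremum over $[cr,Cr]$ is attained at $\rho=cr$. For $k=0$, $G_0$ is nonincreasing, so $G_0(Cr)\leqslant G_0(\rho)\leqslant G_0(cr)$ on $[cr,Cr]$. Hence $|G_0(\rho)|\leqslant\max\{|G_0(cr)|,|G_0(Cr)|\}$, which is bounded by $2|G_0(cr)|+|G_0(Cr)|$.
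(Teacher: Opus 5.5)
Your proposal is correct. Parts 2 and 3 follow the paper's argument essentially verbatim, with two small improvements. In part 2, your direct substitution $t=rs$ with upper limit $Ls\leqslant L$ spares you the paper's dilation to $L=1$. In part 3, for $k=0$, you obtain the sharper bound $\max\{|G_0(cr)|,|G_0(Cr)|\}$.

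The genuine difference is in part 1. The paper normalizes $K_0(L)=0$ and sets $I_k=\int_0^L(-1)^kK_0^{(k)}(r)\,r^{k-\gamma}\,dr$. One integration by parts then gives $I_k\leqslant(k-\gamma)I_{k-1}$. The boundary term at $L$ has a favorable sign and is dropped. The boundary term $r^{k-\gamma}K_0^{(k-1)}(r)$ at the origin is shown to vanish via the monotonicity of $K_0$ for $k=1$, and only sketchily, by induction, for $k\geqslant2$.

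You instead insert the Bernstein measure and reduce to the pointwise comparison $P(k+1-\gamma,T)\leqslant P(2-\gamma,T)$ of regularized incomplete gamma functions. You should rename that function, since $\gamma$ already denotes the weight exponent.

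The two routes are closer than they appear. The two shape parameters differ by the integer $k-1$. So the step you flag as the main obstacle follows at once from the recurrence $\gamma(a+1,T)=a\,\gamma(a,T)-T^{a}e^{-T}$. This recurrence is exactly the paper's integration by parts applied to a single exponential $e^{-rx}$: the discarded term $T^{a}e^{-T}$, with $a=k-\gamma$, is up to the factor $x^{\gamma-2}$ the boundary term at $L$. Your monotone-ratio (or stochastic-ordering) argument is correct but proves more than is needed.

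What your route buys:
\begin{itemize}
\item Everything is explicit exponential by exponential, so there is no boundary term at the origin to control.
\item The sharpness of the constant $\Gamma(k+1-\gamma)/\Gamma(1-\gamma)$ becomes visible: take $\mu=\delta_{x_0}$ with $Lx_0\to\infty$.
\end{itemize}
What the paper's route buys: it uses only the sign pattern $(-1)^jK_0^{(j)}\geqslant0$, with no appeal to Bernstein's theorem and no special functions.
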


\begin{proof}
{\bf 1.} 
Let  $k\geqslant 1$. By replacing \(K_0\) with \(K_0-K_0(L)\), we may assume, without loss of generality, that \(-K_0'\) is completely monotone and that
\[
K_0(L)=0.
\]
Since $-K_0'$ is completely monotone,
\[
(-1)^kK_0^{(k)}(r)\geqslant 0,
\qquad r>0,
\]
and therefore
\[
|K_0^{(k)}(r)|
=
(-1)^kK_0^{(k)}(r).
\]
Set
\[
I_k
:=
\int_0^L
(-1)^kK_0^{(k)}(r)\,
r^{k-\gamma}\,dr.
\]
Then
\[
I_k
=
-\int_0^L
(-1)^{k-1}K_0^{(k)}(r)\,
r^{k-\gamma}\,dr.
\]
Integrating by parts,
\[
\begin{aligned}
I_k
&=
-\Bigl[
(-1)^{k-1}K_0^{(k-1)}(r)
r^{k-\gamma}
\Bigr]_0^L\\
&\qquad+
(k-\gamma)
\int_0^L
(-1)^{k-1}
K_0^{(k-1)}(r)
r^{k-1-\gamma}\,dr.
\end{aligned}
\]
Since
\[
\forall k\geqslant 1,\quad (-1)^{k-1}K_0^{(k-1)}(L)\geq0,
\]
the boundary contribution at $L$ is nonpositive. Moreover, we claim that 
\[
\lim_{r\to0}
r^{k-\gamma}
K_0^{(k-1)}(r)
=0\,.
\]
Indeed, for $k=1,$ as $K_0$ is decreasing and positive on $[0,L]$ we get
\begin{align*}
    \int_{\frac{r}{2}}^r s^{-\gamma} K_0(s) ds
    &\geqslant K_0(r)\int_{\frac{r}{2}}^r s^{-\gamma}  ds\\
    &\geqslant C(\gamma)  r^{1-\gamma}K_0(r)\geqslant 0.
\end{align*}
As 
$$\lim_{r\to 0}  \int_{\frac{r}{2}}^r s^{-\gamma} K_0(s) ds=0,
$$ then we get
$$
\lim_{r\to0} r^{1-\gamma}K_0(r)=0.
$$
The result for $k\geqslant2$ can be done by induction. 
Indeed, repeated integration by parts imply that
\[
\lim_{r\to0}r^{j-\gamma}
K_0^{(j-1)}(r)=0,
\qquad
1\leqslant j\leqslant k.
\]
Hence
\[
I_k
\leqslant
(k-\gamma)
I_{k-1}.
\]
Iterating,
\[
I_k
\leqslant
(k-\gamma)(k-1-\gamma)\cdots(1-\gamma)I_0.
\]
Since
\[
I_0
=
\int_0^L
K_0(r)r^{-\gamma}\,dr,
\]
and
\[
\prod_{j=1}^k(j-\gamma)
=
\frac{\Gamma(k+1-\gamma)}
{\Gamma(1-\gamma)},
\]
we conclude that
\[
\int_0^L
|K_0^{(k)}(r)|\,r^{k-\gamma}\,dr
=
I_k
\leqslant
\frac{\Gamma(k+1-\gamma)}
{\Gamma(1-\gamma)}
\int_0^L
K_0(r)\,r^{-\gamma}\,dr.
\]
{\bf{2.}}
{We first prove that $-G_0'$ is completely monotone.
By the Bernstein representation \eqref{Bernstein} and Tonelli's theorem,
\[
 \begin{aligned}
 -G_0'(r)
 &=
 \int_0^1s^2\bigl(-K_0'(rs)\bigr)\,ds  \\
 &=
 \int_{[0,\infty)}
 \int_0^1s^2e^{-rsx}\,ds\,d\mu(x).
 \end{aligned}
\]
For each fixed $s\in(0,1]$ and $x\geq0$, the function
$r\mapsto e^{-rsx}$ is completely monotone. Hence
$-G_0'$ is a positive superposition of completely monotone
functions and is therefore completely monotone.}

We reduce the estimate to the case $L=1$. Indeed, for the general case $L>0$ we make a dilation argument by setting
\[
\widetilde K_0(x):=K_0(Lx),
\qquad
\widetilde G_0(x):=G_0(Lx),
\qquad 0<x\leq1.
\]
We leave the details for the readers. 
{ 
For $k=0$, since $K_0$ is not assumed to be nonnegative, we
first use the triangle inequality in the definition of $G_0$:
\[
 |G_0(r)|
 =
 \left|\int_0^1 sK_0(rs)\,ds\right|
 \leqslant
 \int_0^1s|K_0(rs)|\,ds.
\]
Since the integrand below is nonnegative, Tonelli's theorem and
the change of variables $t=rs$ yield
\[
\begin{aligned}
 \int_0^1|G_0(r)|r^{-\gamma}\,dr
 &\leqslant
 \int_0^1\int_0^1
 s|K_0(rs)|r^{-\gamma}\,ds\,dr \\
 &=
 \int_0^1s^\gamma
 \int_0^s|K_0(t)|t^{-\gamma}\,dt\,ds \\
 &\leqslant
 \frac1{1+\gamma}
 \int_0^1|K_0(t)|t^{-\gamma}\,dt.
\end{aligned}
\]
}
Assume now that $k\geqslant1$. Differentiating under the integral sign yields
\[
G_0^{(k)}(r)
=
\int_0^1
s^{k+1}
K_0^{(k)}(rs)\,ds.
\]
Since $-K_0'$ is completely monotone,
\[
(-1)^kK_0^{(k)}\geqslant0,
\]
and hence
\[
|G_0^{(k)}(r)|
=
\int_0^1
s^{k+1}
|K_0^{(k)}(rs)|\,ds.
\]
Applying Fubini's theorem and changing variables $t=rs$, we obtain
\[
\begin{aligned}
\int_0^1
|G_0^{(k)}(r)|
r^{k-\gamma}\,dr
&=
\int_0^1
\int_0^1
s^{k+1}
|K_0^{(k)}(rs)|
r^{k-\gamma}\,ds\,dr\\
&=
\int_0^1
s^\gamma
\int_0^s
|K_0^{(k)}(t)|
t^{k-\gamma}\,dt\,ds\\
&=
\frac1{1+\gamma}
\int_0^1
|K_0^{(k)}(t)|
t^{k-\gamma}
(1-t^{1+\gamma})\,dt\\
&\leqslant\frac1{1+\gamma}
\int_0^1
|K_0^{(k)}(t)|
t^{k-\gamma}\,dt
\end{aligned}
\]
By the item \textbf{1}, this completes the proof of the desired result.
\\
{\bf 3.}
Let's now move to the last point.
Since $-G_0'$ is completely monotone, one has
 for every $k\ge1$,
\[
|G_0^{(k)}(\rho)|
=
(-1)^kG_0^{(k)}(\rho).
\]
Moreover,
\[
\frac{d}{d\rho}|G_0^{(k)}(\rho)|
=
(-1)^kG_0^{(k+1)}(\rho)
=
-(-1)^{k+1}G_0^{(k+1)}(\rho)
\leqslant 0.
\]
Hence the function
\[
\rho\longmapsto|G_0^{(k)}(\rho)|
\]
is nonincreasing and it  follows that
\[
\mathcal M_k(r)
=
\sup_{\rho\in[cr,Cr]}
|G_0^{(k)}(\rho)|
=
|G_0^{(k)}(cr)|,
\qquad k\geqslant 1.
\]
For $k=0$, the same argument shows that
\[
G_0'(\rho)\leqslant0,
\]
so that $G_0$ is nonincreasing. Alternatively, without using the sign of $G_0$, for every $\rho\in[cr,Cr]$ we write
\[
G_0(\rho)
=
G_0(cr)+\int_{cr}^{\rho}G_0'(t)\,dt.
\]
Since $G_0'\leqslant0$, we have
\[
\int_{cr}^{Cr}|G_0'(t)|\,dt
=
-\int_{cr}^{Cr}G_0'(t)\,dt
=
G_0(cr)-G_0(Cr).
\]
Therefore,
\[
\begin{aligned}
|G_0(\rho)|
&\leqslant
|G_0(cr)|
+
\int_{cr}^{Cr}|G_0'(t)|\,dt\\
&=
|G_0(cr)|+G_0(cr)-G_0(Cr)\\
&\leqslant
2|G_0(cr)|+|G_0(Cr)|.
\end{aligned}
\]
Taking the supremum over $\rho\in[cr,Cr]$ yields
\[
\mathcal M_0(r)
\leqslant
2|G_0(cr)|+|G_0(Cr)|.
\]
This completes the proof.
\end{proof}

The next step is to establish the regularity properties required to place the $V$-state equation within the classical bifurcation framework. In particular, we need to verify that the nonlinear functional $F_0$ defined in
\eqref{eq-ro56} is well defined on a neighborhood of the unit disk and has the necessary differentiability with respect to both the conformal parametrization and the angular velocity. The estimates developed above lead to the following result.
\begin{myproposition}{}{Theorem-REg-smooth}
Let $\varepsilon\in(0,1)$, and assume that $K_0$ satisfies the assumptions \eqref{Bernstein} and \eqref{kernel-integrability}. Then the functional $F_0$ defined in \eqref{eq-ro56} enjoys the following properties.
\begin{enumerate}

\item The map
\[
F:\mathbb{R}\times B_\varepsilon^\alpha(\textnormal{Id})\longrightarrow Y_{\alpha,0} 
\]
is well defined.

\item The map $F_0:\mathbb{R}\times B_\varepsilon^\alpha(\textnormal{Id})\longrightarrow Y_{\alpha}$ is of class $C^1$.

\item Moreover, the mixed derivative
\(
\partial_\Omega\partial_\Phi F_0
\)
exists and is continuous.
\end{enumerate}
\end{myproposition}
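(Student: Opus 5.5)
The plan is to work entirely with the boundary representation of Proposition~\ref{prop:prop-Bound-int},
\[
\Psi_\Phi(w)=\int_{\mathbb T}G_0\bigl(|\Phi(w)-\Phi(\xi)|\bigr)\,\textnormal{Re}\Big(\bigl(\Phi(\xi)-\Phi(w)\bigr)\overline{\xi\Phi'(\xi)}\Big)\,|d\xi|.
\]
This puts the moving domain onto the fixed circle and trades the singular kernel $K_0$ for the averaged kernel $G_0$. The centrifugal term $\tfrac{\Omega}{2}|\Phi|^2$ is a polynomial in $\Phi,\overline\Phi$. Since $C^{1+\alpha}(\mathbb T)$ is an algebra, this term is smooth from $B_\varepsilon^\alpha(\mathrm{Id})$ into $C^{1+\alpha}$ and affine in $\Omega$. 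Its mixed derivative $\partial_\Omega\partial_\Phi$ equals $h\mapsto\textnormal{Re}(\overline\Phi h)$, which is continuous in $\Phi$ and independent of $\Omega$. This already settles item~3, because $\Psi_\Phi$ does not depend on $\Omega$. The symmetry requirement is that real Fourier coefficients of $\Phi$ give $F_0(w)\in\textnormal{span}\{\textnormal{Re}(w^n)\}$. I would check it exactly as in the proof of Theorem~\ref{thm:Theorem-smooth}: the substitution $\xi\mapsto\overline\xi$ gives $F_0(\Omega,\Phi)(\overline w)=F_0(\Omega,\Phi)(w)$, and Lemma~\ref{lem:lem-stabili-realF} then identifies a real even function with an element of $Y_{\alpha,0}$. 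Subtracting the mean gives the statement for $F$ with values in $Y_\alpha$.

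The core task is showing $\Psi_\Phi\in C^{1+\alpha}$. I would first write $\Phi(\xi)-\Phi(w)=(\xi-w)\int_0^1\Phi'(w+t(\xi-w))\,dt$. The bi-Lipschitz bound $(1-q_\varepsilon)|\xi-w|\le|\Phi(\xi)-\Phi(w)|\le(1+q_\varepsilon)|\xi-w|$ then controls the argument of $G_0$ above and below by multiples of $|\xi-w|$. Next I would differentiate tangentially in $w$. The derivative falls either on the factor $\textnormal{Re}(\dots)$, producing a bounded kernel times $G_0(|\Phi(w)-\Phi(\xi)|)$, or on $G_0$, producing $G_0'(\rho)\,\partial_w\rho$ times the factor, which is $O(|\xi-w|)$. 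In both cases the resulting kernel is dominated by $|G_0(c|w-\xi|)|+|G_0(C|w-\xi|)|+|w-\xi|\,|G_0'(c|w-\xi|)|$, using item~3 of Lemma~\ref{lem:k_0to G_0}.

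For Hölder continuity of order $\alpha$ I would follow the scheme of Lemma~\ref{lem:SI}, replacing its kernel bounds by weighted integrability bounds. Fix $w_1,w_2$ with $r=|w_1-w_2|$. On the near region $|\xi-w_1|<2r$ I would bound each term by the integral of the kernel there. Item~2 of Lemma~\ref{lem:k_0to G_0}, together with \eqref{kernel-integrability}, gives $\int_0^{3r}|G_0(\rho)|\,d\rho\lesssim r^{\alpha}\int_0^{3r}|G_0|\rho^{-\alpha}\,d\rho\lesssim r^\alpha$, and similarly for $\rho|G_0'(\rho)|$ with $k=1$. On the far region I would use the mean value theorem. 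This requires one more derivative, $\rho^2|G_0''(\rho)|$, and the bound $\int_{2r}^{2}\rho\,|G_0''(\rho)|\,r\,d\rho=r\int\rho^{2-\alpha}|G_0''|\rho^{\alpha-1}\,d\rho\lesssim r^\alpha\int\rho^{2-\alpha}|G_0''|\,d\rho$. The last integral is finite by item~2 with $k=2$ and $\gamma=\alpha$. The Hölder parts of $\Phi'$ enter only through bounded factors times differences $|\Phi'(w_1)-\Phi'(w_2)|\le r^\alpha\|\Phi\|_{C^{1+\alpha}}$. Completely monotone structure is what makes the suprema in item~3 collapse to endpoint values, so no extra assumption is needed.

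For the $C^1$ regularity of $\Phi\mapsto\Psi_\Phi$, I would take the Gateaux derivative from Proposition~\ref{prop:Lin-Gen}, whose kernel is $K_0(|\Phi(w)-\Phi(\xi)|)$ multiplied by $\textnormal{Re}\bigl((h(\xi)-h(w))\overline{\xi\Phi'(\xi)}\bigr)=O(|\xi-w|\,\|h\|_{C^{1+\alpha}})$. Boundedness in $C^{1+\alpha}$ follows from the same near/far argument with $K_0$ in place of $G_0$, using item~1 of Lemma~\ref{lem:k_0to G_0} with $k=1,2$ and the factor $|\xi-w|$ to absorb one power. Continuity of $\Phi\mapsto\partial_\Phi F_0(\Omega,\Phi)$ in operator norm comes from differencing kernels at $\Phi_1,\Phi_2$: by the mean value theorem in $\rho$ one gets $|K_0(\rho_1)-K_0(\rho_2)|\lesssim\|\Phi_1-\Phi_2\|_{C^1}\,\rho|K_0'(c\rho)|$, and the same estimates apply. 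Once the derivative is continuous, Gateaux differentiability upgrades to Fréchet. The main obstacle will be the $C^{1+\alpha}$ estimate in the far region after one tangential derivative. There the singular weight $\rho^{-\alpha}$ must come out exactly from the integrability assumption, and the second derivatives of the kernel have to be paired with enough powers of $|\xi-w|$. I would try to organize every term as (smooth Hölder factor)$\times\rho^{k}G_0^{(k)}(\rho)$ and then invoke Lemma~\ref{lem:k_0to G_0} uniformly.
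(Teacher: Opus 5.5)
Your proposal is correct and follows essentially the same route as the paper: the boundary representation of $\Psi_\Phi$ through $G_0$, tangential differentiation in $w$, a near/far splitting with Lemma~\ref{lem:k_0to G_0} supplying the weighted bounds, the $K_0$-kernel of Proposition~\ref{prop:Lin-Gen} for the $C^1$ part, and the trivial $\Omega$-dependence for the mixed derivative. The only minor difference is in how the $G_0''$ term on the far region is absorbed. The paper uses the sharper cancellation $|B(w,\xi)|\lesssim|w-\xi|^{1+\alpha}$, coming from $\textnormal{Re}\big((\xi-w)\overline\xi\big)=\tfrac12|w-\xi|^2$, so that term needs only the unweighted bound on $r^2|G_0''|$. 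You use only $B=O(|w-\xi|)$ and compensate with the weight $\gamma=\alpha$, $k=2$ in Lemma~\ref{lem:k_0to G_0}, which is equally available under \eqref{kernel-integrability}.
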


\begin{proof}
{\bf{1.}} For the proof, it suffices to show that, for every $\Phi\in B_\varepsilon^\alpha(\textnormal{Id})$, the corresponding stream function $\Psi_\Phi$ belongs to $Y_{\alpha,0}$. The required symmetry properties can be established exactly as in the case of the Euler kernel; we refer to Theorem \ref{thm:Theorem-smooth}. We shall therefore focus exclusively on the regularity issue. Recall from Proposition \ref{prop:prop-Bound-int} that $\Psi_\Phi$ admits the boundary representation
\[
\Psi_\Phi(w)
=
\int_{\mathbb T}
G_0(|\Phi(w)-\Phi(\xi)|)
\textnormal{Re}\!\left(
(\Phi(\xi)-\Phi(w))
\overline{\xi\Phi'(\xi)}
\right)
\,|d\xi|.
\]
Set
\[
A(w,\xi):=\Phi(\xi)-\Phi(w),
\qquad
r(w,\xi):=|A(w,\xi)|,
\]
and
\[
B(w,\xi)
:=
\textnormal{Re}\left(
A(w,\xi)\overline{\xi\Phi'(\xi)}
\right).
\]
Thus
\[
\Psi_\Phi(w)
=
\int_{\mathbb T}
G_0(r(w,\xi))B(w,\xi)\,|d\xi|.
\]
We first record the basic cancellation satisfied by $B$. Since
$\Phi\in C^{1+\alpha}(\mathbb T)$, Taylor's formula at $\xi$ gives
\[
\Phi(\xi)-\Phi(w)
=
\Phi'(\xi)(\xi-w)+R(w,\xi),
\]
where
\[
|R(w,\xi)|
\leqslant
C\|\Phi\|_{C^{1+\alpha}}
|w-\xi|^{1+\alpha}.
\]
Therefore,
\[
\begin{aligned}
B(w,\xi)
&=
|\Phi'(\xi)|^2
\textnormal{Re}\left((\xi-w)\overline{\xi}\right)
+
\textnormal{Re}\left(
R(w,\xi)\overline{\xi\Phi'(\xi)}
\right).
\end{aligned}
\]
Since $w,\xi\in\mathbb T$,
\[
\textnormal{Re}\left((\xi-w)\overline{\xi}\right)
=
1-\textnormal{Re}(w\overline{\xi})
=
\tfrac12|w-\xi|^2.
\]
Hence, because $0<\alpha<1$, we find
\begin{align}\label{TML1}
|B(w,\xi)|
\leqslant
C|w-\xi|^{1+\alpha}.
\end{align}
Moreover, the chord--arc property gives
\begin{align}\label{Es-hmid1}
c_\Phi|w-\xi|
\leqslant
r(w,\xi)
\leqslant
\|\Phi'\|_{L^\infty}|w-\xi|.
\end{align}
We now differentiate tangentially in $w$, denoted by $\partial_\tau^w$, which agrees for holomorphic functions with
\[
\partial_\tau^w f(w)=iwf'(w).
\]
Since
\[
\partial_\tau^w A(w,\xi)
=
-iw\Phi'(w),
\]
we have
\[
\partial_\tau^w r(w,\xi)
=
-\tfrac{
\textnormal{Re}\left(
iw\Phi'(w)\overline{A(w,\xi)}
\right)}
{r(w,\xi)}.
\]
Also,
\[
\partial_\tau^w B(w,\xi)
=
-\textnormal{Re}\left(
iw\Phi'(w)\overline{\xi\Phi'(\xi)}
\right).
\]
Consequently,
\[
\begin{aligned}
\partial_\tau\Psi_\Phi(w)
={}&
-\int_{\mathbb T}
G_0'(r(w,\xi))
\tfrac{
\textnormal{Re}\left(
iw\Phi'(w)\overline{A(w,\xi)}
\right)}
{r(w,\xi)}
B(w,\xi)
\,|d\xi|
\\
&-
\int_{\mathbb T}
G_0(r(w,\xi))
\textnormal{Re}\left(
iw\Phi'(w)\overline{\xi\Phi'(\xi)}
\right)
\,|d\xi|\\
&:=\int_{\mathbb T}\mathcal I(w,\xi)\,|d\xi|.
\end{aligned}
\]
It follows from \eqref{TML1} and \eqref{Es-hmid1} that
$$
|\partial_\tau\Psi_\Phi(w)|
\lesssim 
\int_{\mathbb T}
|G_0'(r(w,\xi))|
|\xi-w|^{1+\alpha}
\,|d\xi|+
\int_{\mathbb T}
|G_0(r(w,\xi))
|
\,|d\xi|.
$$
Then with the notation and the results  of Lemma \ref{lem:k_0to G_0}, we get 
\begin{align*}
|\partial_\tau\Psi_\Phi(w)|
&\lesssim 
\int_{\mathbb T}
\mathcal{M}_1(|\xi-w|)
|\xi-w|^{1+\alpha}
\,|d\xi|+
\int_{\mathbb T}
\mathcal{M}_0(|\xi-w|)
\,|d\xi|\\
&\lesssim 
\int_0^{4\pi}
|G_0^\prime( cs)| 
s^{1+\alpha}
\,ds+
\int_0^{4\pi}
\big(|G_0(cs)|+|G_0(Cs)|\big)ds<\infty\,.
\end{align*}
Thus
\[
\Psi_\Phi\in C^1(\mathbb T).
\]
It remains to prove that $\partial_\tau\Psi_\Phi$ is $\alpha$-Hölder
continuous. Let $w_1\neq w_2\in\mathbb T$ and set
\[
\delta:=|w_1-w_2|.
\]
We split the circle into
\[
\mathcal N
:=
\left\{
\xi\in\mathbb T:
\min\{|w_1-\xi|,|w_2-\xi|\}\leqslant 2\delta
\right\}
\]
and
\[
\mathcal F:=\mathbb T\setminus\mathcal N.
\]
We write
\begin{align*}
\partial_\tau\Psi_\Phi(w_1)
-
\partial_\tau\Psi_\Phi(w_2)
&=
\int_{\mathcal N}
\bigl(
\mathcal I(w_1,\xi)-\mathcal I(w_2,\xi)
\bigr)
\,|d\xi|+\int_{\mathcal F}
\bigl(
\mathcal I(w_1,\xi)-\mathcal I(w_2,\xi)
\bigr)
\,|d\xi|.
\end{align*}
On the near region $\mathcal N$, we get  in view of the previous estimates together with Lemma \ref{lem:k_0to G_0} applied successively with $\gamma=\alpha$ and $\gamma=0$
\begin{align*}
\int_{\mathcal N} |\mathcal I(w_j,\xi)||d\xi|&\leqslant\int_{\mathcal N}
\left(
|G_0(r(w_j,\xi))|
+
|G_0'(r(w_j,\xi))|
|w_j-\xi|^{1+\alpha}
\right)
\,|d\xi|\\
&\leqslant  \int_{0}^{2\delta}
\big(|G_0(c s)|+|G_0(C s)|\big) ds+
\int_0^{2\delta}s^{1+\alpha}|G_0^\prime( c s)| ds \\
&\leqslant  (2\delta)^\alpha\int_{0}^{2\delta}
s^{-\alpha}\big(|G_0(c s)|+|G_0(C s)|\big) ds+  (2\delta)^\alpha
\int_0^1s|G_0^\prime( c s)| ds\\ 
&\leqslant 
C\delta^\alpha.
\end{align*}
We now give the details of the estimate on the far region $\mathcal{F}$. Set
\[
r_j:=|\Phi(w_j)-\Phi(\xi)|,
\]
and define
\[
A_j:=\Phi(\xi)-\Phi(w_j),\qquad
P_j:=iw_j\Phi'(w_j),\qquad
Q:=\xi\Phi'(\xi),
\]
so that
\[
\mathcal I(w_j,\xi)
=
-G_0'(r_j)
\tfrac{\textnormal{Re}(P_j\overline{A_j})}{r_j}
B_j
-
G_0(r_j)\textnormal{Re}(P_j\overline Q).
\]
On the far region
\[
\mathcal F
=
\{\xi\in\mathbb{T}:\min(|w_1-\xi|,|w_2-\xi|)>2\delta\},
\]
the bi-Lipschitz property of \(\Phi\) implies
\[
r_1\asymp r_2\asymp |w_1-\xi|\asymp |w_2-\xi|=:r,
\]
and
\[
|r_1-r_2|
\leqslant C\delta.
\]
Moreover,
\[
|P_1-P_2|\leqslant C\delta^\alpha,
\qquad
|A_1-A_2|\leqslant C\delta,
\]
and the geometric cancellation in \eqref{TML1} yields
\[
|B_j|
=
\left|
\textnormal{Re}\left(
A_j\overline{\xi\Phi'(\xi)}
\right)
\right|
\leqslant
Cr_j^{1+\alpha}.
\]
Expanding
\[
\mathcal I(w_1,\xi)-\mathcal I(w_2,\xi),
\]
all the terms are estimated by combining the preceding bounds with the mean
value theorem. The only nontrivial contribution is the one involving
\(G_0''\), namely
\[
\bigl(G_0'(r_1)-G_0'(r_2)\bigr)
\tfrac{\textnormal{Re}(P_1\overline{A_1})}{r_1}
B_1.
\]
Indeed, by the mean value theorem, and the notation of Lemma \ref{lem:k_0to G_0}
\[
|G_0'(r_1)-G_0'(r_2)|
\leqslant
|r_1-r_2|
\sup_{\rho\in[cr,Cr]}|G_0''(\rho)|
\leqslant
C\delta
\sup_{\rho\in[cr,Cr]}|G_0''(\rho)|.
\]
Since
\[
\left|
\tfrac{\textnormal{Re}(P_1\overline{A_1})}{r_1}
\right|
\leqslant C,
\]
and
\[
|B_1|
\leqslant
Cr^{1+\alpha},
\]
we obtain
\[
\begin{aligned}
&
\left|
\bigl(G_0'(r_1)-G_0'(r_2)\bigr)
\tfrac{\textnormal{Re}(P_1\overline{A_1})}{r_1}
B_1
\right|
\\
&\qquad\le
C\delta r^{1+\alpha}
\sup_{\rho\in[cr,Cr]}|G_0''(\rho)|.
\end{aligned}
\]
Since \(\delta\leqslant Cr\) on \(\mathcal F\), then
\[
\delta r^{1+\alpha}
=
\delta^\alpha\delta^{1-\alpha}r^{1+\alpha}
\le
C\delta^\alpha r^2,
\]
and therefore
\begin{align*}
\left|
\bigl(G_0'(r_1)-G_0'(r_2)\bigr)
\tfrac{\textnormal{Re}(P_1\overline{A_1})}{r_1}
\textnormal{Re}(A_1\overline Q)
\right|
&\leqslant 
C\delta^\alpha
r^2
\sup_{\rho\in[cr,Cr]}|G_0''(\rho)|\\
 &\leqslant 
C\delta^\alpha
r^2\mathcal{M}_2(r).
\end{align*}
The remaining terms are estimated similarly and are in fact simpler, since
they involve only \(G_0\) and \(G_0'\). One gets
\[
|\mathcal I(w_1,\xi)-\mathcal I(w_2,\xi)|
\le
C\delta^\alpha
\left(
\mathcal M_0(r)
+
{(r+r^{1-\alpha})}\mathcal M_1(r)
+
r^2\mathcal M_2(r)
\right)\,.
\]
Applying once again Lemma \ref{lem:k_0to G_0}, allows to get
\begin{align*}
\int_{\mathcal{F}}
\bigl|
\mathcal I(w_1,\xi)-\mathcal I(w_2,\xi)
\bigr|
\,|d\xi|&\leqslant 
C\delta^\alpha\int_0^C
\Bigl(
|G_0(r)|
+
{(r+ r^{1-\alpha})}|G_0'(r)|
+
r^2|G_0''(r)|
\Bigr)
dr\\
&\leqslant C\delta^\alpha.
\end{align*}
Therefore,
\[
|\partial_\tau\Psi_\Phi(w_1)
-
\partial_\tau\Psi_\Phi(w_2)|
\leqslant
C|w_1-w_2|^\alpha.
\]
Thus
\[
\partial_\tau\Psi_\Phi\in C^\alpha(\mathbb T),
\]
and consequently
\[
\Psi_\Phi\in C^{1+\alpha}(\mathbb T).
\]
{\bf{2.}} From Proposition \ref{prop:Lin-Gen} we have
\[
\begin{aligned}
\partial_\Phi F_0(\Omega,\Phi)h(w)
={}&
\Omega\operatorname{Re}
\Big(
\overline{\Phi(w)}
\,h(w)
\Big)
\\
&+
\int_{\mathbb T}
K_0\!\left(|\Phi(w)-\Phi(\xi)|\right)
\operatorname{Re}
\left(
\big[h(\xi)-h(w)\big]\overline{\xi\Phi'(\xi)}
\right)
\, |d\xi|.
\end{aligned}
\]
We claim that for $\Phi\in B_{\varepsilon}^\alpha(\textnormal{Id})$  and $h\in X_\alpha$
\[
\|\partial_\Phi F_0(\Omega,\Phi)h\|_{C^{1+\alpha}(\mathbb{T})}
\lesssim C(\|\Phi\|_{C^{1+\alpha}(\mathbb{T})})\|h\|_{C^{1+\alpha}(\mathbb{T})}.
\]
The first term, involving $\Omega$, is handled directly using the fact that $C^{1+\alpha}(\mathbb T)$ is an algebra.
\\
It remains to control the integral term
\[
\mathcal T_\Phi[h](w)
:=
\int_{\T}
K_0\bigl(|\Phi(w)-\Phi(\xi)|\bigr)
\operatorname{Re}
\left(
[h(\xi)-h(w)]\overline{\xi\Phi'(\xi)}
\right)
\,|d\xi|.
\]
Contrary to the previous estimate for $\Psi_\Phi$, we do not have here
the stronger geometric cancellation of order $1+\alpha$. However, the
difference $h(\xi)-h(w)$ provides the first-order cancellation
\[
|h(\xi)-h(w)|
\leqslant
C\|h\|_{C^{1+\alpha}}|\xi-w|.
\]
Since $\Phi$ is bi-Lipschitz on $\T$, one also has
\[
|\Phi(w)-\Phi(\xi)|\asymp |w-\xi|.
\]
Therefore
\[
|\mathcal T_\Phi[h](w)|
\lesssim
\|h\|_{C^{1+\alpha}}
\int_{\T}
|K_0(c|w-\xi|)|\,|w-\xi|\,|d\xi|,
\]
which is finite by the assumptions on $K_0$.

We next differentiate tangentially with respect to $w$. Setting
\[
r=r(w,\xi):=|\Phi(w)-\Phi(\xi)|,
\]
we obtain terms of the form
\[
K_0'(r)
\frac{
\operatorname{Re}\bigl(
iw\Phi'(w)\overline{\Phi(\xi)-\Phi(w)}
\bigr)
}{r}
\operatorname{Re}
\left(
[h(\xi)-h(w)]\overline{\xi\Phi'(\xi)}
\right),
\]
together with terms involving
\[
K_0(r)
\operatorname{Re}
\left(
iw h'(w)\overline{\xi\Phi'(\xi)}
\right).
\]
Denoting the resulting integrand by $J_h(w,\xi)$, we obtain
\[
|J_h(w,\xi)|
\lesssim
\|h\|_{C^{1+\alpha}}
\left(
|K_0'(r)|\,|w-\xi|
+
|K_0(r)|
\right).
\]
Consequently,
\[
\|\partial_\tau\mathcal T_\Phi[h]\|_{L^\infty}
\lesssim
\|h\|_{C^{1+\alpha}}
\int_0^C
\left(
r|K_0'(r)|+|K_0(r)|
\right)\,dr
\lesssim
\|h\|_{C^{1+\alpha}},
\]
where we have used Lemma~\ref{lem:k_0to G_0}.
\\
It remains to prove the $\alpha$-H\"older continuity of
$\partial_\tau\mathcal T_\Phi[h]$. Let $w_1,w_2\in\T$, set
\[
\delta:=|w_1-w_2|,
\]
and split
\[
\T=\mathcal N\cup\mathcal F,
\]
where
\[
\mathcal N
:=
\left\{
\xi\in\T:
\min\{|w_1-\xi|,|w_2-\xi|\}\leqslant 2\delta
\right\},
\qquad
\mathcal F:=\T\setminus\mathcal N.
\]
We first consider the near region. For $j=1,2$ and $\xi\in\mathcal N$,
the triangle inequality gives
\[
|w_j-\xi|\leqslant 3\delta.
\]
Thus, by the bi-Lipschitz property of $\Phi$,
\[
r_j:=|\Phi(w_j)-\Phi(\xi)|
\lesssim\delta.
\]
Starting from
\[
|J_h(w_j,\xi)|
\lesssim
\|h\|_{C^{1+\alpha}}
\left(
|K_0'(r_j)|\,|w_j-\xi|
+
|K_0(r_j)|
\right),
\]
and using the chord--arc estimate
\[
|w_j-\xi|\lesssim r_j,
\]
we obtain
\[
\begin{aligned}
\int_{\mathcal N}
|J_h(w_j,\xi)|
\,|d\xi|
&\lesssim
\|h\|_{C^{1+\alpha}}
\int_{\mathcal N}
\left(
r_j|K_0'(r_j)|
+
|K_0(r_j)|
\right)
\,|d\xi|
\\
&\lesssim
\delta^\alpha
\|h\|_{C^{1+\alpha}}
\int_{\mathcal N}
\left(
r_j^{1-\alpha}|K_0'(r_j)|
+
r_j^{-\alpha}|K_0(r_j)|
\right)
\,|d\xi|.
\end{aligned}
\]
Here we have used that $r_j\lesssim\delta$ on $\mathcal N$, and hence
\[
r_j
=
r_j^\alpha r_j^{1-\alpha}
\lesssim
\delta^\alpha r_j^{1-\alpha},
\qquad
1
=
r_j^\alpha r_j^{-\alpha}
\lesssim
\delta^\alpha r_j^{-\alpha}.
\]
Since
\[
r_j\asymp |w_j-\xi|,
\]
the last integral can be estimated, after parametrizing the circle
locally by $s=|w_j-\xi|$, as
\[
\begin{aligned}
\int_{\mathcal N}
|J_h(w_j,\xi)|
\,|d\xi|
&\lesssim
\delta^\alpha
\|h\|_{C^{1+\alpha}}
\left[
\int_0^{C\delta}
s^{1-\alpha}|K_0'(cs)|\,ds
+
\int_0^{C\delta}
s^{-\alpha}|K_0(cs)|\,ds
\right]
\\
&\lesssim
\delta^\alpha
\|h\|_{C^{1+\alpha}}
\left[
\int_0^C
s^{1-\alpha}|K_0'(s)|\,ds
+
\int_0^C
s^{-\alpha}|K_0(s)|\,ds
\right].
\end{aligned}
\]
The last two integrals are finite: the second one follows from the
weighted integrability assumption on $K_0$, while the first follows
from Lemma~\ref{lem:k_0to G_0} with $k=1$ and
$\gamma=\alpha$. Consequently,
\[
\int_{\mathcal N}
|J_h(w_j,\xi)|
\,|d\xi|
\lesssim
\delta^\alpha
\|h\|_{C^{1+\alpha}}.
\]
We now consider the far region $\mathcal F$. There one has
\[
|w_j-\xi|\gtrsim\delta,
\qquad
r_j\asymp |w_j-\xi|=:r,
\qquad
|r_1-r_2|\lesssim\delta.
\]
Expanding
\[
J_h(w_1,\xi)-J_h(w_2,\xi),
\]
the most singular contribution is the one involving the difference
$K_0'(r_1)-K_0'(r_2)$. By the mean value theorem,
\[
|K_0'(r_1)-K_0'(r_2)|
\lesssim
\delta M_2(r),
\qquad
M_2(r):=
\sup_{\rho\in[cr,Cr]}|K_0''(\rho)|.
\]
Since the factor $h(\xi)-h(w_j)$ contributes one power of $r$, this
term is bounded by
\[
C\|h\|_{C^{1+\alpha}}\delta rM_2(r).
\]
Moreover, $\delta\lesssim r$ on $\mathcal F$, and therefore
\[
\delta r
=
\delta^\alpha\delta^{1-\alpha}r
\lesssim
\delta^\alpha r^{2-\alpha}.
\]
It follows that
\[
\delta rM_2(r)
\lesssim
\delta^\alpha r^{2-\alpha}M_2(r).
\]
The remaining terms are estimated similarly and are simpler, since
they involve only $K_0$ and $K_0'$. Altogether, one obtains
\[
|J_h(w_1,\xi)-J_h(w_2,\xi)|
\lesssim
\delta^\alpha\|h\|_{C^{1+\alpha}}
\left(
M_0(r)
+
r^{1-\alpha}M_1(r)
+
r^{2-\alpha}M_2(r)
\right),
\]
where
\[
M_k(r):=
\sup_{\rho\in[cr,Cr]}|K_0^{(k)}(\rho)|.
\]
Applying Lemma~\ref{lem:k_0to G_0}, we conclude that
\[
\int_{\mathcal F}
|J_h(w_1,\xi)-J_h(w_2,\xi)|
\,|d\xi|
\lesssim
\delta^\alpha\|h\|_{C^{1+\alpha}}.
\]
Combining the estimates on $\mathcal N$ and $\mathcal F$ yields
\[
|\partial_\tau\mathcal T_\Phi[h](w_1)
-
\partial_\tau\mathcal T_\Phi[h](w_2)|
\lesssim
|w_1-w_2|^\alpha
\|h\|_{C^{1+\alpha}}.
\]
Therefore,
\[
\|\mathcal T_\Phi[h]\|_{C^{1+\alpha}}
\leqslant
C\bigl(\|\Phi\|_{C^{1+\alpha}}\bigr)
\|h\|_{C^{1+\alpha}}.
\]

On the far region, one has $\delta\lesssim r$. Expanding the difference
between the integrands at $w_1$ and $w_2$, the most singular term is the
one involving the difference of $K_0'$. By the mean value theorem,
\[
|K_0'(r_1)-K_0'(r_2)|
\lesssim
\delta\,M_2(r),
\]
where
\[
M_2(r)
:=
\sup_{\rho\in[cr,Cr]}
|K_0''(\rho)|.
\]
Since the factor $h(\xi)-h(w_j)$ contributes one power of $r$, we obtain
\[
\delta\,r\,M_2(r)
\lesssim
\delta^\alpha r^{2-\alpha}M_2(r),
\]
using $\delta\lesssim r$ on $F$. The remaining terms are estimated
similarly and are simpler. Consequently,
\[
|\partial_\tau\mathcal T_\Phi[h](w_1)
-
\partial_\tau\mathcal T_\Phi[h](w_2)|
\lesssim
\delta^\alpha
\|h\|_{C^{1+\alpha}},
\]
by Lemma~\ref{lem:k_0to G_0}. Therefore
\[
\|\mathcal T_\Phi[h]\|_{C^{1+\alpha}}
\leqslant
C\bigl(\|\Phi\|_{C^{1+\alpha}}\bigr)
\|h\|_{C^{1+\alpha}}.
\]
The continuity of the mapping $\Phi\in B_\varepsilon^\alpha(\textnormal{Id})\to \partial_\Phi F_0(\Omega,\Phi)$ is straightforward and  can be done in a similar way.\\
{\bf3.}  Concerning the regularity of the mixed derivative $\partial_\Omega\partial_\Phi F_0$, it is  identically the same as for Euler equations.
\end{proof}

\section{Spectral study at the equilibrium}

We now turn to the spectral analysis of the linearized operator at the trivial equilibrium. This is the key step in identifying the angular velocities at which nontrivial branches of rotating patches may bifurcate from the disk. Our goal is to determine the Fredholm structure of the linearized operator, diagonalize it in Fourier variables, and characterize the values of $\Omega$ for which its kernel is nontrivial.
A central difficulty is that the resulting eigenvalues depend on the interaction kernel $K_0$ and are not, in general, explicitly computable. The complete monotonicity assumption on $-K_0'$ will allow us to overcome this difficulty by deriving a universal factorization of the spectrum. This representation separates the dependence on the kernel from the Fourier modes and ultimately yields the monotonicity and simplicity properties required for the bifurcation argument.
\subsection{Fredholm structure}
Before entering into the detailed spectral analysis, we first establish the Fredholm character of the linearized operator, given according to Corollary \ref{cor:cor-linearization-id} by
\begin{equation}\label{Tomega}
\mathit L_\Omega h(w)
:=D_\Phi F_0(\Omega,\mathrm{Id})h(w)=
(\Omega-\lambda_1)
\operatorname{Re }\,\!\bigl(\overline wh(w)\bigr)
+
\int_{\mathbb T}
K_0(|w-\xi|)
\operatorname{Re }\,\!\bigl(\overline\xi h(\xi)\bigr)
\,|d\xi|.
\end{equation}
The key observation is that $\mathit L_\Omega$ can be decomposed into a multiple of a bounded isomorphism and a compact convolution operator. Away from the distinguished value $\Omega=\lambda_1$, this immediately leads to the following Fredholm property.
\begin{myproposition}{}{pro-Fredh}
Let \(0<\alpha<1\), and recall the spaces in \eqref{X-alpha} and \eqref{Y-alpha}. Assume that
\(
\theta\longmapsto
K_0\!\left(2\left|\sin\tfrac{\theta}{2}\right|\right)
\)
belongs to \(L^1(\mathbb T)\). 
If
\[
\Omega\neq\lambda_1,
\]
then
\[
\mathit L_\Omega:X_\alpha\longrightarrow Y_\alpha
\]
is Fredholm of index zero.
\end{myproposition}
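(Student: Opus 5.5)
The plan is to write $\mathit L_\Omega=(\Omega-\lambda_1)\mathcal J+\mathcal K$, where
\[
\mathcal J h(w):=\operatorname{Re}\bigl(\overline w h(w)\bigr),
\qquad
\mathcal K h(w):=\int_{\mathbb T}K_0(|w-\xi|)\operatorname{Re}\bigl(\overline\xi h(\xi)\bigr)\,|d\xi|.
\]
Since $\Omega\neq\lambda_1$, it suffices to show that $\mathcal J:X_\alpha\to Y_\alpha$ is a bounded isomorphism and that $\mathcal K:X_\alpha\to Y_\alpha$ is compact. The conclusion then follows from the classical fact that an isomorphism plus a compact perturbation is Fredholm of index zero.

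\emph{Step 1: $\mathcal J$ is an isomorphism.} Take $h(w)=\sum_{n\geqslant2}h_nw^n$ with $h_n$ real. On $\mathbb T$ we have $\overline w h(w)=\sum_{n\geqslant2}h_nw^{n-1}$, so
\[
\mathcal J h=\sum_{n\geqslant1}h_{n+1}\operatorname{Re}(w^n).
\]
This has zero mean and real coefficients, and multiplication by $\overline w$ preserves $C^{1+\alpha}$, so $\mathcal J$ maps $X_\alpha$ boundedly into $Y_\alpha$. For the inverse, take $g=\sum_{n\geqslant1}g_n\operatorname{Re}(w^n)\in Y_\alpha$ and set $h(w)=w\,\Pi_+(g)(w)$, up to the factor $2$ coming from $\operatorname{Re}(w^n)=\tfrac12(w^n+\overline w^n)$, i.e. $h=2w\Pi_+g$. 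Boundedness of the Szeg\H{o} projection $\Pi_+$ on $C^\alpha$ (already used in the previous chapter), applied to $g$ and to $\partial_\tau g$ using $\partial_\tau\Pi_+=\Pi_+\partial_\tau$, gives $\Pi_+$ bounded on $C^{1+\alpha}$. Hence $h\in X_\alpha$ with $\mathcal Jh=g$, and $\mathcal J$ is a bounded bijection.

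\emph{Step 2: $\mathcal K$ is compact.} By rotation invariance, $\mathcal K$ is a convolution on $\mathbb T$ with the kernel $k(\theta)=K_0(2|\sin\tfrac\theta2|)\in L^1(\mathbb T)$, acting on $u:=\operatorname{Re}(\overline\xi h)$. It is therefore a Fourier multiplier, with $\widehat{\mathcal K u}(n)=2\pi\,\hat k(n)\hat u(n)$, where $\hat k(n)=\frac1{2\pi}\int_0^{2\pi}k(\theta)\cos(n\theta)\,d\theta$ is real and even in $n$. This shows in particular that $\mathcal K$ preserves the real-cosine structure. The mean-zero condition requires care, because $\hat k(0)\hat u(0)$ vanishes only if $\hat u(0)=0$; this does hold, since $h$ only contains the modes $n\geqslant2$, so $\overline\xi h$ only contains positive modes and $\hat u(0)=0$. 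Compactness then reduces to a gain of regularity. The aim is to show $\mathcal K:C^{1+\alpha}\to C^{1+\alpha'}$ is bounded for some $\alpha'>\alpha$, or $C^{\alpha}\to C^{1+\alpha}$, and then to conclude by the compact embedding $C^{1+\alpha'}\hookrightarrow C^{1+\alpha}$ on $\mathbb T$.

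\emph{Main obstacle.} The hard part is obtaining this smoothing from the hypotheses alone. I would first try the Bernstein structure, where $-K_0'\geqslant0$ is completely monotone. Write $\partial_\tau\mathcal Ku=k*\partial_\theta u$. The weighted integrability $\int_0^1|K_0|t^{-\alpha}dt<\infty$, combined with Lemma~\ref{lem:k_0to G_0} (with $k=1$, $\gamma=\alpha$) to control $t^{1-\alpha}|K_0'|$, should show that $\theta\mapsto k(\theta)$ lies in a Besov/Hölder-type class $B^{\alpha}_{1,\infty}$, i.e. $\|k(\cdot+\delta)-k\|_{L^1}\lesssim\delta^{\alpha}$. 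Convolution with such a kernel maps $C^{\alpha}$ into $C^{2\alpha}$ (or at least into $C^{\alpha+\epsilon}$), which gives the needed gain $\alpha'>\alpha$ for the derivative. If this fails, I would fall back on the decay $\hat k(n)\to0$ (Riemann--Lebesgue): approximate $\mathcal K$ in operator norm $C^{1+\alpha}\to C^{1+\alpha}$ by the finite-rank truncations $\sum_{|n|\leqslant N}$. This needs a uniform bound on the tail multiplier $\hat k(n)\mathbf 1_{|n|>N}$ on Hölder spaces, which I would again extract from the monotonicity of $K_0$ (e.g. convexity of $k$ on $(0,\pi)$ giving $\hat k(n)\geqslant0$ and quasi-monotone decay, so that Marcinkiewicz-type multiplier bounds apply).
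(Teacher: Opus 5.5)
Your decomposition and Step~1 agree with the paper. The paper writes $L_\Omega=\bigl((\Omega-\lambda_1)\mathrm{Id}+\mathcal K\bigr)\mathcal A$, with $\mathcal A=\mathcal J$ and $\mathcal K q=\kappa*q$, $\kappa(\theta)=K_0(2|\sin\frac\theta2|)$, acting on $Y_\alpha$. Your inverse $\mathcal J^{-1}g=2w\Pi_+g$ and your checks on the mean-zero and cosine structure are correct.

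\textbf{The gap is Step~2: you never prove that $\mathcal K$ is compact,} which is the heart of the proposition. You flag it as the main obstacle and offer two routes, but carry out neither. Both rely on hypotheses the proposition does not make: complete monotonicity of $-K_0'$ and the weighted bound $\int_0^1|K_0|t^{-\alpha}\,dt<\infty$. The statement itself assumes only $\kappa\in L^1(\mathbb T)$. The fallback is also not sound as written. The sharp truncation $\hat k(n)\mathbf 1_{|n|\leqslant N}$ amounts to convolving with the Dirichlet partial sums $S_N\kappa$, which need not converge to $\kappa$ in $L^1$. Marcinkiewicz-type theorems are $L^p$ statements, so a uniform H\"older bound on the tail multipliers is not available off the shelf.

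\textbf{The missing idea is that no smoothing is needed.} The $C^{1+\alpha}$ norm is translation invariant and convolution commutes with $\partial_\theta$, so Minkowski's inequality gives $\|\kappa*q\|_{C^{1+\alpha}}\leqslant\|\kappa\|_{L^1}\|q\|_{C^{1+\alpha}}$ for every $\kappa\in L^1(\mathbb T)$. Take even real trigonometric polynomials $\kappa_N$ with $\|\kappa-\kappa_N\|_{L^1}\to0$, for instance the Fej\'er means of $\kappa$; this sidesteps exactly the Dirichlet-kernel issue above. Each map $q\mapsto\kappa_N*q$ has finite rank and preserves $Y_\alpha$. Moreover $\|\mathcal K-\kappa_N*\cdot\,\|_{\mathcal L(Y_\alpha)}\leqslant\|\kappa-\kappa_N\|_{L^1}\to0$, so $\mathcal K$ is compact as a norm limit of finite-rank operators. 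This is the paper's argument, and it uses only the stated hypothesis.

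Your first route can be rescued under the chapter's standing assumptions. A near/far splitting together with Lemma~\ref{lem:k_0to G_0} (item~1 with $k=1$, $\gamma=\alpha$) gives $\|\Delta_\delta\kappa\|_{L^1}\lesssim\delta^\alpha$, where $\Delta_\delta f:=f(\cdot+\delta)-f$. The identity $\Delta_\delta^2(\kappa*v)=\Delta_\delta\kappa*\Delta_\delta v$ then shows that $\kappa*\partial_\theta u$ has Zygmund regularity of order $2\alpha$. Hence it lies in a space compactly embedded in $C^\alpha$. That argument costs more work and proves a weaker statement, and as written your proposal leaves it as a plan rather than a proof.
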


\begin{proof}
Write
\[
w=e^{i\theta},
\qquad
h(w)=\sum_{n\geqslant2}a_ne^{in\theta}.
\]
Then
\[
\operatorname{Re}\!\bigl(\overline wh(w)\bigr)
=
\sum_{n\geqslant2}a_n\cos((n-1)\theta).
\]
It follows that the map 
\[
\mathcal A:X_\alpha\longrightarrow Y_\alpha,
\qquad
\mathcal Ah(w)
=
\operatorname{Re}\!\bigl(\overline wh(w)\bigr),
\]
is a bounded isomorphism.
Set
\[
\kappa(\theta)
:=
K_0\!\left(2\left|\sin\tfrac{\theta}{2}\right|\right).
\]
Since
\[
|e^{i\theta}-e^{i\eta}|
=
2\left|\sin\tfrac{\theta-\eta}{2}\right|,
\]
we obtain
\[
\int_{\mathbb T}
K_0(|w-\xi|)
\operatorname{Re}\!\bigl(\overline\xi h(\xi)\bigr)
\,|d\xi|
=
\int_0^{2\pi}
\kappa(\theta-\eta)
(\mathcal Ah)(e^{i\eta})\,d\eta.
\]
Hence, we get the decomposition 
\[
\mathit L_\Omega
=
\Big((\Omega-\lambda_1)\textnormal{Id}+\mathcal K\Big)\mathcal A,
\]
where $\mathcal K$ is the convolution-type operator defined by
\[
\mathcal Kq(\theta)
:=
\int_0^{2\pi}\kappa(\theta-\eta)q(\eta)\,d\eta.
\]
We will prove the following classical result which claims that 
\[
\mathcal K:Y_\alpha\longrightarrow Y_\alpha
\]
is compact. Since \(\kappa\in L^1(\mathbb T)\) is  real and even, we may choose a sequence of even real trigonometric
polynomials \(\kappa_N\) such that
\[
\|\kappa-\kappa_N\|_{L^1(\mathbb T)}
\longrightarrow0.
\]
Define
\[
\mathcal K_Nq
:=
\kappa_N*q.
\]
Each \(\mathcal K_N\) has finite-dimensional range and $\mathcal K_N:Y_\alpha\to Y_\alpha$ is well-defined. Moreover,
convolution commutes with differentiation, and therefore
\[
\|(\mathcal K-\mathcal K_N)q\|_{C^{1+\alpha}}
\leqslant
\|\kappa-\kappa_N\|_{L^1}
\|q\|_{C^{1+\alpha}}.
\]
Consequently,
\[
\|\mathcal K-\mathcal K_N\|_{
\mathcal L(Y_\alpha,Y_\alpha)}
\longrightarrow0,
\]
which proves that \(\mathcal K\) is compact.
Since \(\Omega-\lambda_1\neq0\), the operator
\[
(\Omega-\lambda_1)\textnormal{Id}:Y_\alpha\longrightarrow Y_\alpha
\]
is an isomorphism. Therefore,
\[
(\Omega-\lambda_1)\operatorname{Id}+\mathcal K: Y_\alpha\longrightarrow Y_\alpha
\]
is a compact perturbation of an isomorphism and is consequently Fredholm of index zero. Since $\mathcal A$ is an isomorphism, we conclude that
\[
 L_\Omega:X_\alpha\longrightarrow Y_\alpha
\]
is Fredholm of index zero. This completes the proof.
\end{proof}
\subsection{Fourier decomposition}
The Fredholm property obtained above reduces the loss of invertibility of $ L_\Omega$ defined in \eqref{Tomega} to a finite-dimensional phenomenon. To identify precisely when this occurs, we now exploit the rotational invariance of the kernel and decompose the linearized operator into Fourier modes. Since the kernel depends only on the angular difference, each Fourier mode is invariant under $ L_\Omega$, and the operator becomes diagonal in the trigonometric basis. This leads naturally to a sequence of spectral coefficients $\lambda_n$ and to the associated dispersion relation, whose zeros determine the possible bifurcation values of the angular velocity. The following proposition makes this decomposition explicit and provides a first description of the kernel of $ L_\Omega$.
\begin{myproposition}{}{prop-Fourier}
For \(n\geqslant1\), define
\[
\lambda_n
:=
\int_0^{2\pi}
K_0\!\left(
2\left|\sin\tfrac{\eta}{2}\right|
\right)
\cos(n\eta)\,d\eta\quad\hbox{and}\quad \mu_n(\Omega):=
\Omega-\lambda_1+\lambda_n\,,
\]
and consider the dispersion set
$$\mathcal{S}:=\big\{\Omega\in\mathbb{R}: \exists n\geqslant 1, \mu_n(\Omega)=0 \big\}=\big\{\Omega=\lambda_1-\lambda_n, n\geqslant 1 \big\}\,.
$$
Then, for any function
\[
w\in\mathbb{T}\mapsto h(w)=\sum_{n\geqslant2}a_nw^n\in X_\alpha,
\]
we get
\[
 L_\Omega h(e^{i\theta})
=
\sum_{n\geqslant1}
\mu_{n}(\Omega)
a_{n+1}
\cos(n\theta),
\]
In addition, for $\Omega\in\mathcal{S},$ we have
\[
\ker L_\Omega
=
\operatorname{span}
\left\{
w^{n+1}:
\mu_n(\Omega)=0
\right\}.
\]
\end{myproposition}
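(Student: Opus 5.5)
The plan is to start from the factorization $L_\Omega=\bigl((\Omega-\lambda_1)\mathrm{Id}+\mathcal K\bigr)\mathcal A$ obtained in the proof of Proposition~\ref{prop:pro-Fredh}, and to diagonalize each factor in the cosine basis. For $h(w)=\sum_{n\geqslant2}a_nw^n$ with $a_n\in\mathbb R$ and $w=e^{i\theta}$, we already know that
\[
\mathcal Ah(e^{i\theta})=\operatorname{Re}\bigl(\overline w h(w)\bigr)=\sum_{n\geqslant1}a_{n+1}\cos(n\theta).
\]
It therefore remains to compute how the convolution operator $\mathcal K q=\kappa*q$, with $\kappa(\eta)=K_0(2|\sin\frac\eta2|)$, acts on $\cos(n\theta)$.

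Since $\kappa$ is even and $2\pi$-periodic, the change of variables $\eta\mapsto\theta-\eta$ gives
\[
\int_0^{2\pi}\kappa(\theta-\eta)\cos(n\eta)\,d\eta=\int_0^{2\pi}\kappa(\eta)\cos\bigl(n(\theta-\eta)\bigr)\,d\eta.
\]
Expanding $\cos(n\theta-n\eta)=\cos(n\theta)\cos(n\eta)+\sin(n\theta)\sin(n\eta)$, the sine part integrates to zero because $\kappa$ is even. Hence $\mathcal K\cos(n\cdot)=\lambda_n\cos(n\cdot)$, with $\lambda_n$ as defined in the statement. The integral converges because $\kappa\in L^1(\mathbb T)$, which follows from \eqref{kernel-integrability}. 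Summing over the modes gives formally
\[
L_\Omega h=\sum_{n\geqslant1}(\Omega-\lambda_1+\lambda_n)\,a_{n+1}\cos(n\theta)=\sum_{n\geqslant1}\mu_n(\Omega)\,a_{n+1}\cos(n\theta).
\]

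To justify exchanging the sum with the convolution, I will argue at the level of Fourier coefficients. Convolution by an $L^1$ kernel multiplies the $n$-th Fourier coefficient of an $L^2$ function by $\hat\kappa(n)$, and here $\hat\kappa(n)$ corresponds to $\lambda_n$ up to normalization. This is the step I expect to need the most care, though it is routine: the normalization in the identity $\mathcal K\cos(n\cdot)=\lambda_n\cos(n\cdot)$ must match the one used in Corollary~\ref{cor:cor-linearization-id}, where $\lambda_1$ appears with the integral taken over $|d\xi|=d\eta$ and no factor $1/2\pi$. I will check this normalization directly. As a consistency test, the $n=1$ mode should then have coefficient $\mu_1(\Omega)=\Omega$.

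For the kernel, suppose $L_\Omega h=0$. Since $\{\cos(n\theta)\}_{n\geqslant1}$ is an orthogonal family, every coefficient must vanish, so $\mu_n(\Omega)a_{n+1}=0$ for all $n\geqslant1$. Thus $a_{n+1}=0$ whenever $\mu_n(\Omega)\neq0$, and $h$ lies in the span of those $w^{n+1}$ with $\mu_n(\Omega)=0$. Conversely, each such monomial belongs to $X_\alpha$ and is annihilated by $L_\Omega$.

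Finally, I need this span to be finite-dimensional so that it is a genuine subspace of $X_\alpha$. For $\Omega\neq\lambda_1$ this follows from Proposition~\ref{prop:pro-Fredh}. It can also be seen directly from the Riemann--Lebesgue lemma: $\lambda_n\to0$, so $\mu_n(\Omega)\to\Omega-\lambda_1\neq0$, and only finitely many $\mu_n(\Omega)$ can vanish. For $\Omega\in\mathcal S$ this identifies $\ker L_\Omega$ exactly as stated.
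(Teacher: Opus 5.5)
Your proposal is correct and follows essentially the paper's route: expand $\operatorname{Re}(\overline w h)$ in cosines, use the evenness of $\kappa$ to get $\kappa*\cos(n\cdot)=\lambda_n\cos(n\cdot)$ (your normalization check is right, since $\lambda_n=\int_0^{2\pi}\kappa(\eta)e^{-in\eta}\,d\eta$ exactly), and read off the kernel mode by mode. Your Fourier-coefficient justification of the term-by-term convolution and your Riemann--Lebesgue finiteness remark add rigor the paper leaves implicit; the finiteness remark only covers $\Omega\neq\lambda_1$, which is harmless because the later positivity $\lambda_n>0$ excludes $\lambda_1$ from $\mathcal S$.
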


\begin{proof}
Let
\[
w=e^{i\theta},
\qquad
\xi=e^{i\eta},\quad
h(w)=\sum_{n\geqslant2}a_nw^n.
\]
Then, we have
\[
\operatorname{Re}\,\!\bigl(\overline wh(w)\bigr)
=
\sum_{n\geqslant2}
a_n\cos((n-1)\theta),
\]
and similarly,
\[
\operatorname{Re}\,\!\bigl(\overline\xi h(\xi)\bigr)
=
\sum_{n\geqslant2}
a_n\cos((n-1)\eta).
\]
Moreover,
\[
K_0(|w-\xi|)
=
\kappa(\theta-\eta),\quad\hbox{with}\quad 
\kappa(s)
:=
K_0\!\left(
2\left|\sin\tfrac{s}{2}\right|
\right).
\]
Hence, we obtain from  \eqref{Tomega}
\[
\begin{aligned}
 L_\Omega h(e^{i\theta})
={}&
(\Omega-\lambda_1)
\sum_{n\geqslant2}
a_n
\cos((n-1)\theta)
+\sum_{n\geqslant 2}
a_n
\int_0^{2\pi}
\kappa(\theta-\eta)
\cos((n-1)\eta)\,d\eta.
\end{aligned}
\]
Fix \(m\geqslant1\). By the change of variables
\(
s=\theta-\eta
\),
\[
\begin{aligned}
\int_0^{2\pi}
\kappa(\theta-\eta)
\cos(m\eta)\,d\eta
&=
\int_0^{2\pi}
\kappa(s)
\cos(m(\theta-s))\,ds
\\
&=
\cos(m\theta)
\int_0^{2\pi}
\kappa(s)\cos(ms)\,ds+
\sin(m\theta)
\int_0^{2\pi}
\kappa(s)\sin(ms)\,ds.
\end{aligned}
\]
Since \(\kappa\) is even,
\[
\int_0^{2\pi}
\kappa(s)\sin(ms)\,ds
=0.
\]
Therefore,
\[
\int_0^{2\pi}
\kappa(\theta-\eta)
\cos(m\eta)\,d\eta
=
\lambda_m\cos(m\theta),
\]
where
\[
\lambda_m
=
\int_0^{2\pi}
K_0\!\left(
2\left|\sin\tfrac{\eta}{2}\right|
\right)
\cos(m\eta)\,d\eta.
\]
Substituting this identity with \(m=n-1\) yields
\[
 L_\Omega h(e^{i\theta})
=
\sum_{n\geqslant 1}
\bigl(
\Omega-\lambda_1+\lambda_{n}
\bigr)
a_{n+1}
\cos(n\theta),
\]
which proves the diagonalization. The result on kernel structure is easy to prove. This achieves the proof.
\end{proof}
\subsection{Spectrum factorization}
The spectral analysis admits a remarkable factorization which is one of the central features of the present approach. Although the coefficients $\lambda_n$ depend on the particular interaction kernel $K_0$, this dependence can be completely separated from the oscillatory mechanism responsible for the spectral structure. More precisely, the assumptions on $K_0$, through Bernstein's representation, allow us to encode the kernel solely by the positive measure $\mu$, while all the dependence on the Fourier mode $n$ is carried by a universal family of functions $\phi_n$. This separation is particularly important: it reveals that the essential spectral properties are not tied to the specific form of the kernel, but are governed by a common underlying mechanism shared by the whole class under consideration.

The functions $\phi_n$ are therefore universal objects in the analysis. Once their positivity, monotonicity, and asymptotic behavior are understood, the corresponding properties of the spectral coefficients $\lambda_n$ can be transferred to every admissible kernel through integration against the positive measure $d\mu(x)/x$. In this sense, the factorization below reduces a kernel-dependent spectral problem to the study of a single universal family.

More precisely, we have the following key result.
\begin{mylemma}{}{lem:lamb-n}
Assume that $K_0$ satisfies \eqref{Bernstein}  and \eqref{kernel-integrability}.
Then for every $n\geqslant 1$, the coefficient $\lambda_n$ given in Proposition  \ref{prop:prop-Fourier} admits the representation
\begin{eqnarray*}
  \lambda_{n}=2\int_0^{\infty}\tfrac{\phi_{n}(x)}{x}{d {\mu}(x)},
\end{eqnarray*}
with
\begin{eqnarray*}
  \phi_n(x) := \int_0^\pi e^{-2x \sin(\eta)}e^{i {2 n\eta}} d\eta 
 {\,= \int_0^\pi e^{-2x \sin(\eta)} \cos(2n\eta) d \eta}.
\end{eqnarray*}
Notice that $\phi_n\in C^\infty(\mathbb R)$ and $\phi_n(0)=0$. Consequently, the quotient $\phi_n(x)/x$ has a removable singularity at $x=0$, and thus the integral is well defined near the origin.
\end{mylemma}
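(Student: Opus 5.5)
The plan is to integrate by parts in $\eta$ so that $K_0$ is replaced by its derivative, and then insert the Bernstein representation \eqref{Bernstein}. Start from
\[
\lambda_n=\int_0^{2\pi}K_0\!\left(2\left|\sin\tfrac{\eta}{2}\right|\right)\cos(n\eta)\,d\eta .
\]
By the symmetry $\eta\mapsto 2\pi-\eta$, this equals $2\int_0^\pi K_0(2\sin\frac{\eta}{2})\cos(n\eta)\,d\eta$. The substitution $\eta=2\theta$ then gives
\[
\lambda_n=4\int_0^{\pi/2}K_0(2\sin\theta)\cos(2n\theta)\,d\theta .
\]

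Next, integrate by parts using the primitive $\frac{\sin(2n\theta)}{2n}$, which vanishes at $\theta=0$ and $\theta=\pi/2$:
\[
\lambda_n=-\frac{2}{n}\int_0^{\pi/2}K_0'(2\sin\theta)\,2\cos\theta\,\sin(2n\theta)\,d\theta .
\]
The boundary term at $\theta=0$ requires $\sin(2n\theta)K_0(2\sin\theta)\to0$, which is of order $\theta K_0(2\theta)$. This limit is exactly the one established in the proof of Lemma~\ref{lem:k_0to G_0} (item~1, the case $k=1$), namely $r^{1-\gamma}K_0(r)\to0$, using the integrability \eqref{kernel-integrability}. To be safe, I will do the integration by parts on $[\epsilon,\pi/2]$ and let $\epsilon\to0$.

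Now insert $-K_0'(t)=\int_0^\infty e^{-tx}d\mu(x)$ and apply Tonelli/Fubini:
\[
\lambda_n=\frac{4}{n}\int_0^\infty\int_0^{\pi/2}e^{-2x\sin\theta}\cos\theta\,\sin(2n\theta)\,d\theta\,d\mu(x).
\]
Justifying Fubini is the main technical obstacle. The integrand is bounded in absolute value by $e^{-2x\sin\theta}\cdot 2n\theta$, and integrating this in $\theta$ gives $O(\min(1,x^{-2}))$. Near the origin this matches the finiteness of $\int_0^1\theta\,|K_0'(2\theta)|\,\theta\,d\theta$, which is guaranteed by Lemma~\ref{lem:k_0to G_0}; near infinity it matches the finiteness of $\int^\infty d\mu/x^2$, which follows from that same integral being finite. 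So absolute convergence should follow from the $k=1$ bound of Lemma~\ref{lem:k_0to G_0}.

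The remaining step is to identify the inner integral with $\phi_n(x)/x$ up to the factor $\frac{n}{2}$. Integrate $\phi_n(x)=2\int_0^{\pi/2}e^{-2x\sin\theta}\cos(2n\theta)\,d\theta$ by parts, using the symmetry $\theta\mapsto\pi-\theta$ to reduce $[0,\pi]$ to $[0,\pi/2]$. Again the boundary terms vanish because $\sin(2n\theta)$ is zero at $0$ and $\pi/2$, which yields
\[
\phi_n(x)=\frac{2x}{n}\int_0^{\pi/2}e^{-2x\sin\theta}\,2\cos\theta\,\sin(2n\theta)\,d\theta .
\]
Hence the inner integral equals $\frac{n}{4x}\phi_n(x)$, and substituting gives $\lambda_n=2\int_0^\infty\frac{\phi_n(x)}{x}\,d\mu(x)$. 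This identity also shows directly that $\phi_n(0)=0$ and that $\phi_n(x)/x$ is smooth at $x=0$. Smoothness of $\phi_n$ is immediate by differentiating under the integral sign. The equality of the two expressions for $\phi_n$ holds because the imaginary part $\int_0^\pi e^{-2x\sin\eta}\sin(2n\eta)\,d\eta$ vanishes by the same reflection symmetry.
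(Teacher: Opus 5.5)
Your argument is correct and takes a genuinely different route from the paper. The paper never integrates by parts. It integrates the Bernstein formula in $t$ to get $K_0(t)=K_0(2)+\int_0^\infty \frac{e^{-tx}-e^{-2x}}{x}\,d\mu(x)$ and inserts this into $\lambda_n=2\int_0^\pi K_0(2\sin\eta)e^{2in\eta}\,d\eta$. The constant $K_0(2)$ and the $e^{-2x}$ term then drop out because $\int_0^\pi e^{2in\eta}\,d\eta=0$, and the factor $1/x$ appears automatically. Fubini is justified there because $\frac{e^{-tx}-e^{-2x}}{x}\geq 0$ for $0<t\leq 2$, together with $\int_0^\pi \bigl(K_0(2\sin\eta)-K_0(2)\bigr)\,d\eta<\infty$. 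So the paper only uses integrability of $K_0$ near $0$.

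Your route moves the derivative onto $K_0$ by integrating by parts in the angle. That costs you two extra inputs: the boundary limit $\theta K_0(2\sin\theta)\to 0$, and the bound $\int_0^1 r|K_0'(r)|\,dr<\infty$ from Lemma~\ref{lem:k_0to G_0} with $k=1$, $\gamma=0$. It also needs a second integration by parts to recognize $\phi_n$. In exchange you obtain the explicit formula $\phi_n(x)/x=\frac{2}{n}\int_0^{\pi/2}e^{-2x\sin\theta}\cos\theta\,\sin(2n\theta)\,d\theta$, which makes the removable singularity and the smoothness at $x=0$ transparent.

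There are two bookkeeping slips to fix.
\begin{enumerate}
\item Integrating $\phi_n(x)=2\int_0^{\pi/2}e^{-2x\sin\theta}\cos(2n\theta)\,d\theta$ by parts gives $\phi_n(x)=\frac{2x}{n}\int_0^{\pi/2}e^{-2x\sin\theta}\cos\theta\,\sin(2n\theta)\,d\theta$, with no extra factor $2$. The inner integral is therefore $\frac{n}{2x}\phi_n(x)$, not $\frac{n}{4x}\phi_n(x)$. With your value you would get $\lambda_n=\int_0^\infty \phi_n(x)\,\frac{d\mu(x)}{x}$, off by a factor $2$. With the correct value, $\frac{4}{n}\cdot\frac{n}{2x}=\frac{2}{x}$ gives the stated formula.
\item In the Fubini step, the relevant quantity is $\int_0^1 \theta\,|K_0'(2\theta)|\,d\theta$, not $\int_0^1\theta^2|K_0'(2\theta)|\,d\theta$. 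The latter only controls $\int^\infty x^{-3}\,d\mu$. The former is, by Tonelli, the $\mu$-integral of $\int_0^1\theta e^{-2\theta x}\,d\theta\asymp\min(1,x^{-2})$, which is exactly what you need, and the lemma supplies it. You can also bypass the $x$-asymptotics: by Tonelli, the absolute double integral equals $\int_0^{\pi/2}|K_0'(2\sin\theta)|\cos\theta\,|\sin(2n\theta)|\,d\theta\leq 2n\int_0^{\pi/2}\theta\,|K_0'(2\sin\theta)|\,d\theta$, which is finite by the same lemma.
\end{enumerate}
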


\begin{proof}
Under the assumption \eqref{Bernstein},
we infer the existence of a Borel measure ${\mu}$ on $[0, \infty)$ such that
\begin{eqnarray}\label{eq:K0prim2}
  -K'_0(t)=\int_0^{\infty}e^{-tx }d{\mu}(x),\quad \forall t >0.
\end{eqnarray}
Integrating \eqref{eq:K0prim2} with respect to $t$-variable, and using Fubini's theorem we obtain
\begin{align}\label{Expression-K}
  \nonumber K_0(t) & = K_0(2) - \int_{2}^t \int_0^{\infty}e^{-\tau x} \,d {\mu}(x)d \tau \\
  \nonumber  & = K_0(2)-\int_0^{\infty} \int_{2}^t e^{- \tau x}d \tau d {\mu}(x) \\
  & = K_0(2)+\int_0^{\infty}\tfrac{e^{-t x}-e^{-2x}}{x}d{\mu}(x).
\end{align}
Recall from Proposition \ref{prop:prop-Fourier}, and using change of variables, 
\[
\lambda_n
=
\int_0^{2\pi}
K_0\!\left(
2\left|\sin\tfrac{\eta}{2}\right|
\right)
e^{i n\eta}\,d\eta= 2\int_0^{\pi}
K_0\!\left(
2\sin{\eta}
\right)
e^{i 2 n\eta}\,d\eta.
\]
Applying  Fubini's theorem, we can rewrite 
\begin{align*}
  \lambda_{n} & =  2\int_0^\pi \int_0^\infty
  \Big(\frac{e^{-2\,x \sin \eta }-e^{-2x}}{x} d {\mu}(x)\Big) e^{i2n\eta}d \eta \nonumber \\
  & = 2\int_0^\infty \int_0^\pi \big( e^{-2x \sin \eta }
  - e^{-2x} \big) e^{i2n\eta} \,d \eta\, \tfrac{d{\mu}(x)}{x} \nonumber \\
  & = 2\int_0^{\infty}\phi_{n}(x)\tfrac{d {\mu}(x)}{x}.
\end{align*}
This achieves the proof of the desired result.
\end{proof}
\subsection{Monotonicity of the spectrum and kernel structure}
By Lemma~ \ref{lem:lem:lamb-n}, the spectral coefficients are determined by the
universal functions \(\phi_n\). It therefore remains to establish their
positivity and monotonicity, which yield the ordering of the eigenvalues
and the simplicity of the corresponding kernels.
Rather than estimating the oscillatory integral defining \(\phi_n\)
directly, we derive a second-order differential equation satisfied by
\(\phi_n\) and apply a comparison principle. We begin with the elementary
ODE lemma needed for this purpose.

\begin{mylemma}{}{lem-comparison}
{Let $a:(0,\infty)\to \mathbb{R}$ and $b:(0,\infty)\to (0,\infty)$ be two given continuous functions and define the second order operator
$$
\mathbb{L} f(x)=f^{\prime\prime}(x) + a(x)f^\prime(x) - b(x)f(x), x>0\,.$$ 
The following statements hold true.
\begin{enumerate}[label=(\arabic*)]
\item 
Suppose that $f\in C^2((0,\infty))$ is a function satisfying the conditions
\begin{eqnarray*}
  \left\{\begin{array}{ll}
  \mathbb{L} f(x)\leqslant 0, \quad\forall  x>0,\\
  \displaystyle{\lim_{x\to0^+}}f(x)\geqslant 0,\quad \lim\limits_{x\to \infty}f(x)\geqslant 0.\end{array}\right.
\end{eqnarray*}
Then $f$ is non-negative on $(0,\infty)$.
Moreover, if $f$ additionally satisfies
\begin{align*}
  \mathbb{L} f(x) < 0,\quad \forall  x>0,
\end{align*}
then $f$ is strictly positive on $(0,\infty)$.
\item 
Suppose that $f\in C^2((0,\infty))$ is a function satisfying the conditions
\begin{eqnarray*}
  \left\{\begin{array}{ll}
  \mathbb{L} f(x)\geqslant 0, \quad\forall  x>0,\\
  \displaystyle{\lim_{x\to0^+}}f(x)\leqslant 0,\quad \lim\limits_{x\to \infty}f(x)\leqslant 0.\end{array}\right.
\end{eqnarray*}
Then $f$ is non-positive on $(0,\infty)$.
Moreover, if $f$ additionally satisfies
\begin{align*}
  \mathbb{L} f(x) > 0,\quad \forall  x>0,
\end{align*}
then $f$ is strictly negative on $(0,\infty)$.
\end{enumerate}
}

\end{mylemma}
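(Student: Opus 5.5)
The plan is a minimum principle argument by contradiction, proving statement (1) first and deducing (2) by applying (1) to $-f$. Indeed, $\mathbb L(-f)=-\mathbb L f$, so the hypotheses of (2) for $f$ are exactly those of (1) for $-f$, and likewise for the strict versions. It therefore suffices to treat (1).

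For (1) with the non-strict inequality, I would first handle the strict case $\mathbb L f<0$ and then reduce the general case to it by a perturbation.

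\emph{Strict case.} Suppose $\mathbb L f<0$ on $(0,\infty)$ but $f(x_1)<0$ for some $x_1>0$. Because $\liminf$ of $f$ at $0^+$ and at $\infty$ is $\geqslant0$, the value $f(x_1)<0$ cannot be approached near the two ends. Hence $f$ attains a global negative minimum on $(0,\infty)$ at some interior point $x_0$; the minimum is achieved on a compact subinterval $[\varepsilon,R]$ outside which $f>f(x_1)/2$. At $x_0$ we have $f'(x_0)=0$, $f''(x_0)\geqslant0$ and $f(x_0)<0$. Since $b>0$,
\[
\mathbb L f(x_0)=f''(x_0)-b(x_0)f(x_0)>0,
\]
which contradicts $\mathbb L f<0$. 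So $f\geqslant0$. The same argument also gives strict positivity: if $f(x_0)=0$ at some point, that point is a global minimum, so $f'(x_0)=0$ and $f''(x_0)\geqslant0$. This yields $\mathbb L f(x_0)=f''(x_0)\geqslant0$, again a contradiction.

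\emph{Non-strict case.} For $\mathbb L f\leqslant0$, the main obstacle is producing a perturbation $g$ with $\mathbb L g<0$ that is positive, bounded, and has nonnegative limits at both ends; the problem is that $a$ and $b$ are arbitrary continuous functions. I would avoid the perturbation altogether and argue directly at the minimum. Suppose $m=\min f<0$, attained at an interior point $x_0$ as before. Then $\mathbb L f(x_0)\geqslant -b(x_0)f(x_0)>0$ because $b>0$ and $f(x_0)<0$. This contradicts $\mathbb L f\leqslant0$. So no perturbation is needed: the strict positivity of $b$ does the work, and the argument is the same in both cases.

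The only delicate point is the existence of an interior minimum. This follows from the boundary limits: interpret $\lim f\geqslant0$ as $\liminf$, and use the continuity of $f$ on $[\varepsilon,R]$.
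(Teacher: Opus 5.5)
Your proposal is correct and follows the same argument as the paper. Both proofs locate an interior global minimum using the boundary limits and use $f'(x_0)=0$, $f''(x_0)\geqslant0$ and $b>0$ to contradict the sign of $\mathbb L f(x_0)$, treating a zero minimum the same way for strict positivity, and both deduce (2) by applying (1) to $-f$; your remark that no perturbation is needed is exactly how the paper proceeds.
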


\begin{proof}
\textbf{(1)} We will {start by} proving the first assertion. 
For this aim, we shall argue by contradiction. Assume that $f$ takes strictly negative values at some points of $(0,\infty)$.
Then in light of the assumptions $\displaystyle{\lim_{x\to0^+}}f(x)\geqslant 0$ and $\lim\limits_{x\to \infty}f(x)\geqslant 0,$ one can find some $x_0>0$ such that
\begin{align*}
  \inf_{x>0}f(x)=f(x_0) < 0.
\end{align*}	
Hence,
\begin{align}\label{prop:f-inf}
  f^\prime(x_0)=0, \quad f^{\prime\prime}(x_0)\geqslant 0.
\end{align}
Coming back to the differential inequality we find
\begin{align*}
  f''(x_0)\leqslant b(x_0)f(x_0) < 0,
\end{align*}
which is a contradiction.\\
For the second assertion, we assume that $f$ takes non-positive values at some points of $(0,\infty)$,
then there exists some $x_0>0$ so that $\inf\limits_{x>0} f(x) = f(x_0) \leqslant 0$ satisfies \eqref{prop:f-inf},
but using the strict differential inequality gives $f''(x_0) < b(x_0) f(x_0) \leqslant 0$, and it yields a contradiction. 
This concludes the proof of the desired result.\\
\textbf{(2)} It follows from applying statement (1) to the function $-f$. This ends the proof.
\end{proof}

The next goal is to exploit the comparison theorem to establish some qualitative properties of the universal functions $\phi_n$ introduced in Lemma \ref{lem:lem:lamb-n}. These properties will play a crucial role in the analysis of the spectral structure of the linearized operator, and in particular in the characterization of its kernel. More precisely, we shall prove the following result.
\begin{myproposition}{}{thm:mono}
The following results hold true.
\begin{enumerate}
    \item 
  For every $n\geqslant 1$ and $x>0$, $\phi_n(x)>0$ and the map $n\mapsto \phi_n(x)$ is strictly decreasing.
  \item For each $n\geqslant 1,$ we have $ \lambda_n>0$ and the sequence $(\lambda_n)_{n\geqslant1}$ is strictly decreasing.
  \item Consider the dispersion set $\mathcal{S}$ introduced in Proposition \ref{prop:prop-Fourier}. Then for $\Omega\in\mathcal{S},$ the kernel $\ker L_\Omega$ is one-dimensional  with 
\[
\ker L_\Omega
=
\operatorname{span}
\left\{
w^{n+1}
\right\},\quad \Omega=\lambda_1-\lambda_n, n\geqslant1.
\]
  \end{enumerate}
\end{myproposition}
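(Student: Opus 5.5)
The plan is to prove item 1 by deriving a second-order ODE for the universal functions $\phi_n$ and applying the comparison principle of Lemma~\ref{lem:lem-comparison}. Items 2 and 3 then follow from the factorization in Lemma~\ref{lem:lem:lamb-n} and the diagonalization in Proposition~\ref{prop:prop-Fourier}.

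\textbf{The ODE for $\phi_n$.} Set $g(x,\eta)=e^{-2x\sin\eta}$ and $c_n(\eta)=\cos(2n\eta)$. Differentiating under the integral gives
\[
\phi_n'(x)=-2\int_0^\pi \sin\eta\, g\,c_n\,d\eta,\qquad \phi_n''(x)=4\int_0^\pi\sin^2\eta\, g\,c_n\,d\eta .
\]
Next integrate $\partial_\eta^2 g\cdot c_n$ by parts twice on $[0,\pi]$, using $\partial_\eta^2 g=(4x^2\cos^2\eta+2x\sin\eta)g$ and $c_n''=-4n^2c_n$.
\begin{itemize}
\item The terms $g\,c_n'$ vanish at both endpoints, since $\sin(2n\eta)$ vanishes there.
\item The terms $\partial_\eta g\, c_n$ contribute $2x-(-2x)=4x$.
\end{itemize}
Writing $\cos^2\eta=1-\sin^2\eta$ and substituting the expressions for $\phi_n'$ and $\phi_n''$, I expect to obtain
\[
x^2\phi_n''+x\phi_n'-4(x^2+n^2)\phi_n=-4x .
\]
Equivalently, $\mathbb L_n\phi_n=-4/x<0$, where $\mathbb L_n f=f''+\tfrac1x f'-4\bigl(1+\tfrac{n^2}{x^2}\bigr)f$. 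This operator has the form required by Lemma~\ref{lem:lem-comparison}, with $a(x)=1/x$ and $b(x)=4(1+n^2/x^2)>0$.

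\textbf{Boundary values and item 1.} At the origin, $\phi_n(0)=\int_0^\pi\cos(2n\eta)\,d\eta=0$. At infinity, $\phi_n(x)\to0$ by dominated convergence, because $\sin\eta>0$ on $(0,\pi)$. Lemma~\ref{lem:lem-comparison}(1) with the strict inequality then gives $\phi_n>0$ on $(0,\infty)$.

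For monotonicity in $n$, set $d=\phi_n-\phi_{n+1}$. Since $\mathbb L_n\phi_{n+1}=\mathbb L_{n+1}\phi_{n+1}+\tfrac{4(2n+1)}{x^2}\phi_{n+1}$, we get
\[
\mathbb L_n d=-\tfrac{4(2n+1)}{x^2}\phi_{n+1}<0 ,
\]
where the sign uses the positivity just proved. The function $d$ also tends to $0$ at both ends, so the same lemma gives $d>0$, i.e.\ $n\mapsto\phi_n(x)$ is strictly decreasing.

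\textbf{Items 2 and 3.} Positivity and strict decrease of $\lambda_n=2\int_0^\infty \phi_n(x)\,\tfrac{d\mu(x)}{x}$ transfer directly from item 1, because $\mu$ is a nontrivial nonnegative measure.
\begin{itemize}
\item A possible atom of $\mu$ at $x=0$ is handled by the value of the quotient there: $\lim_{x\to0}\phi_n(x)/x=\phi_n'(0)=-2\int_0^\pi\sin\eta\cos(2n\eta)\,d\eta=\tfrac{4}{4n^2-1}$, which is positive and strictly decreasing in $n$.
\item Finiteness of the integrals for $x\to\infty$ comes from the representation \eqref{Expression-K} together with the integrability assumption \eqref{kernel-integrability}.
\end{itemize}
For item 3, Proposition~\ref{prop:prop-Fourier} shows that $w^{k+1}$ lies in the kernel exactly when $\mu_k(\Omega)=0$, that is, when $\Omega=\lambda_1-\lambda_k$. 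Since $(\lambda_k)$ is strictly decreasing, the map $k\mapsto\lambda_1-\lambda_k$ is injective. Hence for $\Omega=\lambda_1-\lambda_n$ exactly one mode vanishes, and $\ker L_\Omega=\operatorname{span}\{w^{n+1}\}$.

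The step I expect to be the main obstacle is the careful derivation of the ODE, in particular getting the boundary term $4x$ right. A secondary point is justifying the limits of $\phi_n$ at $0$ and $\infty$, and the removable singularity at $x=0$ when $\mu$ charges the origin.
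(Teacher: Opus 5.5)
Your proposal is correct and follows the paper's route: derive $\phi_n''+x^{-1}\phi_n'-4(1+n^2/x^2)\phi_n=-4/x$, apply the comparison lemma to $\phi_n$ and then to $\phi_n-\phi_{n+1}$, and transfer the result to $\lambda_n$ and to $\ker L_\Omega$ through the Bernstein factorization and the Fourier diagonalization. The differences are minor and work in your favor. You derive the ODE by a direct double integration by parts, and your boundary term $4x$ is correct; this is self-contained, whereas the paper cites the Anger--Weber identities. You also treat a possible atom of $\mu$ at $0$ via $\phi_n'(0)=\tfrac{4}{4n^2-1}$, and you justify $\phi_n(x)\to0$ by dominated convergence rather than the Riemann--Lebesgue argument the paper invokes.
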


\begin{proof}
{\bf{1.}} For $\mathbf{z} \in \mathbb{C}$, define
\begin{align*}
  \Phi_n(\mathbf{z}) \triangleq \frac{1}{\pi} \int_0^\pi e^{i(- \mathbf{z}\sin \eta + 2n\eta) } d \eta.
\end{align*}
Recall the Anger and Weber functions defined successively by, see 8.580 in \cite{GR15},
\begin{align*}
  \mathbf{J}_\nu(\mathbf{z}) = \frac{1}{\pi} \int_0^\pi \cos(\nu \eta - \mathbf{z} \sin \eta) d\eta\quad \hbox{and}\quad 
  \mathbf{E}_\nu(\mathbf{z}) = \frac{1}{\pi} \int_0^\pi \sin(\nu\eta - \mathbf{z} \sin \eta) d \eta.
\end{align*}
Then we find
\begin{align*}
  \Phi_n(\mathbf{z})=\mathbf{J}_{2n}(\mathbf{z})+i \mathbf{E}_{2n}(\mathbf{z})
\end{align*}
and
\begin{align}\label{Rela-1}
  \phi_n(x)={\pi\Phi_n(-2 i x)}.
\end{align}
Now, it is a classical fact that  the functions $\mathbf{J}_{2n}(\mathbf{z})$ and $\mathbf{E}_{2n}(\mathbf{z})$  satisfy the following ODEs, for instance see 8.584 in \cite{GR15},
\begin{align*}
  \mathbf{J}_{2n}^{\prime\prime}(\mathbf{z}) + \mathbf{z}^{-1} \mathbf{J}_{2n}^\prime(\mathbf{z}) + \left(1-\tfrac{4n^2}{\mathbf{z}^2}\right)
  \mathbf{J}_{2n}(\mathbf{z}) = 0
\end{align*}
and
\begin{align*}
  \mathbf{E}_{2n}^{\prime\prime}(\mathbf{z}) + \mathbf{z}^{-1}\mathbf{E}_{2n}^\prime(\mathbf{z}) + \left(1-\tfrac{4n^2}{\mathbf{z}^2}\right) \mathbf{E}_{2n}(\mathbf{z}) = -\frac{2}{\pi \mathbf{z}}\cdot
\end{align*}
It follows that
\begin{align*}
  \Phi_n^{\prime\prime}(\mathbf{z}) + \mathbf{z}^{-1}\Phi_n^\prime(\mathbf{z}) +\left(1-\tfrac{4n^2}{\mathbf{z}^2}\right)\Phi_n(\mathbf{z})
  = - \frac{2 i}{\pi \mathbf{z}}\cdot
\end{align*}
This implies by virtue of \eqref{Rela-1} that
\begin{align}\label{eq:phi-ODE}
  \phi_n^{\prime\prime}(x) + x^{-1}\phi_n^\prime(x) - 4\left(1+\tfrac{n^2}{x^2}\right)\phi_n(x) = -\frac{4}{ x}, \quad  x>0.
\end{align}
On the other hand, one may get  from Riemann-Lebesgue's lemma applied with the integral formula of $\phi_n$ that
\begin{align}\label{eq:phi-n0}
  \forall n\geqslant 1,\quad \phi_n(0)=0,\quad \textrm{and}\quad  
  \lim_{x\to\infty}\phi_n(x)=0.
\end{align}
Hence, Lemma \ref{lem:lem-comparison} guarantees that
\begin{align*}
  \forall x>0, \quad \phi_n(x)>0.
\end{align*}
Now we show that for any $x>0$ the sequence $n\mapsto \phi_n(x)$ is strictly decreasing.
For this aim, we define
\begin{align*}
  \chi_n(x) := \phi_n(x) - \phi_{n+1}(x).
\end{align*}
Then using  the equation \eqref{eq:phi-ODE} we find
\begin{align}\label{d-ff-2}
  \chi_n^{\prime\prime}(x) + x^{-1}\chi_n^\prime(x) - 4\left(1+\tfrac{n^2}{x^2}\right)\chi_n(x)
  = - 4 \tfrac{2n+1}{x^2}(x) \phi_{n+1}(x),
  \quad x>0,
\end{align}
with
\begin{align*}
  \chi_n(0)=0, \quad  
  \lim_{x\to\infty}\chi_n(x) = 0.
\end{align*}
Thus, \eqref{d-ff-2} and Lemma \ref{lem:lem-comparison} ensure that
\begin{align*}
  \forall x>0, \quad \chi_n(x) > 0,
\end{align*}
which implies the strict monotonicity of $\phi_n$.
This concludes the proof of the first result.\\
{\bf{2.}} From the formula
\begin{eqnarray*}
  \lambda_{n}=2\int_0^{\infty}\phi_{n}(x)\tfrac{d {\mu}(x)}{x},
\end{eqnarray*}
together with the first point we get the desired result.\\
{\bf{3.}} The result follows from the second point together with Proposition \ref{prop:prop-Fourier}.

\end{proof}
\section{Proof of Theorem \ref{thm:Hmidi-Xue}}

We are now in a position to complete the proof of Theorem \ref{thm:Hmidi-Xue}. The argument follows the same general bifurcation strategy developed for the Euler equation in Section \ref{sec:Burea-q}, based on the Crandall--Rabinowitz theorem. The required regularity properties have already been established in Proposition \ref{prop:Theorem-REg-smooth}. The main additional difficulty in the present setting stems from the general structure of the interaction kernel, for which the spectral properties of the linearized operator are no longer available in an explicit form.
This difficulty has been overcome in the preceding sections through the factorization of the spectral coefficients in terms of the universal functions $\phi_n$. The qualitative properties established for this family provide the necessary information on the eigenvalues and allow us to characterize the kernel and the range of the linearized operator at each bifurcation value. In particular, they ensure the simplicity of the relevant eigenvalue and the Fredholm property with index zero; see Propositions \ref{prop:pro-Fredh} and \ref{prop:thm:mono}. Finally, the transversality condition can be verified exactly as in Section \ref{Section-Transv}. All the hypotheses of the Crandall--Rabinowitz theorem are therefore satisfied, and the desired result follows.

\chapter{Concluding Perspectives}

The theory developed in this monograph illustrates several complementary
mechanisms governing the existence and geometry of rotating vortex
patches. For the two-dimensional Euler equations, the Cauchy transform
and the exterior conformal mapping provide two natural formulations of
the free-boundary problem, revealing respectively its potential-theoretic
and complex-analytic structures. These tools lead to rigidity results
and, through the conformal parametrization, to the bifurcation theory
initiated by Burbea.

A central conclusion of the preceding chapters is that the local
bifurcation mechanism is considerably more universal than the explicit
Euler computations might suggest. For a broad class of active scalar
equations, complete monotonicity of the radial interaction kernel
provides a representation of the spectral coefficients that separates
the Fourier structure from the particular model. This makes it possible
to recover the simplicity, monotonicity, and nondegeneracy properties
required by the Crandall--Rabinowitz theorem without relying on explicit
special-function computations. In this sense, the Euler, gSQG, and QGSW
bifurcation theories can be viewed as manifestations of a common
spectral mechanism.

The results presented here also leave several natural questions open.
We briefly mention some directions that appear particularly relevant.

\section{Beyond local bifurcation}

The bifurcation results of Chapters~6 and~7 are local: they construct
nontrivial branches of rotating patches in a neighborhood of the disk.
A substantially more difficult problem is to understand the global
continuation of these branches. Numerical and analytical studies
indicate a rich global structure, including turning points and limiting
configurations with corners or self-contact. A general description of
the global geometry of these branches, and in particular of their
possible singular limits, remains largely open.

It would be especially interesting to determine to what extent the
structural assumptions on the interaction kernel used in Chapter~7 can
also support a global bifurcation theory. The complete monotonicity
framework provides robust control of the spectrum near the disk, but
its relevance far from the trivial branch is much less understood.

\section{Rigidity and classification}

The rigidity theory developed in Chapter~5 shows that, for the Euler
equations, the angular velocity strongly constrains the geometry of a
rotating patch. Outside the admissible interval of angular velocities,
the disk is the only simply connected rotating configuration. Inside
the bifurcation regime, however, the classification problem is much
more subtle, as the existence of the Kirchhoff ellipses and the
higher-fold Burbea branches demonstrates.

A natural problem is therefore to understand which additional geometric
or analytical assumptions lead to classification results within the
nontrivial regime. More generally, one may ask whether the analytic and
variational rigidity mechanisms discussed in Chapter~5 admit analogues
for the broader class of active scalar equations considered in
Chapter~7.

\section{Topology and singular geometry}

The theory of rotating vortex patches extends well beyond the simply connected setting considered in most of this monograph. In particular, a substantial theory has been developed for doubly connected $V$-states, where the patch is bounded by two interacting interfaces. Local bifurcation from annular configurations, symmetry properties, and global branches have been investigated for the Euler equation and for several related active scalar models. More generally, patches with several interfaces exhibit a richer spectral structure, since the different boundary components interact through the contour dynamics.

Many questions nevertheless remain concerning the global organization of these branches, higher-connectivity configurations, and the possible degenerations of the interfaces. In particular, smooth families of $V$-states may approach limiting configurations involving corners, self-contact, or collision between distinct boundary components. Understanding these singular limits and the mechanisms leading to their formation remains an important problem in the global theory.

\section{From relative equilibria to recurrent dynamics}

Rotating patches are relative equilibria: after passing to a uniformly
rotating frame, the free boundary becomes stationary. Recent
developments show that vortex dynamics also admit genuinely
time-dependent coherent structures, including periodic and
quasi-periodic vortex patches. These solutions require techniques that
go substantially beyond the local bifurcation theory considered here,
involving the analysis of infinite-dimensional Hamiltonian systems and
small-divisor phenomena.
It is natural to ask how the geometric and spectral structures arising
in the study of $V$-states interact with these more general recurrent
motions. In particular, rotating patches may serve as reference states
around which periodic or quasi-periodic dynamics can bifurcate, thereby
connecting the theory of relative equilibria with the broader problem
of recurrent vortex motion.

\section{Beyond complete monotonicity}
Complete monotonicity provides the structural principle behind the
unified bifurcation theory of Chapter~7. Through Bernstein's
representation, it yields a factorization of the spectral coefficients
and, consequently, the monotonicity and simplicity needed for local
bifurcation.
It is unlikely, however, that complete monotonicity is the optimal
condition. The essential issue is to identify the weakest assumptions
on the interaction kernel that preserve the relevant spectral
properties. Such a characterization would clarify the precise scope of
the universal bifurcation mechanism and could extend the theory to
active scalar models lying beyond the class considered here.


\begin{thebibliography}{99}	

\bibitem{AbS64}
M. Albramowitz and I. A. Stegun,
\textit{Handbook of Mathematical Functions with Formulas, Graphs, and Mathematical Tables}.
Vol. 55 of National Bureau of Standards Applied Mathematics Series, (1964).




\bibitem{BHM23}
M. Berti, Z. Hassainia and N. Masmoudi,
Time quasi-periodic vortex patches of Euler equation in the plane.
Invent. Math., \textbf{233} (2023), no. 3, 1279--1391.


\bibitem{BertozziConstantin1993}
A.~L. Bertozzi and P.~Constantin, Global regularity for vortex patches,
Comm. Math. Phys., \textbf{152} (1993), 19--28.
	
\bibitem{Burbea82}
J. Burbea, Motions of vortex patches.
Letters in Mathematical Physics, \textbf{6} (1982), no. 1, 1--16.	
\bibitem{Burbea81} J. Burbea, Vortex motions and conformal mappings, pp. 276–298 in Nonlinear evolution equations and
dynamical systems (Lecce, Italy, 1979), edited by M. Boiti et al., Lecture Notes in Phys. 120, Springer, Berlin, 1980.




\bibitem{CQZZ} D. Cao, G. Qin, W. Zhan and C. Zou,
Existence and regularity of co-rotating and traveling-wave vortex solutions for the generalized SQG equation.
J. Differ. Equations, \textbf{299} (2021), 429--462.

	
\bibitem{CCG16}
A. Castro, D. C\'ordoba and J. G\'omez-Serrano,
Existence and regularity of rotating global solutions for the generalized surface Quasi-Geostrophic equations.
Duke Math. J., \textbf{165} (2016), no. 5, 935--984.
	
\bibitem{CCG16b}
A. Castro, D. C\'ordoba and J. G\'omez-Serrano,
Uniformly rotating analytic global patch solutions for active scalars.
Annals of PDE, \textbf{2} (2016), no. 1, Art. 1.





%
\bibitem{Chemin1998} J.-Y. Chemin, Perfect incompressible Fluids. Oxford University Press 1998.
\bibitem{Chemin1993} J.-Y. Chemin, Sur le mouvement des particules d’un fluide parfait incompressible bidimensionnel. Invent. Math.
103, no. 3, 599–629 (1991).
\bibitem{Chemin19930}
J.-Y. Chemin, Persistance de structures g\'eom\'etriques dans les fluides incompressibles bidimensionnels.
Annales Scientifiques de l'\'Ecole Normale Sup\'erieure, \textbf{26} (1993), no. 4, 517--542.











\bibitem{C-R71} M. G. Crandall and P. H. Rabinowitz,
Bifurcation from simple eigenvalues.
J. Funct. Anal., {\bf 8} (1971), 321--340.
\bibitem{Curtis} J. H. Curtiss, {Faber polynomials and Faber series}, The American Mathematical Monthly, Vol. 78, No. 6 (Jun. - Jul., 1971), pp. 577-596.

\bibitem{DZ78} G. S. Deem and N. J. Zabusky,
Vortex waves: stationary ``V-states", interactions, recurrence, and breaking.
Phys. Rev. Lett., \textbf{40} (1978), no. 13, 859--862.
\bibitem{Delort1991}
J.-M.~Delort,
\newblock Existence de nappes de tourbillon en dimension deux,
\newblock \emph{J. Amer. Math. Soc.} \textbf{4} (1991), 553--586.
\bibitem{DHH16}
F. De la Hoz, Z. Hassainia and T. Hmidi,
Doubly connected V-states for the generalized surface Quasi-Geostrophic equations.
Arch. Ration. Mech. Anal., \textbf{220} (2016), no. 3, 1209--1281.
	
\bibitem{DHHM} F. De la Hoz, Z. Hassainia, T. Hmidi and J. Mateu,
An analytcal and numerical study of steady paches in the disk.
Analysis \& PDE, 9 (2016), no. 7, 1609--1670.



\bibitem{DHMV16}
F. De la Hoz, T. Hmidi, J. Mateu and J. Verdera,
Doubly connected $V$-states for the planar Euler equations.
SIAM J. Math. Anal., \textbf{48} (2016), no. 3, 1892--1928.



	
\bibitem{DHR19}
D. G. Dritschel, T. Hmidi and C. Renault, Imperfect bifurcation for the quasi-geostrophic shallow-water equations.
Arch. Ration. Mech. Anal., \textbf{231} (2019), no. 3, 1853--1915.


	

\bibitem{Gan08}
F. Gancedo, Existence for the $\alpha$-patch model and the QG sharp front in Sobolev spaces.
Adv. Math., \textbf{217} (2008), no. 6, 2569--2598.


\bibitem{Garcia21} C. Garc\'ia, Vortex patches choreography for active scalar equations.
J. Nonlinear Sci., \textbf{31} (2021), no. 5, Paper No. 75.

\bibitem{Garcia20}
C. Garc\'ia, K\'arm\'an vortex street in incompressible fluid models.
Nonlinearity, \textbf{33} (2020), no. 4, 1625--1676.


\bibitem{GS23}
C. Garc\'ia and S. V. Haziot,
Global bifurcation for corotating and counter-rotating vortex pairs.
Comm. Math. Phys., \textbf{402} (2023), 1167--1204.


\bibitem{GHJ20}
C. Garc\'ia, T. Hmidi and J. Soler,
Non uniform rotating vortices and periodic orbits for the two-dimensional Euler equations.
Arch. Ration. Mech. Anal., \textbf{238} (2020), no. 2, 929--1085.




\bibitem{HMH23} 
C. Garc\'ia, T. Hmidi and J. Mateu, 
Time periodic solutions close to localized radial monotone profiles for the 2D Euler equations. 
Annals of PDE, \textbf{10}(2024), Art. No. 1.




\bibitem{Gom19} J. G\'omez-Serrano, On the existence of stationary patches.
Adv. Math., \textbf{343} (2019), 110--140.


\bibitem{GPSY} J. G\'omez-Serrano, J. Park, J. Shi and Y. Yao,
Symmetry in stationary and uniformly rotating solutions of active scalar equations.
Duke Math. J., \textbf{170} (2021), no. 13, 2957--3038.


\bibitem{GR15}
I. S. Gradshteyn, and I. M. Ryzhik, 
\textit{Table of Integrals, Series, and Products}. Translated from the Russian.
Translation edited and with a preface by Daniel Zwillinger and Victor Moll. Eighth edition.
Elsevier/Academic Press, Amsterdam, (2015).


		
\bibitem{HH15}
Z. Hassainia and T. Hmidi, 
On the V-states for the generalized quasi-geostrophic equations.
Comm. Math. Phys., \textbf{337} (2015), no. 1, 321--377.

\bibitem{HHM23} Z. Hassainia, T. Hmidi and N. Masmoudi,
KAM theory for active scalar equations. 
Memoirs Amer. Math. Soc., \textbf{314} (2025), no. 1596, 279 pp.
		
\bibitem{HMW20} Z. Hassainia, N. Masmoudi and M. H. Wheeler,
Global bifurcation of rotating vortex patches.
Comm. Pure Appl. Math., \textbf{73} (2020), no. 9, 1933--1980.


\bibitem{HR22}
Z. Hassainia and E. Roulley,
Boundary effects on the emergence of quasi-periodic solutions for Euler equations. Nonlinearity, \textbf{38}(2025), no. 1, Art. no 015016.

\bibitem{HHR23}
Z. Hassainia, T. Hmidi and E. Roulley,
Invariant KAM tori around annular vortex patches for 2D Euler equations. Commun. Math. Phys., \textbf{405}, no. 11, Paper No. 270.

\bibitem{HW22} Z. Hassainia and M. H. Wheeler,
Multipole vortex patch equilibria for active scalar equations.
SIAM J. Math. Anal., \textbf{54} (2022), no. 6, 6054--6095.
	

	\bibitem{Hmidi15} T. Hmidi, On the trivial solutions for the rotating patch model,
J. Evol. Equ. 15 (2015), no. 4, 801–816.
	
\bibitem{HM16b} T. Hmidi and J. Mateu,
Degenerate bifurcation of the rotating patches.
Adv. Math., \textbf{302} (2016), 799--850.

\bibitem{HM17} T. Hmidi and J. Mateu,
Existence of corotating and counter-rotating vortex pairs for active scalar equations.
Comm. Math. Phys., \textbf{350} (2017), no. 2, 699--747.

\bibitem{HMV13} T. Hmidi, J. Mateu and J. Verdera,
Boundary regularity of rotating vortex patches.
Arch. Ration. Mech. Anal., \textbf{209} (2013), no. 1, 171--208.

\bibitem{HMV15} T. Hmidi, J. Mateu and J. Verdera, On rotating doubly connected vortices, J. Differential Equations 258 (2015), no. 4, 1395–1429.
\bibitem{HR21}
T. Hmidi and E. Roulley,
Time quasi-periodic vortex patches for quasi-geostrophic shallow-water equations.
ArXiv:2110.13751 [math.AP]. To appear in M\'emoires de la SMF.



\bibitem{HXX23} T. Hmidi, L. Xue and Z. Xue,
Emergence of time periodic solutions for the generalized surface quasi-geostrophic equation in the disk.
J. Funct. Anal., \textbf{285} (2023), no. 10, Paper No. 110142.

\bibitem{HXX26} T. Hmidi, L. Xue and Z. Xue, Unified theory on V-states structures for active scalar equations, Adv. Math. 486 (2026), Paper No. 110750, 77 pp.





		

\bibitem{Kato1967}
T.~Kato,
\newblock On classical solutions of the two-dimensional non-stationary
Euler equation,
\newblock \emph{Arch. Rational Mech. Anal.} \textbf{25} (1967), 188--200.


\bibitem{Keller} H. B. Keller, W. F.  Langford, , Iterations, perturbations, and multiplicities for nonlinear
bifurcation problems, Arch. Rational Mech. Anal. 48, 82-108 (1972).
\bibitem{Kirch}
G. Kirchhoff, \textit{Vorlesungen uber Mathematische Physik}, Leipzig (1874).













\bibitem{Lesley} Lesley, Frank D.; Vinge, Vernor S.; Warschawski, Stefan E. Approximation by Faber polynomials for a class of Jordan domains. Math. Z. 138 (1974), 225-237.




\bibitem{MB02} A. J. Majda and A. L. Bertozzi, 
\textit{Vorticity and Incompressible Flow}.
Cambridge Texts in Applied Mathematics, 27. Cambridge University Press, Cambridge, (2002).













	
				





\bibitem{Rou23a} E. Roulley,
Vortex rigid motion in quasi-geostrophic shallow-water equations.
Asymptot. Anal., \textbf{133} (2023), no. 3, 397--446.

\bibitem{Rou23b} E. Roulley, Periodic and quasi-periodic Euler-$\alpha$ flows close to Rankine vortices.
Dyn. Partial Differ. Equ., \textbf{20} (2023), no. 3, 311--366.




\bibitem{Talenti} G. Talenti, Elliptic equations and rearrangements, Ann. Sc. Norm. Super. Pisa Cl.
Sci. (4) 3 (1976), no. 4, 697–718. 

\bibitem{V} J.\ Verdera, {\it $L^2$ boundedness of the Cauchy Integral and Menger
curvature,} Contemporary Mathematics {\bf 277} (2001), 139--158.


	\bibitem{Wolibner1933}

W.~Wolibner,
\newblock Un th\'eor\`eme sur l'existence du mouvement plan d'un fluide
parfait, homog\`ene, incompressible, pendant un temps infiniment long,
\newblock \emph{Math. Z.} \textbf{37} (1933), 698--726.
\bibitem{Yudovich1963}
V. I. Yudovich, Non-stationary flow of an ideal incompressible liquid.
USSR Computational Mathematics and Mathematical Physics, \textbf{3} (1963), no. 6, 1407--1456.
	

\end{thebibliography}
\end{document}